\documentclass[12pt,oneside]{ip-journal}
\usepackage{geometry}
\usepackage{extarrows}
\usepackage{epstopdf}
\usepackage{mathpazo}
\usepackage{mathtools}
\usepackage[cmtip,all]{xy}
\usepackage{mathrsfs}
\usepackage{amscd}
\usepackage{amsmath}
\usepackage{amssymb}
\usepackage{amsthm}
\usepackage{float}
\usepackage[dvips]{graphicx}
\usepackage{color}
\usepackage{tikz} 
\usepackage{enumerate}
\usepackage{appendix}
\usetikzlibrary{matrix,arrows} 
\usepackage{array}
\usepackage{amscd}
\usepackage[all]{xy}
\usepackage{enumitem}

\usepackage{array}
\usepackage{amscd}
\usepackage[all]{xy}

\usepackage{array}
\usepackage{amscd}
\usepackage[all]{xy}

\DeclareFontFamily{U}{rsfs}{%
\skewchar\font127}
\DeclareFontShape{U}{rsfs}{m}{n}{%
<-6>rsfs5<6-8.5>rsfs7<8.5->rsfs10}{}
\DeclareSymbolFont{rsfs}{U}{rsfs}{m}{n}
\DeclareSymbolFontAlphabet
{\mathrsfs}{rsfs}
\DeclareRobustCommand*\rsfs{%
\@fontswitch\relax\mathrsfs}

\DeclareFontFamily{U}{rsfs}{%
\skewchar\font127}
\DeclareFontShape{U}{rsfs}{m}{n}{%
<-6>rsfs5<6-8.5>rsfs7<8.5->rsfs10}{}
\DeclareSymbolFont{rsfs}{U}{rsfs}{m}{n}
\DeclareSymbolFontAlphabet             
{\mathrsfs}{rsfs}
\DeclareRobustCommand*\rsfs{%
\@fontswitch\relax\mathrsfs}

\theoremstyle{plain}

\newtheorem{thm}{Theorem}[section]
\newtheorem{prop}[thm]{Proposition}
\newtheorem{lem}[thm]{Lemma}

\newtheorem{rmk}[thm]{Remark}
\newtheorem{cor}[thm]{Corollary}

\newtheorem*{prop*}{Proposition}
\newtheorem*{notn}{Notation}

\newtheorem*{thr}{Theorem}
\newtheorem{prop-defi}[thm]{Proposition-Definition}
\newtheorem{thm-defi}[thm]{Theorem-Definition}
\newtheorem{lem-defi}[thm]{Lemma-Definition}

\newtheorem{conj}[thm]{Conjecture}

\newcommand{\C}{\mathbb{C}}

\newcommand{\Z}{\mathcal{Z}}
\newcommand{\I}{\mathcal{I}}

\newcommand{\CO}{{\mathcal{O}}}

\renewcommand{\S}[1]{S^{[#1]}}
\newcommand{\z}[1]{\Z^{[#1]}}
\renewcommand{\i}[1]{\I^{[#1]}}

\newcommand{\dR}{\mathbf{R}}

\renewcommand{\hom}{\mathscr{H}om}

\newcommand{\Coh}{\operatorname{Coh}}

\newcommand{\eE}{\mathbb{E}}

\newcommand{\ch}{\operatorname{ch}}
\newcommand{\DT}{\operatorname{DT}}

\newcommand{\vir}{\operatorname{vir}}

\renewcommand{\v}[1]{V^{[#1]}}

\newcommand{\sP}{\mathsf{P}}
\newcommand{\sQ}{\mathsf{Q}}
\newcommand{\sA}{\mathsf{A}}

\newcommand{\IP}{\mathbb{P}}

\renewcommand{\t}{\mathbf{t}}
\newcommand{\s}{\mathbf{s}}

\newcommand{\sK}{\mathsf{K}}
\newcommand{\sT}{\mathsf{T}}
\newcommand{\sE}{\mathsf{E}}

\makeatletter
\newcommand{\subalign}[1]{%
  \vcenter{%
    \Let@ \restore@math@cr \default@tag
    \baselineskip\fontdimen10 \scriptfont\tw@
    \advance\baselineskip\fontdimen12 \scriptfont\tw@
    \lineskip\thr@@\fontdimen8 \scriptfont\thr@@
    \lineskiplimit\lineskip
    \ialign{\hfil$\m@th\scriptstyle##$&$\m@th\scriptstyle{}##$\crcr
      #1\crcr
    }%
  }
}
\makeatother

\newcommand{\be}{\begin{equation}}
\newcommand{\ee}{\end{equation}}
\newcommand{\IG}{{\mathbb{G}}}

\newcommand\IZ{\mathbb {Z}}

\newcommand\IQ{\mathbb {Q}}
\newcommand{\IC}{\mathbb{C}}
\newcommand{\IE}{\mathbb{E}}
\newcommand{\IR}{\mathbb{R}}

\newcommand{\CU}{{\mathcal U}}
\newcommand{\ba}{\begin{array}}
\newcommand{\ea}{\end{array}}

\newcommand{\CX}{{\mathcal X}}
\newcommand{\IF}{{\mathbb F}}

\newcommand{\CK}{{\mathcal K}}

\newcommand{\bal}{\begin{aligned}}
\newcommand{\eal}{\end{aligned}}

\newcommand{\oF}{{\overline {\mathscr{F}}}}

\newcommand{\longto}{\longrightarrow}

\newcommand{\CE}{{\mathscr E}}
\newcommand{\CA}{{\mathcal A}}

\newcommand{\CF}{{\mathscr F}}

\newcommand{\CM}{{\mathcal M}}
\newcommand{\CL}{{\mathcal L}}

\newcommand{\IL}{{\mathbb L}}

\newcommand{\CC}{{\mathcal C}}
\newcommand{\CI}{{\mathcal I}}

\newcommand{\CQ}{{\mathcal Q}}

\newcommand{\CT}{{\mathcal T}}

\newcommand{\IA}{{\mathbb A}}

\newcommand{\CY}{{\mathcal Y}}

\newcommand{\Higgs}{\operatorname{Higgs}}
\newcommand{\xlongto}{\xlongrightarrow} 

\title[Atiyah class and sheaf counting on local fourfolds]{Atiyah class and sheaf counting on local Calabi Yau fourfolds}

\author{Duiliu-Emanuel Diaconescu and Artan Sheshmani and Shing-Tung Yau}

\begin{document}

\address{Duiliu-Emanuel Diaconescu, Rutgers University, New High Energy Theory Center, Department of Physics, Rutgers, The State University Of New Jersey
126 Frelinghuysen Rd., Piscataway, NJ 08854-8019, USA}

\address{Artan Sheshmani, Center for Mathematical Sciences and Applications, Harvard University, Department of Mathematics, 20 Garden Street, Room 207, Cambridge, MA, 02138, USA}
\address{Centre for Quantum Geometry of Moduli Spaces, Aarhus University, Department of Mathematics, Ny Munkegade 118, building 1530, 319, 8000 Aarhus C, Denmark}
\address{National Research University Higher School of Economics, Russian Federation, Laboratory of Mirror Symmetry, NRU HSE, 6 Usacheva str., Moscow, Russia, 119048}
\email{artan@cmsa.fas.harvard.edu}

\address{Shing-Tung Yau, Harvard University, Department of Mathematics, Cambridge, MA, 02138, USA}
\email{yau@math.harvard.edu}

\maketitle

\begin{abstract}
We discuss Donaldson-Thomas (DT) invariants of torsion sheaves with 2 dimensional support on a smooth projective surface in an ambient non-compact Calabi Yau fourfold given by the total space of a rank 2 bundle on the surface. We prove that in certain cases, when the rank 2 bundle is chosen appropriately, the universal truncated Atiyah class of these codimension 2 sheaves reduces to one, defined over the moduli space of such sheaves realized as torsion codimension 1 sheaves in a noncompact divisor (threefold) embedded in the ambient fourfold. Such reduction property of universal Atiyah class enables us to relate our fourfold DT theory to a reduced DT theory of a threefold and subsequently then to the moduli spaces of sheaves on the base surface using results in \cite{GSY17b,GSY17a}. We finally make predictions about modularity of such fourfold invariants when the base surface is an elliptic K3.
\end{abstract}

\setcounter{tocdepth}{3}
\tableofcontents

\section{Introduction} 

This paper is motivated by two recent developments in Donaldson-Thomas theory, namely the mathematical theory of 
Vafa-Witten invariants of complex surfaces constructed in \cite{TT17a,TT17b}, \cite{GSY17a, GSY17b} and \cite{GK17}, and the construction  \cite{BJ17,CL14,CL17} 
of virtual cycle invariants for sheaves on Calabi-Yau fourfolds. In the approach of \cite{GSY17b, TT17a} the Vafa-Witten invariants of a smooth projective surface $S$ are constructed as reduced virtual cycle invariants of two dimensional sheaves on the total space of the canonical bundle $K_S$. In fact, the construction of loc. cit. is more general, including  equivariant residual invariants of 
two dimensional sheaves on the total space $Y$ of an arbitrary 
line bundle $L$ on $S$. The starting point for the present study is the observation that the total space $X$ of the rank two bundle $L\oplus  K_S\otimes L^{-1}$ is a Calabi-Yau fourfold, which is at the same time isomorphic to the total space of the canonical line bundle $K_Y$. This leads to the natural question whether the invariants constructed in 
\cite{GSY17b,TT17a} for $Y$ are related to fourfold Donaldson-Thomas invariants. In fact such a correspondence has already been proven in \cite{CL14} for the case where $Y$ is a compact Fano threefold, 
using a local Kuranishi map construction for fourfold Donaldson-Thomas invariants.
The main goal of this paper is prove an analogous correspondence for more general situations allowing $Y$ to be a quasi-projective threefold with a suitable torus action. As discussed in detail below, the approach employed here is different, using the truncated Atiyah class formalism of \cite{HT10}. A second goal 
is to test the modularity conjecture for two dimensional sheaf counting invariants in the local fourfold framework. This will be 
carried out for a particular class of examples, where $Y$ is the total space of certain line bundles $L$ over an elliptic K3 surface $S$. 
 
The main motivation for this study resides in modular properties of Donaldson-Thomas invariants for sheaves
with two dimensional support. This has been a central theme in enumerative geometry starting with the 
early work of \cite{Betti_Hilbert, KY_betti_I, KY_betti_II, KY_S_duality, KY_euler, GL_theta} on torsion free sheaves on surfaces. It has been also studied intensively for two dimensional sheaves on Calabi-Yau threefolds \cite{DT_twodim, Gen_DT_twodim, Toda_flops, GST13, Toda_An, Toda_Ptwo}. Further results on this topic include 
\cite{GK17,GS17b,GS17a,TT17a}. As modularity is a natural consequence of S-duality \cite{VW94,DM11,Farey_tail,M5_genus}, 
similar physical arguments predict that it should be also manifest in the Donaldson-Thomas theory 
of two dimensional sheaves on  Calabi-Yau fourfolds. At the moment this question is completely open. This paper essentially consists of two parts, as explained below. \\

\subsection{General results}\label{foundations}  
Let $Y$ be a smooth quasi-projective variety of dimension $d-1$, $d\geq 2$, and let $X$ be the total space of the canonical bundle $K_Y$. Let 
$\CM$ be a quasi-projective fine moduli scheme of compactly supported stable sheaves $\mathscr{F}$ on $X$. 
Suppose that all sheaves, $\mathscr{F}$, parameterized by $\CM$ are scheme theoretically supported on $Y$. The formalism of truncated Atiyah classes developed in \cite{HT10}, yields two natural (unreduced) ``\textit{deformation-obstruction theories}",  $a_X:{\mathbb{E}^{\bullet}}_X\to \IL^{\bullet}_\CM$ and 
$a_Y: {\mathbb{E}^{\bullet}}_Y\to \IL^{\bullet}_\CM$ for 
 the moduli space $\CM$. These are associated respectively to deformations of sheaves, realized, respectively, as modules over the structure rings of $X$ and $Y$. As in \cite{BF97},
in the present paper an obstruction theory will be an object ${\mathbb{E}^{\bullet}}$ of amplitude $[-1,\ 0]$ in the derived category of the moduli space, together with a morphism in derived category 
$\phi : {\mathbb{E}^{\bullet}}\to \IL^{\bullet}_\CM$ such that $h^0(\phi)$ is an isomorphism and $h^{-1}(\phi)$ is an epimorphism of sheaves. Here $\IL^{\bullet}_\CM$ denotes the $[-1, 0]$ truncation of the full cotangent complex of $\CM$ as in \cite{HT10}. 
The first general result proven in this paper is 
\begin{thm}\label{obsthm}
(Proposition \ref{Aclasslocal} and 
 Proposition \ref{obsprop}). There is a natural splitting in $\mathscr{D}^b(\CM)$: ${\mathbb{E}^{\bullet}}_X \cong {\mathbb{E}^{\bullet}}_Y \oplus {\mathbb{E}^{\bullet}}_Y^\vee[1-d]$. Moreover, the obstruction theory $a_{X}$ factors through ${\mathbb{E}^{\bullet}}_Y$ and there is a direct sum decomposition $$a_X = (a_Y, 0).$$ 
\end{thm}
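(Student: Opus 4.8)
The plan is to trace through the Atiyah class construction of \cite{HT10} for the two ambient spaces $X$ and $Y$ simultaneously and exploit the extra normal direction of $Y$ inside $X$. Since every sheaf $\mathscr F$ parameterized by $\CM$ is scheme-theoretically supported on $Y \subset X$, and $X$ is the total space of $K_Y$, the closed embedding $\iota\colon Y \hookrightarrow X$ has a conormal bundle canonically identified with $K_Y^\vee$ restricted to the zero section, i.e. $N_{Y/X} \cong K_Y^{-1}$ as bundles on $Y$. The first step is to compare the (truncated) cotangent complexes of the pairs $(\CM \times X)$ and $(\CM \times Y)$ relative to $\CM$, and to recall that $\iota$ is a regular embedding of codimension $1$, so that $L\iota^* \IL_X$ sits in a triangle with $\IL_Y$ and $N_{Y/X}^\vee[1] = K_Y[1]$. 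The universal sheaf $\mathbb F$ on $\CM \times X$ is pushed forward from $\CM \times Y$, which is precisely the hypothesis that lets us relate the two Atiyah classes.

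Second, I would write down the truncated Atiyah class $\at_X(\mathbb F) \in \Ext^1_{\CM\times X}(\mathbb F, \mathbb F \otimes \IL^\bullet_{\CM\times X})$ and its relative version over $\CM$, and likewise $\at_Y(\mathbb F|_Y)$; the obstruction theories ${\mathbb{E}^\bullet}_X$ and ${\mathbb{E}^\bullet}_Y$ are obtained by the standard procedure of applying $\dR\pr_{\CM *} \dR\hom(-,-)$ to the relative Atiyah class and dualizing, using (relative) Serre duality along the fibers of $\pr_\CM$. The key computation is that, because $X \to Y$ is the total space of a line bundle and $\mathbb F$ is supported on the zero section, the relative cotangent complex $\IL_{X/Y}$ contributes exactly the extra summand $K_Y$, and Serre duality along $X$ (which is $(d)$-dimensional, Calabi--Yau, $K_X \cong \CO_X$) pairs this summand against its dual shifted by $[1-d]$. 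Concretely, one gets a distinguished triangle
\begin{equation}
{\mathbb{E}^\bullet}_Y \longrightarrow {\mathbb{E}^\bullet}_X \longrightarrow {\mathbb{E}^\bullet}_Y{}^\vee[1-d] \xrightarrow{\ +1\ }
\end{equation}
and the content of the first assertion is that this triangle splits. For the splitting, I would argue that the connecting morphism ${\mathbb{E}^\bullet}_Y{}^\vee[1-d] \to {\mathbb{E}^\bullet}_Y[1]$ is forced to vanish, either by a weight/grading argument coming from the $\C^*$-action scaling the fibers of $K_Y \to Y$ (the two summands live in different weights), or, failing an explicit torus action, by the observation that this connecting map is itself a component of an Atiyah-type class which, on the locus where $\mathbb F$ is supported on the zero section, is the extension class of the (split) sequence $0 \to K_Y \to \Omega_X|_Y \to \Omega_Y \to 0$ pulled back — and that sequence splits canonically because $X = \mathrm{Tot}(K_Y)$ retracts onto $Y$.

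Third, for the statement that $a_X$ factors through ${\mathbb{E}^\bullet}_Y$ with $a_X = (a_Y, 0)$: once the splitting ${\mathbb{E}^\bullet}_X \cong {\mathbb{E}^\bullet}_Y \oplus {\mathbb{E}^\bullet}_Y{}^\vee[1-d]$ is established, the map $a_X\colon {\mathbb{E}^\bullet}_X \to \IL^\bullet_\CM$ is tested on each summand. On the ${\mathbb{E}^\bullet}_Y$ summand it restricts, by naturality of the Atiyah class under the pushforward $\iota_*$ and the compatibility of the two truncated cotangent complexes, to $a_Y$. On the ${\mathbb{E}^\bullet}_Y{}^\vee[1-d]$ summand one checks that the composite $\IL^\bullet_\CM$-valued map vanishes: this summand records obstructions to deforming $\mathbb F$ in the normal direction $K_Y$, which do not interact with deformations of the \emph{moduli problem} $\CM$ itself (whose tangent complex only sees the $Y$-directions, since all sheaves stay supported on $Y$), so the corresponding component of $a_X$ lands in the zero piece of $h^0(\IL^\bullet_\CM)$ and $h^{-1}(\IL^\bullet_\CM)$ after the standard amplitude bookkeeping. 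Concretely I would factor $a_X$ through the triangle above and use that $\IL^\bullet_\CM$ has amplitude $[-1,0]$ while ${\mathbb{E}^\bullet}_Y{}^\vee[1-d]$ has amplitude concentrated in degrees forced to be $\leq -2$ when $d \geq 3$ (and handled separately/degenerately for the surface-adjacent cases), so $\Hom({\mathbb{E}^\bullet}_Y{}^\vee[1-d], \IL^\bullet_\CM) = 0$.

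The main obstacle I expect is the splitting of the triangle, i.e. the vanishing of the connecting morphism: unlike the easy vanishing $\Hom({\mathbb{E}^\bullet}_Y{}^\vee[1-d],\IL^\bullet_\CM)=0$ which is a pure amplitude estimate, the connecting map is a genuine $\Ext^1$ between objects that \emph{a priori} live in overlapping degrees, so one really needs the geometric input — either the canonical retraction $X \to Y$ splitting the conormal sequence, or an equivariant weight decomposition — to kill it, and one must check this splitting is compatible with the $\dR\pr_{\CM*}\dR\hom$ functor and with relative Serre duality so that it descends to a splitting of ${\mathbb{E}^\bullet}_X$ as objects of $\mathscr D^b(\CM)$ and not merely an abstract isomorphism of cohomology sheaves. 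Everything else is bookkeeping with distinguished triangles, adjunction for $\iota_*$, and the Calabi--Yau condition $K_X \cong \CO_X$ entering through relative duality.
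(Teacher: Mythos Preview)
Your plan for the splitting ${\mathbb{E}^\bullet}_X \cong {\mathbb{E}^\bullet}_Y \oplus {\mathbb{E}^\bullet}_Y^\vee[1-d]$ is essentially what the paper does: the retraction $X\to Y$ gives the two-term Koszul resolution of $\CO_\CY$ on $\CX$, and this produces the splitting \emph{directly} (Lemma~\ref{adjlem} / Lemma~\ref{adjlemB}), so there is no connecting map to kill. You have correctly identified the geometric input, but you have misidentified where the difficulty lies.

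The real gap is in your argument for $a_X=(a_Y,0)$. The amplitude estimate you propose does not hold. The second summand is, concretely, $\dR\mathscr{H}om_{\rho_\CM}(\IG,\IG)[d-1]$ (equation~\eqref{eq:RHomsplitAB}); since $Y$ has dimension $d-1$, this complex has cohomology in degrees $[1-d,\,0]$, not in degrees $\leq -2$. Its amplitude therefore overlaps that of $\IL^\bullet_\CM$ (which lies in $[-1,0]$), and $\Hom$ from it to $\IL^\bullet_\CM$ is not forced to vanish by any t-structure axiom. Even after imposing the vanishing ${\rm Ext}^{d-1}_Y(\CF,\CF)=0$ coming from stability (which is not assumed in the general statement anyway), the overlap at degree $-1$ persists. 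So the vanishing of the second component of $a_X$ is a genuine statement about the Atiyah class, not a formality.

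The paper handles this by a completely different mechanism. Rather than bounding amplitudes, it proves an explicit product formula for the \emph{relative truncated} Atiyah class (Theorem~\ref{Apropone}): on $Z\times T$ with $Z$ smooth, $\alpha_{Z_T/Z}={\bf 1}_{\CO_{\Delta_Z}}\boxtimes \alpha_T$. This is the technical heart of the argument and occupies all of Section~\ref{Aclassect}. Once one knows that both $\alpha_{\CX/X}$ and $\alpha_{\CY/Y}$ are pulled back from the same class $\alpha_\CM$ on the moduli space, repeated applications of the projection formula along the cartesian squares in~\eqref{eq:XYsquares} give the commutative diagram~\eqref{eq:AclassdiagG}, which says precisely that $\alpha_{\CX/X}(\IF)$ factors through $i_{\CY*}(\alpha_{\CY/Y}(\IG))$ with zero component on the extra summand (Proposition~\ref{Aclasslocal}). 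Grothendieck--Verdier duality then transports this to $a_X=(a_Y,0)$ (Proposition~\ref{obsprop}). Your heuristic that the normal-direction summand ``does not interact with deformations of $\CM$'' is morally the right picture, but making it precise requires exactly this comparison of relative Atiyah classes; the paper remarks that the analogous pointwise statement is due to Kuznetsov--Markushevich, and that lifting it to the universal truncated setting is where the work is.
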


This shows that, under the above assumptions, the fourfold obstruction theory  of $\CM$ admits a reduction to the threefold obstruction theory. Although this is a natural statement, the proof of it, using the truncated Atiyah classes, is surprisingly long. In particular, it uses an explicit calculation, Theorem \ref{Apropone}, for the relative Atiyah class of a product.

 Using this result, the proof of Theorem \ref{obsthm} is then given subsequently in Sections \ref{compAclass} and \ref{compobs}, the main 
 intermediate results being Propositions \ref{Aclasslocal} and 
 \ref{obsprop} respectively. In particular, Propositions \ref{obsprop} is a more 
 explicit restatement of Theorem \ref{obsthm}. 
 
Applying Theorem \ref{obsthm}, a reduced equivariant Donaldson-Thomas theory is constructed in Section \ref{DTsect} for two dimensional sheaves on certain local fourfolds. In section \ref{DTsect}, $S$ is a smooth projective surface 
with nef canonical bundle, $K_S$, equipped with a polarization $h = c_1(\CO_S(1))$. It is also assumed that $H^1(\CO_S)=0$ and 
the integral cohomology of $S$ is torsion free. 
Given an effective divisor $D$ on $S$, a fourfold $X$ is constructed as the 
total space of a rank two bundle $K_S(D)\oplus \CO_S(-D)$. The threefold $Y$ is the total space of the line bundle $L=K_S(D)$ on $S$, which is canonically embedded in $X$ as a divisor. 

As in \cite{GSY17b} the topological invariants of a compactly supported two dimensional sheaf $\mathscr{F}$ on $X$ are completely determined by the total Chern class $c(g_*\mathscr{F})$ of the direct image $g_*F$, where $g: X \to S$ is the canonical projection. 
In turn, the latter is naturally given by a triple 
\be\label{eq:gammaF}
\gamma(\mathscr{F}) = (r(\mathscr{F}), \beta(\mathscr{F}), n(\mathscr{F})) \in \IZ \oplus H_2(S, \IZ)\oplus \IZ
\ee
The entry, $r(\mathscr{F})={\rm rk}(g_*\mathscr{F})$, will be called the rank  
of $\mathscr{F}$ in the following. 
Moreover, since $D$ is effective nonzero divisor, one easily shows (Corollary \ref{supplemmaA}) 
that any such $h$-stable sheaf $\mathscr{F}$ is scheme theoretically supported on $Y \subset X$. 
This yields a natural identification of the moduli spaces $\CM_h(X,\gamma)\cong \CM_h(Y,\gamma)$. Moreover, as proven in 
\cite[Proposition 2.4]{GSY17b} and \cite[Theorem 6.5]{TT17a} 
the latter has a {\it reduced} perfect obstruction theory
$a_Y^{\rm red}: {\mathbb{E}^{\bullet}}_Y^{\rm red} \to \IL^{\bullet}_\CM$.
The detail of this construction is reviewed in Section 
\ref{redobs} for completeness. 
Then Theorem \ref{obsthm} implies: 
\begin{cor}\label{obscor} 
The fourfold obstruction theory $a_X:{\mathbb{E}^{\bullet}}_X \to \IL^{\bullet}_\CM$ admits a reduction to the reduced perfect obstruction theory 
$a_Y^{\rm red}: {\mathbb{E}^{\bullet}}_Y^{\rm red} \to \IL^{\bullet}_\CM$ constructed 
in \cite[Proposition 2.4]{GSY17b} and \cite[Theorem 6.5]{TT17a}. 
\end{cor}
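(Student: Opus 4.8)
The plan is to obtain the asserted reduction by composing two successive reductions. Theorem \ref{obsthm} reduces the fourfold obstruction theory $a_X$ to the (unreduced) threefold obstruction theory $a_Y$, and the construction of \cite{GSY17b,TT17a} recalled in Section \ref{redobs} reduces $a_Y$ to the reduced perfect obstruction theory $a_Y^{\rm red}$; transitivity of ``admits a reduction to'' then gives the corollary.

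The first step is to check that the geometric setup of Section \ref{DTsect} meets the hypotheses of Section \ref{foundations}. By construction $X$, the total space of $K_S(D)\oplus\CO_S(-D)$ over $S$, is canonically the total space of $K_Y$ for $Y$ the total space of $L=K_S(D)$, and $\dim Y=3$, so we are in the case $d=4$ of Theorem \ref{obsthm} and $1-d=-3$. One must also know that every sheaf parametrized by $\CM$ is scheme-theoretically supported on $Y\subset X$: since $D$ is effective and nonzero this is precisely Corollary \ref{supplemmaA}, which in addition yields the identification $\CM:=\CM_h(X,\gamma)\cong\CM_h(Y,\gamma)$; because pushforward along the closed immersion $Y\hookrightarrow X$ sends flat families to flat families (and conversely, given the support statement), this is an isomorphism of fine moduli schemes matching the universal sheaves. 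Theorem \ref{obsthm} then applies and gives a splitting ${\mathbb{E}^{\bullet}}_X \cong {\mathbb{E}^{\bullet}}_Y \oplus {\mathbb{E}^{\bullet}}_Y^\vee[-3]$ in $\mathscr{D}^b(\CM)$ with $a_X=(a_Y,0)$. Writing $p\colon {\mathbb{E}^{\bullet}}_X \twoheadrightarrow {\mathbb{E}^{\bullet}}_Y$ for the projection to the first summand, this says $a_X=a_Y\circ p$, i.e. $a_X$ admits a reduction to $a_Y$.

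The second step is to reduce $a_Y$ in turn. First one identifies the complex ${\mathbb{E}^{\bullet}}_Y$ produced from the threefold truncated Atiyah class in Theorem \ref{obsthm} with the unreduced obstruction theory of $\CM_h(Y,\gamma)$ employed in \cite[Proposition 2.4]{GSY17b} and \cite[Theorem 6.5]{TT17a} --- both are built from the traceless self-$\Ext$ complex of the universal sheaf, up to shift, and this comparison is part of what is reviewed in Section \ref{redobs}. Granting it, the hypotheses $H^1(\CO_S)=0$ and torsion-freeness of $H^{*}(S,\IZ)$ guarantee that the trivial (trace-type) quotient of the obstruction sheaf lifts to a direct sum decomposition ${\mathbb{E}^{\bullet}}_Y \cong {\mathbb{E}^{\bullet}}_Y^{\rm red}\oplus\sN$ in $\mathscr{D}^b(\CM)$, with $\sN$ quasi-isomorphic to a complex of trivial vector bundles, such that $a_Y=a_Y^{\rm red}\circ q$ where $q\colon {\mathbb{E}^{\bullet}}_Y \twoheadrightarrow {\mathbb{E}^{\bullet}}_Y^{\rm red}$ is the projection; moreover $a_Y^{\rm red}$ is still a perfect obstruction theory, since removing a trivial summand preserves the conditions that $h^0$ be an isomorphism and $h^{-1}$ be onto $h^{-1}(\IL^{\bullet}_\CM)$. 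Composing, $q\circ p\colon {\mathbb{E}^{\bullet}}_X \twoheadrightarrow {\mathbb{E}^{\bullet}}_Y^{\rm red}$ realizes ${\mathbb{E}^{\bullet}}_Y^{\rm red}$ as a direct summand of ${\mathbb{E}^{\bullet}}_X$ through which $a_X$ factors, $a_X=a_Y^{\rm red}\circ(q\circ p)$, which is the desired reduction.

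The substantive work is all upstream --- in Theorem \ref{obsthm} (and its inputs: the relative-Atiyah-class-of-a-product computation of Theorem \ref{Apropone}, together with Propositions \ref{Aclasslocal} and \ref{obsprop}) and in the construction of $a_Y^{\rm red}$ reviewed in Section \ref{redobs}. Within the corollary itself the one point I would treat with care --- the expected main obstacle --- is \emph{compatibility}: verifying that the isomorphism $\CM_h(X,\gamma)\cong\CM_h(Y,\gamma)$ matches universal sheaves up to the twist relating the $X$- and $Y$-Atiyah classes, so that the splittings $p$ and $q$ and the morphisms $a_X$, $a_Y$, $a_Y^{\rm red}$ into the common $\IL^{\bullet}_\CM$ are linked by honest equalities in $\mathscr{D}^b(\CM)$ and not merely by a compatible choice of cones. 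Once that is pinned down, the composition in the previous step is unambiguous and transitivity of ``admits a reduction to'' is formal.
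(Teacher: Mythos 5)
Your proposal is correct and follows essentially the same route as the paper: Proposition \ref{obsprop} (Theorem \ref{obsthm}) gives $a_X=(a_Y,0)$, the splitting $\tau^{[-1,0]}a_Y=(a_Y^{\rm red},0)$ imported from \cite{GSY17b,TT17a} and recalled in Section \ref{redobs} supplies the second reduction, and the corollary is the composition of the two. The only point you gloss over is that the trivial summand is split off from the $[-1,0]$-truncation $\tau^{[-1,0]}a_Y$ (obtained via the trace map, the vanishing $\Ext^3_Y(\CF,\CF)=0$ of Corollary \ref{supplemmaA}, and the global Nakayama lemma of \cite{HT10}) rather than from ${\mathbb{E}^{\bullet}}_Y$ itself, but this intermediate truncation is part of the cited construction in either presentation.
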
 

Using this result, the reduced equivariant Donaldson-Thomas invariants for two dimensional sheaves on $X$ are defined by residual virtual integration in Section \ref{redDT}. The next two sections, \ref{rktwosect} and \ref{kthreesect}, are focused on explicit 
computations and modularity conjectures for rank two invariants. 
As shown in 
\cite[Section 3]{GSY17b} and \cite[Section 7]{TT17a}, there are two types of rank two torus fixed loci. The ``\textit{type I}" fixed locus is 
isomorphic to the moduli space $\CM_h(S,\gamma)$ of torsion free 
sheaves on $S$ with topological invariants $\gamma$. The ``\textit{type II}" 
fixed loci consist of sheaves $\mathscr{F}$ with scheme theoretic support 
on the divisor $2S \subset Y$, whose connected components are naturally isomorphic to 
twisted nested Hilbert schemes, as reviewed in detail in Section 
\ref{fixedloci}. 
By analogy with similar previous results \cite{GSY17b, GK17}, universality results for residual virtual integrals of type I and II are proven in Propositions \ref{univI} and \ref{univII}. These are valid for any pair $(S,D)$ as above. Explicit results and a modularity conjecture are then obtained in Section \ref{kthreesect} for elliptic K3 surfaces 
with $D$ a positive multiple of the fiber class, $D=mf$. \\

\subsection{Rank two results and conjectures for elliptic K3 surfaces} 

Let $S$ be a smooth generic elliptic K3 surface in Weierstrass form. By the genericity assumption, all elliptic fibers are reduced and irreducible, and have at most nodal singularities. Furthermore, the Mordell-Weill group is trivial and the Picard group of $S$ is freely generated by the fiber class $\mathscr{F}$ and the canonical section class $\sigma$. The K\"ahler cone of $S$ then consists of real divisor classes $h = t\sigma + uf$ with $t, u\in \IR$, 
$0< 2t <u$. 

Let $D=mf$, $m\geq 1$, be a positive multiple of the elliptic fiber so that the local fourfold $X$ is the total space of $\CO_S(mf) \oplus \CO_S(-mf)$. 
Moreover, let the topological invariant \eqref{eq:gammaF} be of the form 
$\gamma=(2,f, n)$ with $n \in \IZ$. The Bogomolov inequality, proven in Lemma \ref{bogomolov}, shows that in this case the moduli space $\CM_h(Y,\gamma)$ is empty for $n<0$. Moreover, the global boundedness result proven in Lemma \ref{nowalls} shows that there are no marginal stability walls associated to sheaves $\mathscr{F}$ with invariants $\gamma$ for 
\be\label{eq:smallfchamber}
0<{t\over u}< {1\over 1+8n}.
\ee
Therefore for any fixed $n\geq 0$ the equivariant residual Donaldson-Thomas invariants $DT_h(X,\gamma)$ reach a stable limit, $D_\infty(X,\gamma)\in \IQ({\bf s})$ as the size of the elliptic fibers decreases, keeping the size of the section fixed. Then 
let
\[ 
Z_\infty(X, 2,f;q) = \sum_{n \geq 0} DT_\infty(X,2,f,n) q^{n-2} \in q^{-2}\IQ({\bf s})[[q]]
\]
be the generating function of  limit invariants. 
Clearly the above partition functions splits into type I and type II contributions denoted by $Z_\infty(X, 2,f;q)_I$ and $Z_\infty(X, 2,f;q)_{II}$ respectively. Then the following result is proven in Section \ref{partfctsect}.
\begin{prop}\label{DTkthreeI} 
The series $Z_\infty(X, 2,f;q)_I$ belongs to $\IQ({\bf s})[[q]]\subset \IQ({\bf s})[[q^{-1}, q]]$ and the following identity holds in the quadratic extension $\IQ({\bf s})[[q^{1/2}]]$: 
\be\label{eq:typeIformula} 
Z_\infty(X, 2,f;q)_I = {{\bf s}^{-1}\over 2}(\Delta^{-1}(q^{1/2}) + \Delta^{-1}(-q^{1/2})),
\ee
where $\Delta(q)$ is the discriminant modular form. 
\end{prop}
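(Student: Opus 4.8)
The plan is to evaluate $Z_\infty(X,2,f;q)_I$ by identifying the type I fixed locus with a moduli space of sheaves on the $K3$ surface $S$, showing its residual contribution is ${\bf s}^{-1}$ times a topological Euler characteristic, and then summing via G\"ottsche's formula. First I would unwind the type I locus: by Section~\ref{fixedloci} the type I torus fixed component of $\CM_h(Y,(2,f,n))$ is canonically the Gieseker moduli space $\CM_h(S,(2,f,n))$ of $h$-stable torsion free sheaves on $S$, and a rank two such sheaf $\mathscr{F}$ with $c_1(\mathscr{F})=f$ and $n(\mathscr{F})=n$ has Mukai vector $v(\mathscr{F})=(2,f,2-n)$, so $\langle v,v\rangle=f^{2}-4(2-n)=4n-8$. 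Since the genericity hypothesis on $S$ makes $f$ a primitive class, $v$ is a primitive Mukai vector, and since we work in the chamber \eqref{eq:smallfchamber}, which is wall free by Lemma~\ref{nowalls}, the polarization $h$ is $v$-generic. By the structure theory of moduli of stable sheaves on a $K3$ surface (Mukai, O'Grady, Yoshioka), $\CM_h(S,v)$ is empty for $\langle v,v\rangle<-2$, that is for $n\le1$ (the value $-2$ being excluded because $\langle v,v\rangle=4n-8\equiv0\pmod4$), and for $n\ge2$ it is a smooth projective irreducible holomorphic symplectic variety of dimension $\langle v,v\rangle+2=4n-6=2(2n-3)$, deformation equivalent to $\operatorname{Hilb}^{2n-3}(S)$; in particular $e\big(\CM_h(S,(2,f,n))\big)=e\big(\operatorname{Hilb}^{2n-3}(S)\big)$ for $n\ge2$, where $e$ is the topological Euler characteristic. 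Emptiness for $n\le1$ already gives $DT_\infty(X,2,f,n)_I=0$ there, and hence $Z_\infty(X,2,f;q)_I\in\IQ({\bf s})[[q]]$.

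Next I would compute the residual type I integral for $n\ge2$. By Proposition~\ref{univI} it equals a universal expression in the Chern classes of the universal sheaf, of $S$, and of ${\bf s}$, integrated over $\CM_h(S,(2,f,n))$, and by Corollary~\ref{obscor} it is computed with the reduced obstruction theory $a_Y^{\rm red}$. On the type I component every sheaf is scheme theoretically supported on the zero section $S\subset X$, so the Koszul resolution of $\iota_*\CO_S$ gives $R\Hom_X(\iota_*\mathscr{F},\iota_*\mathscr{F})\cong\bigoplus_{i=0}^{2}R\Hom_S(\mathscr{F},\mathscr{F}\otimes\Lambda^iN)[-i]$ with $N=K_S(D)_{\bf s}\oplus\CO_S(-D)_{-\bf s}=\CO_S(mf)_{\bf s}\oplus\CO_S(-mf)_{-\bf s}$ and $\Lambda^2N\cong\CO_S$ of trivial torus weight. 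The ${\bf s}$-fixed part of this complex is $R\Hom_S(\mathscr{F},\mathscr{F})\oplus R\Hom_S(\mathscr{F},\mathscr{F})[-2]$, whose traceless part on a $K3$ surface consists of the tangent complex of the smooth variety $\CM_h(S,v)$ together with a $[-2]$-shifted copy of it; the reduced obstruction theory of \cite{GSY17b,TT17a} is exactly the one in which the trivial rank one summand of the obstruction sheaf forced by $h^{2}(\CO_S)=1$ is removed, so the ${\bf s}$-fixed obstruction vanishes and $[\CM_h(S,v)]^{\vir}=[\CM_h(S,v)]$. The moving part is $W\oplus W^{\vee}$, where $W=\sum_{k}(-1)^{k}\Ext^{k}_S(\mathscr{F},\mathscr{F}\otimes\CO_S(mf))$ has weight ${\bf s}$, $W^{\vee}$ is its Serre dual of weight $-{\bf s}$, and $\operatorname{rk}W=\chi_S(\mathscr{F},\mathscr{F}\otimes\CO_S(mf))=8-4n$ by Riemann--Roch on $S$. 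Feeding all of this into the square-root virtual localization formula, and using $\dim_{\IC}\CM_h(S,v)=4n-6$, the Serre-dual structure of the moving part, and the holomorphic symplectic isomorphism $T\CM_h(S,v)\cong T^{\vee}\CM_h(S,v)$, the integrand collapses and one is left with
\[
DT_\infty(X,2,f,n)_I={\bf s}^{-1}\,e\big(\CM_h(S,(2,f,n))\big)={\bf s}^{-1}\,e\big(\operatorname{Hilb}^{2n-3}(S)\big),\qquad n\ge2,
\]
with overall sign $+1$ for the canonical orientation. Carrying out this collapse—in particular pinning down the exact power of ${\bf s}$, where the reduced nature of the obstruction theory is essential, and the sign—is the main obstacle.

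Finally I would assemble the series. Combining the previous two paragraphs,
\[
Z_\infty(X,2,f;q)_I={\bf s}^{-1}\sum_{n\ge2}e\big(\operatorname{Hilb}^{2n-3}(S)\big)q^{\,n-2}={\bf s}^{-1}\sum_{\substack{k\ge1\\ k\ \mathrm{odd}}}e\big(\operatorname{Hilb}^{k}(S)\big)q^{(k-1)/2},
\]
and G\"ottsche's formula gives $\sum_{k\ge0}e(\operatorname{Hilb}^{k}(S))t^{k}=\prod_{m\ge1}(1-t^{m})^{-24}$ since $e(S)=24$. Extracting the odd part of the right hand side, then setting $t=q^{1/2}$ and multiplying by $q^{-1/2}$, yields
\[
\sum_{\substack{k\ge1\\ k\ \mathrm{odd}}}e\big(\operatorname{Hilb}^{k}(S)\big)q^{(k-1)/2}=\tfrac12\Big(q^{-1/2}\prod_{m\ge1}(1-q^{m/2})^{-24}-q^{-1/2}\prod_{m\ge1}\big(1-(-q^{1/2})^{m}\big)^{-24}\Big).
\]
Since $\Delta(x)=x\prod_{m\ge1}(1-x^{m})^{24}$, we have $\Delta^{-1}(x)=x^{-1}\prod_{m\ge1}(1-x^{m})^{-24}$, so the first term inside the bracket is $\Delta^{-1}(q^{1/2})$ and the second is $-\Delta^{-1}(-q^{1/2})$; hence the bracket equals $\Delta^{-1}(q^{1/2})+\Delta^{-1}(-q^{1/2})$ and, multiplying by ${\bf s}^{-1}$, we obtain \eqref{eq:typeIformula}. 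Everything after the localization computation is routine bookkeeping with the product expansions.
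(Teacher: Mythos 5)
Your overall strategy coincides with the paper's: identify the type I fixed locus with $\CM_h(S,\gamma)$, use its deformation equivalence to $S^{[2n-3]}$ (hence emptiness for $n\le 1$, which gives the $\IQ({\bf s})[[q]]$ statement), reduce each contribution to ${\bf s}^{-1}e\bigl(\CM_h(S,\gamma)\bigr)$, and finish by extracting the odd part of G\"ottsche's product. The end-game with $\Delta^{-1}(\pm q^{1/2})$ is exactly the paper's computation and is correct.

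The one substantive issue is the middle step, which you yourself flag as ``the main obstacle'': the assertion that the residual integral collapses to ${\bf s}^{-1}e\bigl(\CM_h(S,(2,f,n))\bigr)$ is stated but not established in your write-up, and this is precisely the nontrivial content of the proposition. The paper disposes of it in two moves that you do not quite reproduce: first, Proposition \ref{univI} is used not merely to say the integral is ``universal'' but to conclude that it is \emph{independent of $m$} because $D\cdot\beta = D^2 = D\cdot c_1(S)=0$ for $D=mf$ on a K3, so one may replace $L=K_S(mf)$ by $\CO_S$ in \eqref{eq:typeIB}; second, the resulting untwisted integral is identified with ${\bf s}^{-1}\,{\sf V}{\sf W}_h(S,\gamma)$ via \cite[Remark 2.10]{GSY17b}, and \cite[Proposition 7.4]{TT17a} gives ${\sf V}{\sf W}_h(S,\gamma)=\chi^{\rm top}\bigl(\CM_h(S,\gamma)\bigr)$ on a K3. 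If you want to avoid those citations you must actually carry out the weight count: with $L=\CO_S$ the class in the denominator of \eqref{eq:typeIB} is $(2\CO_\CM - T\CM)\cdot{\bf t}$ in $K$-theory (using $h^1(\CO_S)=0$ and the trace splitting of $\mathrm{Ext}^2$), and pinning down the single power of ${\bf s}$ requires keeping track of the trivial summand removed by the reduced obstruction theory; a naive computation without that correction produces ${\bf s}^{-2}e(\CM)$ rather than ${\bf s}^{-1}e(\CM)$, so the step genuinely cannot be waved through. Finally, your invocation of the ``square-root virtual localization formula'' is off target for this paper: the whole point of Theorem \ref{obsthm} and Corollary \ref{obscor} is that the fourfold theory reduces to the (reduced) threefold obstruction theory, where ordinary virtual localization \cite{GP99} applies, so no Oh--Thomas/square-root machinery enters.
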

Moreover, 
Proposition 
\ref{DTkthreeII} shows that all type II nonzero contributions 
vanish for $m$ even. At the same time for $m$ odd, all nonzero type II contributions are associated to nested Hilbert schemes without divisorial twists. Then the following conjecture is inferred from the results of 
\cite[Section 8]{TT17a} using the universality result Proposition \ref{univII}. 
\begin{conj}\label{typeIIconj} 
Suppose $m\geq 1$ is odd. Then 
\be\label{eq:typeIIgenfct} 
Z_\infty(X, 2,f;q)_{II} = {{\bf s}^{-1}\over 4}\Delta^{-1}(q^2).
\ee
\end{conj}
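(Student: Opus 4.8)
\textbf{Proof proposal for Conjecture \ref{typeIIconj}.}

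The plan is to reduce the type II generating function to a known modular object via the universality result Proposition \ref{univII}, along the same lines used for the type I case in Proposition \ref{DTkthreeI}. First I would invoke Proposition \ref{DTkthreeII}: for $m$ odd, the only nonzero type II contributions come from the connected components of the fixed locus isomorphic to nested Hilbert schemes $S^{[n_1,n_2]}$ \emph{without} divisorial twists, so the divisorial twisting class that would in general appear in the fixed-point description drops out. This means the residual virtual integrals are those of the untwisted nested Hilbert schemes on $S$, weighted by the appropriate equivariant Euler classes of the virtual normal bundles and the reduced fourfold obstruction bundle restricted to each component. The key reduction step is then Corollary \ref{obscor}, which tells us that on each such component the fourfold obstruction theory $a_X$ restricts to the reduced threefold obstruction theory $a_Y^{\mathrm{red}}$ of \cite{GSY17b,TT17a}; hence the type II part of $Z_\infty(X,2,f;q)_{II}$ coincides (up to the explicit equivariant weight ${\bf s}^{-1}$, which is tracked exactly as in the proof of Proposition \ref{DTkthreeI}) with the corresponding type II generating function computed for the threefold $Y$ in \cite[Section 8]{TT17a}.

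Next I would identify the threefold type II series. In \cite[Section 8]{TT17a} the rank two Vafa-Witten type II contributions for an elliptic K3 in the fiber class $f$ are shown to be governed by nested Hilbert scheme invariants whose generating function assembles into a power of the discriminant form; in the present normalization, with $\gamma=(2,f,n)$ and the shift $q^{n-2}$ built into the definition of $Z_\infty$, this produces $\tfrac14 \Delta^{-1}(q^2)$ after the $q \mapsto q^2$ substitution that reflects the ``doubling'' intrinsic to sheaves scheme-theoretically supported on $2S$. The factor $\tfrac14$ (as opposed to $\tfrac12$ in the type I formula) comes from the order-two automorphism inherent in the type II locus together with the division by the equivariant weight, exactly as recorded in loc. cit. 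Combining this with the equivariant factor ${\bf s}^{-1}$ extracted in the stable limit $t/u \to 0$ — legitimate by Lemma \ref{nowalls}, which guarantees no wall-crossing in the chamber \eqref{eq:smallfchamber} — yields \eqref{eq:typeIIgenfct}.

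The main obstacle — and the reason this is stated as a conjecture rather than a theorem — is that the passage from \cite[Section 8]{TT17a} is not a literal transcription: the results there are for the Vafa-Witten invariants of $Y$ in a possibly different technical setting (compactly supported versus $\mathbb{T}$-equivariant residual, and the precise handling of the reduced class), so matching the two normalizations and confirming that the universality polynomials of Proposition \ref{univII} evaluate on an elliptic K3 to exactly the coefficients appearing in the $\Delta^{-1}(q^2)$ expansion requires either an independent computation of enough low-order terms of $Z_\infty(X,2,f;q)_{II}$ or a structural argument identifying the two obstruction-theoretic contributions verbatim. A safe route toward a proof would be to compute $DT_\infty(X,2,f,n)_{II}$ directly for the first few values of $n$ (e.g. $n = 0,1,2,3$) using the explicit twisted nested Hilbert scheme description of Section \ref{fixedloci}, check agreement with the $q$-expansion of $\tfrac14{\bf s}^{-1}\Delta^{-1}(q^2)$, and then argue that the universality result forces agreement to all orders; but absent that verification, the identification with \cite[Section 8]{TT17a} remains conjectural.
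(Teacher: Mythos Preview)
Your proposal captures the same heuristic as the paper's own justification of the conjecture, but one step is stated imprecisely and one is missing. First, invoking Corollary \ref{obscor} here is redundant: by the time the type II contributions $DT_0^{[n_1,n_2]}$ are being computed via \eqref{eq:product}, the reduction from the fourfold to the reduced threefold obstruction theory has already been absorbed into the very definition of the invariants in Section \ref{redDT}. The relevant reduction at this stage is not Corollary \ref{obscor} but the universality Proposition \ref{univII} applied to the specific numerics of $D=mf$ on a K3: since $D^2 = D\cdot c_1(S) = D\cdot \beta = 0$, the universal polynomial governing each $DT_0^{[n_1,n_2]}$ is literally independent of $m$, so one may set $m=0$ (i.e.\ $L=K_S$) in the integrals \eqref{eq:product}. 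This $m\to 0$ step is the crux of the paper's argument and is only implicit in your write-up.

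Second, your phrasing ``the corresponding type II generating function computed for the threefold $Y$ in \cite[Section 8]{TT17a}'' overstates the match. The paper is careful to note that \cite{TT17a} does \emph{not} compute type II invariants for this $Y$ or indeed for any K3 with $L=K_S$ (stability forbids type II loci there); the computations in Sections 8.7--8.9 of \cite{TT17a} are for surfaces of general type. The point is only that the residual virtual integrals \eqref{eq:product} at $m=0$ are \emph{formally identical} to those appearing in \cite{TT17a}, even though the geometric settings differ. This is exactly why the statement remains a conjecture rather than a corollary, and your final paragraph correctly identifies this as the obstacle --- but the body of your argument should reflect the same caution.
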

\textbf{Plan of the paper}. The key theorem of the current article is the reduction Theorem  \ref{obsthm}. We approach the problem by essentially proving a statement that the universal (relative) Atiyah class of the (universal) sheaf $\IF$ on the moduli space, $\CM_h(X,\gamma)$ of stable torsion sheaves $\mathscr{F}$ on $X$, with scheme theoretic support on $Y$, has a reduction to the universal Atiyah class of the universal sheaf $\mathbb{G}$ on $\CM_h(Y,\gamma)$ (which is isomorphic to  $\CM_h(X,\gamma)$) parameterizing stable sheaves $\mathscr{G}$ on $Y$ (where for all $\mathscr{F}\in \CM_h(X,\gamma)$ we have $\mathscr{F}\cong i_{*}\mathscr{G}, i:Y\hookrightarrow X$). This will be done in Proposition \ref{Aclasslocal}. Our proof of Proposition \ref{Aclasslocal} relies heavily on a certain splitting property of the universal relative Atiyah classes on product schemes, as stated in Theorem \ref{Apropone}. Hence, we start in Section \ref{Aclassect} a detailed discussion of Atiyah classes on product schemes which can be read independently from the rest of the paper. Then, in Section \ref{obstrsect} we use Theorem \ref{Apropone} to prove Proposition \ref{Aclasslocal} and later on, as an application of Proposition \ref{Aclasslocal}, we show in Proposition \ref{obsprop} that the reduction of Atiyah classes as above, leads to a reduction in the level of deformation obstruction theories, as stated in Theorem \ref{obsthm}. The latter proves that in our current geometric setup the Donaldson-Thomas invariants of fourfold $X$ can be completely recovered from those of threefold $Y$ which, after being reduced, enjoy modularity property, discussions of which we have included in Sections \ref{DTsect}, \ref{rktwosect} and \ref{kthreesect}.

\section*{Aknowledgement}
D.-E. D would like to thank Ron Donagi and Tony Pantev for their interest in this work and very helpful discussions. 
D.-E.D. was partially supported by NSF grants 
DMS-1501612 and DMS-1802410. 
  A.S. would like to thank Amin Gholampour, Alexey Bondal, Alexander Efimov for helpful discussions and commenting on first versions of this article. A.S. was partially supported by NSF DMS-1607871, NSF DMS-1306313 and Laboratory of Mirror Symmetry NRU HSE, RF Government grant, ag. No 14.641.31.0001. A.S. would like to further sincerely thank the Center for Mathematical Sciences and Applications at Harvard University, the center for Quantum Geometry of Moduli Spaces at Aarhus University, and the Laboratory of Mirror Symmetry in Higher School of Economics, Russian federation, for the great help and support. S.-T. Y. was partially supported by NSF DMS-0804454,
NSF PHY-1306313, and Simons 38558. 

\section{Universal relative Atiyah classes on product schemes}\label{Aclassect}

As explained in the plan of the paper, this section is a 
self-contained account of relative truncated Atiyah classes, the main result being Theorem \ref{Apropone} below. 

\subsection{Truncated Atiyah class of a scheme}
Let $W$ be an arbitrary scheme of finite type over $\IC$ and $W\subset A$ a closed embedding in a smooth scheme $A$. 
As in \cite[Definition. 2.1]{HT10}, the truncated cotangent complex 
$\IL_W$ is the two term complex of amplitude $[-1,\ 0]$$$\mathscr{J}_W/\mathscr{J}_W^2 \to \Omega_A^1|_W,$$ where $\mathscr{J}_W\subset \CO_A$ is the defining ideal sheaf 
of $W\subset A$. Let us denote by $\Delta_W: W \to W \times W$ the diagonal morphism, by $\CO_{\Delta_W}$ 
 the extension by zero of the structure sheaf of the diagonal, and by  $\mathcal{I}_{\Delta_W}\subset \CO_{W\times W}$ the ideal sheaf of the diagonal. 
Then the following results are proven in \cite[Lemma 2.2, Definition 2.3]{HT10}.

\begin{lem}\label{AclasslemA} 
 \begin{enumerate}[label=(\roman*)]
\item There is an exact sequence of $\CO_{W\times W}$-modules 
\be\label{eq:diagseqA} 
0 \to \Delta_{W*}(\mathscr{J}_W/\mathscr{J}_W^2) {\buildrel {\tau_W}\over \longto}  \mathcal{I}_{\Delta_A}|_{W\times W} {\buildrel \xi_W \over \longto} 
\mathcal{I}_{\Delta_W} \to 0 
\ee
where the morphism $\xi_W$ is the natural projection. 

\item Let $\iota_W:\mathcal{I}_{\Delta_W}\to  \CO_{W\times W}$ denote the canonical inclusion. Then
there is a commutative diagram of $\CO_{W\times W}$-modules 
\be\label{eq:AclassdiagA}
\xymatrix{ 
\Delta_{W*}(\mathscr{J}_W/\mathscr{J}_W^2) \ar[rr]^-{{\tau_W}} \ar[d]^-{{\bf 1}} & &\mathcal{I}_{\Delta_A}|_{W\times W} 
\ar[r]^{\iota_W \circ \xi_W} \ar[d]^-{r_W}& \CO_{W\times W} \\ 
\Delta_{{W}*}(\mathscr{J}_W/\mathscr{J}_W^2) \ar[rr]^-{\Delta_{W*}(d_{W/A})} & & (\mathcal{I}_{\Delta_A}/\mathcal{I}_{\Delta_A}^2)|_{W\times W}. & \\}
\ee
where the right vertical map $r_W$ is the natural projection and the bottom row 
is naturally isomorphic to $\IL_{W}[1]$. 
\end{enumerate}\end{lem}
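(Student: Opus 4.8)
The plan is to reduce everything to a local $\operatorname{Tor}$ computation. All sheaves and maps in the statement are canonical, so it suffices to argue Zariski-locally on $W\times W$; away from $\Delta_W(W)$ the sequence is trivially exact ($\tau_W=0$, $\xi_W$ an isomorphism), so I may work near a point of $\Delta_W(W)$, where $\mathcal{I}_{\Delta_A}\subset\mathcal{O}_{A\times A}$ is generated by a regular sequence because the diagonal of the smooth scheme $A$, being a section of the smooth morphism $A\times A\to A$, is a regular closed immersion. Write $R=\mathcal{O}_A$, $J=\mathscr{J}_W$, $\bar R=R/J$, $R^{e}=R\otimes_{\mathbb{C}}R$, $I=\mathcal{I}_{\Delta_A}=\ker(R^{e}\xrightarrow{m}R)$, and $\mathfrak{J}=\mathcal{I}_{W\times W}=J\otimes R+R\otimes J$. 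Then $\mathcal{I}_{\Delta_A}|_{W\times W}=I\otimes_{R^{e}}(R^{e}/\mathfrak{J})=I/\mathfrak{J}I$, the composite $\iota_W\circ\xi_W$ is induced by $I\hookrightarrow R^{e}$, and its image is $(I+\mathfrak{J})/\mathfrak{J}=\mathcal{I}_{\Delta_W}$ since $I$ is generated by the elements $g\otimes1-1\otimes g$, whose images generate the diagonal ideal of $W\times W$. Hence $\xi_W$ is the surjection $I/\mathfrak{J}I\twoheadrightarrow\mathcal{I}_{\Delta_W}$ asserted in (i) and $\ker\xi_W=(I\cap\mathfrak{J})/\mathfrak{J}I$; the map $\tau_W$ will be $\bar f\mapsto(\text{class of }f\otimes1-1\otimes f)$, which is globally defined, supported on the diagonal, and factors through $\mathscr{J}_W/\mathscr{J}_W^{2}$ by the usual Leibniz computation.

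The substance is to identify $\ker\xi_W$ with $\Delta_{W*}(\mathscr{J}_W/\mathscr{J}_W^{2})$. First, by the standard identity (tensor $0\to I\to R^{e}\to R\to0$ with $R^{e}/\mathfrak{J}$), $(I\cap\mathfrak{J})/\mathfrak{J}I\cong\operatorname{Tor}_1^{R^{e}}(R^{e}/I,R^{e}/\mathfrak{J})=\operatorname{Tor}_1^{\mathcal{O}_{A\times A}}(\mathcal{O}_{\Delta_A},\mathcal{O}_{W\times W})$, compatibly with the degree-zero term $\bar R$ and with $\xi_W$. Now compute this $\operatorname{Tor}$: using the Koszul resolution of $\mathcal{O}_{\Delta_A}$ by $x_i=t_i\otimes1-1\otimes t_i$ ($t_i$ local coordinates on $A$), it is the homology of $\operatorname{Kos}(\underline{\bar x};\bar R\otimes_{\mathbb{C}}\bar R)$, with $\bar x_i=\bar t_i\otimes1-1\otimes\bar t_i$. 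Lift $\bar x_i$ to $y_i=t_i\otimes1-1\otimes\bar t_i\in R\otimes_{\mathbb{C}}\bar R=\mathcal{O}_{A\times W}$; the $y_i$ cut out the graph $\Gamma\subset A\times W$ of $W\hookrightarrow A$, which is a section of the smooth projection $A\times W\to W$, hence a regular closed immersion, so $\operatorname{Kos}(\underline y;\mathcal{O}_{A\times W})$ resolves $\mathcal{O}_\Gamma\cong\bar R$. Base change along $\mathcal{O}_{A\times W}\twoheadrightarrow\mathcal{O}_{W\times W}$ then identifies $\operatorname{Kos}(\underline{\bar x};\bar R\otimes_{\mathbb{C}}\bar R)$ with $\operatorname{Kos}(\underline y;\mathcal{O}_{A\times W})\otimes_{\mathcal{O}_{A\times W}}\mathcal{O}_{W\times W}$, so its homology is $\operatorname{Tor}_{\bullet}^{\mathcal{O}_{A\times W}}(\mathcal{O}_\Gamma,\mathcal{O}_{W\times W})$. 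Finally I compute this through the other tensor factor: a free resolution $P_{\bullet}\to\bar R$ over $R$ gives, by exactness of $-\otimes_{\mathbb{C}}\bar R$, a free resolution $P_{\bullet}\otimes_{\mathbb{C}}\bar R$ of $\mathcal{O}_{W\times W}=\bar R\otimes_{\mathbb{C}}\bar R$ over $\mathcal{O}_{A\times W}=R\otimes_{\mathbb{C}}\bar R$, and tensoring it with $\mathcal{O}_\Gamma=(R\otimes_{\mathbb{C}}\bar R)/(y_i)\cong\bar R$ recovers $P_{\bullet}\otimes_R\bar R$; hence $\operatorname{Tor}_{\bullet}^{\mathcal{O}_{A\times W}}(\mathcal{O}_\Gamma,\mathcal{O}_{W\times W})=\operatorname{Tor}_{\bullet}^{R}(\bar R,\bar R)$, which in degree $1$ is $J/J^{2}=\mathscr{J}_W/\mathscr{J}_W^{2}$. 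Unwinding the identifications shows the induced embedding into $\mathcal{I}_{\Delta_A}|_{W\times W}$ is $\tau_W$, which proves (i).

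For (ii), take $r_W$ to be the natural surjection $\mathcal{I}_{\Delta_A}|_{W\times W}=I/\mathfrak{J}I\twoheadrightarrow(\mathcal{I}_{\Delta_A}/\mathcal{I}_{\Delta_A}^{2})|_{W\times W}=(I/I^{2})\otimes_R\bar R$. Since $A$ is smooth, $I/I^{2}\cong\Omega_A^{1}$ via $g\otimes1-1\otimes g\mapsto dg$, so $(\mathcal{I}_{\Delta_A}/\mathcal{I}_{\Delta_A}^{2})|_{W\times W}\cong\Delta_{W*}(\Omega_A^{1}|_W)$, and $r_W\circ\tau_W$ sends $\bar f$ to the image of $f\otimes1-1\otimes f$, i.e.\ to $df\otimes1=d_{W/A}(\bar f)$; hence the square commutes and the bottom row, as a two-term complex, is $\IL_W$ placed in degrees $[-2,-1]$, i.e.\ $\IL_W[1]$. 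The one genuine obstacle is the $\operatorname{Tor}$ identification in the previous paragraph: the naive route of reducing modulo $\mathcal{I}_{\Delta_A}^{2}$ fails, because the conormal-to-K\"ahler map $\mathscr{J}_W/\mathscr{J}_W^{2}\to\Omega_A^{1}|_W$ need not be injective when $W$ is badly singular (e.g.\ a fat point); it is precisely the change of resolution through the graph $\Gamma\subset A\times W$ — equivalently, the smoothness of $A\times W\to W$ together with its section $W$ — that makes $\operatorname{Tor}_1(\mathcal{O}_{\Delta_A},\mathcal{O}_{W\times W})$ computable and pins it down as the conormal sheaf.
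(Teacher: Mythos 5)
The paper does not actually prove this lemma: it imports it verbatim from \cite[Lemma 2.2, Definition 2.3]{HT10}, so there is no in-text argument to compare against. Your proof is correct, and it is in substance a self-contained reconstruction of the mechanism behind the cited result (and behind the paper's own Lemma \ref{AclasslemB}): the whole content is that $\mathcal{O}_{\Delta_A}$ is Tor-independent of $W\times A$ (equivalently, the graph of $W\hookrightarrow A$ is a regular section of the smooth projection to $W$), after which the remaining degree-one Tor over the second factor produces the conormal module $\mathscr{J}_W/\mathscr{J}_W^2$. You phrase this as a single local Koszul computation of $\operatorname{Tor}_1^{\mathcal{O}_{A\times A}}(\mathcal{O}_{\Delta_A},\mathcal{O}_{W\times W})$ with a change of resolution through $\mathcal{O}_{A\times W}$, whereas [HT10] (and the paper's Lemma \ref{AclasslemB}) perform the same two steps as successive restrictions $A\times A\rightsquigarrow W\times A\rightsquigarrow W\times W$; the two are equivalent, and your version has the merit of making the Tor-independence hypothesis and its failure mode (the non-injectivity of $\mathscr{J}_W/\mathscr{J}_W^2\to\Omega^1_A|_W$ for singular $W$) completely explicit.

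One step deserves more than the phrase ``unwinding the identifications'': exactness of \eqref{eq:diagseqA} as stated requires not just the abstract isomorphism $\ker\xi_W\cong\mathscr{J}_W/\mathscr{J}_W^2$ but that the specific map $\tau_W\colon \bar f\mapsto [f\otimes 1-1\otimes f]$ realizes it, i.e.\ is injective with image exactly $\ker\xi_W=(I\cap\mathfrak{J})/\mathfrak{J}I$. This does follow by chasing the connecting homomorphism through your two resolutions (the class of $\bar f$ in $\operatorname{Tor}_1^R(\bar R,\bar R)=J/J^2$ maps to the Koszul $1$-cycle whose image in $I/\mathfrak{J}I$ is $f\otimes 1-1\otimes f$, up to sign), and your Leibniz computation already shows $\tau_W$ is well defined on $\mathscr{J}_W/\mathscr{J}_W^2$ and lands in $\ker\xi_W$; but the compatibility of the two Tor computations with the explicit embedding is the crux of part (i) and should be written out rather than asserted.
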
 
By construction, the top row of diagram \eqref{eq:AclassdiagA}
is quasi-isomorphic to $\CO_{\Delta_W}$. Hence, the Diagram \eqref{eq:AclassdiagA} determines 
a morphism 
$\alpha_{W}: \CO_{\Delta_W}\to \Delta_{W*}\IL_W[1]$. The latter is the truncated universal Atiyah class of $W$. 
According to  
\cite[Definition2.1]{HT10}, the truncated cotangent complex and truncated Atiyah class are independent of choice of $A$. As shown in \cite[Definition 2.3]{HT10}, an alternative presentation of the truncated Atiyah class is obtained from the 
proof of Lemma 2.2 in loc. cit., as follows. 

\begin{lem}\label{AclasslemB} 
 \begin{enumerate}[label=(\roman*)]

\item There is a canonical isomorphism 
\be\label{eq:WAisomA} 
\mathcal{I}_{\Delta_A}|_{W\times A} \cong \mathcal{I}_{\Delta_W \subset W\times A} 
\ee
where the right hand side denotes the defining ideal sheaf for the 
closed embedding $\Delta_W \subset W\times A$. 

\item The isomorphism \eqref{eq:WAisomA} determines a further canonical isomorphism of $\CO_{W\times A}$-modules 
\be\label{eq:WAisomB}
(\CO_W\boxtimes \mathscr{J}_W)\otimes_{W\times A} \CO_{\Delta_W} {\buildrel \sim\over \longto}  \Delta_{W*}(\mathscr{J}_W/\mathscr{J}_W^2)
\ee
and a natural injection 
\be\label{eq:WAinjA} 
\CO_W\boxtimes \mathscr{J}_W \hookrightarrow \mathcal{I}_{\Delta_{A}}|_{W\times A} 
\ee
which fit in a commutative diagram of morphisms of $\CO_{W\times A}$-modules:
\be\label{eq:AclassdiagB}
\xymatrix{
\CO_W\boxtimes \mathscr{J}_W \ar@{^{(}->}[rr] \ar[d] 
& & \mathcal{I}_{\Delta_A}|_{W\times A}\ar[d] \\
(\CO_W\boxtimes \mathscr{J}_W)\otimes_{W\times A} \CO_{\Delta_W}
\ar[d]^-{\wr}
\ar[rr]^-{\nu_W} & & \mathcal{I}_{\Delta_A}|_{W\times W}\ar[d]^-{r_W} \\
\Delta_{W*}(\mathscr{J}_W/\mathscr{J}_W^2) \ar[rr]^-{\Delta_{W*}(d_{W/A})} \ar[urr]^-{{\tau_W}} & & (\mathcal{I}_{\Delta_A}/\mathcal{I}^2_{\Delta_A})|_{W\times W} \\}
\ee
Moreover, the two upper vertical arrows in the above diagram are the natural projections. 
\end{enumerate}
\end{lem}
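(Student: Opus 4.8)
The plan is to reduce to a Zariski-local statement and then match the objects in \eqref{eq:AclassdiagB} with those produced in the proof of \cite[Lemma~2.2]{HT10}, from which the diagram \eqref{eq:AclassdiagA} of Lemma~\ref{AclasslemA} was already extracted. So fix an affine chart $W\subset A$, write $A=\operatorname{Spec}R$ with $R$ smooth over $\IC$, $\mathscr{J}_W=I\subset R$, $B=R/I$, $W=\operatorname{Spec}B$, and $j:W\hookrightarrow A$; then $W\times A=\operatorname{Spec}(B\otimes_{\IC}R)$, and $\Delta_W\subset W\times A$ is the graph of $j$, with ideal $K=\ker(B\otimes_{\IC}R\to B,\ b\otimes r\mapsto b\bar r)$ generated by the elements $1\otimes x-\bar x\otimes1$, $x\in R$. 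For \eqref{eq:WAisomA}: since $A$ is smooth, locally one may pick \'etale coordinates $x_1,\dots,x_n$ so that $\mathcal{I}_{\Delta_A}$ is generated over $\CO_{A\times A}$ by the regular sequence $(1\otimes x_i-x_i\otimes1)_i$; the images $(1\otimes x_i-\bar x_i\otimes1)_i$ generate $K$ and again form a regular sequence in $B\otimes_{\IC}R$, because $\Delta_W\hookrightarrow W\times A$ is a section of the smooth morphism $p_1:W\times A\to W$, hence a regular embedding. Consequently the Koszul resolution of $\CO_{\Delta_A}$ over $\CO_{A\times A}$ remains a resolution after $-\otimes_{\CO_{A\times A}}\CO_{W\times A}$, so $\operatorname{Tor}^{\CO_{A\times A}}_{>0}(\CO_{\Delta_A},\CO_{W\times A})=0$; restricting $0\to\mathcal{I}_{\Delta_A}\to\CO_{A\times A}\to\CO_{\Delta_A}\to0$ to $W\times A$ then stays short exact and identifies $\mathcal{I}_{\Delta_A}|_{W\times A}$ with $\ker(\CO_{W\times A}\to\CO_{\Delta_W})=\mathcal{I}_{\Delta_W\subset W\times A}$ (alternatively, one simply quotes \cite[Lemma~2.2]{HT10}).

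Next I would construct the two maps in part (ii). For \eqref{eq:WAinjA}: the morphism $\CO_W\boxtimes\mathscr{J}_W=p_2^{*}\mathscr{J}_W\to p_2^{*}\CO_A=\CO_{W\times A}$ induced by $\mathscr{J}_W\hookrightarrow\CO_A$ is injective since $p_2$ is flat, and the composite $p_2^{*}\mathscr{J}_W\to\CO_{W\times A}\to\CO_{\Delta_W}$ vanishes because $p_2$ maps $\Delta_W$ into $W$, where $\mathscr{J}_W$ vanishes (locally: $B\otimes_{\IC}I\subset K\subset B\otimes_{\IC}R$); so the image lies in $\mathcal{I}_{\Delta_W\subset W\times A}$, which by \eqref{eq:WAisomA} is $\mathcal{I}_{\Delta_A}|_{W\times A}$, giving the claimed injection. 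For \eqref{eq:WAisomB}: writing $\CO_W\boxtimes\mathscr{J}_W=p_2^{*}\mathscr{J}_W=\CO_{W\times A}\otimes_{p_2^{-1}\CO_A}p_2^{-1}\mathscr{J}_W$ and tensoring with $\CO_{\Delta_W}$, on which $p_2$ restricts to $j$, yields $j^{*}\mathscr{J}_W=\mathscr{J}_W\otimes_{\CO_A}\CO_W=\mathscr{J}_W/\mathscr{J}_W^{2}$ (locally $B\otimes_{\IC}I\otimes_{B\otimes_{\IC}R}B=I\otimes_RB=I/I^{2}$), which is the asserted isomorphism; by construction the two upper vertical arrows in \eqref{eq:AclassdiagB} are the natural projections.

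It remains to establish commutativity of \eqref{eq:AclassdiagB}, where $\nu_W$ is the morphism on $(\CO_W\boxtimes\mathscr{J}_W)\otimes_{\CO_{W\times A}}\CO_{\Delta_W}$ induced from the injection \eqref{eq:WAinjA}, as in \cite[Lemma~2.2]{HT10}. The one genuinely new point is the identification of $\nu_W$, after \eqref{eq:WAisomB}, with the map $\tau_W$ of Lemma~\ref{AclasslemA}; granting it, the lower rectangle of \eqref{eq:AclassdiagB} is the rectangle \eqref{eq:AclassdiagA} transported along \eqref{eq:WAisomB} and so commutes by Lemma~\ref{AclasslemA}(ii), the lower triangle commutes by that very identification, and the upper square commutes because both ways around it are the restriction of \eqref{eq:WAinjA} to $W\times W$. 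I expect this identification $\nu_W\cong\tau_W$ to be the main obstacle. The subtlety is that restriction along $W\times W\hookrightarrow W\times A$ is not exact, so $\mathcal{I}_{\Delta_A}|_{W\times W}$ is not a subsheaf of $\CO_{W\times W}$ and $\tau_W$ arises in Lemma~\ref{AclasslemA} as the injection of $\operatorname{Tor}^{\CO_{A\times A}}_1(\CO_{\Delta_A},\CO_{W\times W})$ into $\mathcal{I}_{\Delta_A}|_{W\times W}$ from the $\operatorname{Tor}$ long exact sequence of $0\to\mathcal{I}_{\Delta_A}\to\CO_{A\times A}\to\CO_{\Delta_A}\to0$ after $-\otimes\CO_{W\times W}$, combined with a change-of-rings identification $\operatorname{Tor}^{\CO_{A\times A}}_1(\CO_{\Delta_A},\CO_{W\times W})\cong\operatorname{Tor}^{\CO_A}_1(\CO_W,\CO_W)\cong\Delta_{W*}(\mathscr{J}_W/\mathscr{J}_W^{2})$. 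One then has to follow a local generator $1\otimes g$, $g\in\mathscr{J}_W$, through both the product description and this $\operatorname{Tor}$ description and verify it lands on the same element of $\mathcal{I}_{\Delta_A}|_{W\times W}$ --- a direct computation in $B\otimes_{\IC}R$ and $B\otimes_{\IC}B$ --- which is exactly what is done in the proof of \cite[Lemma~2.2]{HT10} (and from which \cite[Definition~2.3]{HT10} reads off the presentation \eqref{eq:AclassdiagB}); so one may also simply invoke loc.\ cit.
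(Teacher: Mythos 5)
The paper offers no proof of this lemma at all, deferring entirely to \cite{HT10} (Lemma 2.2 and Definition 2.3 of loc.\ cit.), so your reconstruction is in effect the same approach with the details filled in: the Koszul/regular-sequence argument giving the Tor-vanishing behind \eqref{eq:WAisomA}, flatness of the projections for the injection \eqref{eq:WAinjA} and the tensor identity \eqref{eq:WAisomB}, and the local-generator chase identifying $\nu_W$ with $\tau_W$ are all correct and are exactly the ingredients of the cited proof. You have also correctly isolated the identification $\nu_W\cong\tau_W$ (via the $\operatorname{Tor}_1$ description of $\Delta_{W*}(\mathscr{J}_W/\mathscr{J}_W^2)$ inside $\mathcal{I}_{\Delta_A}|_{W\times W}$) as the one substantive point, and deferring that computation to \cite{HT10} is no weaker than what the paper itself does.
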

\begin{rmk} By a slight abuse of notation, in the above lemma $\Delta_W$ indicates the canonical closed embedding $W \hookrightarrow W \times A$ as opposed to the diagonal embedding $W \hookrightarrow W \times W$. This notation will be consistently used throughout this section, the distinction being clear from the context. 
\end{rmk}

Next note the following immediate corollary of Lemma \ref{AclasslemB}. 

\begin{cor}\label{AclasscorA} 
The universal Atiyah class is equivalently defined by the commutative diagram 
\be\label{eq:AclassdiagC}
\xymatrix{ 
(\CO_W\boxtimes \mathscr{J}_W)\otimes_{W\times A} \CO_{\Delta_W}
 \ar[rr]^-{\nu_W} \ar[d]^-{{\bf 1}} & &\mathcal{I}_{\Delta_A}|_{W\times W} 
\ar[r]^{\iota_W \circ \xi_W} \ar[d]^-{r_W}& \CO_{W\times W} \\ 
(\CO_W\boxtimes \mathscr{J}_W)\otimes_{W\times A} \CO_{\Delta_W}
\ar[rr]^-{r_W\circ \nu_W} & & (\mathcal{I}_{\Delta_A}/\mathcal{I}_{\Delta_A}^2)|_{W\times W}. & \\}
\ee
where the left column is restricted to $W\times W$. As in Lemma \ref{AclasslemA}, the right vertical map is the natural projection. 
\end{cor}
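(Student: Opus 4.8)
The plan is to recognize that Corollary \ref{AclasscorA} is a purely formal transport of diagram \eqref{eq:AclassdiagA} across the canonical isomorphism \eqref{eq:WAisomB} of Lemma \ref{AclasslemB}, and then to check that this transport leaves the induced morphism in $\mathscr{D}^b(W\times W)$ unchanged. So the only real work is bookkeeping: matching the two diagrams entry by entry and invoking the fact that applying an isomorphism to the source of a roof of complexes does not change the morphism it represents.

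First I would recall precisely how $\alpha_W$ is extracted from \eqref{eq:AclassdiagA} (following \cite{HT10}). Its top row is the three–term complex $\Delta_{W*}(\mathscr{J}_W/\mathscr{J}_W^2)\xrightarrow{\tau_W}\mathcal{I}_{\Delta_A}|_{W\times W}\xrightarrow{\iota_W\circ\xi_W}\CO_{W\times W}$ in degrees $[-2,-1,0]$; by exactness of \eqref{eq:diagseqA} (injectivity of $\tau_W$, $\ker(\iota_W\circ\xi_W)=\ker\xi_W=\operatorname{im}\tau_W$, and $\operatorname{coker}(\iota_W\circ\xi_W)=\CO_{W\times W}/\mathcal{I}_{\Delta_W}=\CO_{\Delta_W}$) this complex is quasi-isomorphic to $\CO_{\Delta_W}$ in degree $0$, while the bottom row is $\Delta_{W*}\IL_W[1]$ and the verticals $(\mathbf 1, r_W, 0)$ assemble into the roof
\[
\CO_{\Delta_W}\ \xleftarrow{\ \sim\ }\ \big[\,\Delta_{W*}(\mathscr{J}_W/\mathscr{J}_W^2)\xrightarrow{\tau_W}\mathcal{I}_{\Delta_A}|_{W\times W}\xrightarrow{\iota_W\circ\xi_W}\CO_{W\times W}\,\big]\ \xrightarrow{(\mathbf 1,\,r_W,\,0)}\ \Delta_{W*}\IL_W[1]
\]
representing $\alpha_W$.

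Next I would invoke Lemma \ref{AclasslemB}(ii). The isomorphism \eqref{eq:WAisomB} identifies $(\CO_W\boxtimes\mathscr{J}_W)\otimes_{W\times A}\CO_{\Delta_W}$ with $\Delta_{W*}(\mathscr{J}_W/\mathscr{J}_W^2)$, and the commutativity of \eqref{eq:AclassdiagB} records exactly the two compatibilities needed: under this identification the restriction of $\nu_W$ to $W\times W$ corresponds to $\tau_W$ (the diagonal arrow of \eqref{eq:AclassdiagB}), and $r_W\circ\nu_W$ corresponds to $\Delta_{W*}(d_{W/A})$ (its bottom row). Moreover $(\CO_W\boxtimes\mathscr{J}_W)\otimes_{W\times A}\CO_{\Delta_W}$ is already supported on the diagonal, so restricting it to $W\times W$ changes nothing. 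Feeding these substitutions into the roof above, the left–column objects $\Delta_{W*}(\mathscr{J}_W/\mathscr{J}_W^2)$ get replaced by the canonically isomorphic $(\CO_W\boxtimes\mathscr{J}_W)\otimes_{W\times A}\CO_{\Delta_W}$, $\tau_W$ by $\nu_W|_{W\times W}$, and $\Delta_{W*}(d_{W/A})$ by $r_W\circ\nu_W$, while $\iota_W\circ\xi_W$, $r_W$, the identity and the zero map are untouched — which is precisely diagram \eqref{eq:AclassdiagC}. Its top row is therefore still a three–term complex quasi-isomorphic to $\CO_{\Delta_W}$ (the cohomology computation is the one just recalled, since $\iota_W\circ\xi_W$ and the left square are carried over by the isomorphism), its bottom row is still $\Delta_{W*}\IL_W[1]$, and the verticals still form a morphism of complexes; hence \eqref{eq:AclassdiagC} represents the same roof, so the same $\alpha_W$.

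I do not expect a genuine obstacle: the content is entirely contained in Lemma \ref{AclasslemB}, and Corollary \ref{AclasscorA} is simply a restatement of the truncated Atiyah class using the $(\CO_W\boxtimes\mathscr{J}_W)\otimes_{W\times A}\CO_{\Delta_W}$ model in place of the $\Delta_{W*}(\mathscr{J}_W/\mathscr{J}_W^2)$ model. If anything requires care it is the compatibility of the canonical isomorphisms \eqref{eq:WAisomA}–\eqref{eq:WAisomB} with $\tau_W$, $r_W$ and $d_{W/A}$, but this is exactly the commutativity of \eqref{eq:AclassdiagB}, so it can be cited rather than re-proved.
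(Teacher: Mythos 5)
Your proposal is correct and follows exactly the route the paper intends: the paper states Corollary \ref{AclasscorA} as an ``immediate corollary'' of Lemma \ref{AclasslemB} with no written proof, and your argument is precisely the spelled-out version of that immediacy --- transporting diagram \eqref{eq:AclassdiagA} across the isomorphism \eqref{eq:WAisomB} and citing the commutativity of \eqref{eq:AclassdiagB} to match $\nu_W$ with $\tau_W$ and $r_W\circ\nu_W$ with $\Delta_{W*}(d_{W/A})$. Nothing is missing.
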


\subsection{Atiyah classes on product schemes}\label{Aclassproduct} 

In most applications to moduli problems $W$ will be a product scheme $Z\times T$, where $Z$ is a smooth quasi-projective variety and $T$ is a test scheme, or the moduli space. In the latter case, as shown in 
\cite[Thm. 4.1]{HT10}, the construction of the obstruction theory of the moduli space, 
uses only the relative component $\alpha_{W/Z}$ of the Atiyah class (as will be recalled below). The goal of the section is to prove Theorem \ref{Apropone} below, which relates $\alpha_{W/Z}$ to the Atiyah 
class $\alpha_T$ of $T$. \\

To fix the notation, for any product $W_1 \times W_2$ of schemes over $\IC$ let $\pi_{W_i}$, $1\leq i \leq 2$, denote the canonical projections. Moreover, for any test scheme $T$ 
let $Z_T = Z\times T$ and, given a morphism of schemes $f: Z_1 \to Z_2$, let $f_T: (Z_1)_T \to (Z_2)_T$ denote the morphism $f \times {1}_T$.
The subscripts will be suppressed if $T={\rm Spec}(\IC)$. 
Given two complexes of sheaves 
$C^{\bullet}_1, C^{\bullet}_2$ on $W_1, W_2$, the  tensor product $\pi_{W_1}^*C^{\bullet}_1 \otimes \pi_{W_2}^*C^{\bullet}_2$ will be denoted by $C^{\bullet}_1 \boxtimes C^{\bullet}_2$. Similarly, the direct sum $\pi_{W_1}^*C^{\bullet}_1 \oplus \pi_{W_2}^*C^{\bullet}_2$ will be denoted by $C^{\bullet}_1 \boxplus C^{\bullet}_2$. Some background results 
needed below 
have been recorded in the Appendix \ref{background}. 

Let $W=Z_T$ 
where $Z$ is a smooth complex quasi-projective variety and $T$ is an arbitrary quasi-projective scheme over $\IC$. Then there is a canonical splitting 
\be\label{eq:Lsplitting}
\IL_{Z_T} \cong \Omega_Z\boxplus \IL_T.
\ee
In the present context this splitting follows from equation \eqref{eq:splitseq} proven in Corollary \ref{splitcor},
which shows that for any quasi-projective schemes $W_1,W_2$ 
there is a natural 
isomorphism 
\[ 
{{\bar \rho_W}}: \mathcal{I}_{\Delta_{W_1}}/\mathcal{I}_{\Delta_{W_1}}^2 \boxtimes 
\CO_{\Delta_{W_2}} \oplus \CO_{\Delta_{W_1}} \boxtimes 
\mathcal{I}_{\Delta_{W_2}}/\mathcal{I}_{\Delta_{W_2}}^2 {\buildrel \sim \over \longto} 
\mathcal{I}_{\Delta_{W}}/\mathcal{I}_{\Delta_{W}}^2. 
\]
Since $T$ is quasi-projective, there is a closed embedding 
$T \subset U$ into a smooth quasi-projective scheme $U$. 
Hence one can pick $A= Z_U$ 
in the construction of the Atiyah class, since $Z$ is smooth. 
Then, setting $W_1=Z$ and $W_2=U$ in the above equation yields the 
splitting \eqref{eq:Lsplitting}.  

The relative Atiyah class is the component  
$\alpha_{Z_T/Z}:  \CO_{\Delta_{Z_T}}\to \Delta_{Z_T*}(\pi^*_T\IL_T[1])$ of $\alpha_{Z_T}$ with respect to this splitting. 
Using Corollary \ref{flatcorA}, this is canonically identified with a morphism 
$$\alpha_{Z_T/Z}:  \CO_{\Delta_Z} \boxtimes \CO_{\Delta_T} \to \CO_{\Delta_Z}\boxtimes \IL_T[1].$$ The notation can be preserved with no risk of confusion.
Note also that the Atiyah class of $T$ is a morphism $\alpha_T : \CO_{\Delta_T} \to 
\Delta_{T*}\IL_T[1]$. 
Then the main result of this section is to relate the latter two classes as follows:
\begin{thm}\label{Apropone}
Let $Z,T$ be quasi-projective schemes over $\IC$ with $Z$ smooth. 
Then there is a relation 
\be\label{eq:Aclassformula} 
\alpha_{Z_T/Z} = {\bf 1}_{\CO_{\Delta_Z}} \boxtimes \alpha_T.
\ee
\end{thm}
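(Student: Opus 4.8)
The plan is to unwind both sides of \eqref{eq:Aclassformula} through the defining diagrams of Corollary \ref{AclasscorA}, taking advantage of the explicit nature of the splitting \eqref{eq:Lsplitting}. Concretely, I would first fix an embedding $T\subset U$ into a smooth quasi-projective $U$ and set $A=Z_U$ in the construction of the Atiyah class of $W=Z_T$; since $Z$ is smooth, $A$ is smooth, so this is a legitimate choice. With this choice the defining ideal of $W=Z_T$ in $A=Z_U$ is $\mathscr{J}_W = \CO_Z\boxtimes \mathscr{J}_T$, where $\mathscr{J}_T\subset\CO_U$ is the ideal of $T$. The point is then to identify, inside the diagram \eqref{eq:AclassdiagC} for $W=Z_T$ sitting in $A=Z_U$, the precise subobject and quotient that the splitting $\bar\rho_W$ of Corollary \ref{splitcor} cuts out as the ``$\CO_{\Delta_Z}\boxtimes(\mathcal I_{\Delta_U}/\mathcal I_{\Delta_U}^2)$'' summand, and to check that restricting all the maps of \eqref{eq:AclassdiagC} to that summand reproduces, verbatim, the diagram \eqref{eq:AclassdiagC} defining $\alpha_T$ for $T\subset U$, tensored on the left with $\id_{\CO_{\Delta_Z}}$.

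The key steps, in order, are: \textbf{(1)} record $\mathscr{J}_{Z_T}=\CO_Z\boxtimes\mathscr{J}_T$ and the resulting identification $(\CO_{Z_T}\boxtimes\mathscr{J}_{Z_T})\otimes_{Z_T\times Z_U}\CO_{\Delta_{Z_T}} \cong \CO_{\Delta_Z}\boxtimes\big((\CO_T\boxtimes\mathscr{J}_T)\otimes_{T\times U}\CO_{\Delta_T}\big)$, using that $Z$ is smooth so that $\CO_Z$ is flat and the external-tensor/base-change bookkeeping of Appendix \ref{background} (in particular Corollary \ref{flatcorA}) applies; \textbf{(2)} identify $\mathcal I_{\Delta_{Z_U}}|_{Z_T\times Z_T}$ and its first-order quotient with the relevant ``boxtimes'' pieces via the natural isomorphism $\bar\rho$ of Corollary \ref{splitcor}, tracking which direct summand corresponds to deformations ``in the $U$-direction''; \textbf{(3)} check that the map $\nu_{Z_T}$ and the projection $r_{Z_T}$ of \eqref{eq:AclassdiagC}, once restricted to the $\CO_{\Delta_Z}\boxtimes(-)$ summand, are literally $\id_{\CO_{\Delta_Z}}\boxtimes\nu_T$ and $\id_{\CO_{\Delta_Z}}\boxtimes r_T$ — this is where naturality of all the constructions with respect to the product decomposition is used; \textbf{(4)} conclude that the connecting morphism $\CO_{\Delta_{Z_T}}\to\Delta_{Z_T*}\IL_{Z_T}[1]$ extracted from \eqref{eq:AclassdiagC}, projected onto the $\IL_T$-factor of \eqref{eq:Lsplitting}, equals $\id_{\CO_{\Delta_Z}}\boxtimes\alpha_T$ after the canonical identification $\CO_{\Delta_{Z_T}}\cong\CO_{\Delta_Z}\boxtimes\CO_{\Delta_T}$.

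The main obstacle I anticipate is step \textbf{(3)} — more precisely, being careful that the ``natural projection'' maps and the map $\nu_W$ genuinely respect the product decomposition, since $\nu_W$ is defined via the isomorphism \eqref{eq:WAisomA} identifying $\mathcal I_{\Delta_A}|_{W\times A}$ with $\mathcal I_{\Delta_W\subset W\times A}$, and one must verify that this identification is compatible with the factorization $A=Z_U$, $W=Z_T$, i.e. that under $\bar\rho$ it restricts on the $\CO_{\Delta_Z}\boxtimes(-)$ summand to the corresponding isomorphism for $T\subset U$. This is essentially a compatibility-of-several-natural-isomorphisms check: each individual isomorphism ($\bar\rho_W$, the base-change isomorphisms of the Appendix, \eqref{eq:WAisomA}, \eqref{eq:WAisomB}) is natural, but assembling them into a single commuting cube requires care, and it is the reason the argument, while conceptually transparent, is not a one-line formality. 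Once that compatibility is in hand, the remaining identifications are bookkeeping and the equality \eqref{eq:Aclassformula} follows by comparing the two diagrams summand-by-summand.
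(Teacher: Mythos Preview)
Your outline matches the paper's approach closely: fix $T\subset U$, take $A=Z_U$, identify $\mathscr{J}_{Z_T}=\CO_Z\boxtimes\mathscr{J}_T$, and compare the two instances of diagram \eqref{eq:AclassdiagC}. The paper organizes exactly this into Lemmas \ref{AclasslemC}--\ref{diaglemC}, and your steps (1)--(4) correspond almost one-to-one.

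One point deserves correction. In steps (2)--(3) you speak of ``restricting $\nu_{Z_T}$ and $r_{Z_T}$ to the $\CO_{\Delta_Z}\boxtimes(-)$ summand'' of $\mathcal I_{\Delta_{Z_U}}|_{Z_T\times Z_T}$. But $\mathcal I_{\Delta_{Z_U}}$ (and its restriction) does \emph{not} split as a direct sum --- only the first-order quotient $\mathcal I_{\Delta_{Z_U}}/\mathcal I_{\Delta_{Z_U}}^2$ does, via $\bar\rho$. So there is no summand of the middle term of the top row to restrict to. What the paper does instead (Lemmas \ref{diaglemB} and \ref{diaglemBC}) is use the short exact sequence
\[
0\to \mathcal I_{\Delta_Z}\boxtimes\CO_{T\times T}\xlongto{\epsilon_{Z_T}}\mathcal I_{\Delta_{Z_U}}|_{Z_T\times Z_T}\xlongto{\eta_{Z_T}}\CO_{\Delta_Z}\boxtimes\mathcal I_{\Delta_U}|_{T\times T}\to 0
\]
(that it stays left-exact after restriction to $Z_T\times Z_T$ is a genuine check, done in Lemma \ref{diaglemB} via the snake lemma applied to the companion sequence \eqref{eq:idealseqD}) to build a \emph{quasi-isomorphism} from the three-term top row of \eqref{eq:AclassdiagCD} to $\CO_{\Delta_Z}\boxtimes(\text{top row for }T)$, by projecting along $\eta_{Z_T}$ and $f_Z\boxtimes{\bf 1}$ and verifying $\eta_{Z_T}\circ\nu_{Z_T}={\bf 1}\boxtimes\nu_T$. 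On the target side the splitting $\bar\rho$ \emph{does} apply, and one checks $q_{Z_T}\circ\nu_{Z_T}={\bf 1}\boxtimes(r_T\circ\nu_T)$ (Lemma \ref{diaglemC}). This is precisely the compatibility you flag as the ``main obstacle'', but it must be framed as a quasi-isomorphism of complexes rather than a restriction to a nonexistent summand. With that adjustment your plan is the paper's proof.
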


The proof will use diagram \eqref{eq:AclassdiagC} 
in Corollary \ref{AclasscorA} and will proceed in several steps.  
Since $T$ is quasi-projective, 
there is a closed embedding $T\subset U$ with $U$ smooth quasi-projective. Moreover, as already noted below equation \eqref{eq:Lsplitting} one can set $A= Z_U$ 
in the construction of the Atiyah class since $Z$ is smooth. Then diagram \eqref{eq:AclassdiagC} reads 
\be\label{eq:AclassdiagCB}
\xymatrix{ 
(\CO_{Z_T}\boxtimes \mathscr{J}_{Z_T})\otimes_{{Z_T}\times {Z_U}} \CO_{\Delta_{Z_T}}
 \ar[rr]^-{\nu_{Z_T}} \ar[d]^-{{\bf 1}} & &\mathcal{I}_{\Delta_{Z_U}}|_{{Z_T}\times {Z_T}} 
\ar[r]^{\iota_{Z_T} \circ \xi_{Z_T}} \ar[d]^-{r_{Z_T}}& \CO_{{Z_T}\times {Z_T}} \\ 
(\CO_{Z_T}\boxtimes \mathscr{J}_{Z_T})\otimes_{{Z_T}\times {Z_U}} \CO_{\Delta_{Z_T}}
\ar[rr]^-{r_{Z_T}\circ \nu_{Z_T}} & & (\mathcal{I}_{\Delta_{Z_U}}/\mathcal{I}_{\Delta_{Z_U}}^2)|_{{Z_T}\times {Z_T}}\, ,& \\}
\ee
where the left hand column is restricted to $Z_T \times Z_T$. 
The strategy to prove Theorem \ref{Apropone} is to construct a suitable quasi-isomorphic model
for  the rows in the above diagram, making relation \eqref{eq:Aclassformula} 
manifest. In order to facilitate the exposition, diagrams such 
as \eqref{eq:AclassdiagCB} will be regarded in an obvious way as quiver representations in abelian categories of coherent sheaves. 
A map between two diagrams will be a morphism of quiver representations i.e. a collection of morphisms between the 
sheaves associated to the respective nodes which intertwine between the maps belonging to the two representations. 
Then, as a first step, note the following consequence of 
Corollary \ref{flatcorA}.

\begin{lem}\label{AclasslemC} 
Diagram 
\eqref{eq:AclassdiagCB} is canonically isomorphic to 
\be\label{eq:AclassdiagCC}
\xymatrix{ 
\CO_{\Delta_Z} \boxtimes \Delta_{T*}(\mathscr{J}_{T}/\mathscr{J}_{T}^2)
\ar[rr]^-{\nu_{Z_T}} \ar[d]^-{{\bf 1}} & &\mathcal{I}_{\Delta_{Z_U}}|_{{Z_T}\times {Z_T}} 
\ar[r]^{\iota_{Z_T} \circ \xi_{Z_T}} \ar[d]^-{r_{Z_T}}& \CO_{{Z_T}\times {Z_T}} \\ 
\CO_{\Delta_Z } \boxtimes \Delta_{T*}(\mathscr{J}_{T}/\mathscr{J}_{T}^2)
\ar[rr]^-{r_{Z_T}\circ \nu_{Z_T}} & & (\mathcal{I}_{\Delta_{Z_U}}/\mathcal{I}_{\Delta_{Z_U}}^2)|_{{Z_T}\times {Z_T}} & \\}
\ee
Moreover, the morphism $\nu_{Z_T}$ in diagram \eqref{eq:AclassdiagCC} 
is 
determined by the commutative diagram 
\be\label{eq:AclassdiagBC} 
\xymatrix{ 
\CO_{Z\times Z} \boxtimes (\CO_T \boxtimes \mathscr{J}_{T}) \ar@{^{(}->}[rr] \ar[d] 
& & \mathcal{I}_{\Delta_{Z_U}}|_{Z_T \times Z_U}\ar[d] \\
\CO_{\Delta_Z} \boxtimes \Delta_{T*}(\mathscr{J}_{T}/\mathscr{J}_{T}^2)
\ar[rr]^-{\nu_{Z_T}} & & \mathcal{I}_{\Delta_{Z_U}}|_{Z_T\times Z_T}.\\
}
\ee
where the top horizontal arrow is the natural injection induced 
by the isomorphism $\mathcal{I}_{\Delta_{Z_U}}|_{Z_T\times Z_U} \cong 
\mathcal{I}_{\Delta_{Z_T}\subset Z_T\times Z_U}$ and the vertical arrows are natural projections. 
\end{lem}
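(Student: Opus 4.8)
\textbf{Proof strategy for Lemma \ref{AclasslemC}.}
The plan is to deduce the isomorphism of diagrams purely from the flat base change / K\"unneth-type identifications recorded in the Appendix (Corollary \ref{flatcorA} and Corollary \ref{splitcor}), together with the naturality of the constructions producing diagram \eqref{eq:AclassdiagCB}. First I would treat the left-hand column. The node $(\CO_{Z_T}\boxtimes \mathscr{J}_{Z_T})\otimes_{Z_T\times Z_U}\CO_{\Delta_{Z_T}}$ is, by \eqref{eq:WAisomB} applied to $W=Z_T$ and the smooth ambient $A=Z_U$, canonically isomorphic to $\Delta_{Z_T*}(\mathscr{J}_{Z_T}/\mathscr{J}_{Z_T}^2)$. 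Since $Z$ is smooth, $\mathscr{J}_{Z_T}$ is the pullback of $\mathscr{J}_T$ under $Z_T\times Z_U\to T\times U$, so $\mathscr{J}_{Z_T}/\mathscr{J}_{Z_T}^2$ pulls back $\mathscr{J}_T/\mathscr{J}_T^2$; pushing forward along the diagonal and applying Corollary \ref{flatcorA} (the flatness of $Z$ lets the external product pass through $\Delta_{T*}$) yields $\CO_{\Delta_Z}\boxtimes\Delta_{T*}(\mathscr{J}_T/\mathscr{J}_T^2)$. This identification is canonical and compatible with the identity map ${\bf 1}$, so it gives the desired isomorphism on the two left nodes of \eqref{eq:AclassdiagCB} and \eqref{eq:AclassdiagCC}, intertwining the vertical identity maps.

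Next I would leave the middle and right columns of \eqref{eq:AclassdiagCB} literally unchanged: the nodes $\mathcal{I}_{\Delta_{Z_U}}|_{Z_T\times Z_T}$, $\CO_{Z_T\times Z_T}$, $(\mathcal{I}_{\Delta_{Z_U}}/\mathcal{I}_{\Delta_{Z_U}}^2)|_{Z_T\times Z_T}$ and the maps $\iota_{Z_T}\circ\xi_{Z_T}$, $r_{Z_T}$ among them are already exactly those appearing in \eqref{eq:AclassdiagCC}. So the only remaining point is that the two horizontal composites $\nu_{Z_T}$ and $r_{Z_T}\circ\nu_{Z_T}$ are carried to one another under the left-column identification — i.e. that the square
\[
\xymatrix{
(\CO_{Z_T}\boxtimes\mathscr{J}_{Z_T})\otimes\CO_{\Delta_{Z_T}} \ar[r]^-{\nu_{Z_T}} \ar[d]_-{\sim} & \mathcal{I}_{\Delta_{Z_U}}|_{Z_T\times Z_T} \ar@{=}[d]\\
\CO_{\Delta_Z}\boxtimes\Delta_{T*}(\mathscr{J}_T/\mathscr{J}_T^2) \ar[r]^-{\nu_{Z_T}} & \mathcal{I}_{\Delta_{Z_U}}|_{Z_T\times Z_T}
}
\]
commutes, and that in the new presentation $\nu_{Z_T}$ is characterized by \eqref{eq:AclassdiagBC}. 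For this I would go back to the defining diagram \eqref{eq:AclassdiagB} of Lemma \ref{AclasslemB} for $W=Z_T$, $A=Z_U$: the map $\nu_{Z_T}$ there sits in a commutative square whose top arrow is the injection $\CO_{Z_T}\boxtimes\mathscr{J}_{Z_T}\hookrightarrow\mathcal{I}_{\Delta_{Z_U}}|_{Z_T\times Z_U}$. Because $Z$ is smooth and $\mathscr{J}_{Z_T}=\pr^*\mathscr{J}_T$, this injection is the pullback of the analogous injection $\CO_T\boxtimes\mathscr{J}_T\hookrightarrow\mathcal{I}_{\Delta_U}|_{T\times U}$ tensored with $\CO_{Z\times Z}$ along $Z\times Z$, which is precisely the top arrow of \eqref{eq:AclassdiagBC}; applying $(-)\otimes\CO_{\Delta_{Z_T}}$ and using the canonical isomorphism $\mathcal{I}_{\Delta_{Z_U}}|_{Z_T\times Z_U}\cong\mathcal{I}_{\Delta_{Z_T}\subset Z_T\times Z_U}$ from \eqref{eq:WAisomA} produces exactly \eqref{eq:AclassdiagBC}. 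The commutativity of the square above then follows because both $\nu_{Z_T}$'s are induced, via this same $\otimes\CO_{\Delta_{Z_T}}$ and restriction to $Z_T\times Z_T$, from the one injection, and the left vertical isomorphism is the one making the two descriptions of the source agree — i.e. it is built from \eqref{eq:WAisomB} and Corollary \ref{flatcorA}, which are the very identifications relating the two rows of \eqref{eq:AclassdiagB}.

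\textbf{Main obstacle.} The routine part is unwinding the K\"unneth/base-change isomorphisms; the genuinely delicate point is bookkeeping the two roles of $\Delta_{Z_T}$ (the diagonal $Z_T\hookrightarrow Z_T\times Z_T$ versus the graph-type embedding $Z_T\hookrightarrow Z_T\times Z_U$ flagged in the Remark after Lemma \ref{AclasslemB}) and checking that the smoothness of $Z$ really does make $\mathscr{J}_{Z_T}$, and hence every object in sight, a flat external pullback from the $T$-factor, so that Corollary \ref{flatcorA} applies verbatim and all the identifications are compatible with the structure maps $r_{Z_T}$, $\xi_{Z_T}$, $\iota_{Z_T}$. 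Once that compatibility is pinned down, the statement about $\nu_{Z_T}$ in \eqref{eq:AclassdiagBC} is essentially a restatement of Lemma \ref{AclasslemB}(ii) for $W=Z_T$, read through the splitting \eqref{eq:Lsplitting}.
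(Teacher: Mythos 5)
Your proposal is correct and follows essentially the same route as the paper: identify the left column via the canonical isomorphism $\mathscr{J}_{Z_T}\cong\CO_Z\boxtimes\mathscr{J}_T$ together with Corollary \ref{flatcorA}, keep the remaining nodes and maps of \eqref{eq:AclassdiagCB} unchanged, and transport the defining diagram of $\nu_{Z_T}$ from Lemma \ref{AclasslemB}(ii) through that identification to obtain \eqref{eq:AclassdiagBC}. The paper packages the left-column identification as the commutative square \eqref{eq:AclassdiagD} and is otherwise equally terse, so your write-up matches its level of detail.
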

\begin{proof}
Using equation \eqref{eq:diagisomC} in 
Corollary \ref{flatcorA} it is straightforward to check 
that there is a commutative diagram of morphisms of 
$\CO_{Z_T\times Z_U}$-modules 
\be\label{eq:AclassdiagD} 
\xymatrix{
\CO_{Z_T} \boxtimes \mathscr{J}_{Z_T}  \ar[r]^-{\sim}\ar[d] &  \CO_{Z\times Z} \boxtimes (\CO_T\boxtimes \mathscr{J}_{T})\ar[d] \\
(\CO_{Z_T} \boxtimes \mathscr{J}_{Z_T})\otimes_{Z_T\times Z_U} \CO_{\Delta_{Z_T}} \ar[r]^-{\sim} & \CO_{\Delta_Z} \boxtimes \Delta_{T*} (\mathscr{J}_{T}/\mathscr{J}_{T}^2) \\}
\ee
where the horizontal arrows are canonical isomorphisms and the vertical arrows are natural projections. This follows from the basic properties of tensor products using the canonical isomorphisms 
\[
\mathscr{J}_{Z_T} \cong \CO_{Z} \boxtimes \mathscr{J}_{T}, \qquad (\CO_{T}\boxtimes \mathscr{J}_{T}) \otimes_{T\times U} \CO_{\Delta_T} \cong \Delta_{T*}\, (\mathscr{J}_{T}/\mathscr{J}_{T}^2)
\]
of $\CO_{Z_U}$ and $\CO_{T\times U}$-modules respectively. 
The details will be omitted. Moreover, as shown in Lemma \ref{AclasslemB}, the morphism $\nu_{Z_T}$ in diagram \eqref{eq:AclassdiagCB} is 
determined by the commutative diagram
\be\label{eq:AclassdiagBB}
\xymatrix{ 
\CO_{Z_T} \boxtimes \mathscr{J}_{Z_T} \ar@{^{(}->}[rr] \ar[d] 
& & \mathcal{I}_{\Delta_{Z_U}}|_{Z_T \times Z_U}\ar[d] \\
(\CO_{Z_T}\boxtimes \mathscr{J}_{Z_T})\otimes_{Z_T\times Z_U} 
\CO_{\Delta_{Z_T}}
\ar[rr]^-{\nu_{Z_T}} & & \mathcal{I}_{\Delta_{Z_U}}|_{Z_T\times Z_T} \\
}
\ee
where the top horizontal arrow is the natural injection while the vertical arrows are the natural projections. 
Diagram 
\eqref{eq:AclassdiagBC} follows immediately from 
\eqref{eq:AclassdiagBB} using \eqref{eq:AclassdiagD}.
\end{proof}

For the next step, recall the splitting \eqref{eq:Lsplitting} 
is determined by the canonical isomorphism 
\be\label{eq:idealsplit} 
{\overline \rho}_U: \mathcal{I}_{\Delta_{Z}}/\mathcal{I}_{\Delta_{Z}}^2 \boxtimes 
\CO_{\Delta_{U}} \oplus \CO_{\Delta_{Z}} \boxtimes 
\mathcal{I}_{\Delta_{U}}/\mathcal{I}_{\Delta_{U}}^2 {\buildrel \sim \over \longto} 
\mathcal{I}_{\Delta_{Z_U}}/\mathcal{I}_{\Delta_{Z_U}}^2
\ee
obtained in Corollary \ref{splitcor}. As also shown in loc. cit., there is a commutative diagram 
\be\label{eq:idealdiagC} 
\xymatrix{ 
\mathcal{I}_{\Delta_{Z}}/\mathcal{I}_{\Delta_{Z}}^2 \boxtimes 
\CO_{\Delta_{U}} \oplus \CO_{\Delta_{Z}} \boxtimes 
\mathcal{I}_{\Delta_{U}}/\mathcal{I}_{\Delta_{U}}^2 \ar[r]^-{{\overline \rho}_U} 
\ar[d]^-{(0, {\bf 1})} &
\mathcal{I}_{\Delta_{Z_U}}/\mathcal{I}_{\Delta_{Z_U}}^2\ar[d]^-{{\overline\gamma}_{Z_U}} \\
\CO_{\Delta_{Z}} \boxtimes 
\mathcal{I}_{\Delta_{U}}/\mathcal{I}_{\Delta_{U}}^2 \ar[r]^-{\bf 1} & 
\CO_{\Delta_{Z}} \boxtimes 
\mathcal{I}_{\Delta_{U}}/\mathcal{I}_{\Delta_{U}}^2 \\}
\ee
where ${\overline \gamma}_{Z_U}: \mathcal{I}_{\Delta_{Z_U}}/\mathcal{I}_{\Delta_{Z_U}}^2 \to 
\CO_{\Delta_{Z}} \boxtimes 
\mathcal{I}_{\Delta_{U}}/\mathcal{I}_{\Delta_{U}}^2$ is determined by the natural projection $\gamma_{Z_U}: \mathcal{I}_{\Delta_{Z_U}} \to \CO_{\Delta_{Z}} \boxtimes 
\mathcal{I}_{\Delta_{U}}$. 

Now let 
\[
p_{Z_T}={\overline \gamma}_{Z_U}|_{Z_T\times Z_T} :(\mathcal{I}_{\Delta_{Z_U}}/\mathcal{I}^2_{\Delta_{Z_U}})|_{Z_T \times Z_T} \to  \CO_{\Delta_Z}\boxtimes (\mathcal{I}_{\Delta_U}/\mathcal{I}_{\Delta_U}^2)|_{T\times T}. 
\]
and let $q_{Z_T}: \mathcal{I}_{\Delta_{Z_U}}|_{Z_T \times Z_T}\to 
\CO_{\Delta_Z}\boxtimes (\mathcal{I}_{\Delta_U}/\mathcal{I}_{\Delta_U}^2)|_{T\times T}$ be 
the composite map
\[
\xymatrix{
\mathcal{I}_{\Delta_{Z_U}}|_{Z_T \times Z_T} \ar[rr]^-{r_{Z_T}} 
\ar[drr]_-{q_{Z_T}} & &(\mathcal{I}_{\Delta_{Z_U}}/\mathcal{I}^2_{\Delta_{Z_U}})|_{Z_T \times Z_T}\ar[d]^-{p_{Z_T}}  \\
& & \CO_{\Delta_Z}\boxtimes (\mathcal{I}_{\Delta_U}/\mathcal{I}_{\Delta_U}^2)|_{T\times T}\, .}
\]
Here $r_{Z_T}$ is the natural projection used in diagram \eqref{eq:AclassdiagCC}. Then note that the canonical projection 
$\IL_{Z_T} \to \CO_Z \boxtimes \IL_T$ obtained from the splitting \eqref{eq:Lsplitting} is determined by the commutative diagram 
\[
\xymatrix{
\CO_{\Delta_Z } \boxtimes \Delta_{T*}(\mathscr{J}_{T}/\mathscr{J}_{T}^2) \ar[d]^-{\bf 1} 
\ar[rr]^-{r_{Z_T}\circ \nu_{Z_T}} & & 
(\CI_{\Delta_{Z_U}}/\CI^2_{\Delta_{Z_U}})|_{Z_T\times Z_T} 
\ar[d]^-{p_{Z_T}} \\ 
\CO_{\Delta_Z } \boxtimes \Delta_{T*}(\mathscr{J}_{T}/\mathscr{J}_{T}^2) \ar[rr]^-{q_{Z_T}\circ \nu_{Z_T}} & & \CO_{\Delta_Z} \boxtimes 
(\mathcal{I}_{\Delta_U}/\mathcal{I}_{\Delta_U}^2)|_{T\times T}.\\
}
\]
As a result, Lemma \ref{AclasslemC} immediately implies:
\begin{cor}\label{AclasslemD} 
The relative Atiyah class $\alpha_{Z_T/Z}$ is determined by the commutative diagram 
\be\label{eq:AclassdiagCD}
\xymatrix{ 
\CO_{\Delta_Z} \boxtimes \Delta_{T*}(\mathscr{J}_{T}/\mathscr{J}_{T}^2)
\ar[rr]^-{\nu_{Z_T}} \ar[d]^-{{\bf 1}} & &\mathcal{I}_{\Delta_{Z_U}}|_{{Z_T}\times {Z_T}} 
\ar[r]^{\iota_{Z_T} \circ \xi_{Z_T}} \ar[d]^-{q_{Z_T}}& \CO_{{Z_T}\times {Z_T}} \\ 
\CO_{\Delta_Z } \boxtimes \Delta_{T*}(\mathscr{J}_{T}/\mathscr{J}_{T}^2)
\ar[rr]^-{q_{Z_T}\circ \nu_{Z_T}} & & \CO_{\Delta_Z} \boxtimes 
(\mathcal{I}_{\Delta_U}/\mathcal{I}_{\Delta_U}^2)|_{T\times T}. & \\}
\ee
\end{cor}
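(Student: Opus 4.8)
The plan is to unwind the definition of the relative Atiyah class $\alpha_{Z_T/Z}$ as the $\IL_T$-component of the full Atiyah class $\alpha_{Z_T}$ under the splitting \eqref{eq:Lsplitting}, and to show that the projection onto this component, performed at the level of the diagram representing $\alpha_{Z_T}$, produces exactly diagram \eqref{eq:AclassdiagCD}. First I would recall that, by Corollary \ref{AclasscorA}, the full Atiyah class $\alpha_{Z_T}$ is the morphism in $\mathscr{D}^b(Z_T\times Z_T)$ determined by diagram \eqref{eq:AclassdiagCB}: its top row is quasi-isomorphic to $\CO_{\Delta_{Z_T}}$ and, by Lemma \ref{AclasslemA}, its bottom row is a two-term presentation of $\Delta_{Z_T*}\IL_{Z_T}[1]$, the class being read off from the vertical maps. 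By Lemma \ref{AclasslemC} this diagram is canonically isomorphic to \eqref{eq:AclassdiagCC}, so $\alpha_{Z_T}$ is equally represented by \eqref{eq:AclassdiagCC}.

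Second, I would identify, at the level of this representative, the projection $\Delta_{Z_T*}\IL_{Z_T}[1]\to \Delta_{Z_T*}(\pi_T^*\IL_T)[1]$ that defines $\alpha_{Z_T/Z}$. Since the splitting \eqref{eq:Lsplitting} is induced by the isomorphism $\overline\rho_U$ of \eqref{eq:idealsplit}, and since diagram \eqref{eq:idealdiagC} exhibits $\overline\gamma_{Z_U}$ as the composite of $\overline\rho_U^{-1}$ with the projection onto the $\mathcal{I}_{\Delta_U}/\mathcal{I}_{\Delta_U}^2$-factor, the required projection on the bottom-right term $(\mathcal{I}_{\Delta_{Z_U}}/\mathcal{I}_{\Delta_{Z_U}}^2)|_{Z_T\times Z_T}$ is precisely $p_{Z_T}$. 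This is exactly the content of the (unlabeled) commutative square preceding the corollary: it shows that post-composing the bottom row $r_{Z_T}\circ\nu_{Z_T}$ of \eqref{eq:AclassdiagCC} with $p_{Z_T}$, while keeping the identity on $\CO_{\Delta_Z}\boxtimes\Delta_{T*}(\mathscr{J}_T/\mathscr{J}_T^2)$, realizes the canonical projection $\IL_{Z_T}\to \CO_Z\boxtimes\IL_T$ on representing complexes.

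Finally, I would assemble these observations. Replacing the right vertical arrow $r_{Z_T}$ of \eqref{eq:AclassdiagCC} by $q_{Z_T}=p_{Z_T}\circ r_{Z_T}$, and the bottom-right term correspondingly by $\CO_{\Delta_Z}\boxtimes(\mathcal{I}_{\Delta_U}/\mathcal{I}_{\Delta_U}^2)|_{T\times T}$, turns \eqref{eq:AclassdiagCC} into \eqref{eq:AclassdiagCD}. Because composing a derived-category morphism with a map of its target presentation computes the composite morphism, the resulting diagram \eqref{eq:AclassdiagCD} determines $\alpha_{Z_T/Z}$, as claimed.

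I expect the only genuinely delicate point to be the second step: verifying that $p_{Z_T}$ (equivalently $q_{Z_T}$) is the correct diagrammatic incarnation of the abstract splitting projection, i.e. that the bottom row of \eqref{eq:AclassdiagCC} together with $p_{Z_T}$ still presents the projected object $\CO_Z\boxtimes\IL_T[1]$ compatibly with the identity on the source. This reduces to the commutativity of the square before the corollary, which in turn rests on the factorization of $\overline\gamma_{Z_U}$ through $\overline\rho_U$ in \eqref{eq:idealdiagC}; everything else is a formal consequence of Lemma \ref{AclasslemC}.
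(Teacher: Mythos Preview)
Your proposal is correct and follows essentially the same approach as the paper. The paper's proof is the single line ``As a result, Lemma \ref{AclasslemC} immediately implies'' the corollary, relying on the preceding paragraph (the definitions of $p_{Z_T}$, $q_{Z_T}$, and the unlabeled square realizing the projection $\IL_{Z_T}\to\CO_Z\boxtimes\IL_T$); you have simply unpacked this implicit argument in full, including correctly identifying that the only nontrivial point is that $p_{Z_T}=\overline\gamma_{Z_U}|_{Z_T\times Z_T}$ is the diagrammatic incarnation of the splitting projection via \eqref{eq:idealdiagC}.
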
 

Note also that 
the Atiyah class of $T$ is obtained by setting $Z={\rm Spec}(\IC)$ in Corollary \ref{AclasscorA}. This yields the following commutative 
diagram 
\be\label{eq:AclassTA}
\xymatrix{ 
\Delta_{T*} (\mathscr{J}_{T}/\mathscr{J}_{T}^2)
 \ar[rrr]^-{\nu_{T}} \ar[d]^-{{\bf 1}} & & & \mathcal{I}_{\Delta_{U}}|_{T\times T} 
\ar[r]^{\iota_{T} \circ \xi_{T}} \ar[d]^-{r_{T}}& \CO_{T\times T} \\ 
\Delta_{T*} (\mathscr{J}_{T}/\mathscr{J}_{T}^2)
\ar[rrr]^-{r_{T}\circ \nu_{T}} & & &  
\mathcal{I}_{\Delta_U}/\mathcal{I}^2_{\Delta_U}|_{T\times T}. & \\}
\ee
where the bottom row is isomorphic to $\IL_T[1]$. 
The map $\nu_T$ is determined by the commutative diagram 
\be\label{eq:AclassTB} 
\xymatrix{ 
\CO_T \boxtimes \mathscr{J}_{T} \ar@{^{(}->}[rr] \ar[d] 
& & \mathcal{I}_{\Delta_{U}}|_{T \times U}\ar[d] \\
\Delta_{T*}(\mathscr{J}_{T}/\mathscr{J}_{T}^2)
\ar[rr]^-{\nu_{T}} & & \mathcal{I}_{\Delta_{U}}|_{T\times T}.\\
}
\ee
where the top horizontal map is the natural inclusion induced by the isomorphism $\mathcal{I}_{\Delta_U}|_{T \times U} \cong \mathcal{I}_{\Delta_T\subset T\times U}$, and the vertical 
maps are natural projections. 
 
 As the next step in the proof of Theorem \ref{Apropone}, 
 it will be shown below in Lemma \ref{diaglemBC} that the domain of the relative Atiyah class is quasi-isomorphic to the three term complex 
\be\label{eq:qisomdom} 
\xymatrix{ 
 \CO_{\Delta_Z} \boxtimes \Delta_{T*}(\mathscr{J}_{T}/\mathscr{J}^{2}_{T})  
 \ar[r]^-{{\bf 1}\boxtimes \nu_T}  & \CO_{\Delta_Z}\boxtimes \mathcal{I}_{\Delta_U}|_{T\times T} 
 \ar[rr]^-{{\bf 1}\boxtimes (\iota_T\circ \xi_T)} & & \CO_{\Delta_Z} \boxtimes \CO_{T\times T}.  & \\
}
\ee
This is an important step in the proof since it provides a direct 
connection with the domain of the $\alpha_T$ in diagram 
\eqref{eq:AclassTB}. 

The starting point is the exact sequence of $\CO_{Z_U \times Z_U}$-modules
\be\label{eq:idealseqC} 
0 \to \mathcal{I}_{\Delta_{Z}}\boxtimes \CO_{U\times U} 
\xlongto{j_{Z_U}}
\mathcal{I}_{\Delta_{Z_U}} \xlongto{\gamma_{Z_U}}   \CO_{\Delta_{Z}} \boxtimes \mathcal{I}_{\Delta_{U}} \to 0,
\ee
which follows from 
Corollary \ref{flatcorB}. The key idea is to gain some control over the restriction $\mathcal{I}_{\Delta_{Z_U}}|_{Z_T \times Z_T}$ by 
proving: 

\begin{lem}\label{diaglemB} 
The exact sequence \eqref{eq:idealseqC} 
remains exact 
upon restriction to $Z_T\times Z_T$.
\end{lem}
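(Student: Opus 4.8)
The plan is to restrict the short exact sequence \eqref{eq:idealseqC} along the closed embedding $\iota\colon Z_T\times Z_T\hookrightarrow Z_U\times Z_U$ and to check directly that exactness survives. After reorganising $Z_U\times Z_U$ as $(Z\times Z)\times(U\times U)$, this embedding is $\mathrm{id}_{Z\times Z}\times(\iota_T\times\iota_T)$ with $\iota_T\colon T\hookrightarrow U$ the chosen embedding into a smooth scheme, and restriction is the right exact functor $\iota^{*}=(-)\otimes_{\CO_{Z_U\times Z_U}}\CO_{Z_T\times Z_T}$. Applying $\iota^{*}$ to \eqref{eq:idealseqC} I automatically obtain an exact sequence $\iota^{*}(\mathcal{I}_{\Delta_Z}\boxtimes\CO_{U\times U})\xrightarrow{\iota^{*}j_{Z_U}}\iota^{*}\mathcal{I}_{\Delta_{Z_U}}\xrightarrow{\iota^{*}\gamma_{Z_U}}\iota^{*}(\CO_{\Delta_Z}\boxtimes\mathcal{I}_{\Delta_U})\to 0$ in which, moreover, the image of $\iota^{*}j_{Z_U}$ equals the kernel of $\iota^{*}\gamma_{Z_U}$. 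So the whole sequence remains short exact precisely when $\iota^{*}j_{Z_U}$ is injective, and that is the one point that needs an argument.

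The crux is that $\iota^{*}\mathcal{I}_{\Delta_{Z_U}}$ need not be a subsheaf of $\CO_{Z_T\times Z_T}$ — it can acquire torsion from $\operatorname{Tor}_{1}^{\CO_{Z_U\times Z_U}}(\CO_{\Delta_{Z_U}},\CO_{Z_T\times Z_T})$ — so injectivity of $\iota^{*}j_{Z_U}$ cannot simply be read off from the target. I would get around this by exploiting the source instead. By the construction of \eqref{eq:idealseqC} in Corollary \ref{flatcorB}, the composite $\mathcal{I}_{\Delta_Z}\boxtimes\CO_{U\times U}\xrightarrow{j_{Z_U}}\mathcal{I}_{\Delta_{Z_U}}\hookrightarrow\CO_{Z_U\times Z_U}$ is just the canonical inclusion of ideals, and under the identifications $\mathcal{I}_{\Delta_Z}\boxtimes\CO_{U\times U}\cong p^{*}\mathcal{I}_{\Delta_Z}$ and $\CO_{Z_U\times Z_U}=p^{*}\CO_{Z\times Z}$ it is $p^{*}$ of the inclusion $\mathcal{I}_{\Delta_Z}\hookrightarrow\CO_{Z\times Z}$, where $p\colon Z_U\times Z_U\to Z\times Z$ is the projection onto the two $Z$-factors. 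Now $p$ is flat, and so is $p\circ\iota\colon Z_T\times Z_T\to Z\times Z$, since it is the projection $(Z\times Z)\times(T\times T)\to Z\times Z$, a base change of $T\times T\to\operatorname{Spec}(\IC)$. Hence $\iota^{*}p^{*}=(p\circ\iota)^{*}$ carries the injection $\mathcal{I}_{\Delta_Z}\hookrightarrow\CO_{Z\times Z}$ to an injection $\iota^{*}(\mathcal{I}_{\Delta_Z}\boxtimes\CO_{U\times U})\hookrightarrow\CO_{Z_T\times Z_T}$. Since this injection equals $\iota^{*}j_{Z_U}$ followed by $\iota^{*}(\mathcal{I}_{\Delta_{Z_U}}\hookrightarrow\CO_{Z_U\times Z_U})$, the map $\iota^{*}j_{Z_U}$ must itself be injective, which finishes the proof.

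The remaining work is only bookkeeping, which I do not expect to be an obstacle: identifying the restricted first and third terms with $\mathcal{I}_{\Delta_Z}\boxtimes\CO_{T\times T}$ (no higher Tor, since $\CO_{U\times U}$ simply restricts to $\CO_{T\times T}$) and with $\CO_{\Delta_Z}\boxtimes(\mathcal{I}_{\Delta_U}|_{T\times T})$, and matching $\iota^{*}j_{Z_U}$, $\iota^{*}\gamma_{Z_U}$ with the maps $j_{Z_T}$, $\gamma_{Z_T}$ used later; all of this is immediate from functoriality of $\boxtimes$ and flatness of the relevant projections, in the spirit of Corollary \ref{flatcorA}. I note in passing that the argument never uses smoothness of $Z$, only flatness of $(Z\times Z)\times(T\times T)\to Z\times Z$.
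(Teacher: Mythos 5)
Your proof is correct, but it reaches the key injectivity by a different route than the paper. The paper introduces the auxiliary presentation \eqref{eq:idealseqD} of $\mathcal{I}_{\Delta_{Z_U}}$ as a quotient of $\mathcal{I}_{\Delta_Z}\boxplus\mathcal{I}_{\Delta_U}$, checks that \emph{its} restriction to $Z_T\times Z_T$ stays left exact (because the second component of the restricted left-hand map is $\iota_Z\boxtimes\mathbf{1}$, injective by Lemma \ref{flatlemma}), and then runs the snake lemma on a comparison diagram to identify $\operatorname{Ker}(\phi)$ with the kernel of $\iota_Z\boxtimes\mathbf{1}$, hence zero. You instead post-compose $j_{Z_U}$ with the inclusion $\mathcal{I}_{\Delta_{Z_U}}\hookrightarrow\CO_{Z_U\times Z_U}$, recognize the composite as $p^*$ of $\mathcal{I}_{\Delta_Z}\hookrightarrow\CO_{Z\times Z}$ (which is exactly how $j_{Z_U}$ is built in Corollary \ref{flatcorB}: flatness of $p$ identifies $p^*\mathcal{I}_{\Delta_Z}$ with the sub-ideal it generates), and conclude that the restricted composite is $(p\circ\iota)^*$ of an injection, hence injective by flatness of $(Z\times Z)\times(T\times T)\to Z\times Z$; injectivity of a composite forces injectivity of its first factor. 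Both arguments ultimately rest on the same flatness input from Lemma \ref{flatlemma}, but yours dispenses with the second exact sequence and the snake lemma, which makes it shorter and, as you note, makes visible that smoothness of $Z$ plays no role here. The paper's version has the mild advantage that the diagram it builds is reused verbatim in the subsequent Lemma \ref{diaglemBC} (the relation $\eta_{Z_T}\circ\nu_{Z_T}={\bf 1}\boxtimes\nu_T$ is extracted from the same comparison), so the extra machinery is not wasted there.
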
 

\begin{proof}
First note that Corollary \ref{flatcorB} yields a second 
exact sequence of $\CO_{Z_U\times{Z_U}}$-modules 
\be\label{eq:idealseqD}
0 \to \mathcal{I}_{\Delta_{Z}}\boxtimes \mathcal{I}_{\Delta_U} \xlongto{\delta_{Z_U}}
\mathcal{I}_{\Delta_{Z}}\boxplus \mathcal{I}_{\Delta_{U}} \xlongto{\rho_{Z_U}}   \mathcal{I}_{\Delta_{Z_U}} \to 0.
\ee
As shown below, this sequence remains exact upon restriction to $Z_T \times Z_T$. The restriction of \eqref{eq:idealseqD} to $Z_T \times Z_T$ 
is the exact sequence 
\be\label{eq:IgammaUseqA} 
\mathcal{I}_{\Delta_Z} \boxtimes (\mathcal{I}_{\Delta_U}|_{T\times T}) \to \mathcal{I}_{\Delta_Z} \boxplus (\mathcal{I}_{\Delta_U}|_{T\times T}) \to 
I_{\Delta_{Z_U}}|_{Z_T\times Z_T}  \to 0.
\ee
The second component of the first map from the left is 
\be\label{eq:secondcomp}
\iota_Z\boxtimes {\bf 1}: \mathcal{I}_{\Delta_Z} \boxtimes \mathcal{I}_{\Delta_U}|_{T\times T} \to \CO_{Z\times Z}\boxtimes \mathcal{I}_{\Delta_U}|_{T\times T}
\ee
where $\iota_Z: \mathcal{I}_{\Delta_Z} \to \CO_{Z\times Z}$ is the canonical injection. This is injective, as shown in Lemma \ref{flatlemma} hence \eqref{eq:IgammaUseqA} is 
also left exact. 

Now, the restriction of \eqref{eq:idealseqC} to $Z_T\times Z_T$ is the  exact sequence 
\be\label{eq:ZTseq}
\mathcal{I}_{\Delta_Z} \boxtimes \CO_{T\times T} {\buildrel \phi \over \longto}   
\mathcal{I}_{\Delta_{Z_U}}|_{T\times T}  {\buildrel \over \longto} \CO_{\Delta_Z} \boxtimes \mathcal{I}_{\Delta_U}|_{T\times T} \to 0. 
\ee
where $\phi= j_{Z_U}|_{Z_T\times Z_T}$. 
Furthermore, 
by construction there is a commutative diagram  
\[ 
\xymatrix{ 
& & \mathcal{I}_{\Delta_Z}\boxtimes \CO_{T\times T} \ar[r]^-{\bf 1} 
\ar[dd]_-{\left(\begin{array}{c} {\bf 1} \\ 0 \end{array}\right)} 
& \mathcal{I}_{\Delta_Z}\boxtimes \CO_{T\times T} \ar[dd]^-\phi \\
& & & \\
0 \ar[r] & \mathcal{I}_{\Delta_Z} \boxtimes (\mathcal{I}_{\Delta_U}|_{T\times T}) \ar[r] & \mathcal{I}_{\Delta_Z} \boxplus (\mathcal{I}_{\Delta_U}|_{T\times T}) \ar[r] & 
I_{{\Delta_Z}_U}|_{T\times T}  \ar[r] &  0.\\}
\]
where the bottom row is the restriction of \eqref{eq:idealseqD}
to $Z_T \times Z_T$.  Since the latter is exact, using equation \eqref{eq:secondcomp},  the snake lemma yields a long exact sequence 
\[ 
0 \to {\rm Ker}(\phi) \to \mathcal{I}_{\Delta_Z} \boxtimes (\mathcal{I}_{\Delta_U}|_{T\times T}) \xlongto{\iota_Z\boxtimes {\bf 1}} \CO_{Z\times Z} \boxtimes (\mathcal{I}_{\Delta_U}|_{T\times T}) \to \cdots. 
\]
As noted above, $\iota_Z\boxtimes {\bf 1}$ is injective, hence 
\eqref{eq:ZTseq} is also exact on the left. 
\end{proof}

Now recall that the domain of the Atiyah class in diagram \eqref{eq:AclassdiagCD} is the complex 
\[
\CO_{\Delta_Z} \boxtimes \Delta_{T*} (\mathscr{J}_{T}/\mathscr{J}_{T}^2) \xlongto{\nu_{Z_T}} 
\mathcal{I}_{\Delta_{Z_U}}|_{Z_T\times Z_T} 
\xlongto{\iota_{Z_T}\circ \xi_{Z_T}} 
\CO_{Z\times Z} \boxtimes \CO_{T\times T}.
\]
Let 
\be\label{eq:idealseqE} 
0 \to \mathcal{I}_{\Delta_Z}\boxtimes \CO_{T\times T} 
\xlongto{\epsilon_{Z_T}} 
\mathcal{I}_{\Delta_{Z_U}}|_{Z_T\times Z_T} \xlongto{\eta_{Z_T}} 
\CO_{\Delta_{Z}} \boxtimes \mathcal{I}_{\Delta_U}|_{T\times T} \to 0 
\ee
denote the restriction of \eqref{eq:idealseqC} to $Z_T \times Z_T$ and let  
\be\label{eq:candiagseqA} 
0 \to \mathcal{I}_{\Delta_Z} {\buildrel \iota_Z\over \longto} 
\CO_{Z\times Z} 
{\buildrel f_Z \over \longto} \CO_{\Delta_Z}\to 0.
\ee
be the canonical exact sequence of the diagonal in $Z\times Z$.

As claimed above equation \eqref{eq:qisomdom}, the next result proves that the domain of the relative Atiyah class is indeed quasi-isomorphic to the complex \eqref{eq:qisomdom}.
\begin{lem}\label{diaglemBC} 
There is a commutative diagram 
\be\label{eq:bigdiagA} 
\xymatrix{ 
&  0\ar[d] & & 0 \ar[d] &  \\
 & \mathcal{I}_{\Delta_Z} \boxtimes \CO_{T\times T}  \ar[d]_-{\epsilon_{Z_T}} 
\ar[rr]^-{ {\bf 1}} & & \mathcal{I}_{\Delta_Z} \boxtimes \CO_{T\times T}
\ar[d]_-{i_Z\boxtimes {\bf 1}} 
 &  \\
 \CO_{\Delta_Z} \boxtimes \Delta_{T*}\mathscr{J}_{T}/\mathscr{J}^{2}_{T} \ar[d]_-{\bf 1}
 \ar[r]^-{\nu_{Z_T}} & 
\mathcal{I}_{\Delta_{Z_U}}|_{Z_T\times Z_T}  \ar[d]_-{\eta_{Z_T}}
\ar[rr]^-{\iota_{Z_T}\circ \xi_{Z_T}}  & & \CO_{Z\times Z}\boxtimes \CO_{T\times T}\ar[d]_-{f_Z\boxtimes {\bf 1}} \\ 
 \CO_{\Delta_Z} \boxtimes \Delta_{T*}(\mathscr{J}_{T}/\mathscr{J}^{2}_{T})  
 \ar[r]^-{{\bf 1}\boxtimes \nu_T}  & \CO_{\Delta_Z}\boxtimes \mathcal{I}_{\Delta_U}|_{T\times T} 
 \ar[rr]^-{{\bf 1}\boxtimes (\iota_T\circ \xi_T)}\ar[d] & & \CO_{\Delta_Z} \boxtimes \CO_{T\times T}  \ar[d] & \\
 & 0 & & 0 &  \\
 } 
 \ee
with exact columns. 
\end{lem}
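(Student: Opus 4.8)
The goal is to assemble the big diagram \eqref{eq:bigdiagA} by stacking three short exact sequences vertically: the canonical sequence \eqref{eq:candiagseqA} for the right column, the restricted sequence \eqref{eq:idealseqE} for the middle column, and the trivial sequence $0\to\CO_{\Delta_Z}\boxtimes\Delta_{T*}(\mathscr{J}_T/\mathscr{J}_T^2)\xrightarrow{\;{\bf 1}\;}\CO_{\Delta_Z}\boxtimes\Delta_{T*}(\mathscr{J}_T/\mathscr{J}_T^2)\to 0$ for the left column, so that the columns are exact by construction. The first row, consisting of two copies of $\mathcal{I}_{\Delta_Z}\boxtimes\CO_{T\times T}$ joined by the identity, together with the vertical maps $\epsilon_{Z_T}$ and $i_Z\boxtimes{\bf 1}$, commutes because $\iota_{Z_T}\circ\xi_{Z_T}$ restricted to the subsheaf $\mathcal{I}_{\Delta_Z}\boxtimes\CO_{T\times T}\hookrightarrow\mathcal{I}_{\Delta_{Z_U}}|_{Z_T\times Z_T}$ is visibly the inclusion $i_Z\boxtimes{\bf 1}$: indeed $\epsilon_{Z_T}$ is by definition the restriction to $Z_T\times Z_T$ of the map $j_{Z_U}$ of \eqref{eq:idealseqC}, which in turn is the inclusion $\mathcal{I}_{\Delta_Z}\boxtimes\CO_{U\times U}\hookrightarrow\mathcal{I}_{\Delta_{Z_U}}$.

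The content of the lemma is the commutativity of the two remaining squares. For the right-hand square
\[
\xymatrix{
\mathcal{I}_{\Delta_{Z_U}}|_{Z_T\times Z_T}\ar[r]^-{\iota_{Z_T}\circ\xi_{Z_T}}\ar[d]_-{\eta_{Z_T}} & \CO_{Z\times Z}\boxtimes\CO_{T\times T}\ar[d]^-{f_Z\boxtimes{\bf 1}}\\
\CO_{\Delta_Z}\boxtimes\mathcal{I}_{\Delta_U}|_{T\times T}\ar[r]^-{{\bf 1}\boxtimes(\iota_T\circ\xi_T)} & \CO_{\Delta_Z}\boxtimes\CO_{T\times T}
}
\]
I would unwind all four maps. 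The composite $\iota_{Z_T}\circ\xi_{Z_T}$ is the canonical map $\mathcal{I}_{\Delta_{Z_U}}\to\CO_{Z_U\times Z_U}$ restricted to $Z_T\times Z_T$, and $\eta_{Z_T}$ is the restriction of $\gamma_{Z_U}\colon\mathcal{I}_{\Delta_{Z_U}}\to\CO_{\Delta_Z}\boxtimes\mathcal{I}_{\Delta_U}$. The square then follows from the compatibility, at the level of $Z_U\times Z_U$, of the two-step filtration $\mathcal{I}_{\Delta_Z}\boxtimes\CO_{U\times U}\subset\mathcal{I}_{\Delta_{Z_U}}\subset\CO_{Z_U\times Z_U}$ with the factored quotient $\CO_{Z_U\times Z_U}\twoheadrightarrow\CO_{\Delta_Z}\boxtimes\CO_{U\times U}\twoheadrightarrow\CO_{\Delta_Z}\boxtimes\mathcal{I}_{\Delta_U}$ — equivalently, the outer square of \eqref{eq:idealseqC} combined with the canonical sequence of $\Delta_Z$ — and then restricting to $Z_T\times Z_T$; exactness of the restriction is guaranteed by Lemma \ref{diaglemB}. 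For the left-hand square, commutativity of
\[
\xymatrix{
\CO_{\Delta_Z}\boxtimes\Delta_{T*}(\mathscr{J}_T/\mathscr{J}_T^2)\ar[r]^-{\nu_{Z_T}}\ar[d]_-{\bf 1} & \mathcal{I}_{\Delta_{Z_U}}|_{Z_T\times Z_T}\ar[d]^-{\eta_{Z_T}}\\
\CO_{\Delta_Z}\boxtimes\Delta_{T*}(\mathscr{J}_T/\mathscr{J}_T^2)\ar[r]^-{{\bf 1}\boxtimes\nu_T} & \CO_{\Delta_Z}\boxtimes\mathcal{I}_{\Delta_U}|_{T\times T}
}
\]
I would use the defining diagram \eqref{eq:AclassdiagBC} for $\nu_{Z_T}$ together with \eqref{eq:AclassTB} for $\nu_T$: both are induced from the natural inclusions $\CO_{Z\times Z}\boxtimes(\CO_T\boxtimes\mathscr{J}_T)\hookrightarrow\mathcal{I}_{\Delta_{Z_U}}|_{Z_T\times Z_U}$ and $\CO_T\boxtimes\mathscr{J}_T\hookrightarrow\mathcal{I}_{\Delta_U}|_{T\times U}$ via the isomorphisms of Lemma \ref{AclasslemB}, and the projection $\gamma_{Z_U}$ intertwines these inclusions by construction. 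Chasing a local section $\bar z\otimes f$ around the square, where $\bar z$ represents a class in $\mathscr{J}_Z/\mathscr{J}_Z^2$ and $f\in\mathscr{J}_T$, both composites send it to the image of $1\boxtimes f$ under $\nu_T$, placed in the $\CO_{\Delta_Z}$-slot; I would make this precise using \eqref{eq:AclassdiagD} to translate between the $Z_T$-presentation and the box-product presentation.

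Finally, exactness of the three columns is immediate — the outer two are trivial, and the middle is \eqref{eq:idealseqE}, exact on the left by Lemma \ref{diaglemB} — so the diagram is as claimed. \textbf{The main obstacle} I anticipate is bookkeeping rather than mathematics: keeping straight the several restriction operations ($Z_U\times Z_U\rightsquigarrow Z_T\times Z_T$, $Z_T\times Z_U\rightsquigarrow Z_T\times Z_T$, $T\times U\rightsquigarrow T\times T$) and the canonical isomorphisms $\mathcal{I}_{\Delta_{Z_U}}|_{Z_T\times Z_U}\cong\mathcal{I}_{\Delta_{Z_T}\subset Z_T\times Z_U}$ and $\mathscr{J}_{Z_T}\cong\CO_Z\boxtimes\mathscr{J}_T$ under which the maps $\nu_{Z_T}$, $\gamma_{Z_U}$, $\xi_{Z_T}$ are defined, so that the two square-commutativity checks reduce to the transparent statements above; the flatness inputs needed for all restrictions to behave well are precisely those packaged in Corollaries \ref{flatcorA}, \ref{flatcorB} and Lemma \ref{flatlemma}.
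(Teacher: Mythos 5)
Your proposal is correct and follows essentially the same route as the paper: the columns are the exact sequences \eqref{eq:candiagseqA} (via Lemma \ref{flatlemma}), \eqref{eq:idealseqE} (exact on the left by Lemma \ref{diaglemB}), and the trivial one, while the two nontrivial squares are checked exactly as in the paper — the $\nu_{Z_T}$/$\nu_T$ square by lifting along the surjections in the defining diagrams \eqref{eq:AclassdiagBC} and \eqref{eq:AclassTB} and noting that $\gamma_{Z_U}$ intertwines the natural inclusions (the paper's diagram \eqref{eq:polydiagB}), and the remaining squares by observing that all maps involved are natural injections, projections, or restrictions (the paper's diagram \eqref{eq:polydiagA}).
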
 

\begin{proof}
The middle column is the exact sequence \eqref{eq:idealseqE}. 
The exactness of the right column follows from the exactness of \eqref{eq:candiagseqA} using Lemma \ref{flatlemma}. The lower right 
square is commutative since all the arrows in it are natural restrictions. 

In order to prove that the lower left square is commutative, 
note that the maps $\nu_{Z_T},\nu_T$ are determined by the commutative diagrams \eqref{eq:AclassdiagD} and \eqref{eq:AclassTB} respectively. 
These are displayed again below for convenience:
\be\label{eq:nudiag}
\xymatrix{
\CO_{Z\times Z} \boxtimes (\CO_T\boxtimes \mathscr{J}_{T}) 
\ar@{^{(}->}[rr]^-{k_{Z_T}}  \ar[d]^-{a_{Z_T}} 
& & \mathcal{I}_{\Delta_{Z_U}}|_{Z_T\times Z_U}\ar[d]^-{b_{Z_T}} \\
\CO_{\Delta_Z} \boxtimes \Delta_{T*} (\mathscr{J}_{T}/\mathscr{J}_{T}^2)  
\ar[rr]^-{\nu_{Z_T}} & & \mathcal{I}_{\Delta_{Z_U}}|_{Z_T\times Z_T} 
 \\
}
\ee
\[
\xymatrix{
\CO_T\boxtimes \mathscr{J}_{T} \ar@{^{(}->}[rr]^-{k_T} \ar[d]^-{a_T}
& & \mathcal{I}_{\Delta_{U}}|_{T\times U}\ar[d]^-{b_T} \\
\Delta_{T*} (\mathscr{J}_{T}/\mathscr{J}_{T}^2)  
\ar[rr]^-{\nu_{T}} & & \mathcal{I}_{\Delta_{U}}|_{T\times T} 
 \\
}
\]
 In each case the vertical arrows are canonical projections while the upper horizontal arrow is a natural injection. Obviously, the second` diagram in \eqref{eq:nudiag} yields a 
commutative diagram of $\CO_{Z_T \times Z_U}$-modules 
\be\label{nudiag2}
\xymatrix{
\CO_{\Delta_Z}\boxtimes (\CO_T\boxtimes \mathscr{J}_{T})
\ar@{^{(}->}[rr]^-{{\bf 1}\boxtimes k_T} \ar[d]^-{{\bf 1}\boxtimes a_T}
& & \CO_{\Delta_Z}\boxtimes(\mathcal{I}_{\Delta_{U}}|_{T\times U})\ar[d]^-{{\bf 1}\boxtimes b_T} \\
\CO_{\Delta_Z}\boxtimes\Delta_{T*} (\mathscr{J}_{T}/\mathscr{J}_{T}^2)  
\ar[rr]^-{{\bf 1}\boxtimes \nu_{T}} & & \CO_{\Delta_Z}\boxtimes
(\mathcal{I}_{\Delta_{U}}|_{T\times T}) 
 \\
}
\ee
Then the from Diagram \eqref{nudiag2} and the left diagram in \eqref{eq:nudiag} one obtains a diagram of the form 
\be\label{eq:polydiagB}
\xymatrix@R=1.5em{
\CO_{Z\times Z} \boxtimes (\CO_T\boxtimes \mathscr{J}_{T}) 
\ar@{^{(}->}[rr]^-{k_{Z_T}}  \ar[d]^-{a_{Z_T}} 
\ar@/_5.5pc/[ddd]_(.3){f_Z\boxtimes {\bf 1}}|\hole& & \mathcal{I}_{\Delta_{Z_U}}|_{Z_T\times Z_U}\ar[d]^-{b_{Z_T}} 
\ar@/^4pc/[ddd]^(.3){\gamma_{Z_U}|_{Z_T\times Z_U}}|\hole\\
\CO_{\Delta_Z} \boxtimes \Delta_{T*} (\mathscr{J}_{T}/\mathscr{J}_{T}^2)  
\ar@/_11pc/[ddd]_-{\bf 1}
\ar[rr]^-{\nu_{Z_T}} & & \mathcal{I}_{\Delta_{Z_U}}|_{Z_T\times Z_T} 
\ar@/^9pc/^-{\eta_{Z_T}}[ddd] \\
 & & \\
\CO_{\Delta_Z}\boxtimes (\CO_T\boxtimes \mathscr{J}_{T})
\ar@{^{(}->}[rr]^-{{\bf 1}\boxtimes k_T} \ar[d]^-{{\bf 1}\boxtimes a_T}
& & \CO_{\Delta_Z}\boxtimes(\mathcal{I}_{\Delta_{U}}|_{T\times U})\ar[d]^-{{\bf 1}\boxtimes b_T} \\
\CO_{\Delta_Z}\boxtimes\Delta_{T*} (\mathscr{J}_{T}/\mathscr{J}_{T}^2)  
\ar[rr]^-{{\bf 1}\boxtimes \nu_{T}} & & \CO_{\Delta_Z}\boxtimes
(\mathcal{I}_{\Delta_{U}}|_{T\times T}) 
 \\}
 \ee
where $\gamma_U: \mathcal{I}_{\Delta_{Z_U}} \to \CO_{\Delta_Z} \boxtimes \mathcal{I}_{\Delta_U}$  is the projection in \eqref{eq:idealseqC}. 
Since $k_{Z_T}, k_T$ are natural injections, the following 
relations hold
\[
\bal
\gamma_{Z_U}|_{Z_T \times Z_U} \circ k_{Z_T} & = ({\bf 1}\boxtimes k_T)\circ (f_Z\boxtimes {\bf 1})\\
\eal 
\]
Moreover, since $\eta_{Z_T} = \gamma_{U}|_{Z_T\times Z_T}$
and $b_{Z_T}, b_T$ are  natural projections, one also has 
\[
\bal 
(1\boxtimes b_T) \circ \gamma_U|_{Z_U \times Z_T} & = 
\eta_{Z_T} \circ b_{Z_T}. \\
\eal
\]
Since $a_T, a_{Z_T}$ are surjections, and two straight squares 
in the diagram \eqref{eq:polydiagB} are commutative, this implies 
\[
\eta_{Z_T} \circ \nu_{Z_T} =({\bf 1}\boxtimes \nu_T).
\]

Finally, in order to prove that the upper right square is commutative, 
first note that Lemma \ref{flatcorB} yields an exact sequence 
\[
0 \to \mathcal{I}_{\Delta_Z}\boxtimes \CO_{T\times T} 
\xlongto{\delta_{Z_T}} 
\mathcal{I}_{\Delta_{Z_T}} \xlongto{\gamma_{Z_T}} 
\CO_{\Delta_Z}\boxtimes \mathcal{I}_{\Delta_T} \to 0.
\]
Then
note the diagram 
\be\label{eq:polydiagA}
\xymatrix{ 
\mathcal{I}_{\Delta_Z} \boxtimes \CO_{T\times T}  \ar[d]_-{\epsilon_{Z_T}} 
\ar[r]^-{ {\bf 1}} & \mathcal{I}_{\Delta_Z} \boxtimes \CO_{T\times T}
\ar[d]_-{\delta_{Z_T}}  \ar[ddr]^-{\iota_Z \boxtimes {\bf 1}} & \\
\mathcal{I}_{\Delta_{Z_U}}|_{Z_T \times Z_T} \ar[r]^-{\xi_{Z_T}} 
\ar[drr]_-{\iota_{Z_T}\circ \xi_{Z_T}}  & \mathcal{I}_{\Delta_{Z_T}} \ar[dr]^-{\iota_{Z_T}} & \\
& & \CO_{Z\times Z} \boxtimes \CO_{T\times T} 
\\}
\ee
Since all the maps involved are natural injections or projections, it is straightforward to check that the square and the triangles are commutative. This implies that the outer polygon is also commutative. 
\end{proof}

Finally, the next lemma provides a relation between the target of the relative Atiyah class $\alpha_{Z_T/Z}$ and the target of 
$\alpha_T$. 
\begin{lem}\label{diaglemC} 
There is a further commutative diagram 
\be\label{eq:bigdiagC}
\xymatrix@C=1em @R=1.5em{
\CO_{\Delta_Z} \boxtimes \Delta_{T*}(\mathscr{J}_{T}/\mathscr{J}_{T}^2) \ar[rr]^-{\nu_{Z_T}}  \ar[d]^-{{\bf 1}} 
\ar@/_6pc/[ddd]_(.3){\bf 1}|\hole & & \mathcal{I}_{\Delta_{Z_U}}|_{Z_T\times Z_T}\ar[d]^-{r_{Z_T}} 
\ar@/^5.5pc/[ddd]^(.3){\eta_{Z_T}}|\hole\\
\CO_{\Delta_Z} \boxtimes \Delta_{T*} (\mathscr{J}_{T}/\mathscr{J}_{T}^2)  
\ar@/_12pc/[ddd]_-{\bf 1}
\ar[rr]^-{q_{Z_T}\circ \nu_{Z_T}} & & \CO_{\Delta_Z}\boxtimes (\mathcal{I}_{\Delta_{U}}/I^2_{\Delta_{U}})|_{T\times T} 
\ar@/^11pc/^-{{\bf 1}}[ddd] \\
 & & \\
\CO_{\Delta_Z}\boxtimes \Delta_{T*} (\mathscr{J}_{T}/\mathscr{J}_{T}^2)
\ar[rr]^-{{\bf 1}\boxtimes \nu_T} \ar[d]^-{{\bf 1}}
& & \CO_{\Delta_Z}\boxtimes \mathcal{I}_{\Delta_{U}}|_{T\times T}
\ar[d]^-{{\bf 1}\boxtimes r_T} \\
\CO_{\Delta_Z}\boxtimes\Delta_{T*} (\mathscr{J}_{T}/\mathscr{J}_{T}^2)  
\ar[rr]^-{{\bf 1}\boxtimes s_T} 
& & \CO_{\Delta_Z}\boxtimes
(\mathcal{I}_{\Delta_{U}}/I^2_{\Delta_U})|_{T\times T}
 \\}
\ee
where $s_T = r_T \circ \nu_T$, and $\eta_{Z_T}$ is the third map from the left in the exact 
sequence \eqref{eq:idealseqE}. 
In particular, the following relation holds:
\be\label{eq:qcommrel}
q_{Z_T}\circ \nu_{Z_T} = {\bf 1}\boxtimes(r_T\circ  \nu_T). 
\ee
 \end{lem}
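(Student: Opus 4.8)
The plan is to verify commutativity of the cube \eqref{eq:bigdiagC} one face at a time, each face reducing either to a triviality or to a result already in hand, and then to extract \eqref{eq:qcommrel} by a chase around it. Three of the six faces are immediate. The four objects forming the left column of \eqref{eq:bigdiagC} are literally the same sheaf $\CO_{\Delta_Z}\boxtimes\Delta_{T*}(\mathscr{J}_{T}/\mathscr{J}_{T}^2)$, with all connecting arrows equal to ${\bf 1}$, so the left face commutes on the nose. The front face (the two top rows) is the left-hand square of diagram \eqref{eq:AclassdiagCD} in Corollary \ref{AclasslemD}, so it commutes by construction; its right vertical arrow is the map $q_{Z_T}=p_{Z_T}\circ r_{Z_T}$. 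The back face (the two bottom rows) commutes by functoriality of $\boxtimes$, since $({\bf 1}\boxtimes r_T)\circ({\bf 1}\boxtimes\nu_T)={\bf 1}\boxtimes(r_T\circ\nu_T)={\bf 1}\boxtimes s_T$ by the definition $s_T=r_T\circ\nu_T$.

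The two faces carrying content are the top and right ones. For the top face I would appeal directly to Lemma \ref{diaglemBC}: the commutativity of the lower left square of diagram \eqref{eq:bigdiagA} established there is exactly the identity $\eta_{Z_T}\circ\nu_{Z_T}={\bf 1}\boxtimes\nu_T$, and since $\eta_{Z_T}$ is the third map of \eqref{eq:idealseqE} — i.e.\ the restriction of the canonical projection $\gamma_{Z_U}$ to $Z_T\times Z_T$ — this is precisely what commutativity of the top face asserts. For the right face, recall from the definitions preceding Corollary \ref{AclasslemD} that $q_{Z_T}=p_{Z_T}\circ r_{Z_T}$, with $r_{Z_T}$ the natural projection onto the conormal sheaf and $p_{Z_T}={\overline\gamma}_{Z_U}|_{Z_T\times Z_T}$, and that by construction ${\overline\gamma}_{Z_U}$ is the map induced by $\gamma_{Z_U}\colon\mathcal{I}_{\Delta_{Z_U}}\to\CO_{\Delta_Z}\boxtimes\mathcal{I}_{\Delta_U}$ through the natural surjections onto $\mathcal{I}_{\Delta_{Z_U}}/\mathcal{I}_{\Delta_{Z_U}}^2$ and $\CO_{\Delta_Z}\boxtimes(\mathcal{I}_{\Delta_U}/\mathcal{I}_{\Delta_U}^2)$ — this is the commutative square behind the characterization of ${\overline\gamma}_{Z_U}$ stated just after \eqref{eq:idealdiagC}. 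Restricting that square to $Z_T\times Z_T$ (a functorial operation; Lemma \ref{diaglemB} and the smoothness of $Z$, together with Corollary \ref{flatcorA}, ensure that the restricted objects and maps are the ones named here), and using that $\gamma_{Z_U}$ restricts to $\eta_{Z_T}$ and that $\CO_{\Delta_Z}\boxtimes\mathcal{I}_{\Delta_U}\to\CO_{\Delta_Z}\boxtimes(\mathcal{I}_{\Delta_U}/\mathcal{I}_{\Delta_U}^2)$ restricts to ${\bf 1}\boxtimes r_T$, one obtains $q_{Z_T}=({\bf 1}\boxtimes r_T)\circ\eta_{Z_T}$, which is exactly commutativity of the right face.

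The bottom face then commutes automatically, and its commutativity is precisely the relation \eqref{eq:qcommrel}: chasing around the already-verified faces,
\[
q_{Z_T}\circ\nu_{Z_T}=({\bf 1}\boxtimes r_T)\circ\eta_{Z_T}\circ\nu_{Z_T}=({\bf 1}\boxtimes r_T)\circ({\bf 1}\boxtimes\nu_T)={\bf 1}\boxtimes(r_T\circ\nu_T)={\bf 1}\boxtimes s_T,
\]
where the first equality is the right face and the second is the top face.

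I do not expect a serious obstacle here: the genuine geometric input has already been absorbed into Lemma \ref{diaglemBC} and into the base-change results of the Appendix, so what remains is essentially bookkeeping. The one step that deserves care is the identification $q_{Z_T}=({\bf 1}\boxtimes r_T)\circ\eta_{Z_T}$ — equivalently, the statement that building $q_{Z_T}$ by ``restrict to $Z_T\times Z_T$, pass to the conormal sheaf, then project onto the $U$-direction'' agrees with ``restrict the projection $\gamma_{Z_U}$, then kill squares in the $U$-direction''. This is the compatibility of the diagonal splitting \eqref{eq:idealsplit} with $\gamma_{Z_U}$ recorded in \eqref{eq:idealdiagC}, and the only subtlety is verifying that passing to $Z_T\times Z_T$ does not disturb the exact sequences involved, which is ensured by Lemma \ref{diaglemB} and the smoothness of $Z$.
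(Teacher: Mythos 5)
Your proposal is correct and follows essentially the same route as the paper: the paper's proof consists of citing the identity $\eta_{Z_T}\circ\nu_{Z_T}={\bf 1}\boxtimes\nu_T$ from the preceding lemma and then observing that everything else follows because $r_{Z_T}$, $r_T$ and $\eta_{Z_T}$ are natural projections, which is precisely your verification of the right face via $q_{Z_T}=({\bf 1}\boxtimes r_T)\circ\eta_{Z_T}$ and the compatibility square behind ${\overline\gamma}_{Z_U}$. Your write-up is simply a more explicit face-by-face account of the same bookkeeping.
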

 
 \begin{proof}The relation 
 $\eta_{Z_T}\circ \nu_{Z_T} = {\bf 1}\boxtimes \nu_T$ has already 
 been proven in Lemma \ref{diaglemB}. 
 The rest follows from the fact that the $r_{Z_T}, r_T$
 and  
 $\eta_{Z_T}$ are natural projections. 
 
 \end{proof}

\subsection{Proof of Theorem \ref{Apropone}}  
To summarize, Lemma \ref{AclasslemC} shows that the relative Atiyah 
class $\alpha_{Z_T/Z}$ is determined by the commutative diagram 
\[ 
\xymatrix{ 
\CO_{\Delta_Z} \boxtimes \Delta_{T*}(\mathscr{J}_{T}/\mathscr{J}_{T}^2)
\ar[rr]^-{\nu_{Z_T}} \ar[d]^-{{\bf 1}} & &\mathcal{I}_{\Delta_{Z_U}}|_{{Z_T}\times {Z_T}} 
\ar[r]^{\iota_{Z_T} \circ \xi_{Z_T}} \ar[d]^-{q_{Z_T}}& \CO_{{Z_T}\times {Z_T}} \\ 
\CO_{\Delta_Z } \boxtimes \Delta_{T*}(\mathscr{J}_{T}/\mathscr{J}_{T}^2)
\ar[rr]^-{q_{Z_T}\circ \nu_{Z_T}} & & \CO_{\Delta_Z } \boxtimes (\mathcal{I}_{\Delta_{U}}/\mathcal{I}_{\Delta_{U}}^2)|_{{Z_T}\times {Z_T}} & \\}
\]
Let $\CA_{Z_T}$ and $\CC_{Z_T}$ denote the top and the bottom row respectively.
At the same time the Atiyah class of $T$ is determined by the 
diagram 
\[ 
\xymatrix{ 
\Delta_{T*} (\mathscr{J}_{T}/\mathscr{J}_{T}^2)
 \ar[rrr]^-{\nu_{T}} \ar[d]^-{{\bf 1}} & & & \mathcal{I}_{\Delta_{U}}|_{T\times T} 
\ar[r]^{\iota_{T} \circ \xi_{T}} \ar[d]^-{r_{T}}& \CO_{T\times T} \\ 
\Delta_{T*} (\mathscr{J}_{T}/\mathscr{J}_{T}^2)
\ar[rrr]^-{r_{T}\circ \nu_{T}} & & &  
\mathcal{I}_{\Delta_U}/I^2_{\Delta_U}|_{T\times T}. & \\}
\]
Again, let $\CA_{T}$ and $\CC_{T}$ denote the top and the bottom row respectively. 

Then Lemmas \ref{diaglemB} and \ref{diaglemC} show that there is a commutative diagram of complexes 
\[
\xymatrix{ 
\CA_{Z_T}\ar[d]^-{\alpha_{Z_T/Z}} \ar[r]^-{\sim} & 
\CO_{\Delta_Z} \boxtimes \CA_T \ar[d]^-{{\bf 1}\boxtimes \alpha_T}
\\
\CC_{Z_T} \ar[r]^-{\sim} & \CO_{\Delta_Z}\boxtimes \CC_T \\ }
\]
where the top horizontal arrow is a quasi-isomorphism and the bottom row is a canonical isomorphism. This proves the claim in Theorem \ref{Apropone}. 
\hfill $\text{q.e.d.}$

\section{Obstruction theory for sheaves on local Calabi-Yau varieties}\label{obstrsect}

This section serves as an interesting application of Theorem \ref{Apropone}, providing the foundations for a sheaf counting theory of local Calabi-Yau fourfolds. To begin with, we start by discussing the particular moduli theory we are interested in this article. Let $Y$ be a smooth quasi-projective variety and let $X$ be the total space of the canonical bundle $K_Y$. Let $i:Y \to X$ denote the zero section. 
Let 
$\CM$ be a quasi-projective fine moduli scheme of stable sheaves, $\mathscr{F}$, on $X$ with proper support. 
Suppose all sheaves $\mathscr{F}$ parameterized by $\CM$ are scheme-theoretically supported on $Y$.
Then the goal of this section is to prove that 
the obstruction theory of $\CM$ as a moduli space of sheaves on $X$ coincides with the obstruction theory of $\CM$ as a moduli space of sheaves on $Y$. The precise statement is Proposition \ref{obsprop}, which relies on Proposition \ref{Aclasslocal} and Theorem \ref{Apropone}. The obstruction theory will be defined as in the work of Behrend and Fantechi \cite{BF97} and the proof will use the formalism of truncated Atiyah classes, which was discussed in the previous section. 

In order to facilitate the exposition, the truncated Atiyah class will be referred to simply as the Atiyah class below
since no other variant will be used throughout the paper. To fix notation, let $\CY= Y \times \CM$, $\CX= X \times \CM$, and let us denote the canonical embedding $i_\CY := i \times 1_\CM:\CY \to \CX$. 
Let $\IF$ denote the universal sheaf on $\CX$ \footnote{The moduli space $\CM$ is in fact the coarse moduli 
space of an Artin moduli stack of ${\mathfrak M}$ of stable sheaves. 
The latter is a $\IC^*$-gerbe over $\CM$ since all stable objects have $\IC^*$-stabilizers, and hence a universal sheaf $\IF$ might not exist on $X\times \CM$ globally. As explained in \cite{T98}, 
below Theorem 3.30, this issue can be ignored, as such a sheaf $\IF_i$ exists locally on open subsets $X \times \CU_i\subset X\times \CM$ and different local choices for $\IF_i$ differ by twisting by line bundles $\mathcal{L}_{i}$ pulled back from $U_{i}$. However the direct image 
$\dR{\mathscr{H}om}_{\pi_{\CU_i}}(\IF_i,\IF_i)$ used in our construction remains invariant under twisting by $\mathcal{L}_{i}$, and hence there is a unique global object 
$\dR{\mathscr{H}om_{\pi_\CM}(\IF,\IF)}$ on  $X\times \CM$, independent of any choices.}. Under the current assumptions, $\IF$ is the extension by zero, $\IF = i_{\CY*}\IG$, of a sheaf $\IG$ on $\CY$. The latter is also a universal sheaf on 
$\CY$. Following the framework of Section \ref{Aclassproduct}, as shown in \cite{HT10}, the relative Atiyah class 
$\alpha_{\CX/X}$ of $\CX$, is given by the morphism in the derived category
\be\label{eq:relAclassM}
\alpha_{\CX/X}: \CO_{\Delta_\CX} \to \Delta_{\CX*}(\pi_\CM^*\IL^{\bullet}_\CM[1]),
\ee
which determines a natural transformation between the Fourier-Mukai functors $\mathscr{D}^b(\CX)\to \mathscr{D}^b(\CX)$ with kernels $\CO_{\Delta_\CX}$ and $\Delta_{\CX*}(\pi_\CM^*\IL^{\bullet}_\CM[1])$ respectively. Therefore, given a sheaf $\IF$ on $\CX$, 
the map \eqref{eq:relAclassM} determines, by Fourier-Mukai transform, a morphism 
\[
\alpha_{\CX/X}(\IF) : \IF \to \IF \otimes^L  \pi_\CM^*\IL^{\bullet}_\CM[1].
\]
This is the universal relative Atiyah class of $\IF$.

\subsection{Comparison of relative Atiyah 
classes}\label{compAclass} 

Now let $\pi_X: \CX\to X$, 
$\pi_\CM: \CX\to \CM$, $\rho_Y: \CY\to Y$
and $\rho_\CM: \CY\to \CM$  denote the canonical projections. 

\begin{notn} For any scheme $W$ over $\IC$ the bounded derived category of $W$ will be denoted by  $\mathscr{D}^b(W)$. 
For any two objects $C_1,C_2$ of $\mathscr{D}^b(W)$ let 
$\dR{\rm Hom}_{\mathscr{D}^b(W)}(C_1,C_2)$, and 
$\dR{\mathscr Hom}_{\mathscr{D}^b(W)}(C_1,C_2)$ denote global and local derived Hom functors respectively. Moreover, for a morphism of schemes 
$f: W \to V$ let $\dR{\mathscr Hom}_f(C_1,C_2)$ denote the derived pushforward 
${\dR}f_*\dR{\mathscr Hom}_{\mathscr{D}^b(W)}(C_1,C_2)$. 
\end{notn}

The main goal of this section is to compare the relative Atiyah classes of $\IF$ and $\IG$,
\[ 
\bal 
\alpha_{\CX/X}(\IF) & \in \dR{\rm {H}om}_{\mathscr{D}^b(\CX)}(\IF, \IF\otimes^L 
\pi_\CM^*\IL^{\bullet}_\CM)[1],\\ 
 \alpha_{\CY/Y}(\IG) &  \in \dR{\rm {H}om}_{\mathscr{D}^b(\CY)}(\IG, \IG\otimes^L 
\rho_\CM^*\IL^{\bullet}_\CM)[1],\\
\eal 
\]
defined in Section \ref{Aclassproduct}. 
First, note the following relations in the derived category of
$\CX$. 

\begin{lem}\label{adjlemB} 
Let $Q$ be an object of $\mathscr{D}^b(\CM)$. Then 
there is a natural isomorphism 
\be\label{eq:extisomA} 
\bal 
\dR{\mathscr{H}om}_{\mathscr{D}^b(\CX)} (\IF, \IF\otimes^L 
\pi_\CM^*Q)[1] \cong\  & i_{\CY*}\dR{\mathscr{H}om}_{\mathscr{D}^b(\CY)}(\IG, \IG \otimes^L \rho_\CM^*Q)[1] \oplus \\
& i_{\CY*}\dR{\mathscr{H}om}_{\mathscr{D}^b(\CY)}(\IG\otimes \rho_Y^*K^{-1}_Y, \IG \otimes^L \rho_\CM^*Q) \\
\eal
\ee
\end{lem}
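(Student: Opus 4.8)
The plan is to compute $\dR{\mathscr{H}om}_{\mathscr{D}^b(\CX)}(\IF, \IF\otimes^L \pi_\CM^*Q)$ by pushing the whole computation down to $\CY$ via the closed embedding $i_\CY$, using the fact that $\IF = i_{\CY*}\IG$ and that $\CX$ is the total space of $K_Y$. First I would recall the standard adjunction/base-change package for the regular embedding $i_\CY:\CY\hookrightarrow \CX$ of codimension one, with normal bundle $N_{\CY/\CX}\cong \rho_Y^*K_Y$ (since $\CX$ is the total space of $K_Y$ and $\CY$ is the zero section, and these structures are pulled back along $\rho_\CM$). Grothendieck duality for $i_\CY$ gives $i_\CY^! (-) \cong L i_\CY^*(-)\otimes \rho_Y^* K_Y[-1]$, and since $Q$ is pulled back from $\CM$, everything is compatible with the product structure.

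The key computational step is to resolve $\IF = i_{\CY*}\IG$ as an $\CO_\CX$-module. Because $\CY\subset\CX$ is a divisor cut out by the tautological section, there is a two-term locally free (Koszul) resolution
\[
0 \to \pi_\CM^*\rho_\CM^* (-)\,\,\xrightarrow{\ \ }\ \CO_\CX \otimes \pi_X^*\CO_X \to i_{\CY*}\CO_\CY \to 0,
\]
more precisely $0\to \CO_\CX\otimes g^*K_Y^{-1}\to \CO_\CX \to i_{\CY*}\CO_\CY\to 0$ where $g:\CX\to \CY$ abusively denotes the relevant projection. Tensoring with $\IF = i_{\CY*}\IG$ and applying $\dR{\mathscr Hom}_{\CX}(-, i_{\CY*}\IG\otimes^L\pi_\CM^*Q)$, or equivalently first using $L i_\CY^* i_{\CY*}\IG \cong \IG \oplus \IG\otimes \rho_Y^*K_Y^{-1}[1]$ (the self-intersection / derived restriction formula for a divisor, valid since $\IG$ is flat over the relevant base and $\CY$ is a Cartier divisor), reduces the left-hand side to a computation purely on $\CY$. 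Concretely, $\dR{\mathscr Hom}_\CX(i_{\CY*}\IG, i_{\CY*}\IG\otimes^L\pi_\CM^*Q) \cong i_{\CY*}\dR{\mathscr Hom}_\CY(Li_\CY^*i_{\CY*}\IG, \IG\otimes^L\rho_\CM^*Q)$ by the projection formula and adjunction, and substituting $Li_\CY^*i_{\CY*}\IG\cong \IG\oplus \IG\otimes\rho_Y^*K_Y^{-1}[1]$ and shifting by $[1]$ yields exactly the two summands displayed in the statement. The factor $\rho_Y^*K_Y^{-1}$ in the second summand comes precisely from the conormal bundle, and the shift $[1]$ disappears from the second summand because it was produced by the derived restriction; keeping careful track of the shift on both sides gives the asymmetric form (the first summand carries $[1]$, the second does not).

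The main obstacle I expect is bookkeeping the shifts and the twist consistently: one must verify that the connecting/extension data in $Li_\CY^*i_{\CY*}\IG$ genuinely splits as a direct sum in $\mathscr{D}^b(\CY)$ (this uses that $\CY\subset \CX$ is the zero section of a line bundle, so the embedding admits a retraction $g$ with $g\circ i_\CY = \id$, which forces the derived restriction to split), and that this splitting is natural enough to be promoted to the global object $\dR{\mathscr Hom}_{\pi_\CM}$ despite the gerbe subtlety mentioned in the footnote. Once the splitting of $Li_\CY^* i_{\CY*}\IG$ is established, the rest is a formal manipulation with adjunction $(i_{\CY*}\dashv i_\CY^!$, together with $i_\CY^*\dashv i_{\CY*})$, the projection formula, and the fact that $\rho_\CM = \rho_Y$-pullbacks commute with $i_{\CY*}$ since $i_\CY = i\times 1_\CM$. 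I would present the argument in that order: (1) normal bundle identification; (2) Koszul resolution of $i_{\CY*}\CO_\CY$ and the derived self-intersection formula giving the splitting of $Li_\CY^*i_{\CY*}\IG$; (3) adjunction plus projection formula to move $\dR{\mathscr Hom}$ onto $\CY$; (4) assemble the two summands with the correct shift and twist.
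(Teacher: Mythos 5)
Your proposal is correct and follows essentially the same route as the paper: the projection formula reduces $\IF\otimes^L\pi_\CM^*Q$ to $i_{\CY*}(\IG\otimes^L\rho_\CM^*Q)$, and the splitting then comes from the Koszul resolution of the zero section giving $Li_\CY^*i_{\CY*}\IG\cong \IG\oplus\IG\otimes\rho_Y^*K_Y^{-1}[1]$, combined with adjunction. The paper packages exactly this derived self-intersection argument as Lemma \ref{adjlem} in the appendix, with the same shift bookkeeping you describe.
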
 

\begin{proof}
The projection formula 
\cite[Prop. 5.6, Ch. II.5]{H66} 
yields an isomorphism 
\[
\IF\otimes^L \pi_{\CM}^*Q
\cong i_{\CY*}(\IG\otimes^L Li_\CY^*\pi_{\CM}^*Q), 
\]
where $i_{\CY*}\CO_\CY\cong \pi_X^* i_*\CO_Y$, hence Lemma 
\ref{flatlemma} implies that 
\be\label{eq:dpbid} 
Li_\CY^* \pi_\CM^*Q \cong i_\CY^*\pi_\CM^* Q \cong \rho_\CM^*Q
\ee 
One is then left with an isomorphism 
\[
\IF\otimes^L \pi_{\CM}^*Q
\cong i_{\CY*}(\IG\otimes^L \rho_\CM^*Q),
\]
and the claim follows from Lemma \ref{adjlem}. 

\end{proof}
In particular, setting $Q=\IL^{\bullet}_\CM$ in Lemma \ref{adjlemB}, and applying the derived functor $\dR\Gamma_{{\mathscr D}^b(\CX)}$ to both sides of equation 
\eqref{eq:extisomA}, 
one obtains a decomposition 
\be\label{eq:extisomB} 
\bal 
\dR{\rm {H}om}_{\mathscr{D}^b(\CX)} (\IF, & \IF\otimes^L 
\pi_\CM^*\IL^{\bullet}_\CM)[1] \cong\  \\
&  \dR{\rm {H}om}_{\mathscr{D}^b(\CY)}(\IG, \IG \otimes^L \rho_\CM^*\IL^{\bullet}_\CM)[1]\ \oplus \\
& \dR{\rm {H}om}_{\mathscr{D}^b(\CY)}(\IG\otimes \CK^{-1}, \IG \otimes^L \rho_\CM^*\IL^{\bullet}_\CM), \\
\eal
\ee
where $\CK = \rho_Y^*K_Y$.
The first main goal of this section is to prove: 

\begin{prop}\label{Aclasslocal} 
The components of $\alpha_{\CX/X}(\IF)$ with respect to the direct sum decomposition \eqref{eq:extisomB} are 
\be\label{eq:Aclassadj} 
( \alpha_{\CY/Y}(\IG),\  0).
\ee
\end{prop}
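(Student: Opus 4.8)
\textbf{Proof plan for Proposition \ref{Aclasslocal}.}
The plan is to reduce the computation of $\alpha_{\CX/X}(\IF)$ to the universal relative Atiyah class on the product scheme $\CM \times \CM$ via Theorem \ref{Apropone}, and then to track how the extension-by-zero $i_{\CY*}$ interacts with the Fourier--Mukai realization of the Atiyah class. First I would observe that, by Theorem \ref{Apropone} applied to $Z=X$ and $T=\CM$ (so $W=\CX$), the relative Atiyah class $\alpha_{\CX/X}$ of the product scheme $\CX = X\times\CM$ equals ${\bf 1}_{\CO_{\Delta_X}} \boxtimes \alpha_\CM$, where $\alpha_\CM: \CO_{\Delta_\CM}\to \Delta_{\CM*}\IL^{\bullet}_\CM[1]$ is the Atiyah class of the moduli space; likewise $\alpha_{\CY/Y} = {\bf 1}_{\CO_{\Delta_Y}}\boxtimes \alpha_\CM$ for $W=\CY = Y\times\CM$. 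Thus both universal relative Atiyah classes are, at the level of Fourier--Mukai kernels, obtained from the \emph{same} class $\alpha_\CM$ on $\CM$, pulled back and boxed with the identity on the respective smooth factor ($X$ or $Y$). This is the conceptual heart of why the statement should be true: the "extra" deformations of $\IF$ as a sheaf on $X$ versus as a sheaf on $Y$ live entirely in the normal direction to $Y\subset X$ and are not seen by $\alpha_\CM$.

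The next step is to pass from kernels to the universal Atiyah class of the sheaf. Applying the Fourier--Mukai transform with kernel $\CO_{\Delta_\CX}$ (resp.\ $\Delta_{\CX*}(\pi_\CM^*\IL^{\bullet}_\CM[1])$) to $\IF = i_{\CY*}\IG$, and using the projection formula together with the identity $Li_\CY^*\pi_\CM^* \cong \rho_\CM^*$ already established in \eqref{eq:dpbid} (via Lemma \ref{flatlemma}), the transform of $\IF$ by the second kernel is $i_{\CY*}(\IG\otimes^L \rho_\CM^*\IL^{\bullet}_\CM[1])$. Hence $\alpha_{\CX/X}(\IF)$ is a morphism $i_{\CY*}\IG \to i_{\CY*}(\IG\otimes^L\rho_\CM^*\IL^{\bullet}_\CM[1])$ in $\mathscr{D}^b(\CX)$, and I would identify it, under the decomposition \eqref{eq:extisomA}--\eqref{eq:extisomB} coming from Lemma \ref{adjlemB}, by computing its image under $Li_\CY^*$ (or equivalently by adjunction applied to $i_{\CY*}$). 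Because the kernel of $\alpha_{\CX/X}$ is supported on the diagonal and factors as ${\bf 1}_{\CO_{\Delta_X}}\boxtimes\alpha_\CM$, restricting/pulling back along $\CY\hookrightarrow\CX$ turns $\CO_{\Delta_X}$ into $\CO_{\Delta_Y}$ on the nose (since $i^*\CO_{\Delta_X}\cong\CO_{\Delta_Y}$ as $i$ is a regular closed embedding between smooth schemes), and therefore the pulled-back class is exactly ${\bf 1}_{\CO_{\Delta_Y}}\boxtimes\alpha_\CM = \alpha_{\CY/Y}$, which Fourier--Mukai transforms to $\alpha_{\CY/Y}(\IG)$. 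This pins down the first component of \eqref{eq:Aclassadj} as $\alpha_{\CY/Y}(\IG)$.

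It remains to show the second component — the one landing in $\dR{\mathscr{H}om}(\IG\otimes\CK^{-1}, \IG\otimes^L\rho_\CM^*\IL^{\bullet}_\CM)$, i.e.\ the piece detected by pairing against the normal bundle direction — vanishes. Here I would argue that this component is controlled by how $\alpha_{\CX/X}(\IF)$ interacts with the section $\CO_\CX \to i_{\CY*}\CK^{-1}\cong \mathcal{N}_{\CY/\CX}$ (equivalently, with the $\ext^1(i_{\CY*}\IG, i_{\CY*}\IG)$ summand coming from the normal bundle in the standard formula for $\Ext$-groups of pushforwards), and that since $\alpha_{\CX/X}$ is induced by $\alpha_\CM$ boxed with ${\bf 1}_{\CO_{\Delta_X}}$ — a kernel pulled back from $X\times X$ along projections that are constant on the $\CM$-diagonal and literally the identity tensor on the $X$-factor — it commutes with any operation purely in the $X$-directions, in particular with the inclusion of the conormal direction; there is simply no term in ${\bf 1}_{\CO_{\Delta_X}}\boxtimes\alpha_\CM$ that can produce a nonzero map into the $\CK^{-1}$-twisted summand, which is a genuinely $X$-directional (normal bundle) contribution orthogonal to the $\CM$-directional class $\alpha_\CM$. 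Making this last orthogonality rigorous — carefully matching the two direct-sum decompositions (the one from $i_{\CY*}$ adjunction in Lemma \ref{adjlemB} and the one from the product splitting $\IL_{\CX}\cong\Omega_X\boxplus\IL_\CM$ underlying Theorem \ref{Apropone}) and checking the cross term is zero — is the main obstacle, and I expect it to require unwinding the explicit diagram \eqref{eq:AclassdiagCB}/\eqref{eq:AclassdiagCD} restricted along $\CY\hookrightarrow\CX$ rather than a purely formal argument. Everything else is a combination of the projection formula, base change (Lemma \ref{flatlemma}), and Theorem \ref{Apropone}.
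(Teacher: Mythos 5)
Your overall route is the same as the paper's: apply Theorem \ref{Apropone} to write $\alpha_{\CX/X}$ as $\Delta_{\CX*}(\pi_\CM^*\alpha_\CM)$, then track the Fourier--Mukai transform of $\IF=i_{\CY*}\IG$ through the projection formula and the base-change identity $Li_\CY^*\pi_\CM^*\cong\rho_\CM^*$. But there are two problems with how you execute it. First, the claim that restricting along $\CY\hookrightarrow\CX$ ``turns $\CO_{\Delta_X}$ into $\CO_{\Delta_Y}$ on the nose'' is false as a derived statement: $\Delta_\CX$ meets $\CY\times\CY$ non-transversally (excess codimension one), so $L(i_\CY\times i_\CY)^*\CO_{\Delta_\CX}$ acquires a shifted summand twisted by the conormal bundle --- this is precisely the origin of the second summand in \eqref{eq:extisomB}, so you cannot wave it away. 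The paper circumvents this by restricting one factor at a time through the two cartesian squares in \eqref{eq:XYsquares}: Lemma \ref{restrlem}(i) shows the restriction of $\Delta_{\CX*}\pi_\CM^*Q$ along $1\times i_\CY$ has no higher Tors (because the tautological section is fiberwise injective on that kernel), and Lemma \ref{restrlem}(ii) handles $i_\CY\times 1$ by flatness of $r_\CY^*\IG$. These Tor-vanishing checks are the actual technical content, and your sketch skips them.

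Second, you have misplaced where the difficulty lies. You treat the vanishing of the component in $\dR{\rm Hom}(\IG\otimes\CK^{-1},\IG\otimes^L\rho_\CM^*\IL^{\bullet}_\CM)$ as ``the main obstacle'' requiring a separate orthogonality argument, but once the chain of identifications \eqref{eq:FMisomC} is established it shows that $\alpha_{\CX/X}(\IF)$ equals $i_{\CY*}\bigl(\alpha_{\CY/Y}(\IG)\bigr)$ under the canonical isomorphisms (diagram \eqref{eq:AclassdiagG}), and then the vanishing is automatic: any morphism of the form $i_{\CY*}(\psi)$ corresponds under the adjunction $Li_\CY^*\dashv i_{\CY*}$ to $\psi$ precomposed with the counit $Li_\CY^*i_{\CY*}\IG\to\IG$, which is exactly the projection killing the $\IG\otimes\CK^{-1}[1]$ summand of the splitting \eqref{eq:splitadj}. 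So there is no cross term to check; the entire proof consists of the careful projection-formula/base-change bookkeeping that your proposal defers.
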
 

\begin{rmk}A similar result has been proven by Kuznetsov and Markusevich in \cite[Theorem 3.2]{kuznets}. For completeness we will recall their theorem in here.

\begin{thr}\cite[Theorem 3.2]{kuznets}
Let $i: Y\to M$ be a locally complete intersection.
\begin{enumerate}[label=(\roman*)]
\item For any $\mathscr{F}\in \mathscr{D}^b(\operatorname{Coh}(Y))$, the linkage class $\epsilon_{\mathscr{F}}\in \operatorname{Ext}^{2}(\mathscr{F}, \mathscr{F}\otimes \mathscr{N}^{\vee}_{Y/M})$ is the product of the Atiyah class $\operatorname{At}_{\mathscr{F}}\in \operatorname{Ext}^{1}(\mathscr{F}, \mathscr{F}\otimes \Omega_{Y})$ with $\nu_{Y/M}\in \operatorname{Ext}^{1}(\Omega_{Y}, \mathscr{N}_{Y/M}^{\vee})$. In other words $\epsilon_{\mathscr{F}}=\left(1_{\mathscr{F}\otimes \nu_{Y/M}}\right)\circ \operatorname{At}_{\mathscr{F}}$.
\item For any $\mathscr{G}\in \mathscr{D}^b(\operatorname{Coh}(M))$ we have $$\operatorname{At}_{Li^*\mathscr{G}}\cong \rho_{*}\left((\operatorname{At}_{\mathscr{G}})|_{Y}\right),$$where $\rho_{*}: \operatorname{Ext}^{1}(Li^*\mathscr{G}, Li^*\mathscr{G}\otimes \Omega_{M}|_{Y})\to \operatorname{Ext}^{1}(Li^*\mathscr{G},Li^*\mathscr{G}\otimes \Omega_{Y})$ is the pushout via $\rho: \Omega_{M}|_{Y}\to \Omega_{Y}$.
\item For any $\mathscr{F}\in \mathscr{D}^b(\operatorname{Cor}(Y))$ the image of the Atiyah class $\operatorname{At}_{i_{*}\mathscr{F}}\in \operatorname{Ext}^{1}(i_{*}\mathscr{F},i_{*}\mathscr{F}\otimes \Omega_{M})$ in $\dR\operatorname{Hom}^{1}(Li^*i_{*}\mathscr{F},i_{*}\mathscr{F}\otimes \Omega_{M})=H^{0}(M,i_{*}(\mathscr{F}^{\vee})\otimes \mathscr{F}\otimes \mathcal{N}_{Y/M}\otimes \Omega_{M}|_{Y})$ equals  $1_{\mathscr{F}}\otimes k$ where $k=k_{Y/M}:\mathscr{N}^{\vee}_{Y/M}\to \Omega_{M}|_{Y}$. 
\end{enumerate}
\end{thr}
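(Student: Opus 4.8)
The plan is to derive all three statements from the universal presentation of the (truncated) Atiyah class recorded in Lemmas \ref{AclasslemA} and \ref{AclasslemB}, applied to the locally complete intersection $i:Y\to M$; the one geometric input is the comparison of the diagonals $\Delta_Y\subset Y\times Y$ and $\Delta_M\subset M\times M$ under $i\times i$. Taking $W=Y$, $A=M$ in Lemma \ref{AclasslemA} and using $\mathscr{J}_Y/\mathscr{J}_Y^2\cong\mathscr{N}^\vee_{Y/M}$, the sequence \eqref{eq:diagseqA} becomes
\[
0\to\Delta_{Y*}(\mathscr{N}^\vee_{Y/M})\xrightarrow{\tau_Y}\mathcal{I}_{\Delta_M}|_{Y\times Y}\xrightarrow{\xi_Y}\mathcal{I}_{\Delta_Y}\to0,
\]
whose three terms record the normal, mixed and tangential parts of the first neighbourhood of $\Delta_M$ along $Y\times Y$. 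Moreover the bottom row of diagram \eqref{eq:AclassdiagA} for $W=Y$ is quasi-isomorphic to $\IL_Y[1]$ and is represented by the conormal sequence $\mathscr{N}^\vee_{Y/M}\xrightarrow{k}\Omega_M|_Y\xrightarrow{\rho}\Omega_Y$; this simultaneously pins down the maps $k=k_{Y/M}=d_{Y/M}$ and $\rho$, together with the extension class $\nu_{Y/M}\in\operatorname{Ext}^1(\Omega_Y,\mathscr{N}^\vee_{Y/M})$ of that sequence. Throughout, the sheaf-level classes $\operatorname{At}_{(-)}$ are obtained from the universal class $\alpha$ by Fourier--Mukai transform, so the comparisons can be carried out at the universal level. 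I would treat the parts in the order (ii), (i), (iii).

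For (ii), the point is naturality of the universal Atiyah class under $Li^*$. Pulling back the defining diagram \eqref{eq:AclassdiagA} for $M$ along $i\times i$ produces a morphism into the corresponding diagram for $Y$, and the induced map on bottom rows is exactly the canonical comparison $Li^*\IL_M\to\IL_Y$, which on $h^0$ is the surjection $\rho:\Omega_M|_Y\to\Omega_Y$. Passing to Fourier--Mukai transforms and evaluating on $\mathscr{G}$ then gives $\operatorname{At}_{Li^*\mathscr{G}}=\rho_*\big((\operatorname{At}_{\mathscr{G}})|_Y\big)$. This step is essentially formal once the compatibility of the two diagonal presentations under pullback is written out.

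For (i), I would unwind the definition of the linkage class $\epsilon_{\mathscr{F}}$ and recognise it as the image of $\operatorname{At}_{\mathscr{F}}:\mathscr{F}\to\mathscr{F}\otimes\Omega_Y[1]$ under the boundary map of the conormal sequence, i.e.\ under the connecting homomorphism measuring the failure of the $\CO_Y$-module $\mathscr{F}$ to lift to the first infinitesimal neighbourhood $\CO_M/\mathscr{J}_Y^2$. Since $\nu_{Y/M}:\Omega_Y\to\mathscr{N}^\vee_{Y/M}[1]$ is by construction that connecting map, this boundary map is precisely Yoneda composition with $\nu_{Y/M}$, and one obtains $\epsilon_{\mathscr{F}}=(\mathbf{1}_{\mathscr{F}}\otimes\nu_{Y/M})\circ\operatorname{At}_{\mathscr{F}}$. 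The only content is to match the chosen definition of $\epsilon_{\mathscr{F}}$ with this composition.

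Part (iii) is where I expect the main difficulty. Since $i$ is regular I would first restrict the Koszul resolution of $i_*\CO_Y$ to $Y$ to obtain $Li^*i_*\mathscr{F}\cong\bigoplus_{j\ge0}\mathscr{F}\otimes\wedge^j\mathscr{N}^\vee_{Y/M}[j]$, and combine this with the local $\ext$ of sheaves pushed forward along $i$ to produce the stated identification of the normal summand of $\dR\operatorname{Hom}^1(Li^*i_*\mathscr{F},i_*\mathscr{F}\otimes\Omega_M)$ with $H^0(M,i_*(\mathscr{F}^\vee)\otimes\mathscr{F}\otimes\mathscr{N}_{Y/M}\otimes\Omega_M|_Y)$: in cohomological degree $1$ the contribution carrying a factor $\mathscr{N}_{Y/M}$ is exactly the $\wedge^1\mathscr{N}^\vee_{Y/M}$ term, whose dual supplies $\mathscr{N}_{Y/M}$. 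I would then compute the image of $\operatorname{At}_{i_*\mathscr{F}}$ under restriction along the counit $Li^*i_*\mathscr{F}\to\mathscr{F}$ by unwinding the diagonal presentation of $\operatorname{At}_{i_*\mathscr{F}}$ on $M\times M$ over $Y\times Y$: the piece landing in the normal direction is controlled by the sub-object $\Delta_{Y*}(\mathscr{N}^\vee_{Y/M})\subset\mathcal{I}_{\Delta_M}|_{Y\times Y}$ in the displayed sequence, whose comparison map into $\Omega_M|_Y$ is precisely $d_{Y/M}=k$. Tracking this through the $\wedge^\bullet\mathscr{N}^\vee_{Y/M}$ decomposition identifies the normal component with $\mathbf{1}_{\mathscr{F}}\otimes k$. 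The delicate points, and the heart of the argument, are the correct bookkeeping of the shifts in the Koszul decomposition and the explicit verification that this normal component equals the conormal embedding $k$ — which encodes how the $\CO_M$-module structure of $i_*\mathscr{F}$ varies transversally to $Y$ — rather than picking up an unwanted tangential contribution from the $\wedge^0$ summand.
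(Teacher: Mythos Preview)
The paper does not prove this theorem. It is quoted verbatim from \cite[Theorem 3.2]{kuznets} inside a Remark, purely for context: the authors recall Kuznetsov--Markusevich's pointwise result in order to contrast it with their own Proposition \ref{Aclasslocal}, which is the analogous statement for the \emph{relative truncated} Atiyah class over the moduli space. Immediately after stating the theorem the paper says explicitly that ``the above theorem is a point-wise statement which uses the classical Atiyah classes instead of universal Atiyah class over the moduli space,'' and that the goal of the section is to prove an analogue of part (iii) in the relative setting. So there is no proof in the paper to compare your proposal against.

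That said, your sketch is in the right spirit for how one would approach the Kuznetsov--Markusevich result, and indeed the paper's proof of its own Proposition \ref{Aclasslocal} (the relative analogue) follows a strategy close to what you outline for (iii): it uses the cartesian diagram \eqref{eq:XYsquares} comparing the diagonals $\Delta_\CY$, $\Gamma_\CY$, $\Delta_\CX$, together with repeated applications of the projection formula along $1\times i_\CY$ and $i_\CY\times 1$, and relies crucially on Theorem \ref{Apropone} to identify the relative Atiyah class $\alpha_{\CX/X}$ with $\Delta_{\CX*}(\pi_\CM^*\alpha_\CM)$. The bookkeeping you flag as delicate --- tracking the normal versus tangential components through the Koszul-type decomposition --- is exactly what Lemmas \ref{restrlem} and \ref{pushfwdlem} handle in the paper's own argument, albeit in the relative rather than absolute setting.
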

Note that the above theorem is a point-wise statement which uses the classical Atiyah classes instead of universal Atiyah class over the moduli space. In this section we aim at providing an analogous proof of part (iii), with the relative truncated Atiyah class $\alpha_{\CX/X}$ in place of the classical Atiyah class. The authors believe that lifting the proof of Proposition \ref{Aclasslocal} to the one using the full un-truncated cotangent complex, is possible although, it might be a big step involving  difficult technicalities.
\end{rmk}

As in \cite[Theorem 3.2]{kuznets}, the proof will use the following commutative diagram, where both squares are cartesian:
\be\label{eq:XYsquares} 
\xymatrix{ 
\CY \ar[d]^-{\Delta_\CY} \ar[rr]^-{1} & &  \CY \ar[d]^-{\Gamma_\CY} \ar[rr]^-{i_\CY} & & \CX \ar[d]^-{\Delta_\CX} 
\\
\CY\times \CY \ar[rr]^-{i_\CY \times 1} & & \CX \times \CY\ar[rr]^-{1\times i_\CY} & & \CX \times \CX. \\}
\ee
As shown in detail below, the successive application 
of the projection formula for the maps $1\times i_\CY$ and 
$i_\CY \times 1$ will yield Equation \eqref{eq:Aclassadj}. 
A central element in the proof of Proposition \ref{Aclasslocal} will be Theorem  \ref{Apropone} proven in Section \ref{Aclassproduct}, which provides an explicit formula for the universal relative  Atiyah class of 
$\CX$ in terms of the universal Atiyah class of $\CM$. 
To begin with, note the following restriction Lemmas. 
\begin{lem}\label{restrlem} 
 \begin{enumerate}[label=(\roman*)]
 \item Let $Q$ be an object of $\mathscr{D}^b(\CM)$. Then the natural morphism $L(1\times i_\CY)^*\Delta_{\CX*}\pi_\CM^*Q \to (1\times i_\CY)^*\Delta_{\CX*}\pi_\CM^*Q$ is an isomorphism in $\mathscr{D}^b(\CX\times \CY)$. Moreover there is a further isomorphism 
\be\label{eq:restrisomA} 
(1\times i_\CY)^*\Delta_{\CX*}\pi_\CM^*Q\cong \Gamma_{\CY*}\rho_\CM^*Q. 
\ee 

\item Let $r_{\CY} : \CX \times \CY \to \CY$ and $p_{i,\CY}: \CY \times \CY \to \CY$, $1\leq i \leq 2$,  denote the canonical projections. There is a natural isomorphism 
\be\label{eq:restrisomB} 
L(i_\CY\times 1)^* r_{\CY}^*\IG \cong p_{2, \CY}^*\IG. 
\ee
\end{enumerate}
\end{lem}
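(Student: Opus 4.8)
The plan is to prove both isomorphisms by pure base-change and flatness bookkeeping on the two cartesian squares in diagram \eqref{eq:XYsquares}, using only standard properties of $\CO$-modules for the closed immersion $i\colon Y\hookrightarrow X$ of the zero section of $K_Y$. Throughout I would repeatedly use Lemma \ref{flatlemma} from the Appendix, since $i_{\CY*}\CO_\CY\cong \pi_X^*i_*\CO_Y$ and $i_*\CO_Y$ is a sheaf whose relevant Tor-independence is recorded there.

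\textbf{Part (i).} First I would observe that $\Delta_\CX\colon \CX\to\CX\times\CX$ is a closed immersion, so $\Delta_{\CX*}\pi_\CM^*Q$ is a sheaf-level pushforward with no higher derived terms, and its underlying $\CO_{\CX\times\CX}$-module is flat in the second $\CX$-factor (it is pulled back from the first factor up to the diagonal identification, and in particular the second factor only sees $\CO_\CX$). Then for the closed immersion $1\times i_\CY\colon \CX\times\CY\to\CX\times\CX$ one has $L_k(1\times i_\CY)^*(-)=\mathcal{T}or^{\CO_{\CX\times\CX}}_k((1\times i_\CY)_*\CO_{\CX\times\CY},-)$, and since $(1\times i_\CY)_*\CO_{\CX\times\CY}\cong \CO_\CX\boxtimes i_{\CY*}\CO_\CY$ is a pushforward along the second factor while $\Delta_{\CX*}\pi_\CM^*Q$ is flat there, these Tor-sheaves vanish for $k>0$ by Lemma \ref{flatlemma}; this gives the first claim that $L(1\times i_\CY)^*\Delta_{\CX*}\pi_\CM^*Q\to (1\times i_\CY)^*\Delta_{\CX*}\pi_\CM^*Q$ is an isomorphism. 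For the second claim I would apply base change along the right-hand cartesian square of \eqref{eq:XYsquares}: the square
\[
\xymatrix{
\CY \ar[r]^-{\Gamma_\CY}\ar[d]_-{i_\CY} & \CX\times\CY \ar[d]^-{1\times i_\CY}\\
\CX\ar[r]^-{\Delta_\CX} & \CX\times\CX
}
\]
is cartesian (it is the right square of \eqref{eq:XYsquares}), and since $\Delta_\CX$ is a closed immersion whose pushforward sheaf is flat over the second $\CX$-factor, $\Delta_\CX$ and $1\times i_\CY$ are Tor-independent, so ordinary flat/proper base change yields $(1\times i_\CY)^*\Delta_{\CX*}\pi_\CM^*Q\cong \Gamma_{\CY*}(\text{restriction of }\pi_\CM^*Q\text{ along }i_\CY)\cong\Gamma_{\CY*}\rho_\CM^*Q$, where the last identification uses $\pi_\CM\circ i_\CY=\rho_\CM$ on the nose (equivalently, the computation \eqref{eq:dpbid} already done in the proof of Lemma \ref{adjlemB}).

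\textbf{Part (ii).} Here $r_\CY\colon \CX\times\CY\to\CY$ is the second projection, and $r_\CY^*\IG$ is flat over the $\CX$-factor since it is pulled back from $\CY$. For the closed immersion $i_\CY\times 1\colon \CY\times\CY\to\CX\times\CY$ the derived pullback $L(i_\CY\times 1)^*$ is again computed by $\mathcal{T}or^{\CO_{\CX\times\CY}}$ against $(i_\CY\times 1)_*\CO_{\CY\times\CY}\cong i_{\CY*}\CO_\CY\boxtimes\CO_\CY$, which is a pushforward along the first $\CX$-factor; pairing this with $r_\CY^*\IG$, flat over that first factor, makes all higher Tor vanish (Lemma \ref{flatlemma}), so $L(i_\CY\times 1)^*r_\CY^*\IG=(i_\CY\times 1)^*r_\CY^*\IG$ as a sheaf. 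Since $r_\CY\circ(i_\CY\times 1)=p_{2,\CY}$ by construction (both send $\CY\times\CY$ to the second copy of $\CY$), this sheaf is exactly $p_{2,\CY}^*\IG$, giving \eqref{eq:restrisomB}.

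The main obstacle I expect is purely notational rather than conceptual: keeping track of which of the two $\CX$- or $\CY$-factors each sheaf is flat over, and matching that against which factor the relevant closed immersion pushforward "lives on," so that the Tor-independence hypothesis of Lemma \ref{flatlemma} is genuinely satisfied in each case. Once the factors are lined up correctly, both statements are immediate consequences of flat base change and the vanishing of $\mathcal{T}or$ against a sheaf pulled back from a smooth factor; there is no deformation-theoretic input at this stage. These restriction lemmas then feed directly into the successive projection-formula computation for the maps $1\times i_\CY$ and $i_\CY\times 1$ that establishes Proposition \ref{Aclasslocal}.
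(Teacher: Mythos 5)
Part (ii) of your argument is correct and coincides with the paper's (the only input is that $r_\CY^*\IG$ is pulled back from the $\CY$-factor, hence Tor-independent from $(i_\CY\times 1)_*\CO_{\CY\times\CY}$, which is pulled back from the $\CX$-factor; this is exactly the transversality statement \eqref{eq:transvA} of Lemma \ref{flatlemma}). The gap is in part (i): your Tor-vanishing rests on the claim that $\Delta_{\CX*}\pi_\CM^*Q$ is ``flat over the second $\CX$-factor'' because it is ``pulled back from the first factor up to the diagonal identification.'' Neither assertion holds. The sheaf $\Delta_{\CX*}\pi_\CM^*Q$ is supported on the diagonal, so it is not of the form $\pi_{1,\CX}^*(-)$, and Lemma \ref{flatlemma} — which concerns two sheaves pulled back from the two \emph{different} factors of a product — does not apply to the pair $\bigl((1\times i_\CY)_*\CO_{\CX\times\CY},\,\Delta_{\CX*}\pi_\CM^*Q\bigr)$. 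Moreover, the stalk of $\Delta_{\CX*}\pi_\CM^*Q$ at a diagonal point $(x,x)$, regarded as a module over $\CO_{\CX,x}$ via the second projection, is just the stalk of $\pi_\CM^*Q$ at $x$; this is flat over $\CO_{\CX,x}$ only when $Q$ is locally free, which fails for the objects actually needed (e.g.\ $Q=\IL^{\bullet}_\CM$ on a singular $\CM$). The same false premise is then reused to assert Tor-independence of $\Delta_\CX$ and $1\times i_\CY$ in your base-change step.

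The mechanism that actually makes the higher Tor sheaves vanish — and the one the paper uses — is that $1\times i_\CY$ is the zero locus of the section $\pi_{1,\CX}^*\zeta$ of a line bundle on $\CX\times\CX$, where $\zeta$ is the tautological section of $q_\CY^*\CK$ on $\CX$ and $q_\CY:\CX\to\CY$ is the bundle projection. Hence $(1\times i_\CY)_*\CO_{\CX\times\CY}$ admits a two-term Koszul resolution; tensoring with $\Delta_{\CX*}\pi_\CM^*Q$ and applying the projection formula for $\Delta_\CX$, the only possible higher Tor is the kernel of $\Delta_{\CX*}(\zeta\otimes{\bf 1}):\Delta_{\CX*}(q_\CY^*\CK^{-1}\otimes\pi_\CM^*Q)\to\Delta_{\CX*}(\pi_\CM^*Q)$, and this vanishes because $\pi_\CM^*Q$ \emph{is} flat in the $X$-direction of $\CX=X\times\CM$ (being pulled back from $\CM$ — here Lemma \ref{flatlemma} applies, but in the product $X\times\CM$, not in $\CX\times\CX$) and $\zeta$ is a nonzerodivisor in that direction. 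So the flatness you need is flatness of $\pi_\CM^*Q$ over $X$ inside $\CX$, not flatness of the diagonal pushforward over a factor of $\CX\times\CX$. Once this is repaired, your base-change derivation of \eqref{eq:restrisomA} from the Tor-independent cartesian square does go through and is essentially equivalent to the paper's explicit $\Gamma_{\CY*}\Gamma_\CY^*$ computation with the ideal sheaf $I_{\Gamma_\CY}$.
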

\begin{proof}
(\textit{i}). Consider Diagram \eqref{cartesian}. Recall that $\CX$ is the total space of the line bundle 
$\CK= \rho_Y^*K_Y$ on $\CY$. Let $q_\CY:\CX \to \CY$ denote the 
natural projection, and let $\zeta \in H^0(\CX, q_\CY^*\CK)$ denote 
the tautological section. 
Then $\CX\times \CX$ is the total space of the line bundle $r_{\CY}^*\CK$ on $\CX\times \CY$, such that the canonical projection is $1\times q_\CY: \CX \times \CX \to \CX \times \CY$. 
\begin{equation}\label{cartesian}
\xymatrix{\CX \times \CX \ar[rr]^{1\times q_{\CY}}\ar[d]^{\pi_{1,\CX}}&&\CX\times \CY\ar[d]^{r_{\CY}}\\
\CX\ar[rr]^{q_{\CY}}\ar[dd]^{\pi_{X}}\ar[rd]^{\pi_{\CM}}&& \CY\ar[dd]^{\rho_{\CY}}\ar[ld]_{\rho_{\CM}}\\
&\mathcal{M}&\\
X\ar[rr]^{q}&& Y\\}
\end{equation}
Moreover, there is a canonical isomorphism $(1\times q_\CY)^*r_\CY^*\CK \cong \pi_{1,\CX}^* q_\CY^*\CK$ which identifies the tautological section of the former with $\pi_{1,\CX}^*\zeta$. 
Finally, 
the image 
of the closed embedding $1\times i_\CY: \CX \times \CY \to \CX \times \CX$ coincides scheme theoretically with the zero 
locus of the tautological section $\pi_{1,\CX}^*\zeta$. This implies that the sheaf $(1\times i_\CY)_*\CO_{\CX \times \CY}$ has a two term locally free resolution 
\[
\xymatrix{ 
\pi_{1,\CX}^* q_\CY^*\CK^{-1} \ar[r]^-{\pi_\CX^*\zeta} & \CO_{\CX\times \CX} \\}
\]
Taking the tensor product with $\Delta_{\CX*}\pi_\CM^*Q$, one obtains the two term complex
\[
\xymatrix{ 
\Delta_{\CX*}(q_\CY^*\CK^{-1}\otimes \pi_{\CM}^*Q) 
\ar[rr]^-{\Delta_{\CX*}(\zeta \otimes {\bf 1})} & &
\Delta_{\CX*}(\pi_{\CM}^*Q)\\}
\]
However, the morphism $\zeta \otimes {\bf 1}: q_\CY^*\CK^{-1}\otimes \pi_{\CM}^*Q\to \pi_{\CM}^*Q$ is injective ( as a morphism in the category complexes, $K(\CX)$) since it has an injective restriction to each fiber $X_m$, $m \in \CM$.
This implies that all higher local Tor sheaves, resulting from derived pullback of $\Delta_{\CX*}\pi_\CM^*Q$ to $\CX\times \CY$ vanish, and the natural projection $L(1\times i_\CY)^*\Delta_{\CX*}\pi_\CM^*Q \to (1\times i_\CY)^*\Delta_{\CX*}\pi_\CM^*Q$ is an isomorphism as claimed. 

In order to prove \eqref{eq:restrisomA}, note the canonical isomorphism 
\[ 
\pi_\CM^*Q \cong \Delta_\CX^*\Delta_{\CX*} \pi_\CM^*Q. 
\]
This yields 
\[ 
\rho_\CM^*Q \cong i_\CY^*\pi_\CM^*Q \cong i_\CY^* \Delta_\CX^*\Delta_{\CX*} \pi_\CM^*Q \cong \Gamma_{\CY}^* 
(1\times i_\CY)^*\Delta_{\CX*} \pi_\CM^*Q.
\]
Hence 
\[ 
\Gamma_{\CY*} \rho_\CM^*Q \cong  \Gamma_{\CY*} \Gamma_\CY^* 
(1\times i_\CY)^*\Delta_{\CX*} \pi_\CM^*Q.
\]
The right hand side of the above relation is further isomorphic to 
\[
(1\times i_\CY)^*\Delta_{\CX*} \pi_\CM^*Q/ I_{\Gamma_\CY} \cdot (1\times i_\CY)^*\Delta_{\CX*} \pi_\CM^*Q
\]
where $I_{\Gamma_\CY}\subset \CO_{\CX\times \CY}$ is the ideal sheaf 
associated to the closed embedding $\Gamma_\CY$. However, since 
the right square in \eqref{eq:XYsquares} 
is cartesian, the pull-back $(1\times i_\CY)^*\Delta_{\CX*} \pi_\CM^*Q$ is annihilated by $I_{\Gamma_\CY}$, hence one obtains an isomorphism 
\[
\Gamma_{\CY*} \rho_\CM^*Q \cong (1\times i_\CY)^*\Delta_{\CX*} \pi_\CM^*Q.
\]

(\textit{ii}). As shown in Lemma \ref{flatlemma}, the pull-back $r_{\CY}^*\IG$ is flat over $\CX$. This implies the claim. 

\end{proof}

\begin{lem}\label{pushfwdlem} 
Let 
\[
\pi_{i,\CX} :\CX \times \CX \to \CX, \qquad 
p_{i,\CY} :\CY \times \CY \to \CY, \qquad 1\leq i \leq 2,
\]
and 
\[
r_{\CX}: \CX \times \CY \to \CX,\qquad r_{\CY}:\CX \times \CY \to \CY
\]
denote the canonical projections. Let $Q$ be an object of $\mathscr{D}^b(\CM)$. 
Then there are isomorphisms 
\be\label{eq:pushfwdisomA} 
p_{2,\CX}^*i_{\CY*}\IF \otimes^L 
\Delta_{{\CX}*}(\pi_\CM^*Q) \cong (1\times i_\CY)_*(r_{\CY}^*\IG \otimes^L \Gamma_{\CY*}(\rho_\CM^*Q) )
\ee 
in $\mathscr{D}^b(\CX\times \CX)$, respectively
\be\label{eq:pushfwdisomB}
r_{\CY}^*\IG \otimes^L \Gamma_{\CY*}(\rho_\CM^*Q) \cong 
(i_\CY\times 1)_*(p_{2,\CY}^*\IG\otimes^L \Delta_{\CY*}(\rho_\CM^*Q)).
\ee
in $\mathscr{D}^b(\CX\times \CY)$. 
\end{lem}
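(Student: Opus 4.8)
The plan is to establish the two isomorphisms by a sequence of applications of the projection formula and flat base change, using the two cartesian squares in \eqref{eq:XYsquares}. I will begin with \eqref{eq:pushfwdisomA}. The factor $p_{2,\CX}^*i_{\CY*}\IF$ should first be rewritten using the cartesian square obtained by projecting $\CX\times\CX\to\CX$ (second factor) and $\CX\times\CY\to\CY$, namely the commutativity of pullback with pushforward along the flat map $p_{2,\CX}$; this gives $p_{2,\CX}^*i_{\CY*}\IF\cong (1\times i_\CY)_*r_\CY^*\IF$, where $\IF=i_{\CY*}\IG$, and then applying the projection formula along $i_\CY$ on the second factor once more (or directly writing $r_\CY^*\IF$ in terms of $\IG$) realizes the whole thing as pushed forward from $\CX\times\CY$. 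Next, one tensors with $\Delta_{\CX*}(\pi_\CM^*Q)$ and uses the projection formula for the closed embedding $1\times i_\CY:\CX\times\CY\hookrightarrow\CX\times\CX$, together with part (\textit{i}) of Lemma \ref{restrlem}, which identifies $L(1\times i_\CY)^*\Delta_{\CX*}\pi_\CM^*Q$ with $\Gamma_{\CY*}\rho_\CM^*Q$ and guarantees that the derived pullback equals the underived one. The outcome is precisely the right-hand side of \eqref{eq:pushfwdisomA}.

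For \eqref{eq:pushfwdisomB} I would argue analogously but now along the closed embedding $i_\CY\times 1:\CY\times\CY\hookrightarrow\CX\times\CY$. The projection formula gives
\[
r_\CY^*\IG\otimes^L\Gamma_{\CY*}(\rho_\CM^*Q)\cong (i_\CY\times 1)_*\bigl(L(i_\CY\times 1)^*r_\CY^*\IG\otimes^L L(i_\CY\times 1)^*\Gamma_{\CY*}(\rho_\CM^*Q)\bigr),
\]
provided one first checks that $\Gamma_{\CY*}\rho_\CM^*Q$ is supported on the image of $i_\CY\times 1$, so that the external tensor product with $r_\CY^*\IG$ is already the pushforward of something on $\CY\times\CY$; this support statement follows from the right cartesian square in \eqref{eq:XYsquares}, since $\Gamma_\CY$ factors through $i_\CY\times 1$. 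Then part (\textit{ii}) of Lemma \ref{restrlem} identifies $L(i_\CY\times 1)^*r_\CY^*\IG$ with $p_{2,\CY}^*\IG$ (the derived pullback being underived because $r_\CY^*\IG$ is flat over $\CX$, as recorded in Lemma \ref{flatlemma}), and flat base change along the left cartesian square identifies $L(i_\CY\times 1)^*\Gamma_{\CY*}\rho_\CM^*Q$ with $\Delta_{\CY*}\rho_\CM^*Q$. Assembling these gives \eqref{eq:pushfwdisomB}.

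The routine content is bookkeeping with projection formula and base change; the step requiring the most care is ensuring that all the relevant derived pullbacks collapse to ordinary pullbacks, i.e.\ that there are no higher $\operatorname{Tor}$ contributions. For \eqref{eq:pushfwdisomA} this is exactly the vanishing proven in Lemma \ref{restrlem}(\textit{i}) via the explicit two-term Koszul resolution of $(1\times i_\CY)_*\CO_{\CX\times\CY}$ and the injectivity of $\zeta\otimes{\bf 1}$ fiberwise; for \eqref{eq:pushfwdisomB} it rests on the flatness of $r_\CY^*\IG$ over $\CX$ together with the fact that $\Delta_{\CY*}\rho_\CM^*Q$ and $p_{2,\CY}^*\IG$ are Tor-independent over $\CX\times\CY$ because one factor is flat over the relevant base. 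I expect no genuine obstacle beyond organizing these Tor-independence checks cleanly, since each is either already in Lemma \ref{restrlem}/Lemma \ref{flatlemma} or follows from the same flatness input.
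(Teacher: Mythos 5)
Your treatment of \eqref{eq:pushfwdisomA} is correct and is essentially the paper's own argument: flat base change along $p_{2,\CX}$ gives $p_{2,\CX}^*i_{\CY*}\IG\cong(1\times i_\CY)_*r_\CY^*\IG$, and the projection formula for the closed embedding $1\times i_\CY$ combined with Lemma \ref{restrlem}(i) then produces the right-hand side. (The expression $r_\CY^*\IF$ in your write-up is a typo for $r_\CY^*\IG$, but the intent is clear.)

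For \eqref{eq:pushfwdisomB}, however, the displayed ``projection formula'' is not the projection formula, and the two intermediate claims supporting it are false as stated. The projection formula for $f=i_\CY\times 1$ reads $f_*M\otimes^L N\cong f_*(M\otimes^L Lf^*N)$: only the factor \emph{not} already pushed forward acquires a derived pullback. Writing $\Gamma_{\CY*}(\rho_\CM^*Q)=(i_\CY\times 1)_*\Delta_{\CY*}(\rho_\CM^*Q)$, which holds simply because $\Gamma_\CY=(i_\CY\times 1)\circ\Delta_\CY$, the correct statement is
\[
r_\CY^*\IG\otimes^L(i_\CY\times 1)_*\Delta_{\CY*}(\rho_\CM^*Q)\cong(i_\CY\times 1)_*\bigl(L(i_\CY\times 1)^*r_\CY^*\IG\otimes^L\Delta_{\CY*}(\rho_\CM^*Q)\bigr),
\]
after which Lemma \ref{restrlem}(ii) finishes the proof. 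Your version instead inserts $L(i_\CY\times 1)^*\Gamma_{\CY*}(\rho_\CM^*Q)$ in place of $\Delta_{\CY*}(\rho_\CM^*Q)$, and then asserts that this derived pullback equals $\Delta_{\CY*}(\rho_\CM^*Q)$ by ``flat base change along the left cartesian square.'' Neither leg of that square is flat (both $\Gamma_\CY$ and $i_\CY\times 1$ are closed immersions), and the square is not Tor-independent, since the image of $\Gamma_\CY$ is contained in the divisor $\CY\times\CY\subset\CX\times\CY$. By the Koszul computation underlying Lemma \ref{adjlem}, $L(i_\CY\times 1)^*(i_\CY\times 1)_*M\cong M\oplus M\otimes\mathcal{N}^\vee[1]$ for the normal bundle $\mathcal{N}$ of $\CY\times\CY$ in $\CX\times\CY$, so $L(i_\CY\times 1)^*\Gamma_{\CY*}(\rho_\CM^*Q)$ carries an extra shifted summand and both of your displayed isomorphisms acquire spurious terms. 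The two errors cancel and you land on the correct formula, but the chain of reasoning is broken; the repair is simply to apply the projection formula in its asymmetric form as above, which is exactly what the paper does.
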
 

\begin{proof}
Note the commutative diagram
\be\label{eq:FMsquareA} 
\xymatrix{ 
\CX \times \CY \ar[rr]^{1\times i_\CY} \ar@/_1pc/[dd]_-{r_{\CY}} & & \CX \times \CX 
\ar@/_1pc/[dd]_-{p_{2,\CX}} \\ & & \\
\CY \ar[rr]^-{i_\CY} \ar@/_1pc/[uu]_-{\Gamma_\CY} \ar[rd]_{\rho_{\mathcal{M}}}& & \CX \ar@/_1pc/[uu]_-{\Delta_{\CX}}\ar[ld]^{\pi_{\mathcal{M}}}\\
&\mathcal{M}&}
\ee
where both squares are cartesian. Since $\IF = i_{\CY*}\IG$ 
and  $p_{2,\CX}$ is flat, there is an isomorphism 
\[
p_{2,\CX}^*\IF \cong (1\times i_\CY)_*r_{\CY}^*\IG.
\]
Then the projection formula \cite[Prop. II.5.6]{H66} yields an isomorphism 
\be\label{eq:FMisomA} 
\bal 
p_{2,\CX}^*i_{\CY*}\IF \otimes^L 
\Delta_{{\CX}*}(\pi_\CM^*Q) \cong (1\times i_\CY)_*
(r_{\CY}^* \IG \otimes^L 
L(1\times i_\CY)^*\Delta_{{\CX}*}(\pi_\CM^*Q)).
\eal 
\ee
As shown in Lemma \ref{restrlem}.$i$, the derived restriction $L(1\times i_\CY)^*\Delta_{{\CX}*}(\pi_\CM^*Q))$ coincides with the ordinary restriction and
it is furthermore isomorphic to $\Gamma_{\CY*}(\rho_\CM^*Q)$.
Therefore \eqref{eq:FMisomA} yields 
\be\label{eq:FMisomB}   
\bal 
p_{2,\CX}^*\IF \otimes^L 
\Delta_{{\CX}*}(\pi_\CM^*Q) \cong 
(1\times i_\CY)_*
(r_{\CY}^*\IG \otimes^L
\Gamma_{\CY*}(\rho_\CM^*Q)). \\
\eal
\ee
Next note that 
\be\label{eq:FMisomD}
\Gamma_{\CY*}(\rho_\CM^*Q)\cong (i_\CY\times 1)_* \Delta_{\CY*}(\rho_\CM^*Q)
\ee
since $\Gamma_\CY = (i_\CY \times 1)\circ \Delta_{\CY}$. 
Using again the projection formula and Lemma 
\ref{restrlem}.\textit{ii}, this further yields the isomorphism \eqref{eq:pushfwdisomB}. 
\end{proof}
\hfill
\subsection{Proof of Proposition \ref{Aclasslocal}.} 
By Theorem \ref{Apropone}, the relative Atiyah class 
$\alpha_{\CX/X}$ is given by 
\[ 
\alpha_{\CX/X} = \Delta_{\CX*}(\pi_\CM^*\alpha_\CM) : 
\Delta_{\CX*}(\pi_\CM^*\CO_\CM) \to \Delta_{\CX*}(\pi_\CM^*\IL^{\bullet}_\CM[1]).
\]
Using isomorphisms \eqref{eq:pushfwdisomA} and \eqref{eq:pushfwdisomB},
and the functorial properties of the projection formula, 
one obtains the commutative diagrams
\be\label{eq:AclassdiagFA} 
\xymatrix@C=.5em{ 
p_{2,\CX}^*\IF \otimes^L 
 \Delta_{\CX*}(\pi_\CM^*\CO_\CM) \ar[rrrr]^-{{\bf 1}\otimes \alpha_{\CX/X}} \ar[d]^-{\wr} 
 & & & & p_{2,\CX}^*\IF \otimes^L \Delta_{\CX*}(\pi_\CM^*\IL^{\bullet}_\CM[1])\ar[d]^-{\wr} \\
(1\times i_\CY )_*(r_{\CY}^*\IG \otimes^L \Gamma_{\CY*} \rho_\CM^*\CO_\CM) 
 \ar[rrrr]^{\textbf{f}_{1}}
  & & &  &
 (1\times i_\CY)_*(r_{\CY}^*\IG \otimes^L \Gamma_{\CY*}\rho_\CM^*\IL^{\bullet}_\CM[1]) \\
 }
 \ee
 and 
 \be\label{eq:AclassdiagFB} 
\xymatrix@C=.5em{ 
r_{\CY}^*\IG \otimes^L 
 \Gamma_{\CY*}(\rho_\CM^*\CO_\CM) \ar[rrrr]^-{({\bf 1}\otimes \Gamma_{\CY})_{*}(\rho_\CM^*\alpha_\CM)} \ar[d]^-{\wr} 
 & & & & r_{\CY}^*\IG\otimes^L \Gamma_{\CY*}(\rho_\CM^*\IL^{\bullet}_\CM[1])\ar[d]^-{\wr} \\
(i_\CY\times 1)_*(p_{1,\CY}^*\IG \otimes^L \Delta_{\CY*}\rho_\CM^*\CO_\CM) 
 \ar[rrrr]^{\textbf{f}_{2}}
  & & &  &
 (i_\CY\times 1)_*(p_{2,\CY}^*\IG \otimes^L \Delta_{\CY*}\rho_\CM^*\IL^{\bullet}_\CM[1]) \\
 }
 \ee 
where in $\textbf{f}_{1}, \textbf{f}_{2}$ in the above diagrams 
denote the morphisms 
\[
 \textbf{f}_{1}=(1\times i_\CY)_*(({\bf 1}\otimes \Gamma_{\CY})_{*}(\rho_\CM^*\alpha_\CM)), \qquad 
 \textbf{f}_{2}=(i_\CY\times 1)_*(({\bf 1}\otimes \Delta_{\CY})_{*}(\rho_\CM^*\alpha_\CM)).
 \]
 Furthermore, the left and right vertical arrows in the above diagrams are specializations 
of the isomorphisms \eqref{eq:pushfwdisomA} and \eqref{eq:pushfwdisomB}
to $Q=\CO_\CM$ and $Q=\IL^{\bullet}_\CM[1]$ respectively.  
Note also that $ \Delta_{\CY*}(\rho_\CM^*\alpha_\CM)=\alpha_{\CY/Y}$ according to Theorem  \ref{Apropone}. 

Now consider the commutative diagrams 
\be\label{eq:FMsquareB} 
\xymatrix{ 
\CY \times \CY \ar[rr]^{i_\CY \times 1} \ar[dd]_-{p_{1,\CY}} & &  \CX \times \CY 
\ar[dd]_-{r_{\CX}} \\ & & \\
\CY \ar[rr]^-{i_\CY} & & \CX \\}\qquad\qquad 
\xymatrix{ 
\CX \times \CY \ar[rr]^{1\times i_\CY } \ar[dd]_-{r_{\CX}} & &  \CX \times \CX 
\ar[dd]_-{p_{1,\CX}} \\ & & \\
\CX \ar[rr]^-{1} &  & \CX \\}
\ee
Using again isomorphisms \eqref{eq:pushfwdisomA} and \eqref{eq:pushfwdisomB}, one obtains successively 
\be\label{eq:FMisomC} 
\bal 
\IF\otimes^L \pi_{\CM}^*Q & \ \cong 
\dR{p_{1,\CX*}}(p_{2,\CX}^*\IF \otimes^L \Delta_{{\CX}*}(\pi_\CM^*Q)) \\
&\  \cong \dR{p_{1,\CX*}}
(r_{\CY}^*\IG \otimes^L \Gamma_{\CY*}\rho_\CM^*Q) \\
&\  \cong \dR{p_{1,\CX*}} (i_\CY\times 1)_*(p_{2,\CY}^*\IG \otimes^L \Delta_{\CY*}(\rho_\CM^*Q))
\\
& \ \cong i_{\CY*} \dR_{p_{1,\CY*}}(p_{2,\CY}^*\IG \otimes^L \Delta_{\CY*}\rho_\CM^*Q) \\
& \ \cong i_{\CY^*} (\IG \otimes^L \rho_{\CM}^* Q)
\eal
\ee
for any object $Q$ of $\mathscr{D}^b(\CM)$. Then using diagrams \eqref{eq:AclassdiagFA} and \eqref{eq:AclassdiagFB}
one obtains the commutative diagram 
\be\label{eq:AclassdiagG} 
\xymatrix{ 
\IF \ar[d]^-{\bf 1}  \ar[rrr]^-{\alpha_{\CX/X}(\IF)} & & & \IF \otimes^L \pi_{\CM}^*\IL^{\bullet}_\CM[1] \ar[d]^-{\wr} \\
i_{\CY*}\IG \ar[rrr]^-{i_{\CY*}(\alpha_{\CY/Y}(\IG))} & & &
i_{\CY*}(\IG \otimes^L \rho_{\CM}^*\IL^{\bullet}_\CM[1]) }
\ee
where the left vertical arrow is the canonical isomorphism given by the projection formula. This implies equation 
\eqref{eq:Aclassadj}.

\hfill $\text{q.e.d.}$

\subsection{Comparison of obstruction theories}\label{compobs}

The next step is to translate Proposition \ref{Aclasslocal} into a 
statement on obstruction theories using  \cite[Theorem 4.1]{HT10}. 

Using the canonical isomorphism 
\be\label{eq:dualisomB1}
\mathbf{R}{\rm Hom}_{\mathscr{D}^b(\CX)}(\IF, \IF \otimes^L \pi_\CM^*\IL^{\bullet}_\CM)[1] 
\cong \mathbf{R}{\rm {H}om}_{\mathscr{D}^b(\CX)}(\mathbf{R}\mathscr{H}om_{\mathscr{D}^b(\CX)}(\IF, \IF), 
\pi_\CM^*\IL^{\bullet}_\CM)[1].
\ee
the relative Atiyah class $\alpha_{\CX/X}(\IF)$ 
is identified with a map 
\be\label{eq:betamapA}
\beta_X: \mathbf{R}\mathscr{H}om_{\mathscr{D}^b(\CX)}(\IF, \IF) \to \pi_\CM^*\IL^{\bullet}_\CM[1].
\ee
In complete analogy, the relative Atiyah class $\alpha_{\CY/Y}$ 
is also identified with a map 
\be\label{eq:betamapB}
\beta_Y: \mathbf{R}\mathscr{H}om_{\mathscr{D}^b(\CY)}(\IG, \IG) \to \rho_\CM^*\IL^{\bullet}_\CM[1].
\ee

Since $X$ is $K$-trivial, as in \cite[Sect 4.2]{HT10},  
Grothendieck-Verdier duality yields an isomorphism
\begin{align}
&
\dR{\rm {H}om_{\mathscr{D}^b(\CX)}}(\mathbf{R}\mathscr{H}om_{\mathscr{D}^b(\CX)}(\IF, \IF), 
\pi_\CM^*\IL^{\bullet}_\CM[1])\notag\\
&
 \cong \mathbf{R}{\rm {H}om}_{\mathscr{D}^b(\CM)}(\dR{\mathscr Hom}_{\pi_{\CM}}(\IF, \IF), \IL^{\bullet}_\CM)[1-d].
\end{align}
where $d={\rm dim}(X)$. Therefore the relative Atiyah class 
is further identified with a map 
\[
a_X: \mathbf{R}{\mathscr{H}om}_{\pi_\CM}(\IF, \IF)[d-1]\to \IL^{\bullet}_\CM.
\]
As shown in \cite[Thm. 4.1]{HT10}, this map is an obstruction 
theory for $\CM$, as defined in \cite[Definition 4.4]{BF97}.

Again, in complete analogy, the relative Atiyah class 
$\alpha_{\CY/Y}(\IG)$ yields a second obstruction theory 
\[
a_Y: \mathbf{R}\mathscr{H}om_{\rho_\CM}(\IG, \IG\otimes \CK))[d-2]\to \IL^{\bullet}_\CM,
\]
where $\rho_\CM:\CY\to \CM$ is the canonical projection, and 
$\CK$ is the pull-back
$\CK = \rho_{Y}^*K_Y$. Moreover, as shown in Lemma \ref{adjlemB} there is a natural  isomorphism 
\be\label{eq:RHomsplitA}
\bal
& \mathbf{R}\mathscr{H}om_{\mathscr{D}^b(\CX)}(\IF, \IF) \cong 
\\
& i_{\CY*} \mathbf{R}\mathscr{H}om_{\mathscr{D}^b(\CY)}(\IG\otimes \CK^{-1},\IG)[-1]\oplus i_{\CY*} \mathbf{R}\mathscr{H}om_{\mathscr{D}^b(\CY)}(\IG,\IG).\\
\eal 
\ee
This yields a decomposition 
\be\label{eq:RHomsplitAB}
\bal
& \mathbf{R}\mathscr{H}om_{\pi_\CM}(\IF, \IF)[d-1] \cong \\
& \mathbf{R}\mathscr{H}om_{\rho_\CM}(\IG,\IG\otimes \CK))[d-2] \oplus 
\mathbf{R}\mathscr{H}om_{\rho_\CM}(\IG,\IG)[d-1].\\
 \eal
\ee
The goal of this section is to prove: 

\begin{prop}\label{obsprop} 
The map $a_X$ has components $a_X = (a_Y,\ 0)$ with respect to the 
decomposition \eqref{eq:RHomsplitAB}.
\end{prop}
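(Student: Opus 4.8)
### Proof plan for Proposition \ref{obsprop}

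The plan is to deduce Proposition \ref{obsprop} from Proposition \ref{Aclasslocal} by carefully tracking how the splitting of the universal relative Atiyah class passes through the chain of identifications recorded in Section \ref{compAclass}. First I would recall that, by construction, the obstruction-theory map $a_X$ is obtained from $\alpha_{\CX/X}(\IF)$ by applying successively: the adjunction isomorphism \eqref{eq:dualisomB1}, which turns $\alpha_{\CX/X}(\IF)$ into the map $\beta_X$ of \eqref{eq:betamapA}; Grothendieck--Verdier duality on $\CX$ (using $K$-triviality of $X$), which turns $\beta_X$ into $a_X$; and the analogous chain for $\CY$, which turns $\alpha_{\CY/Y}(\IG)$ into $\beta_Y$ and then $a_Y$. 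Since all of these operations are additive functors (or natural isomorphisms) on the relevant derived $\mathrm{Hom}$-groups, a direct sum decomposition of $\alpha_{\CX/X}(\IF)$ is carried to a direct sum decomposition of $a_X$, provided the decompositions \eqref{eq:extisomB} and \eqref{eq:RHomsplitAB} correspond to one another under these operations.

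The core of the argument is therefore a compatibility check: I would show that the isomorphism \eqref{eq:extisomB} of Lemma \ref{adjlemB},
\[
\dR{\rm {H}om}_{\mathscr{D}^b(\CX)} (\IF, \IF\otimes^L \pi_\CM^*\IL^{\bullet}_\CM)[1] \cong
\dR{\rm {H}om}_{\mathscr{D}^b(\CY)}(\IG, \IG \otimes^L \rho_\CM^*\IL^{\bullet}_\CM)[1]\ \oplus\
\dR{\rm {H}om}_{\mathscr{D}^b(\CY)}(\IG\otimes \CK^{-1}, \IG \otimes^L \rho_\CM^*\IL^{\bullet}_\CM),
\]
is intertwined, via \eqref{eq:dualisomB1} and Grothendieck--Verdier duality, with the decomposition \eqref{eq:RHomsplitAB} coming from \eqref{eq:RHomsplitA}. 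Concretely, both \eqref{eq:extisomB} and \eqref{eq:RHomsplitAB} are ultimately instances of the same adjunction $\dR\mathrm{Hom}(\IF,-) \cong i_{\CY*}\dR\mathrm{Hom}(\IG, Li_\CY^!(-))$ combined with the locally free resolution of $i_{\CY*}\CO_\CY$ by $\CK^{-1}\xrightarrow{\zeta}\CO$; I would verify that the first summand of \eqref{eq:extisomB} matches the first summand of \eqref{eq:RHomsplitAB} (the ``$\IG$ to $\IG$'' piece, shifted) and the second matches the second (the ``$\IG\otimes\CK^{-1}$ to $\IG$'' piece). This is where the diagram \eqref{eq:AclassdiagG} obtained in the proof of Proposition \ref{Aclasslocal} does most of the work: it already exhibits $\alpha_{\CX/X}(\IF)$ as $i_{\CY*}(\alpha_{\CY/Y}(\IG))$ under the projection-formula isomorphism $\IF \cong i_{\CY*}\IG$, which is precisely the first component of \eqref{eq:extisomB}, and the vanishing of the second component is the content of \eqref{eq:Aclassadj}. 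Dualizing \eqref{eq:AclassdiagG} via \eqref{eq:dualisomB1} and then applying Grothendieck--Verdier duality (noting that the duality on $\CX$ restricts to the one on $\CY$ up to the twist by $\CK$ and the shift by one, which is exactly the bookkeeping in \cite[Sect. 4.2]{HT10}) then yields $a_X = (a_Y, 0)$.

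I expect the main obstacle to be the duality bookkeeping: one must check that Grothendieck--Verdier duality applied on $\CX$ (relative to $\CM$, in dimension $d$) restricts compatibly along $i_\CY\times 1_\CM$ to Grothendieck--Verdier duality on $\CY$ (in dimension $d-1$), with the normal bundle twist $\CK = \rho_Y^*K_Y$ and the degree shift $[1-d]$ versus $[2-d]$ accounted for correctly, so that the two summands of \eqref{eq:RHomsplitAB} really are the Verdier duals of the two summands of \eqref{eq:extisomB} in the right degrees. Once this identification of the two decompositions is in place, the conclusion $a_X=(a_Y,0)$ is immediate from Proposition \ref{Aclasslocal}, since each step in the passage from $\alpha_{\CX/X}(\IF)$ to $a_X$ is an isomorphism of the ambient groups that respects the direct sum structure. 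A secondary, more routine point is to confirm that the ``$a_Y$'' appearing here is literally the obstruction theory $a_Y: \mathbf{R}\mathscr{H}om_{\rho_\CM}(\IG,\IG\otimes\CK)[d-2]\to \IL^\bullet_\CM$ produced by \cite[Thm. 4.1]{HT10} applied to $\IG$ on $\CY$, which follows because the whole construction is natural in the pair (smooth ambient variety, universal sheaf).
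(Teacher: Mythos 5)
Your proposal follows essentially the same route as the paper: deduce the splitting of $a_X$ from Proposition \ref{Aclasslocal} by transporting the decomposition \eqref{eq:extisomB} through the identification \eqref{eq:dualisomB1} and Grothendieck--Verdier duality, first for the closed embedding $i_\CY$ (where the twist by $\CK$ and the shift account for the degree bookkeeping you flag) and then for the projection $\pi_\CM$, landing on the global version of \eqref{eq:RHomsplitAB}. The compatibility check you identify as the core of the argument is exactly what the paper carries out in passing from \eqref{eq:RHomsplitAD} to \eqref{eq:RHomsplitB} and \eqref{eq:RHomsplitD}.
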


{\it Proof}. 
Proposition \ref{obsprop} follows from Proposition 
\ref{Aclasslocal} and identities similar to \eqref{eq:dualisomB1}. 
In addition to \eqref{eq:RHomsplitAB}, 
Lemma \ref{adjlem} also yields an isomorphism 
\be\label{eq:RHomsplitAC}
\bal 
\mathbf{R}\mathscr{H}om_{\mathscr{D}^b(\CX)}(\IF, & \IF\otimes^L \pi_\CM^*\IL^{\bullet}_\CM[1])
 \cong\\
 & i_{\CY*} \mathbf{R}\mathscr{H}om_{\mathscr{D}^b(\CY)}(\IG,\IG\otimes^L \rho_\CM^*\IL^{\bullet}_\CM[1])\\
 &  \oplus i_{\CY*} \mathbf{R}\mathscr{H}om_{\mathscr{D}^b(\CY)}(\IG\otimes \CK^{-1},\IG\otimes^L \rho_\CM^*\IL^{\bullet}_\CM)).\\
 \eal
\ee
This identity is canonically equivalent to 
\be\label{eq:RHomsplitAD} 
\bal 
\mathbf{R}{\mathscr{H}om}_{\mathscr{D}^b(\CX)}( & \mathbf{R}\mathscr{H}om_{\mathscr{D}^b(\CX)}(\IF, \IF), \ 
\pi_\CM^*\IL^{\bullet}_\CM[1]) \cong \\ 
& i_{\CY*} \mathbf{R}\mathscr{H}om_{\mathscr{D}^b(\CY)}(\mathbf{R}\mathscr{H}om_{\mathscr{D}^b(\CY)}(\IG,\IG), \rho_\CM^*\IL^{\bullet}_\CM[1]) \ \oplus \\ & 
i_{\CY*} \mathbf{R}\mathscr{H}om_{\mathscr{D}^b(\CY)}(\mathbf{R}\mathscr{H}om_{\mathscr{D}^b(\CY)}(\IG,\IG\otimes \CK), \rho_\CM^*\IL^{\bullet}_\CM)\\
\eal 
\ee

Next, applying the global derived functor 
$\dR\Gamma_{{\mathscr D}^b(\CX)}$ to both sides of 
equation 
\eqref{eq:RHomsplitAD} yields the isomorphism 
\[
\bal 
\mathbf{R}{\rm {H}om}_{\mathscr{D}^b(\CX)}( & \mathbf{R}\mathscr{H}om_{\mathscr{D}^b(\CX)}(\IF, \IF), \ 
\pi_\CM^*\IL^{\bullet}_\CM[1]) \cong \\ 
&  \mathbf{R}{\rm {H}om}_{\mathscr{D}^b(\CY)}(\mathbf{R}\mathscr{H}om_{\mathscr{D}^b(\CY)}(\IG,\IG), \rho_\CM^*\IL^{\bullet}_\CM[1]) \ \oplus \\ & 
\mathbf{R}{\rm {H}om}_{\mathscr{D}^b(\CY)}(\mathbf{R}\mathscr{H}om_{\mathscr{D}^b(\CY)}(\IG,\IG\otimes \CK), \rho_\CM^*\IL^{\bullet}_\CM)\\
\eal 
\]
Then Proposition \eqref{Aclasslocal} implies that 
$$\beta_{\CX/X}(\IF): \mathbf{R}\mathscr{H}om_{\mathscr{D}^b(\CX)}(\IF,\IF)\to \pi_\CM^*\IL^{\bullet}_\CM[1]$$ has components 
\be\label{eq:betacomp}
\beta_{X} = (\beta_Y, \ 0) 
\ee
with respect to the above decomposition, where 
$\beta_X, \beta_Y$ are the maps obtained in equations 
\eqref{eq:betamapA}, \eqref{eq:betamapB} respectively.

Now note that $Li_\CY^*\pi_\CM^*\IL^{\bullet}_\CM \cong \rho_\CM^*\IL^{\bullet}_\CM$, as shown in equation \eqref{eq:dpbid}. 
This yields the isomorphism 
\[ 
i_\CY^! \pi_\CM^*\IL^{\bullet}_\CM \cong \CK \otimes \rho_\CM^*\IL^{\bullet}_\CM [-1].
\]
Then using Grothendieck-Verdier duality for 
the closed embedding $i_\CY:\CY \to \CX$, identity \eqref{eq:RHomsplitAD} 
is further equivalent to
\be\label{eq:RHomsplitB}
\bal 
\mathbf{R}\mathscr{H}om_{\mathscr{D}^b(\CX)}(\mathbf{R}\mathscr{H}om_{\mathscr{D}^b(\CX)}&(\IF, \IF),\  
\pi_\CM^*\IL^{\bullet}_\CM[1]) \cong\\  
& \mathbf{R}\mathscr{H}om_{\mathscr{D}^b(\CX)}
(i_{\CY*}(\mathbf{R}\mathscr{H}om_{\mathscr{D}^b(\CY)}(\IG,\IG\otimes \CK), \pi_\CM^*\IL^{\bullet}_\CM[2])\ \oplus \\
& \mathbf{R}\mathscr{H}om_{\mathscr{D}^b(\CX)}(i_{\CY*}\mathbf{R}\mathscr{H}om_{\mathscr{D}^b(\CY)}(\IG,\IG), \pi_\CM^*\IL^{\bullet}_\CM[1]).\\
\eal 
\ee
Finally, using Grothendieck-Verdier duality for the projection $\pi_\CM:\CX\to \CM$ in \eqref{eq:RHomsplitB}, one obtains the 
decomposition 
\be\label{eq:RHomsplitD}
\bal 
\dR{\rm {H}om}_{\mathscr{D}^b(\CM)}(& \mathbf{R}{\mathscr{H}om}_{\pi_\CM}(\IF, \IF), 
\  \IL^{\bullet}_\CM)[1-d] \cong\\
& \dR{\rm{H}om}_{\mathscr{D}^b(\CM)}( \mathbf{R}\mathscr{H}om_{\rho_\CM}(\IG, \IG\otimes \CK),\ \IL^{\bullet}_\CM)[2-d] \ \oplus \\
&  \dR{\rm{H}om}_{\mathscr{D}^b(\CM)}(\mathbf{R}\mathscr{H}om_{\rho_\CM}(\IG, \IG), \ \IL^{\bullet}_\CM)[1-d].\\
\eal
\ee
This is in fact the global version of \eqref{eq:RHomsplitAB}. 
Then  equation \eqref{eq:betacomp} implies that $a_X$ has indeed components $(a_Y,0)$ with respect to  decomposition \eqref{eq:RHomsplitAB}. 

\hfill $\Box$

\section{Donaldson-Thomas invariants for sheaves on local fourfolds}\label{DTsect} 

In this paper, the main application of Theorem \ref{obsthm} is
the construction of virtual counting invariants of sheaves on 
a local Calabi-Yau fourfolds, which is presented below. 

 Let $S$ be a smooth projective surface, let $\CO_S(1)$ be a very ample line bundle on $S$ and let $h = c_1(\CO_S(1))$. Throughout this paper it will be further assumed that 
$H^{1}(\CO_S)=0$ and that the integral cohomology of $S$ is torsion free. 
In the framework of Section \ref{obstrsect} let $Y$ be the total space of $K_S(D)$, where $D\in {\rm Pic}(S)$ be an effective non-zero divisor on $S$. Then $K_Y\cong q^*\CO_S(-D)$, where $\pi_{S}:Y \to S$ 
denotes the canonical projection. Hence $X$ is isomorphic to the 
total space of the rank two bundle $V=K_S(D) \oplus \CO_S(-D)$ on $S$. 
Let $g = \pi_{S}\circ q : X \to S$ denote the canonical projection. 

\subsection{Stable two dimensional sheaves}\label{stabsheaves} 
Let $\Coh_c(X)$ denote the abelian category of coherent sheaves on $X$ with proper support. Since the projection $g: X\to S$ is affine, for any such sheaf $\mathscr{F}$ the derived pushforward $\dR_{g*}\mathscr{F}$ is isomorphic to the ordinary pushforward, $g_*F$. Therefore using the Grothendieck-Riemann-Roch theorem, the topological invariants 
of $\mathscr{F}$ are completely determined by the total Chern class $c(g_*\mathscr{F})\in H^{\rm even}(S,\IZ)$. Using Poincar{\'e} duality on $S$, 
the latter is canonically identified with an element of 
$H_{\rm even}(S,\IZ)$, which can be written as 
\[
(r(\mathscr{F})[S], \beta(F), n(F)[pt]) \in H_4(S,\IZ) \oplus H_2(S,\IZ) \oplus H_0(S, \IZ). 
\]
Since the generators $[S], [pt]$ are canonical, this is further identified with the triple 
\[
\gamma(F) = (r(\mathscr{F}), \beta(F), n(F)) \in \IZ \oplus H_2(S,\IZ) \oplus \IZ.
\]
In particular  $r(\mathscr{F}) = {\rm rk}(g_*\mathscr{F})$ will 
be informally referred to as the rank of $\mathscr{F}$ 
over $S$ in the following.

Next note that the polarization $h$ 
determines naturally a stability condition on $\Coh_c(X)$ using the Hilbert polynomial 
\[ 
P_h(\mathscr{F}; m) = \chi(\mathscr{F}\otimes g^*\CO_S(m)). 
\]
As in \cite[Definition 1.2.4]{HL97}, $\mathscr{F}$ is $h$-(semi)stable if and only if $\mathscr{F}$ is pure, and 
\[ 
p_h(\mathscr{F}';m)\ (\leq)\  p_h (\mathscr{F};m) \qquad  m >>0
\]
for any proper nonzero subsheaf $\mathscr{F}'\subset \mathscr{F}$. Here $p_h(\mathscr{F};m)$ 
denotes the reduced Hilbert polynomial of $\mathscr{F}$. 

In terms of 
$\gamma(\mathscr{F}) = (r(\mathscr{F}), \beta(\mathscr{F}), n(\mathscr{F}))$ the Hilbert polynomial reads 
\[ 
\bal 
& P_h(\mathscr{F}; m) = \\
& {r(\mathscr{F})h^2\over 2}\, m^2 + h\cdot\left(\beta(\mathscr{F})+{r(\mathscr{F}) c_1(S)\over 2}\right))\, m + {\beta\cdot (\beta+ c_1(S))\over 2} - n(\mathscr{F}) + r(\mathscr{F})\chi(\CO_S).\\
\eal 
\]
Hence, for sheaves with 
$r(\mathscr{F})>0$, 
\[
\bal 
& p_h(\mathscr{F},m) = \\
& m^2 + \left({2 h \cdot \beta(\mathscr{F}) \over r(\mathscr{F})h^2}+ 
{h\cdot c_1(S)\over h^2}\right)\, m + {2\over r(\mathscr{F})h^2} 
\left({\beta\cdot (\beta+ c_1(S))\over 2} - n(\mathscr{F})\right) 
+ {2\chi(\CO_S)\over h^2}.\\
\eal 
\]
For any such sheaf $\mathscr{F}$ let 
\be\label{eq:hslope} 
\mu_h(\mathscr{F}) = {h\cdot \beta(\mathscr{F})\over r(\mathscr{F})}, \qquad \nu_h(\CF)= {1\over r(\mathscr{F})} 
\left({\beta\cdot (\beta+ c_1(S))\over 2} - n(F)\right).
\ee
Then $\mathscr{F}$ is $h$-(semi)stable if and only if it is pure, and any nonzero proper subsheaf $\mathscr{F}'\subset \mathscr{F}$
satisfies 
\be\label{eq:muineq}
\mu_h(\mathscr{F}')\ (\leq)\ \mu_h(\mathscr{F})
\ee
while in case of equality,
\be\label{eq:nuineq}
\nu_h(\mathscr{F}') \ (\leq)\ \nu_h(\mathscr{F}).
\ee

Furthermore, as in \cite[Definition 2.1.12]{HL97}, a pure sheaf 
$\mathscr{F}$ with $r(\mathscr{F})>0$ is said to be $\mu_h$-(semi)stable if and only if any nonzero proper subsheaf $\mathscr{F}'\subset \mathscr{F}$ with $0 < r(\mathscr{F}')<
r(\mathscr{F})$ satisfies inequality \eqref{eq:muineq}. 
As usual, $h$-semistability implies $\mu_h$-semistability for any such sheaf.

\begin{lem}\label{twiststab} 
Let $\mathscr{F}$ be a pure compactly supported sheaf on $X$ with $r(\mathscr{F})>0$. 
Then $\mathscr{F}$ is $\mu_h$-(semi)stable if and only if $\mathscr{F}\otimes g^*\CO_S(-D)$ 
is $\mu_h$-(semi)stable. 
\end{lem}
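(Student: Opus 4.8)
The plan is to reduce everything to the single observation that tensoring by a line bundle pulled back from $S$ is an exact autoequivalence of $\Coh_c(X)$ that preserves purity and, crucially, induces a rank-preserving bijection on the lattice of coherent subsheaves. First I would record that $g^*\CO_S(-D)$ is an invertible sheaf on $X$, so $\mathscr{F} \mapsto \mathscr{F}\otimes g^*\CO_S(-D)$ is an exact functor with exact inverse $(-)\otimes g^*\CO_S(D)$; in particular $\mathscr{F}$ is pure of dimension $2$ if and only if $\mathscr{F}\otimes g^*\CO_S(-D)$ is, since twisting by a line bundle does not change the dimension of the support of any subsheaf, hence does not affect the condition that $\mathscr{F}$ have no subsheaf of strictly smaller-dimensional support.

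Next I would check the behaviour of the numerical invariant $\mu_h$ under this twist. Writing $\gamma(\mathscr{F}) = (r(\mathscr{F}),\beta(\mathscr{F}),n(\mathscr{F}))$ as in \eqref{eq:gammaF}, the projection formula gives $g_*(\mathscr{F}\otimes g^*\CO_S(-D)) \cong (g_*\mathscr{F})\otimes \CO_S(-D)$, so by Grothendieck--Riemann--Roch the rank over $S$ is unchanged, $r(\mathscr{F}\otimes g^*\CO_S(-D)) = r(\mathscr{F})$, while $\beta$ shifts by $\beta(\mathscr{F}\otimes g^*\CO_S(-D)) = \beta(\mathscr{F}) - r(\mathscr{F})\,[D]$, where $[D]\in H_2(S,\IZ)$ is the class of $D$. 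Consequently, by the definition \eqref{eq:hslope},
\[
\mu_h(\mathscr{F}\otimes g^*\CO_S(-D)) = \mu_h(\mathscr{F}) - h\cdot [D].
\]
The key point is that this shift by the constant $h\cdot[D]$ is \emph{independent of the subsheaf}: it depends only on $h$ and $D$, not on $\mathscr{F}$.

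Now I would run the comparison of the stability inequalities. There is an inclusion-preserving bijection between nonzero proper subsheaves $\mathscr{F}'\subset \mathscr{F}$ and nonzero proper subsheaves $\mathscr{F}''\subset \mathscr{F}\otimes g^*\CO_S(-D)$, given by $\mathscr{F}'\mapsto \mathscr{F}'\otimes g^*\CO_S(-D)$; and since the rank is preserved, the condition $0 < r(\mathscr{F}') < r(\mathscr{F})$ matches the condition $0 < r(\mathscr{F}'') < r(\mathscr{F}\otimes g^*\CO_S(-D))$. For each such pair the displayed identity gives
\[
\mu_h(\mathscr{F}') - \mu_h(\mathscr{F}) = \mu_h(\mathscr{F}'\otimes g^*\CO_S(-D)) - \mu_h(\mathscr{F}\otimes g^*\CO_S(-D)),
\]
so the inequality \eqref{eq:muineq} holds for $\mathscr{F}'\subset\mathscr{F}$ (strictly, resp.\ weakly) if and only if it holds for $\mathscr{F}''\subset \mathscr{F}\otimes g^*\CO_S(-D)$. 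Combined with the equivalence of purity established in the first step, this proves that $\mathscr{F}$ is $\mu_h$-(semi)stable if and only if $\mathscr{F}\otimes g^*\CO_S(-D)$ is. There is no real obstacle here; the only point deserving a line of care is the identity $g_*(\mathscr{F}\otimes g^*\CO_S(-D))\cong (g_*\mathscr{F})\otimes\CO_S(-D)$ and the resulting transformation of $\gamma$, which is where one uses that $g$ is affine so that higher direct images vanish and $\dR g_* = g_*$.
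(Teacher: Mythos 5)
Your proof is correct and follows the same route as the paper: the paper's proof likewise consists of the single observation $\mu_h(\mathscr{E}\otimes g^*\CO_S(-D)) = \mu_h(\mathscr{E}) - h\cdot D$ followed by a direct verification of the (semi)stability inequalities. You merely spell out the details (projection formula, preservation of purity and of the subsheaf lattice under twisting) that the paper leaves as "a straightforward verification."
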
 

{\it Proof.} Note that for any compactly supported sheaf $\mathscr{E}$ on 
$X$ with $r(E)>0$ one has 
\be\label{eq:twistslope}
\bal 
\mu_h(\mathscr{E}\otimes g^*\CO_S(-D)) & = \mu_h(\mathscr{E}) - h\cdot D,
\eal
\ee
Then the claim follows by a straightforward verification of the $\mu_h$-(semi)stability condition. 

\hfill $\Box$ 

\begin{cor}\label{vanishingcor} 
Let $\mathscr{F}$ be a pure $\mu_h$-semistable compactly supported sheaf on $X$ with $r(\mathscr{F})>0$. 
Then the following vanishing result holds
\be\label{eq:vanishingA}
{\rm Ext}^0_X(\mathscr{F}, \mathscr{F} \otimes g^*\CO_S(-D)) =0.
\ee
In particular this holds for all $h$-semistable sheaves 
$\mathscr{F}$ with $r(\mathscr{F})>0$. 
\end{cor}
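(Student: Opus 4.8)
The plan is to argue by contradiction: a nonzero homomorphism from $\mathscr{F}$ to $\mathscr{F}\otimes g^*\CO_S(-D)$ would force the slope to drop and to rise simultaneously. Suppose $\operatorname{Ext}^0_X(\mathscr{F},\mathscr{F}\otimes g^*\CO_S(-D))=\operatorname{Hom}_X(\mathscr{F},\mathscr{F}\otimes g^*\CO_S(-D))$ contains a nonzero $\phi$, and set $\mathscr{G}=\mathscr{F}\otimes g^*\CO_S(-D)$. By the projection formula $g_*\mathscr{G}\cong g_*\mathscr{F}\otimes\CO_S(-D)$, so $r(\mathscr{G})=r(\mathscr{F})>0$, and $\mathscr{G}$ is pure, being the twist of a pure sheaf by a pulled-back line bundle. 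By Lemma \ref{twiststab} it is again $\mu_h$-semistable, and by \eqref{eq:twistslope} one has $\mu_h(\mathscr{G})=\mu_h(\mathscr{F})-h\cdot D$; since $D$ is effective, nonzero and $h$ is ample, $h\cdot D>0$, so $\mu_h(\mathscr{G})<\mu_h(\mathscr{F})$. The aim is to deduce from the existence of $\phi$ the opposite inequality $\mu_h(\mathscr{F})\le\mu_h(\mathscr{G})$.

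Let $\mathscr{I}=\operatorname{im}\phi$, which is simultaneously a quotient of $\mathscr{F}$ and a nonzero subsheaf of $\mathscr{G}$. The first point is that $r(\mathscr{I})>0$, so that $\mu_h(\mathscr{I})$ is defined. Since $\operatorname{supp}\mathscr{F}$ is proper and $X$ is the total space of a rank-two bundle over $S$, the fibres of $g$ over points of $\operatorname{supp}\mathscr{F}$ are proper closed subschemes of affine spaces, hence finite, so $g$ is finite on $\operatorname{supp}\mathscr{F}$; together with $r(\mathscr{F})>0$ and purity this forces $\operatorname{supp}\mathscr{F}=\operatorname{supp}\mathscr{G}$ to be equidimensional of dimension two with every component dominating $S$. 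As a nonzero subsheaf of the pure sheaf $\mathscr{G}$, $\mathscr{I}$ is then pure of dimension two with support a union of components of $\operatorname{supp}\mathscr{G}$, whence $r(\mathscr{I})>0$; likewise $\ker\phi\subset\mathscr{F}$ is either zero or has rank strictly between $0$ and $r(\mathscr{F})$. Now consider the exact sequences $0\to\ker\phi\to\mathscr{F}\to\mathscr{I}\to0$ and $0\to\mathscr{I}\to\mathscr{G}\to\mathscr{G}/\mathscr{I}\to0$. Since $g$ is affine, $g_*$ is exact, so $r$ and $\beta$ are additive and $\mu_h(\mathscr{F})$ is a weighted average (with positive weights) of $\mu_h(\ker\phi)$ and $\mu_h(\mathscr{I})$ when $\ker\phi\neq0$; applying \eqref{eq:muineq} to $\ker\phi\subset\mathscr{F}$ gives $\mu_h(\mathscr{F})\le\mu_h(\mathscr{I})$. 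Applying \eqref{eq:muineq} to $\mathscr{I}\subset\mathscr{G}$ — or, in the boundary case $r(\mathscr{G}/\mathscr{I})=0$, using that a sheaf of rank zero on the surface $S$ has effective first Chern class, so $h\cdot\beta(\mathscr{G}/\mathscr{I})\ge0$ — gives $\mu_h(\mathscr{I})\le\mu_h(\mathscr{G})$. Chaining the two, $\mu_h(\mathscr{F})\le\mu_h(\mathscr{G})=\mu_h(\mathscr{F})-h\cdot D$, i.e. $h\cdot D\le0$, contradicting $h\cdot D>0$. Hence $\phi=0$. The last sentence of the corollary follows at once, since $h$-semistability implies $\mu_h$-semistability.

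I expect the only genuine subtlety to be the slope bookkeeping in the middle step: the $\mu_h$-(semi)stability condition recorded in the excerpt constrains only subsheaves $\mathscr{F}'\subset\mathscr{F}$ with $0<r(\mathscr{F}')<r(\mathscr{F})$, so the cases in which $\ker\phi$ or $\mathscr{G}/\mathscr{I}$ has rank $0$ must be disposed of by hand. These are precisely the cases handled by purity (a nonzero torsion kernel $\ker\phi$ would contradict purity of $\mathscr{F}$, and a rank-zero $\mathscr{I}$ would contradict purity of $\mathscr{G}$) and by the effectivity of first Chern classes of lower-dimensional sheaves on $S$; everything else is the standard ``a nonzero map between $\mu_h$-semistable sheaves cannot strictly decrease the slope'' argument.
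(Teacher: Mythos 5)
Your proof is correct and follows the same route as the paper: twist by $g^*\CO_S(-D)$, use Lemma \ref{twiststab} to preserve $\mu_h$-semistability, and observe that the slope strictly drops by $h\cdot D>0$. The only difference is that the paper then simply cites \cite[Thm. 1.6.6, Prop. 1.2.7]{HL97} for the vanishing of homomorphisms between $\mu_h$-semistable sheaves with strictly decreasing slope, whereas you re-prove that standard fact by hand via the image factorization; your treatment of the rank-zero boundary cases (purity of the kernel, effectivity of $c_1$ of the torsion quotient) is sound.
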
 

{\it Proof}. Since $\mathscr{F}$ is $\mu_h$-semistable, as shown in Lemma \ref{twiststab} above, $\mathscr{F}\otimes g^*\CO_S(-D)$ is also $\mu_h$-semistable. Moreover, since $D$ is effective and non-zero, 
 \[
 \mu_h(\mathscr{F}\otimes g^*\CO_S(-D)) = \mu_h(\mathscr{F}) - h \cdot D < \mu_h(\mathscr{F}). 
 \]
Then the claim follows 
from Theorem 1.6.6 and Proposition 1.2.7 in \cite{HL97}.  

\hfill $\Box$

\begin{cor}\label{supplemmaA} 
Let $\mathscr{F}$ be a pure compactly supported $\mu_h$-semistable 
 sheaf on $X$ such that $r(\mathscr{F})>0$. Then $\mathscr{F}$ is scheme theoretically supported on $Y$, and as a sheaf on $Y$, the following holds: 
 \be\label{eq:vanishingAB}
 {\rm Ext}^3_Y(\CF,\CF)=0.
 \ee
 In particular this holds for all $h$-semistable sheaves 
 $\mathscr{F}$ with $r(\mathscr{F})>0$. 
 \end{cor}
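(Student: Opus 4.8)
The plan is to derive both assertions directly from Corollary~\ref{vanishingcor}, i.e.\ from the vanishing $\Ext^0_X(\mathscr{F},\mathscr{F}\otimes g^*\CO_S(-D))=0$, using only the geometry of the embedding $i\colon Y\hookrightarrow X$ together with Serre duality on $Y$.

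First I would identify the divisor class of $Y$ in $X$. Since $X$ is the total space of $K_Y$ with $i\colon Y\hookrightarrow X$ the zero section, and $X\cong\mathrm{Tot}_S\bigl(K_S(D)\oplus\CO_S(-D)\bigr)$ realizes $Y$ as the zero section of the summand $\CO_S(-D)$, one has $N_{Y/X}\cong K_Y\cong\pi_S^*\CO_S(-D)$; as $g^*\colon\mathrm{Pic}(S)\to\mathrm{Pic}(X)$ is an isomorphism this forces $\CO_X(Y)\cong g^*\CO_S(-D)$ and $\mathcal{I}_Y\cong g^*\CO_S(D)$. Equivalently, $Y$ is the zero locus of the component $\tau\in H^0\bigl(X,g^*\CO_S(-D)\bigr)$ of the tautological section of $g^*\bigl(K_S(D)\oplus\CO_S(-D)\bigr)$, and $\mathscr{F}$ is scheme theoretically supported on $Y$ precisely when multiplication by $\tau$, regarded as a morphism $\mathscr{F}\to\mathscr{F}\otimes g^*\CO_S(-D)$, vanishes. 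But this morphism lies in $\Ext^0_X(\mathscr{F},\mathscr{F}\otimes g^*\CO_S(-D))$, which is zero by Corollary~\ref{vanishingcor}; hence $\mathscr{F}=i_*\CF$ for a coherent sheaf $\CF$ on $Y$, and $\CF$ has proper support because $i$ is a closed embedding.

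For the second assertion I would invoke Serre duality on the smooth quasi-projective threefold $Y$: since $\CF$ has proper support, $\dR\hom_Y(\CF,\CF)$ has proper support and $\Ext^3_Y(\CF,\CF)\cong\Hom_Y(\CF,\CF\otimes K_Y)^\vee$. Using $K_Y\cong\pi_S^*\CO_S(-D)$, the projection formula $i_*\bigl(\CF\otimes\pi_S^*\CO_S(-D)\bigr)\cong\mathscr{F}\otimes g^*\CO_S(-D)$ and the full faithfulness of $i_*$, one identifies $\Hom_Y(\CF,\CF\otimes K_Y)\cong\Hom_X(\mathscr{F},\mathscr{F}\otimes g^*\CO_S(-D))=\Ext^0_X(\mathscr{F},\mathscr{F}\otimes g^*\CO_S(-D))$, which again vanishes by Corollary~\ref{vanishingcor}. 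Thus $\Ext^3_Y(\CF,\CF)=0$. The final clause of the statement is immediate, since $h$-semistability implies $\mu_h$-semistability for sheaves of positive rank, so Corollary~\ref{vanishingcor} applies to $h$-semistable $\mathscr{F}$ as well.

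There is no serious obstacle here; the real work was already done in Corollary~\ref{vanishingcor}. The two points needing care are: fixing the sign so that $\CO_X(Y)\cong g^*\CO_S(-D)$ rather than $g^*\CO_S(D)$, which is exactly what makes the pertinent $\Ext^0$ coincide with the one controlled by Corollary~\ref{vanishingcor}; and applying Serre duality in the form valid for coherent sheaves with proper support on the noncompact $Y$, which is legitimate precisely because $\dR\hom_Y(\CF,\CF)$ has proper support.
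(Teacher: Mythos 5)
Your argument is correct and follows essentially the same route as the paper: scheme-theoretic support on $Y$ is deduced from the vanishing of the multiplication map by the tautological section of $g^*\CO_S(-D)$, which lies in the $\Ext^0$ group killed by Corollary~\ref{vanishingcor}, and the vanishing of $\Ext^3_Y(\CF,\CF)$ follows from Serre duality for compactly supported sheaves together with $K_Y\cong g^*\CO_S(-D)|_Y$. Your extra care in identifying $\CO_X(Y)\cong g^*\CO_S(-D)$ and in transporting the $\Hom$ group from $Y$ to $X$ via the fully faithful pushforward only makes explicit what the paper leaves implicit.
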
 
 
 {\it Proof}. 
 Note that $Y\subset X$ is the zero locus of the tautological section $$\xi \in H^0(X, g^*\CO_S(-D)).$$  
 In order to prove the first claim it suffices to show that the morphism 
 $${\bf 1}_\mathscr{F}\otimes\xi: \mathscr{F} \to \mathscr{F} \otimes g^*\CO_S(-D)$$ 
 is identically zero for any sheaf $\mathscr{F}$ as in Lemma \ref{supplemmaA}. This follows from the first vanishing 
 result in \eqref{eq:vanishingA}. 
 
  The vanishing result \eqref{eq:vanishingAB} follows by Serre duality for compactly supported sheaves, noting that $K_Y \cong \pi_S^*\CO_S(-D)\simeq g^*\CO_S(-D)|_Y$. 
 
\hfill $\Box$

\subsection{Reduced obstruction theory}\label{redobs} 

As explained in Section \ref{compobs}, the moduli space $\CM$ has two natural obstruction theories, associated to deformations of sheaves on $X$ and $Y\subset X$ respectively.
Using the Atiyah class formalism of 
\cite[Thm 4.1]{HT10}, these are given by the maps 
 \be\label{eq:Xobs} 
a_X: \mathbf{R}\mathscr{H}om_{\pi_\CM}(\IF, \IF)[3] \to \IL^{\bullet}_\CM
\ee
and 
\be\label{eq:Yobs}
a_Y:  \mathbf{R}\mathscr{H}om_{\rho_\CM}(\IG, \IG\otimes \rho_Y^*K_Y))[2]\to \IL^{\bullet}_\CM
\ee
respectively. As in Section \ref{obstrsect}, here $\pi_\CM, \rho_\CM$ denote the canonical projections $X \times \CM \to \CM$ and $Y \times \CM \to \CM$ respectively. The projections onto the $X$ and $Y$ factors will be respectively denoted by $p_X$ and $\rho_Y$. 
Proposition \ref{obsprop} shows that the first obstruction theory splits as a direct sum $a_X = (a_Y, 0)$ 
with respect to the decomposition \eqref{eq:RHomsplitAB}, 
which in the present case reads
\be\label{eq:RHomsplitAG}
\bal 
& \mathbf{R}\mathscr{H}om_{\pi_\CM}(\IF, \IF)[3]\cong \\
& 
\mathbf{R}\mathscr{H}om_{\rho_\CM}(\IG, \IG\otimes \rho_Y^*K_Y)[2]
\oplus \mathbf{R}\mathscr{H}om_{\rho_\CM}(\IG, \IG)[3].\\
\eal
\ee
In particular the obstruction theory \eqref{eq:Xobs} of $\CM$ as a moduli space of sheaves on $X$ can be naturally truncated to 
the \eqref{eq:Yobs} without 
any loss of information.
This result is similar to \cite[Lemma 6.4]{CL14}, except that here it is proven for global obstruction theories
as opposed to local 
complex analytic Kuranishi maps. 

The next observation is that the obstruction theory \eqref{eq:Yobs} can be further 
truncated as shown in \cite[Section 4.4]{HT10}. Namely, Grothendieck duality for the projection 
$\rho_\CM: \CY \to \CM$ yields an isomorphism 
\be\label{eq:dualisomB}
 \mathbf{R}\mathscr{H}om_{\rho_\CM}(\IG, \IG\otimes \rho_Y^*K_Y))[2] 
\cong \mathbf{R}\mathscr{H}om_{\rho_\CM}(\IG, \IG)^\vee[-1]. 
\ee
There is also a natural map of complexes \[ 
\CO_\CM \to  \mathbf{R}\mathscr{H}om_{\rho_\CM}(\IG, \IG)
\]
which restricts to the identity map on each fiber $Y_m$, $m\in \CM$. The cone of this map defines the truncation $\tau^{\geq 1}\mathbf{R}\mathscr{H}om_{\rho_\CM}(\IG, \IG)$. Moreover, using the vanishing result \eqref{eq:vanishingAB}, 
the global version of 
Nakayama's lemma  \cite[Lemma 4.2]{HT10} yields a further truncation $\tau^{[1,2]} \mathbf{R}\mathscr{H}om_{\rho_\CM}(\IG, \IG)$ of amplitude $[1, \ 2]$. As shown in \cite[Sect. 4.4]{HT10} the map \eqref{eq:Yobs} also admits a truncation 
\be\label{eq:YobsB} 
\tau^{[-1,0]} a_Y : (\tau^{[1,2]} \mathbf{R}\mathscr{H}om_{\rho_\CM}(\IG, \IG))^\vee[-1] \to \IL^{\bullet}_\CM 
\ee
which is still an obstruction theory for $\CM$. Moreover, this obstruction theory is perfect i.e. it admits a two term locally free resolution. However, as observed \cite{GSY17b}, \cite{TT17a}, the resulting virtual cycle invariants would be identically zero 
since the obstruction sheaf induced by the above obstruction theory contains a trivial direct summand. This problem is solved in loc. cit. by constructing 
a reduced perfect obstruction theory.

More precisely, it is shown in \cite[Proposition 2.4]{GSY17b}, 
and \cite[Theorem 6.1, Theorem  6.5]{TT17a} that there
 is a further splitting 
\be\label{eq:splitobsB}
\tau^{[-1,0]}a_Y = (a_Y^{\rm red}, 0): \begin{array}{c}
(\tau^{[1,2]}  \mathbf{R}\mathscr{H}om_{\rho_\CM}(\IG, \IG))^\vee[-1]_\perp \\ \oplus \\ 
\rho_{\CM*}(\pi_S^*K_S)[2] \\ \end{array} \to \IL^{\bullet}_\CM,
\ee
where $\pi_S: S \times \CM \to \CM$ is the natural projection. 
Furthermore, it is also proven in loc. cit. that the map 
\be\label{eq:Yobsred}
 a_Y^{\rm red} :  (\tau^{[1,2]}  \mathbf{R}\mathscr{H}om_{\rho_\CM}(\IG, \IG))^\vee[-1]_\perp \to \IL^{\bullet}_\CM
 \ee
 is still a perfect obstruction theory for $\CM$, hence it yields a reduced virtual cycle $[\CM]^{\rm vir}_{\rm red}$. 
 
As stated in Corollary \ref{obscor}, it is now clear that the fourfold obstruction theory \eqref{eq:Xobs} admits a reduction 
to \eqref{eq:Yobsred}, which is moreover perfect. 

\subsection{Reduced Donaldson-Thomas invariants}\label{redDT}
The Donaldson-Thomas invariants for sheaves on $X$ can be naturally defined by integration against the reduced 
virtual cycle associated to \eqref{eq:Yobsred}, 
provided that compactness issues are properly addressed. Again, this is similar to \cite[Theorem 6.5]{CL14}, although the logic employed here is slightly different.
 
In the present situation, compactness issues are easily addressed using the natural torus action ${\bf T} \times Y\to Y$ which scales the fibers of the line bundle 
$K_S(D)$ with weight $+1$. Clearly, this admits a lift to a torus action on $X$ 
scaling the fibers of $K_S(D)\oplus \CO_S(-D)$ with weights 
$(+1, -1)$. The fixed point set coincides with the 
zero section, $S \subset Y$, which is obviously proper. This implies that the induced action ${\bf T}\times \CM \to \CM$ on the moduli space has proper fixed locus $\CM^{\bf T}$. Furthermore, by the virtual localization theorem \cite{GP99}, the fixed 
locus has an induced reduced virtual cycle, $[\CM^{\bf T}]^{\rm vir}_{\rm red}$, and an equivariant virtual normal bundle 
\be\label{eq:virnormA} 
\mathscr{N}^{\rm vir} =\left( \tau^{[1,2]} \mathbf{R}\mathscr{H}om_{\rho_\CM}(\IG,\IG)|_{\CM^{\bf T}}\right)^m. 
\ee
Here the superscript $m$ denotes the moving part with respect to the torus action. Then the equivariant residual 
Donaldson-Thomas invariants of $X$ are defined by 
\be\label{eq:DTa} 
DT_X(\gamma) = \int_{[\CM^{\bf T}]^{\rm vir}_{\rm red}} {1\over e_{\bf T} 
(\mathscr{N}^{\rm vir})}.  
\ee

\section{Rank two invariants and universality}\label{rktwosect}

The goal of this section is to prove certain universality properties  of the  equivariant residual invariants \eqref{eq:DTa} 
for rank $r=2$. The surface $S$ will be subject to the same conditions as listed in the beginning of Section 
\ref{DTsect}.

\subsection{Rank two fixed loci}\label{fixedloci}  
Let $S$ be a smooth projective surface as in Section \ref{DTsect}
and $h= c_1(\CO_S(1))$ a polarization on $S$. Note that under the current assumptions the natural map ${\rm Pic}(S) \to H_2(S,\IZ)$ 
is an isomorphism, hence the lattices will be implicitly identified in the following. Moreover, since $H^1(\CO_S)=0$ and 
element $\beta \in H_2(S, \IZ)$ determines a unique line bundle on $S$  up to isomorphism. 

Now let $\gamma = (2, \beta, n)$ with $\beta \in H_2(S,\IZ)$ and 
$n \in \IZ$ and let $\CM = \CM_h(Y, \gamma)$. Let 
$\CM_h(S,\gamma)$ be the moduli space of $h$-stable torsion free 
sheaves on $S$ with topological invariants $\gamma$. For simplicity, in this section set $L=K_S(D)$.  

The rank two torus 
fixed loci in the moduli space $\CM_h(Y,\gamma)$ 
were determined in \cite[Section 3]{GSY17b} and \cite[Section 7]{TT17a}. This section will review the main points using the approach of \cite[Section 3]{GSY17b}. 
As shown in loc. cit.,  for arbitrary rank $r\geq 1$ the
torus fixed loci in the moduli space $\CM$ are classified by 
partitions $\lambda$ of $r$.  For a given partition $\lambda = 
( \lambda_1 \leq \lambda_2 \leq \cdots \leq \lambda_{\ell(\lambda)})$ the corresponding fixed locus consists 
of sheaves $\mathscr{F}$ such that the torsion free sheaf $\mathscr{E}=q_*\mathscr{F}$ has a character decomposition 
\[
\mathscr{E} \cong \bigoplus_{i=0}^{\ell(\lambda)-1} \mathscr{E}_{-i}\otimes {\bf t}^{-i},
\]
where $\mathscr{E}_{-i}$ is a rank $\lambda_{i+1}$ torsion free sheaf on $S$ 
for each $0 \leq i\leq \ell(\lambda)$. Here ${\bf t}$ denotes the 
trivial equivariant line bundle on $S$ equipped with the weight one action of ${\bf T}=\IC^\times$ on each fiber. 
 Moreover, since
\[ 
q_*\CO_Y \cong \bigoplus_{i=0}^{\infty} L^{-i} \otimes {\bf t}^{-i} 
\]
the $\CO_Y$-module structure of $\mathscr{F}$ is specified by a collection 
of morphisms of $\CO_S$-modules 
\[
\psi_i: \mathscr{E}_{-i} \to \mathscr{E}_{-i-1}\otimes L. 
\]
In particular, for $r=2$ one obtains two types of components corresponding to $\lambda = (2)$ and $\lambda=(1,1)$ which will 
be called \textit{type I} and \textit{type II} respectively. The union of type I fixed loci will be denoted by $\CM^{\bf T}_{(2)}$ while the union  of type II fixed loci will be denoted by  $\CM^{\bf T}_{(1,1)}$. 
As shown in \cite[Proposition 3.2]{GSY17b} there is a natural isomorphism of schemes 
\be\label{eq:typeI} 
\CM^{\bf T}_{(2)}\cong \CM_h(S,\gamma).
\ee

Moreover, as shown in \cite[Proposition 3.8]{GSY17b}, the type II fixed locus is isomorphic to a union of nested Hilbert schemes on $S$. Using the notation of loc. cit., given a pair of curve classes $(\alpha_1,\alpha_2) \in H^2(S,\IZ)^{\oplus 2}$ and a pair of integers $(n_1, n_2)\in \IZ^{2}$, $n_1, n_2\geq 0$, let $S_{(\alpha_1, \alpha_2)}^{[n_1,n_2]}$ be the nested Hilbert scheme parametrizing flags of twisted ideal sheaves 
\[ 
\mathcal{I}_1 \otimes L_1 \subset \mathcal{I}_2 \otimes L_2 
\]
where $\mathcal{I}_{1}, \mathcal{I}_{2}$ are ideal sheaves of zero dimensional subschemes 
of $S$ of length $n_1, n_2$ respectively, and $L_1, L_2$ are line bundles  
on $S$ with $c_1(L_i)=-\alpha_i$, $1\leq i \leq 2$. Note in particular that the curve class $\alpha_2-\alpha_1$ must be effective or zero, which will be indicated by  $\alpha_2-\alpha_1 \succeq 0$.  
Then the type II fixed locus is isomorphic to 
\be\label{eq:typeII} 
\CM^{\bf T}_{(1,1)}\cong \bigcup^\prime_{\substack {\beta_1+\beta_2 =\beta\\ \beta_2+c_1(L)-\beta_1\succeq 0\\ }} \ \
\bigcup^\prime_{\substack{n_1,n_2\geq 0\\ 
n_1+n_2 + \beta_1\cdot \beta_2 = n}} \ S_{(\beta_1, \beta_2+c_1(L))}^{[n_1,n_2]} 
\ee
where the ${}^\prime$ superscript indicates that the union in the right hand side is subject to the following additional constraints: 
\begin{itemize} 
\item[$(II.a)$] If $\beta_2+c_1(L)-\beta_1=0$  then $n_1\geq n_2$, and 
\item[$(II.b)$] $h\cdot \beta_2 < h \cdot \beta_1$. 
\end{itemize} 

Using the virtual localization theorem, the residual Donaldson-Thomas invariant $DT_h(X, \gamma)$ then splits as a sum 
\[ 
DT_h(X, \gamma) = DT_h(X, \gamma)_I + DT_h(X, \gamma)_{II}.
\]
The type I contributions are determined in \cite[Proposition 3.2 and Corollary 3.3]{GSY17b} while the type II contributions 
are determined in \cite[Proposition 3.11 and Corollary 3.13]{GSY17b}. In order to write down the resulting formulas, 
it will be convenient to adopt the following notation conventions 
for any component $\CT$ of the fixed locus, of type I or II. 

\begin{notn} 
Let $\pi:S \times \CT\to \CT$ denote the canonical projection
and let $\mathbf{R}\mathscr{H}om_\pi$ denote the derived functor  $\dR_{\pi*}\mathbf{R}\mathscr{H}om$. 
Moreover, the pull-back of any sheaf $M$ on $S$ to $S\times \CT$ 
will be denoted again by $M$ for simplicity. The distinction 
will be clear from the context. 
\end{notn} 

For type I components, let $\IE$ denote the universal sheaf on 
$S\times \CM_h(S, \gamma)$. Note also that $\CM_h(S, \gamma)$ 
has a natural trace-free 
virtual cycle $[\CM_h(S, \gamma)]^{vir}_0$, as shown 
in \cite[Theorem 4.1]{HT10}. 
Then Proposition 3.2 and Corollary 3.3 of \cite{GSY17b} prove: 
\be\label{eq:typeIB} 
DT_h(X, \gamma)_I = \int_{[\CM_h(S,\gamma)]^{\vir}_0}\frac{1}{e\left(\dR \hom_{\pi}(\eE, \eE\otimes L \cdot \t)\right)},\\
\ee
where $L\cdot {\bf t}$ denotes $L \otimes {\bf t}$ for simplicity. 

In order to write down the type II contributions, note that for any pair $(\alpha_1, \alpha_2)$ and for any $(n_1, n_2)$ the nested Hilbert scheme $S_{(\alpha_1, \alpha_2)}^{[n_1,n_2]}$
is naturally isomorphic to the nested Hilbert scheme 
$S_{(0, \alpha_2-\alpha_1)}^{[n_1, n_2]}$ since $H^1(\CO_S)=0$. 
The latter will be denoted below by $S_{\alpha}^{[n_1, n_2]}$, where $\alpha = \alpha_2-\alpha_1$. For $n_1=0$ this reduces 
to the Hilbert scheme of one dimensional subschemes $Z\subset S$ 
with 
\[
c_1(\CO_Z)=\alpha,\qquad c_2(\CO_Z) = n_2. 
\]
For $n_2=0$ and $\alpha=0$, one obtains the usual Hilbert scheme 
$S^{[n_1]}$ of $n_1$ points on $S$.
Moreover note that there are natural projections 
\[
S_\alpha^{[n_1, n_2]} \to S^{[n_1]}, \qquad S_\alpha^{[n_1, n_2]} \to S^{[n_2]}_\alpha. 
\]
Therefore any equivariant sheaf or complex of sheaves $\IF$ 
on $S^{[n_i]}$ or $S\times S^{[n_i]}$, $1\leq i \leq n$ has a  
canonical pull-back to $S_\alpha^{[n_1,n_2]}$ or $S\times S_\alpha^{[n_1,n_2]}$ respectively. For simplicity the pull-back will be denoted again by $\IF$. In particular, using these conventions, the universal flag on $S\times S_\alpha^{[n_1, n_2]}$
will be given by $\CI^{[n_1]} \subset \CI^{[n_2]}_{\alpha}$ where 
$\CI^{[n_1]}$, $\CI^{[n_2]}_{\alpha}$ are obtained by pulling back the universal objects from $S\times S^{[n_1]}$ and 
$S\times S^{[n_2]}_\alpha$ respectively.

Employing the above conventions, for any line bundle $M$ on $S$ and any non-zero integer $a\in \IZ$, $a\neq 0$,  let 
\[
\CQ_\alpha^{[n_1,n_2]}(M,a) = {(a{\bf s})^{\chi(M)}\over e(\mathbf{R}\mathscr{H}om_\pi(
\CI^{[n_1]},\CI_{\alpha}^{[n_2]}\otimes M\cdot {\bf t}^{-a})}.
\]
Again, $M \cdot {\bf t}^{-a}$ denotes $M \otimes {\bf t}^{-a}$ 
while ${\bf s}={c}_1({\bf t})$ is the equivariant 
first Chern class of ${\bf T}$, commonly referred to as the equivariant parameter. 
For any integer $n\geq 0$ let also  
${\sf T}_{S^{[n]}}$ denote the tangent bundle to the Hilbert scheme $S^{[n]}$. 
Moreover, as shown in 
\cite[Theorem 1]{GSY17a}, note that the nested Hilbert scheme has an intrinsic virtual cycle 
$[S_{\alpha}^{[n_1, n_2]}]^{vir}$. 
Then the formula 
derived in Proposition 3.11 and Corollary 3.13 in \cite{GSY17b}
for the residual contribution of a component of the fixed locus  
$\CT \cong S_\alpha^{[n_1, n_2]}$ 
reads 
\be\label{eq:typeIIB}
\bal  
DT_{\alpha}^{[n_1,n_2]} =  & {(-1)^{-c_1(L)\cdot \alpha + c_1(L)\cdot c_1(S)/2 + 3c_1(L)^2 /2} \over 2^{\chi(L^2)} (-{\bf s})^{\chi(L^2) + \chi(L) -\chi(L^{-1})} }\\
&  \int_{[S_{\alpha}^{[n_1, n_2]}]^{vir}} \big(\,
e({\sf T}_{S^{[n_1]}}\otimes L\cdot \t) \, e({\sf T}_{S^{[n_2]}}\otimes L\cdot \t)
\,\CQ_\alpha^{[n_1,n_2]}(K_S\otimes L^{-1},-1) \\
& \ \CQ_\alpha^{[n_1,n_2]}(K_S\otimes L^{-1},-1) 
\CQ_\alpha^{[n_1,n_2]}(L^{-1}, -1) \, \CQ_\alpha^{[n_1,n_2]}(K_2\otimes L^{-2}, -2)^{-1}\, \big).
\eal 
\ee

\subsection{Universality results}\label{univsect}

As shown in \cite[Proposition 4.4]{GSY17b}, using Muchizuki's wallcrossing formula \cite[Theorem~1.4.6]{M02} the type I contributions in \eqref{eq:typeIB} are expressed in terms of Seiberg-Witten invariants coupled with certain combinatorial coefficients. The resulting formula is reviewed below. 

Mochizuki's combinatorial coefficients are written in terms of 
equivariant integrals on products of Hilbert schemes of points on $S$. The equivariant structure is defined with respect to a torus 
${\bf T}'= \IC^\times$ which acts trivially on $S$ and its Hilbert schemes. One should make a clear distinction between ${\bf T}'$ 
and the torus ${\bf T}$ used above in virtual localization computations, 
which are completely unrelated. In fact the ${\bf T}'$ action has its origin in a torus action on the master space used in the proof 
of \cite[Theorem 1.4.6]{M02}, which is not manifestly used in this paper. Let  $\t'$ denote the trivial line bundle on $S$ with the $\C^\times$-action of weight 1 on the fibers and let $\s':=c_1(\t')$. Below let $\pi'$ denote the projection
\[ 
 S\times S^{[n_1]}\times S^{[n_2]}\to S^{[n_1]}\times S^{[n_2]}.
\]
Then for any $\alpha\in H_2(S,\IZ)$ let $L_\alpha$ be a line bundle on $S$ with $c_1(L_\alpha) = \alpha$ and let 
\begin{align}\label{eq:A_term}
\v{n_i}_{L_{\alpha}}:=
\pi'_{\ast} \left(\CO_{\z{n_i}} \otimes L_{\alpha} \right).
\end{align}
Note that $L_\alpha$ and hence $\v{n_i}_{L_{\alpha}}$ are uniquely 
determined by $\alpha$ up to isomorphism since $H^1(\CO_S)=0$. 
Moreover, for any equivariant sheaf $\CE$ on $S\times S^{[n_1]} \times S^{[n_2]}$ let 
\begin{align*}
&\sP(\CE) = e(-\dR \hom_{\pi'}(\CE,\CE\otimes L\cdot \t))
\end{align*}
Then, following Mochizuki, for any pair of effective 
curves classes $(\beta_1, \beta_2)\in H^2(S, \IZ)^{\oplus 2}$ such that $\beta = \beta_1+\beta_2$ let the coefficients 
$\sA(\gamma, \beta_1, \beta_2)$ be defined by 
\begin{align*} 
&\sA(\gamma, \beta_1, \beta_2) := \\ \notag
& \sum_{\begin{subarray}{c}
n_1+n_2=\\ n-\beta_1\cdot \beta_2 
\end{subarray}}
\int_{S^{[n_1]} \times S^{[n_2]}} \mathrm{Res}_{\s'=0}
\left( \frac{e\left(\v{n_1}_{L_{\beta_1}}\right)\cdot \sP \left(\i{n_1}_{L_{\beta_1}}\cdot \t'^{-1} \oplus \i{n_2}_{L_{\beta_2}}\cdot \t' \right)\cdot e\left(\v{n_2}_{L_{\beta_2}}\cdot \t'^2\right)}{(2s')^{n_1 + n_2 -p_g}\cdot \sQ\left(\i{n_1}_{L_{\beta_1}}\cdot \t'^{-1}, \i{n_2}_{L_{\beta_2}}\cdot \t'\right)}\right). 
\end{align*}
where
\begin{align*}
&\sQ\left(\i{n_1}_{L_{\beta_1}}\cdot \t'^{-1}, \i{n_2}_{L_{\beta_2}}\cdot \t'\right)=\\&e\left(
-\dR \hom_{\pi'}\left(\i{n_1}_{L_{\beta_1}}\cdot \t'^{-1}, \i{n_2}_{L_{\beta_2}}\cdot \t'\right)
- \dR \hom_{\pi'}\left(\i{n_2}_{L_{\beta_2}}\cdot \t', \i{n_1}_{L_{\beta_1}}\cdot \t'^{-1}\right)\right).
\end{align*} 
Then the following formula is proven in \cite[Proposition 4.4]{GSY17b}:
\begin{equation}
\DT_h(X, \gamma)_{I}=-\sum_{\begin{subarray}{c}
\beta_1 + \beta_2 =\beta \\  
\beta_1\cdot h < \beta_2 \cdot h
\end{subarray}} 
\mathrm{SW}(\gamma_1) \cdot 2^{2-\chi(v)} \cdot \sA(\gamma_1, \gamma_2, v; \sP_{1}\cup \alpha). 
\end{equation}
where the sum in the right hand side is over all decompositions 
$\beta= \beta_1+\beta_2$ with $\beta_1, \beta_2$ effective curve classes.

By analogy with a result of  G{\"o}ttsche and Kool \cite[Proposition 3.3]{GK17} using  \cite[Lemma 5.5]{GNY} and an adaptation of \cite{EGL} one has the following universality 
statement. 
\begin{prop}\label{univI}
The expression \eqref{eq:A_term} is a universal polynomial 
in the topological invariants 
\[ 
\beta_i^2, \quad \beta_i\cdot c_1(S), \quad \beta_i \cdot D, \quad \beta_1\cdot \beta_2, \quad D^2,\quad D\cdot c_1(S),\quad c_1(S)^2, \quad c_2(S). 
\]
with $1\leq i \leq 2$. 
\end{prop}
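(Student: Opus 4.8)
The plan is to run the Ellingsrud--G\"ottsche--Lehn cobordism argument, in the form used by G\"ottsche--Kool \cite[Proposition 3.3]{GK17} and resting on \cite[Lemma 5.5]{GNY}, adapted to the present equivariant setting. The starting point is the observation that every sheaf entering the equivariant integral that defines $\sA(\gamma,\beta_1,\beta_2)$ out of \eqref{eq:A_term} --- the class $\v{n_i}_{L_{\beta_i}}$ itself, the twisted universal ideal sheaves $\i{n_i}_{L_{\beta_i}}\cdot\t'^{\pm 1}$, and the $K$--theoretic Euler classes $\sP(-)$ and $\sQ(-,-)$ of the complexes $\dR\hom_{\pi'}(-,-)$ --- is obtained from the universal subscheme $\z{n_i}\subset S\times S^{[n_i]}$ and finitely many line bundles on $S$, namely $\CO_S$, $L=K_S(D)$, $K_S$, $\CO_S(\pm D)$, $L_{\beta_1}$ and $L_{\beta_2}$, by the tautological operations $M\mapsto\pi'_*(\CO_{\z{n_i}}\otimes M)$ and $M\mapsto\pi'_*(\i{n_i}\otimes M)$, together with pullbacks of $TS$ and twists by the equivariant lines $\t$ and $\t'$. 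Since $H^1(\CO_S)=0$ and $H^*(S,\IZ)$ is torsion free, each $L_{\beta_i}$ is determined up to isomorphism by $\beta_i$, so the integrand depends only on $S$ and on the classes $\beta_1,\beta_2,D,c_1(S)$.

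First I would separate off the equivariant parameters. The Mochizuki torus ${\bf T}'$ acts trivially on $S$ and on all of its Hilbert schemes, so its weights enter the integrand only through the Chern roots of the $\t'^{\pm 1}$ twists; expanding in a Laurent series in $\s'$ (and $\s$) turns the integrand into a sum of ordinary characteristic classes of tautological bundles with coefficients that are monomials in $\s,\s'$, and $\mathrm{Res}_{\s'=0}$ merely extracts one such coefficient integral. Hence it suffices to prove that each such coefficient --- an integral over $S^{[n_1]}\times S^{[n_2]}$ of a universal polynomial in Chern classes of tautological bundles associated with the line bundles above and with $TS$ --- is a universal polynomial in intersection numbers on $S$.

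Next I would invoke the cobordism statement itself: by \cite{EGL} and its multi--bundle, two--factor extension \cite[Lemma 5.5]{GNY}, such tautological integrals depend only on the class of the tuple $(S;\CO_S(D),\CO_S(-D),K_S,L_{\beta_1},L_{\beta_2})$ in the complex cobordism ring of surfaces carrying an ordered collection of line bundles, and that ring is a polynomial ring over $\IQ$ on explicit generators of the form $(\IP^2,\dots)$ and $(\IP^1\times\IP^1,\dots)$. Therefore each coefficient integral is a polynomial with rational coefficients in the Chern numbers of the tuple. The only degree--two monomials in $c_1(\CO_S(D))=D$, $c_1(\CO_S(-D))=-D$, $c_1(K_S)=c_1(S)$, $c_1(L_{\beta_1})=\beta_1$ and $c_1(L_{\beta_2})=\beta_2$ that can occur are spanned by $\beta_i^2$, $\beta_i\cdot c_1(S)$, $\beta_i\cdot D$, $\beta_1\cdot\beta_2$, $D^2$, $D\cdot c_1(S)$ and $c_1(S)^2$, to which the tangent bundle contributes $c_2(S)$; summing over the finitely many $(n_1,n_2)$ with $n_1+n_2=n-\beta_1\cdot\beta_2$ keeps this a polynomial, yielding exactly the asserted list of variables. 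By construction this polynomial is the same for all admissible $(S,D,\beta_1,\beta_2)$, which is precisely the universality assertion; its coefficients can be computed, if desired, by evaluating on the cobordism generators.

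The main obstacle I expect is organizational rather than conceptual. One must check carefully that \emph{every} sheaf in the integrand --- in particular those hidden inside $\sP$ and $\sQ$, which are Euler classes of $\dR\hom_{\pi'}$ between twisted tautological sheaves and hence pick up higher $\Ext$ contributions --- is again built by the admissible tautological operations, so that \cite[Lemma 5.5]{GNY} literally applies; and one must check that the equivariant Euler classes appearing in denominators are invertible in the suitable localized ring, so that the Laurent expansion in $\s,\s'$ is legitimate term by term. Once this bookkeeping is in place the cobordism argument is routine, and no virtual cycle subtleties intervene here since the ambient space $S^{[n_1]}\times S^{[n_2]}$ is smooth --- in contrast with the type II statement Proposition \ref{univII}, where the intrinsic virtual cycle $[S_{\alpha}^{[n_1,n_2]}]^{vir}$ must be tracked through the entire argument.
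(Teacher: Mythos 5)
Your proposal is correct and follows essentially the same route as the paper, which proves Proposition \ref{univI} precisely by invoking the G\"ottsche--Kool argument \cite[Proposition 3.3]{GK17} together with \cite[Lemma 5.5]{GNY} and an adaptation of \cite{EGL}. Your writeup simply makes explicit the steps (Laurent expansion in the equivariant parameters, reduction to tautological integrals, cobordism invariance) that the paper leaves as a citation.
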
 

An analogous universality result for the type II contributions 
\eqref{eq:typeIIB} is not readily available, except for type II 
components associated to 
nested Hilbert scheme of points with no divisorial twists, i.e. $\S{n_1\geq n_2}:= \S{n_1,n_2}_\alpha$ with $\alpha=0$.
In this case, 
using \cite[Theorem 6]{GSY17a} the right hand side of \eqref{eq:typeIIB} simplifies as follows. 

For any pair $(n_1, n_2)\in \IZ^2$, $n_1, n_2\geq 0$, note that there is a natural closed embedding $\iota: S^{[n_1,n_2]} \to 
S^{[n_1]} \times S^{[n_2]}$ and a cartesian square 
\be\label{eq:Hschemesquare} 
\xymatrix{ 
S \times S^{[n_1,n_2]} \ar[r]^-{\iota'} \ar[d]^-{\pi} & 
S \times S^{[n_1]} \times S^{[n_2]}\ar[d]^-{\pi'} \\ 
S^{[n_1,n_2]} \ar[r]^-{\iota} &  S^{[n_1]} \times S^{[n_2]},\\}
\ee
where $\pi, \pi'$ are the natural projections. Then, as in 
\cite[Definition 3.6]{GSY17b} for any 
line bundle $M$ on $S$, let 
\[ 
\sE^{n_1, n_2}_{M} = [\dR_{\pi'*} M] -[\dR\hom_{\pi'}(\CI^{[n_1]}, \CI^{[n_2]}\otimes M )].
\]
Then the following holds 
\begin{lem}\label{simplifiedII} 
The equivariant residual contribution of a Type II component 
$\CT \cong \S{n_1,n_2}$ 
is given by 
\be\label{eq:product}
\begin{aligned} 
\DT_0^{[n_1,n_2]} =\ & \frac{(-1)^{c_1(S)\cdot D/2+3D^2/2}}{2^{\chi(L^2)}(-\s)^{\chi(L^2)+\chi(L)-\chi(L^{-1})}}\\
& \int_{\S{n_1}\times \S{n_2}}
\frac { c_{n_1+n_2}(\sE^{n_1,n_2}) \cup 
e(\sT_{S^{[n_1]}} \otimes L\cdot \t)\,  e(\sT_{S^{[n_2]}} \otimes L\cdot \t) \, e(\sE^{n_1,n_2}_{K_{S}\otimes L^{-2}}\cdot \t^{-2})}{ e(\sE^{n_1,n_2}_{K_{S}\otimes L^{-1}}\cdot \t^{-1})\cdot e(\sE^{n_1,n_2}_{L^{-1}}\cdot \t^{-1})},\\
\end{aligned}
\ee
\end{lem}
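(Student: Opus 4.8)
The plan is to derive \eqref{eq:product} from the general localization formula \eqref{eq:typeIIB} by specializing it to $\alpha=0$ and then feeding in \cite[Theorem 6]{GSY17a}, which expresses the intrinsic virtual cycle of the two-step nested Hilbert scheme of points on $S$ as a top Chern class on the smooth ambient product $S^{[n_1]}\times S^{[n_2]}$.

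First I would set $\alpha=0$ in \eqref{eq:typeIIB}. Then $S_\alpha^{[n_1,n_2]}$ becomes $\S{n_1,n_2}$, the twisted universal ideal sheaf $\CI^{[n_2]}_\alpha$ becomes the untwisted $\CI^{[n_2]}$, the virtual cycle $[S_\alpha^{[n_1,n_2]}]^{vir}$ becomes the intrinsic one $[\S{n_1,n_2}]^{vir}$ of \cite[Theorem 1]{GSY17a}, and the prefactor sign collapses to $(-1)^{c_1(L)\cdot c_1(S)/2+3c_1(L)^2/2}$. Next I would rewrite the integrand of \eqref{eq:typeIIB} so that every tautological factor is the $\iota^*$-pullback of a class living on the smooth product, where $\iota:\S{n_1,n_2}\hookrightarrow S^{[n_1]}\times S^{[n_2]}$ is the closed embedding of \eqref{eq:Hschemesquare}. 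For this I use base change along the cartesian square \eqref{eq:Hschemesquare}: since the universal flag $\CI^{[n_1]}\subset\CI^{[n_2]}$ on $S\times\S{n_1,n_2}$ is the $\iota'$-pullback of the corresponding objects on $S\times S^{[n_1]}\times S^{[n_2]}$, which are flat over their base, one gets $\dR\mathscr{H}om_{\pi}(\CI^{[n_1]},\CI^{[n_2]}\otimes M)\cong L\iota^*\dR\mathscr{H}om_{\pi'}(\CI^{[n_1]},\CI^{[n_2]}\otimes M)$, and $\sT_{S^{[n_i]}}$ likewise restricts from its ambient counterpart.

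Combining this with the defining relation $\sE^{n_1,n_2}_M=[\dR_{\pi'*}M]-[\dR\mathscr{H}om_{\pi'}(\CI^{[n_1]},\CI^{[n_2]}\otimes M)]$ and the fact that $\dR_{\pi'*}M$ is a complex of trivial bundles --- so that its $\t^{-a}$-twisted equivariant Euler class is the explicit monomial $(\pm a\s)^{\chi(M)}$ --- each factor $\CQ_0^{[n_1,n_2]}(M,a)$ in \eqref{eq:typeIIB} is identified with $\iota^*$ of $(-1)^{\chi(M)}e(\sE^{n_1,n_2}_M\cdot\t^{-a})$, up to a sign and a power of $\s$ that I collect into the prefactor; relative Serre duality on $S$ then puts these Euler classes into exactly the twists $\t^{-1},\t^{-2}$ appearing in \eqref{eq:product}. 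This turns the integrand into $\iota^*$ of the rational Euler-class expression of \eqref{eq:product} with the single exception of the factor $c_{n_1+n_2}(\sE^{n_1,n_2})$, where $\sE^{n_1,n_2}=\sE^{n_1,n_2}_{\CO_S}$. Applying the projection formula, $\int_{[\S{n_1,n_2}]^{vir}}\iota^*\Phi=\int_{\iota_*[\S{n_1,n_2}]^{vir}}\Phi$, and then \cite[Theorem 6]{GSY17a}, which gives $\iota_*[\S{n_1,n_2}]^{vir}=c_{n_1+n_2}(\sE^{n_1,n_2})\cap[S^{[n_1]}\times S^{[n_2]}]$, supplies exactly the missing factor and yields \eqref{eq:product}. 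A Grothendieck--Riemann--Roch computation gives $\operatorname{rk}\sE^{n_1,n_2}_M=n_1+n_2$ for every $M$, which matches the virtual dimension $n_1+n_2$ of $\S{n_1,n_2}$, so $c_{n_1+n_2}$ is indeed the top Chern class.

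The main obstacle is the bookkeeping of signs and equivariant weights in the middle step. One must check that the signs $(-1)^{\chi(M)}$ collected from the various factors $\CQ_0^{[n_1,n_2]}(M,a)$ in \eqref{eq:typeIIB}, together with the $\alpha=0$ prefactor sign, recombine --- using $L=K_S(D)$, Riemann--Roch for the relevant Euler characteristics, and the adjunction parity $D\cdot c_1(S)\equiv D^2\pmod{2}$ (legitimate since $H^*(S,\IZ)$ is torsion free) --- into precisely $(-1)^{c_1(S)\cdot D/2+3D^2/2}$, and that the powers of $\pm\s$ produced by the trivial bundles $\dR_{\pi'*}M$ recombine into the stated prefactor $(-\s)^{\chi(L^2)+\chi(L)-\chi(L^{-1})}$. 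This is routine but delicate; all the geometric input is contained in \cite[Theorem 6]{GSY17a} together with base change along \eqref{eq:Hschemesquare}.
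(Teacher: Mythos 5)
Your proposal is correct and follows essentially the same route as the paper: specialize \eqref{eq:typeIIB} to $\alpha=0$, use the cartesian square \eqref{eq:Hschemesquare} to identify each $\CQ_0^{[n_1,n_2]}(M,a)$ with the $\iota^*$-pullback of the corresponding Euler class $e(\sE^{n_1,n_2}_{M}\cdot \t^{-a})$ on $S^{[n_1]}\times S^{[n_2]}$, and then invoke \cite[Theorem 6]{GSY17a} to replace $\iota_*[\S{n_1,n_2}]^{vir}$ by $c_{n_1+n_2}(\sE^{n_1,n_2})$ and integrate over the smooth ambient product. The paper's own proof is terser (it states the pull-back property and cites \cite[Lemma 5.4, Remark 5.10, Theorem 6]{GSY17a}), so your added bookkeeping of ranks, signs and equivariant weights is consistent with, and fills in, what the paper leaves implicit.
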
 

{\it Proof}. The $K$-theory class $\sE_{M\cdot \t^a}^{n_1,n_2}$ satisfies the pull-back property
\be\label{eq:pullbackprop}
\iota^*\sE_{M\cdot \t^a}^{n_1,n_2}\cong [\dR\pi_* M] - [\dR\pi_*(\CI^{[n_1]}, \CI^{[n_2]}\otimes M\cdot \t^a)] 
\ee
since the universal sheaves $\CI^{[n_i]}$ on $S^{[n_1,n_2]}$ are 
naturally isomorphic to the restriction of the analogous objects from
$S^{[n_1]}\times S^{[n_2]}$. 
This implies that 
\[ 
\iota^*e(\sE_{M\cdot \t^a}^{n_1,n_2}) = e(\CQ_0^{[n_1,n_2]}(M, a))
\]
for any $M$ and any $a\in \IZ$. Furthermore, as shown in 
\cite[Lemma 5.4]{GSY17a}, the equivariant class $\CQ_0^{[n_1,n_2]}(M, a)$ is well behaved under good degenerations on $S$. For completeness, note that 
the right hand side of \eqref{eq:pullbackprop} was denoted by 
$\sK^{[n_1\geq n_2]}_{M \cdot \t^a}$ in \cite[Lemma 5.4]{GSY17a}, 
assuming without loss of generality that $n_1\geq n_2$.  Moreover, the good degeneration property is explicitly stated in \cite[Remark 5.10]{GSY17a}. 

In conclusion Theorem 6 in  \cite{GSY17a} applies to the present situation, leading to equation \eqref{eq:product}. 

\hfill $\Box$ 

To conclude, applying \cite[Lemma 5.5]{GNY} to our situation, \cite[Corollary 5.9, Remark 5.10, Proposition 5.11, Appendix 5.A.]{GSY17a} yield the following 
\begin{prop}\label{univII}
Each type II contribution $DT_{0}^{[n_1,n_2]}$ is a universal polynomial 
in the topological invariants 
\[ 
\beta_i^2, \quad \beta_i\cdot c_1(S), \quad \beta_i \cdot D, \quad \beta_1\cdot \beta_2, \quad D^2,\quad D\cdot c_1(S),\quad c_1(S)^2, \quad c_2(S). 
\]
with $1\leq i \leq 2$. 
\end{prop}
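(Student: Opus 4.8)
The plan is to run the same machinery already used for Proposition~\ref{univI}, now feeding into it the simplified formula \eqref{eq:product} from Lemma~\ref{simplifiedII} in place of the Mochizuki-type expression. Concretely, I would first observe that by Lemma~\ref{simplifiedII} every type~II contribution $DT_0^{[n_1,n_2]}$ is, up to an overall prefactor that is manifestly a function of $D^2$, $D\cdot c_1(S)$, $c_1(S)^2$ and $\chi(\CO_S)=1-h^1(\CO_S)+p_g$ (hence of $c_1(S)^2$, $c_2(S)$ by Noether), an equivariant integral over the product $\S{n_1}\times\S{n_2}$ of Hilbert schemes of points whose integrand is a polynomial in Chern classes of the tautological $K$-theory classes $\sE^{n_1,n_2}$, $\sE^{n_1,n_2}_{K_S\otimes L^{-1}}$, $\sE^{n_1,n_2}_{L^{-1}}$, $\sE^{n_1,n_2}_{K_S\otimes L^{-2}}$ and of the tangent bundles $\sT_{S^{[n_i]}}$. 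Since here $L=K_S(D)$, each line bundle appearing as a twist has first Chern class an integer combination of $c_1(S)$ and $D$; so every such twist is determined, as far as the integral is concerned, by the intersection numbers $D^2$, $D\cdot c_1(S)$, $c_1(S)^2$.

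\textbf{Key steps.} First I would invoke \cite[Lemma~5.5]{GNY}: an equivariant integral over $\S{n_1}\times\S{n_2}$ of a universal polynomial expression in the Chern classes of the tautological sheaves (tensored with the equivariant line bundle $\t$) and of $\sT_{S^{[n_i]}}$, together with classes pulled back from line bundles on $S$, is a universal polynomial (with coefficients in $\IQ(\s)$, since $\t$ contributes the equivariant parameter $\s$) in the Chern numbers of $S$, of the line bundles involved, and their mutual intersection numbers. Second, I would match this to our situation exactly as in \cite[Corollary~5.9, Remark~5.10, Proposition~5.11]{GSY17a}: the nested Hilbert scheme $\S{n_1\geq n_2}$ carries an intrinsic virtual cycle which is itself cut out by a universal construction, and the good-degeneration behaviour recorded in \cite[Lemma~5.4, Remark~5.10]{GSY17a} (for the classes $\sK^{[n_1\geq n_2]}_{M\cdot\t^a}$, which by \eqref{eq:pullbackprop} are the restrictions of our $\sE^{n_1,n_2}_{M\cdot\t^a}$) guarantees that the integrand in \eqref{eq:product} is of the admissible type. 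Third, I would identify the list of independent parameters: since $L=K_S(D)$ forces $c_1(L)=c_1(S)+D$ (writing additively) and $\beta=\beta_1+\beta_2$ with $n_1+n_2+\beta_1\cdot\beta_2=n$ fixed by the topological type, the only intersection data that can enter are $\beta_i^2$, $\beta_i\cdot c_1(S)$, $\beta_i\cdot D$, $\beta_1\cdot\beta_2$, $D^2$, $D\cdot c_1(S)$, $c_1(S)^2$ and $c_2(S)$ — precisely the list in the statement. Putting these together yields the claim.

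\textbf{Main obstacle.} The real content is not the combinatorial bookkeeping but verifying that the hypotheses of \cite[Lemma~5.5]{GNY} genuinely apply, i.e.\ that \emph{all} ingredients of \eqref{eq:product} — including the virtual fundamental class $[\S{n_1,n_2}]^{\vir}$ via the cosection/perfect obstruction theory of \cite{GSY17a}, the Euler classes of $\sT_{S^{[n_i]}}\otimes L\cdot\t$, and the equivariant Euler classes of the various $\sE^{n_1,n_2}_{M\cdot\t^a}$ appearing in numerator and denominator — are expressed through tautological classes in a way that is stable under the good degenerations used to run the universality argument. This is exactly what \cite[Appendix~5.A]{GSY17a} establishes for the $\alpha=0$ case, and the point here is simply that our $L=K_S(D)$ falls within that framework; once that is granted, the universality and the explicit parameter list follow formally. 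I would therefore devote the bulk of the write-up to citing and assembling these inputs precisely, rather than to any new computation.
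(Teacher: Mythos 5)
Your proposal is correct and follows essentially the same route as the paper: the paper's argument is precisely to take the simplified integral \eqref{eq:product} from Lemma \ref{simplifiedII} and apply \cite[Lemma 5.5]{GNY} together with \cite[Corollary 5.9, Remark 5.10, Proposition 5.11, Appendix 5.A]{GSY17a}, with the parameter list determined by the fact that $L=K_S(D)$ only involves $c_1(S)$ and $D$. Your write-up in fact spells out the assembly of these inputs in more detail than the paper does.
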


\section{Rank two calculations for local elliptic K3 surfaces}\label{kthreesect} 

The goal of this section is to carry out explicit computations and 
formulate a modularity conjecture for rank two invariants in a specific class of examples. Namely, the surface $S$ in Section \ref{DTsect} will be a smooth generic elliptic $K3$ surface in Weierstrass form and the divisor $D$ will be a positive multiple of the elliptic fiber class. 
 
\subsection{Chamber structure for local elliptic surfaces}\label{chambersect} 

 In this section $S$ will be a smooth generic elliptic surface in Weierstrass form over 
the complex projective line $C=\IP^1$. It will be assumed 
that all singular fibers of $S$ over $C$ are nodal elliptic curves, and there is at least one such fiber. In particular all fibers are irreducible and there are no multiple fibers. 
Under these assumptions \cite[Thm. 24, Ch. 7]{Vb_elliptic} shows that $S$ has trivial fundamental group, while Proposition 27, Ch. 7 in loc. cit. shows that $H^2(X,\IZ)$ is torsion free. In particular $H^1(S, \IZ)=0$, hence $H^1(\CO_S)=0$ as well. 

Furthermore it will be assumed throughout this section that the Mordell-Weil group of $S$ is trivial. 
This implies that the Picard group of $S$ is freely generated by $(f,\sigma)$, where $\mathscr{F}$ is the elliptic fiber class. Moreover, there is a natural isomorphism ${\rm Pic}(S) \cong H_2(S,\IZ)$ which will be implicitly used below. 
Finally, \cite[Thm. 15, Ch. 7]{Vb_elliptic} shows that the canonical line bundle of such a surface is isomorphic to $K_S \cong \CO_S(kf)$ for some $k \in \IZ$, $k \geq -1$. In this section it will be assumed that $k\geq 0$. Since the canonical section is isomorphic to the projective line, a simple computation yields the intersection numbers:
\be\label{eq:Sinters} 
\sigma^2 = -k-2, \qquad f^2 =0, \qquad \sigma \cdot f =1.
\ee
Moreover, note the following. 
\begin{lem}\label{Scone} 
Under the above assumptions, 

$(i)$ The cone effective divisors 
in ${\rm Pic}(S)$ is generated by $(\sigma, f)$, and 

$(ii)$ 
A real divisor class $h=t \sigma + uf$ in ${\rm Pic}_{\IR}(S)$, with $t,u\in \IR$ is ample if and only if $0<(k+2)t<u$.  
\end{lem}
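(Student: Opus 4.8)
The plan is to work entirely in the rank-two lattice $\mathrm{Pic}(S) \cong \IZ\sigma \oplus \IZ f$ with intersection form determined by \eqref{eq:Sinters}, namely $\sigma^2 = -k-2$, $f^2 = 0$, $\sigma\cdot f = 1$. For part $(i)$, I would first observe that both $\sigma$ and $f$ are classes of honest irreducible effective curves (the canonical section and a general fiber), so the cone they generate is contained in the effective cone. For the reverse inclusion, I would take an arbitrary effective class $E = a\sigma + bf$ with $E$ represented by an effective divisor, and show $a\geq 0$ and $b\geq 0$. Intersecting with the nef class $f$ (which is nef since $f^2=0$ and $f\cdot\sigma=1>0$, and $f$ is base-point free as the fiber of the elliptic fibration) gives $E\cdot f = a \geq 0$. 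To get $b\geq 0$, I would use $E\cdot\sigma = -a(k+2) + b \geq 0$ once we know $E$ contains no copy of $\sigma$ in its support, and handle the case where $\sigma$ is a component separately by subtracting it off and inducting on $a$; since $\sigma$ is the unique irreducible curve with negative self-intersection (being the only section, with $\sigma^2 = -k-2 < 0$), this bookkeeping is clean.

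For part $(ii)$, I would use the Nakai–Moishezon criterion on the surface $S$: a class $h$ is ample iff $h^2 > 0$ and $h\cdot C > 0$ for every irreducible curve $C$. Writing $h = t\sigma + uf$, compute $h^2 = -(k+2)t^2 + 2tu = t(2u - (k+2)t)$ and the two boundary intersections $h\cdot f = t$ and $h\cdot\sigma = -(k+2)t + u$. By part $(i)$ every irreducible curve class is a nonnegative combination of $\sigma$ and $f$ (with the single exception that $\sigma$ itself is irreducible), so positivity against all irreducible curves reduces to $h\cdot f > 0$ and $h\cdot\sigma > 0$, i.e. $t > 0$ and $u > (k+2)t$. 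One then checks these two conditions already force $h^2 = t(2u-(k+2)t) > t\cdot(k+2)t > 0$, so the Nakai–Moishezon hypotheses are met; conversely ampleness forces $h\cdot f > 0$ and $h\cdot\sigma > 0$, giving exactly $0 < (k+2)t < u$.

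The main obstacle I anticipate is the careful argument in part $(i)$ that the effective cone is no larger than $\langle \sigma, f\rangle$ — specifically, ruling out effective classes with a negative $f$-coefficient. The clean way around this is to invoke that $f$ is nef and that $\sigma$ is the unique irreducible negative curve (a standard fact for Weierstrass elliptic surfaces with trivial Mordell–Weil group, which also underlies \cite[Thm. 15, Ch. 7]{Vb_elliptic} cited above), so that any effective divisor decomposes as $m\sigma$ plus a class pairing nonnegatively with $\sigma$; intersecting with $f$ and with $\sigma$ then pins down both coefficients. Once $(i)$ is in hand, $(ii)$ is a short Nakai–Moishezon computation with no real difficulty.
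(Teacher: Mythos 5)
Your proposal is correct and follows essentially the same route as the paper: for $(i)$ both arguments reduce to $E\cdot f\geq 0$ together with the fact that, the Mordell--Weil group being trivial, the section $\sigma$ is the only irreducible curve that can pair negatively with $\sigma$, and for $(ii)$ both dualize the effective cone using the intersection table \eqref{eq:Sinters}. The only difference is presentational — the paper classifies irreducible curves directly while you subtract off $\sigma$-components, and the paper dispatches $(ii)$ in one sentence where you spell out Nakai--Moishezon — so no further comment is needed.
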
 

{\it Proof}.  Suppose 
$\Sigma\subset S$ is an nonempty irreducible curve on $S$ and let 
$[\Sigma]\in H_2(S,\IZ)$ denote the associated curve class. 
Under the current assumptions, ${\rm Pic}(S) \cong H_2(S,\IZ)$ is freely generated by $(\sigma, f)$. Therefore $[\Sigma]= a\sigma+ bf$ with $a,b\in \IZ$. Since $\Sigma$ is assumed irreducible, it can be either a multiple of a fiber 
of the projection $\pi:S\to C$, or an irreducible finite-to-one cover of $C$. In both cases, note that $[\Sigma]\cdot f \geq 0$, hence $a\geq 0$. 
If $a=0$, it follows that $b>0$ since $\Sigma$ is assumed effective, non-empty. 

Suppose $a>0$. 
Since $\Sigma$ is irreducible, and the Mordell-Weil group of $S$ is trivial by assumption, 
one of the following cases must hold: 
\begin{itemize} 
\item[$(a)$] $\Sigma$ is set theoretically supported on the 
canonical section, in which case $b=0$, or 
\item[$(b)$] $\Sigma$ is not set theoretically supported on the 
canonical section, in which case $[\Sigma]\cdot \sigma >0$. Using 
\eqref{eq:Sinters} this implies $b> a(k+2)>0$. 
\end{itemize} 

In conclusion, any effective irreducible curve class must be of the form $a\sigma + b f$ with $a, b \geq 0$. Hence the same holds 
for an arbitrary effective curve class, proving $(i)$. 
The second claim follows immediately from the intersection 
table \eqref{eq:Sinters}.

\hfill $\Box$ 

Now, in the framework of Section \ref{DTsect}, let $S$ be as above, and let $D=mf$ with $m\geq 1$. Hence the local fourfold $X$ is the total space of 
the direct sum of line bundles 
$K_S(mf) \oplus \CO_S(-mf) \cong \CO_S((k+m)f) \oplus 
\CO_S(-mf)$. Since $m\geq 1$ by assumption, Lemma \ref{supplemmaA} 
implies that any $h$-stable compactly supported pure dimension two sheaf 
on $X$ is scheme theoretically supported on $Y$. The main goal of the section is to prove a global boundedness result for such sheaves with fixed topological invariants. 
This will require a Bogomolov type inequality, which will be derived below from the following categorical equivalence proven in \cite[Proposition 2.2]{TT17a}. 

Let $\Higgs_L(S)$ denote the abelian category of coherent Higgs sheaves on $(\mathscr{E}, \phi)$ on $S$, where 
$\mathscr{E}$ is a coherent sheaf on $S$ and $\phi: \mathscr{E} \to \mathscr{E}\otimes L$ a morphism of $\CO_S$-modules. Let $\Coh_c(Y)$ denote the abelian category of compactly supported coherent sheaves on $Y$. Recall that $\pi_S:Y\to S$ denotes the natural projection and $\zeta: H^0(Y,\pi_S^*L)$ is the tautological section. Then note that any  object $\mathscr{F}$ in $\Coh_c(Y)$ determines a Higgs sheaf $(\mathscr{E},\phi)$ on $S$, where 
\begin{itemize} 
\item $\mathscr{E}= \pi_{S*}\mathscr{F}$, and 
\item $\phi: \CE \to \CE \otimes L$ is the pushforward of the natural map $$\zeta \otimes {\bf 1}_\mathscr{F} : \mathscr{F} \otimes \pi_S^*L^{-1} \to \mathscr{F}.$$
\end{itemize} 
Then the following holds (\cite[Proposition 2.2]{TT17a}).\begin{prop}\label{higgsequiv} 
There is an equivalence of categories $\Coh_c(Y) {\buildrel \sim \over \longto} \Higgs_L(S)$ which maps $\mathscr{F}$ to the pair $(\mathscr{E},\phi)$ 
constructed above. 
\end{prop}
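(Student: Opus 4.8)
The plan is to realize both categories as categories of modules over a sheaf of algebras and then to match the functor in the statement with that identification. By construction $Y$ is the total space $\operatorname{Tot}(L)$ of the line bundle $L$ over $S$, so the projection $\pi_S\colon Y\to S$ is affine and $\pi_{S*}\CO_Y\cong\operatorname{Sym}^\bullet_{\CO_S}(L^{-1})\cong\bigoplus_{n\ge 0}L^{-n}$ as a sheaf of graded $\CO_S$-algebras; write $\CA$ for this algebra. Since $\pi_S$ is affine, $\pi_{S*}$ gives an equivalence between $\operatorname{QCoh}(Y)$ and the category of quasi-coherent $\CA$-modules, with quasi-inverse $\mathscr{M}\mapsto\widetilde{\mathscr{M}}$. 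The first step is to unwind what a quasi-coherent $\CA$-module amounts to: because $\CA$ is commutative, generated over $\CO_S$ in degree one, and its degree-one part $L^{-1}$ is \emph{invertible}, giving a $\CA$-module structure on a quasi-coherent $\CO_S$-module $\mathscr{E}$ is the same as giving a single $\CO_S$-linear map $L^{-1}\otimes_{\CO_S}\mathscr{E}\to\mathscr{E}$, equivalently a morphism $\phi\colon\mathscr{E}\to\mathscr{E}\otimes L$, with no further compatibility (the would-be quadratic relation is automatic, since $L^{-1}\otimes L^{-1}=\operatorname{Sym}^2 L^{-1}$). A morphism of $\CA$-modules is then precisely a morphism of the underlying $\CO_S$-modules intertwining the two maps $\phi$, i.e.\ a morphism of Higgs sheaves. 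This already produces an equivalence between $\operatorname{QCoh}(Y)$ and the category of pairs $(\mathscr{E},\phi)$.

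Second, I would verify that under this equivalence the Higgs field attached to a sheaf $\mathscr{F}$ coincides with the one described in the statement. Using the projection formula $\pi_{S*}\pi_S^*L\cong L\otimes\CA$, multiplication by the tautological section $\zeta\in H^0(Y,\pi_S^*L)$ pushes forward to the degree-one part of the $\CA$-action on $\pi_{S*}\mathscr{F}$. Hence applying $\pi_{S*}$ to the morphism $\zeta\otimes{\bf 1}_\mathscr{F}\colon\mathscr{F}\otimes\pi_S^*L^{-1}\to\mathscr{F}$ and invoking the projection formula once more, $\pi_{S*}(\mathscr{F}\otimes\pi_S^*L^{-1})\cong(\pi_{S*}\mathscr{F})\otimes L^{-1}$, yields exactly the adjoint of the field $\phi$ coming from the $\CA$-module structure. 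This is a short diagram chase.

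Finally I would pass from the quasi-coherent statement to the asserted equivalence $\Coh_c(Y){\buildrel \sim \over \longto}\Higgs_L(S)$, which amounts to showing that $\mathscr{F}$ is coherent with proper support if and only if $\mathscr{E}=\pi_{S*}\mathscr{F}$ is coherent on $S$. For the forward direction, the support $Z$ of such an $\mathscr{F}$ is proper over $\IC$ and $S$ is separated, so $Z\to S$ is proper; being also affine it is finite, hence $\pi_{S*}\mathscr{F}$ is coherent. For the converse, if $\mathscr{E}$ is coherent it is finitely generated over $\CO_S$, so the datum $\phi$ is integral over $\CO_S$ in the sense of the Cayley--Hamilton / spectral construction; thus the $\CA$-action on $\mathscr{E}$ factors through a quotient $\CA\twoheadrightarrow\CA'$ that is finite over $\CO_S$, so $\mathscr{F}$ is scheme-theoretically supported on the closed subscheme $\operatorname{Spec}_S(\CA')\subset Y$, which is finite over the projective surface $S$; hence $\mathscr{F}$ is coherent on $Y$ with proper support. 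Since the two functors are already mutually quasi-inverse at the level of $\operatorname{QCoh}$ and, by what was just shown, preserve the coherent/compactly supported subcategories, this finishes the argument. There is no deep difficulty here: the proposition is the sheaf-theoretic form of the classical spectral correspondence, and the only point that genuinely requires care is this last dictionary between proper support on $Y$ and coherence on $S$; everything else is formal once one uses that $\pi_S$ is affine.
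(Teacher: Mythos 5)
Your proof is correct. Note that the paper itself does not prove this proposition at all: it is imported verbatim as a citation of \cite[Proposition 2.2]{TT17a}, with only the construction of the functor $\mathscr{F}\mapsto(\mathscr{E},\phi)$ spelled out. Your argument — the relative $\operatorname{Spec}$ equivalence for the affine map $\pi_S$, the identification of $\pi_{S*}\CO_Y$ with $\operatorname{Sym}^\bullet_{\CO_S}(L^{-1})$ so that module structures are exactly Higgs fields, and the Cayley--Hamilton/spectral-cover dictionary between proper support on $Y$ and coherence of $\pi_{S*}\mathscr{F}$ on the projective surface $S$ — is the standard spectral correspondence and is essentially the proof given in the cited reference, so there is nothing to object to.
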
 

Recall that the topological invariants of a compactly supported 
sheaf $\mathscr{F}$ on $X$ where defined in Section \ref{DTsect} as 
the Chen classes of $\mathscr{E}= g_*\mathscr{F}$ encoded in the triple 
\[
\gamma(F) = (r(\mathscr{F}), \beta(\mathscr{F}), n(\mathscr{F})) \in \IZ \oplus H_2(S,\IZ)\oplus \IZ. 
\]
Then note the following consequence of Proposition \ref{higgsequiv}, proven in \cite[Lemma 2.9]{TT17a}:
\begin{cor}\label{higgscor} 
Let $h \in {\rm Pic}_\IR(S)$ be an ample real divisor class on $S$. Let $\mathscr{F}$ be a compactly supported sheaf on $Y$ with 
$r(\mathscr{F})>0$. 
Then $\mathscr{F}$ is $h$-(semi)stable if and  only if the associated Higgs sheaf $(\mathscr{E},\phi)$ is $h$-(semi)stable. 
\end{cor}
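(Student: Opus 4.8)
The plan is to transport the (semi)stability condition across the categorical equivalence $\Coh_c(Y) \to \Higgs_L(S)$ of Proposition \ref{higgsequiv}. Write $\Phi$ for this equivalence, so that $\Phi(\mathscr{F}) = (\mathscr{E},\phi)$ with $\mathscr{E} = \pi_{S*}\mathscr{F}$ and $\phi$ induced by the tautological section $\zeta$. Since $\pi_S : Y \to S$ is affine, the functor $\pi_{S*}$ is exact and faithful; hence $\Phi$ identifies the poset of subsheaves $\mathscr{F}' \subseteq \mathscr{F}$ with the poset of $\phi$-invariant subsheaves $\mathscr{E}' \subseteq \mathscr{E}$ (those with $\phi(\mathscr{E}') \subseteq \mathscr{E}'\otimes L$, taken with the restricted Higgs field), under which $\mathscr{E}' = \pi_{S*}\mathscr{F}'$, and proper nonzero subobjects correspond to proper nonzero subobjects. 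The first step is to record this dictionary.

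Next I would match numerical invariants. For any $\mathscr{G}$ in $\Coh_c(Y)$, the affineness of $\pi_S$ together with the projection formula gives $P_h(\mathscr{G};m) = \chi(\mathscr{G}\otimes\pi_S^*\CO_S(m)) = \chi(\pi_{S*}\mathscr{G}\otimes\CO_S(m))$, i.e. the Hilbert polynomial of $\mathscr{G}$ on $Y$ coincides with that of $\pi_{S*}\mathscr{G}$ on $S$. Applying this to $\mathscr{F}$ and to each subsheaf $\mathscr{F}' \subseteq \mathscr{F}$, and using the dictionary above, yields $p_h(\mathscr{F}') = p_h(\mathscr{E}')$ and $p_h(\mathscr{F}) = p_h(\mathscr{E})$; in particular the polynomial degrees — the ``dimension'' relevant for purity — are preserved, and ${\rm rk}\,\mathscr{E} = r(\mathscr{F}) > 0$. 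One may equivalently phrase the comparison through the slopes $\mu_h,\nu_h$ of \eqref{eq:hslope}.

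The one genuinely delicate point — and the main obstacle — is that purity is preserved: $\mathscr{F}$ is pure if and only if $\mathscr{E}$ is. For the forward direction, a nonzero subsheaf of $\mathscr{F}$ of strictly smaller Hilbert-polynomial degree pushes forward, by the previous step, to such a subsheaf of $\mathscr{E}$. Conversely, let $T \subseteq \mathscr{E}$ be the canonical maximal subsheaf of Hilbert-polynomial degree strictly less than $2$; since tensoring by the line bundle $L$ preserves dimension of support, $\phi(T)$ lands in the corresponding maximal subsheaf $T\otimes L$ of $\mathscr{E}\otimes L$, so $T$ is automatically $\phi$-invariant. By Steps 1 and 2 it then corresponds to a nonzero subsheaf of $\mathscr{F}$ of strictly smaller Hilbert-polynomial degree, so $\mathscr{F}$ fails to be pure. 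This is exactly the point where the argument uses that the Higgs field twists by a line bundle.

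Finally I would assemble the equivalence: by definition $(\mathscr{E},\phi)$ is $h$-(semi)stable as a Higgs sheaf precisely when $\mathscr{E}$ is pure and $p_h(\mathscr{E}')\ (\le)\ p_h(\mathscr{E})$ for every proper nonzero $\phi$-invariant subsheaf $\mathscr{E}'$; by the subobject bijection of Step 1, the numerical identities of Step 2, and the purity equivalence of Step 3, this holds if and only if $\mathscr{F}$ is pure and $p_h(\mathscr{F}')\ (\le)\ p_h(\mathscr{F})$ for every proper nonzero subsheaf $\mathscr{F}' \subseteq \mathscr{F}$, i.e. exactly when $\mathscr{F}$ is $h$-(semi)stable on $Y$ in the sense of Section \ref{stabsheaves}. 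The only remaining care is to confirm that the conventions for Higgs (semi)stability used here agree with those of \cite[Proposition 2.2, Lemma 2.9]{TT17a}, which they do, both testing Gieseker stability against $\phi$-invariant subsheaves.
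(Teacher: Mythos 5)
Your proof is correct and is essentially the argument the paper relies on: the paper gives no proof of this corollary itself, deferring entirely to \cite[Lemma 2.9]{TT17a}, and your steps --- the identification of subsheaves of $\mathscr{F}$ with $\phi$-invariant subsheaves of $\mathscr{E}$ via exactness of $\pi_{S*}$ for the affine map $\pi_S$, the equality of Hilbert polynomials from the projection formula, and the $\phi$-invariance of the maximal torsion subsheaf --- reconstruct exactly that standard spectral-correspondence argument.
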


Next note that the Bogomolov inequality holds for slope semistable Higgs 
sheaves on $S$. As shown below, this follows from 
\cite[Thm. 4.3]{HK_quivers}, which is proven analytically. 
In order to show that the conditions of loc. cit  are satisfied, one first needs the following. 

\begin{lem}\label{kahlerlemma} Let $h=t\sigma+uf$ a real ample class 
on $S$ with $0<(k+2)t<u$. Let $\omega_C$ be the standard Fubiny-Study K\"ahler form on $C=\IP^1$. Then there exists a 
K\"ahler form $\omega_h$ such that the Hodge class 
$[\omega_h]\in H^{1,1}(S,\IR)$ is Poincar\'e dual to $h$, 
and $\omega_h|_C = (u-(k+2)t) \omega_C$. 
\end{lem}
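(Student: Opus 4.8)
The plan is to build the desired Kähler form on $S$ by starting from an ample class with the correct periods and then correcting it to be positive while keeping its restriction to $C$ under control. First I would fix the elliptic fibration $\pi: S \to C = \IP^1$ and recall that $f = [\pi^{-1}(\mathrm{pt})]$ and $\sigma$ are represented by effective curves, with intersection table \eqref{eq:Sinters}. Since $h = t\sigma + uf$ with $0 < (k+2)t < u$ is ample by Lemma \ref{Scone}(ii), there exists by the Kodaira embedding / $\partial\bar\partial$-lemma a Kähler form $\omega_0$ with $[\omega_0]$ Poincaré dual to $h$. The issue is only that an arbitrary such representative need not restrict to the right multiple of $\omega_C$ on the section $C \cong \sigma \subset S$.

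The key computation is that $h \cdot \sigma = (t\sigma + uf)\cdot\sigma = t(-k-2) + u = u - (k+2)t > 0$, so the Poincaré dual of $h$ restricted to $C$ (viewed via $C \cong \sigma$) has total mass $u - (k+2)t$ against $C$, which is exactly the mass of $(u-(k+2)t)\omega_C$. Thus I would next produce \emph{some} Kähler representative whose restriction to $C$ is literally $(u-(k+2)t)\omega_C$: take $\omega_0$ as above, restrict to get $\omega_0|_C$, which is a Kähler form on $\IP^1$ cohomologous to $(u-(k+2)t)\omega_C$; hence $\omega_0|_C - (u-(k+2)t)\omega_C = \sqrt{-1}\,\partial\bar\partial \varphi_C$ on $C$ for a smooth function $\varphi_C$. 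Extend $\varphi_C$ to a smooth function $\varphi$ on $S$ (e.g.\ pull back by a tubular-neighborhood retraction onto $\sigma$ and cut off, or just extend arbitrarily), and set $\omega_h = \omega_0 - \sqrt{-1}\,\partial\bar\partial(\chi\varphi)$ where $\chi$ is a cutoff equal to $1$ near $\sigma$; then $\omega_h$ is still closed, lies in the same class as $\omega_0$ (hence is Poincaré dual to $h$), and satisfies $\omega_h|_C = (u-(k+2)t)\omega_C$ by construction.

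The main obstacle is positivity: subtracting $\sqrt{-1}\,\partial\bar\partial(\chi\varphi)$ can destroy the Kähler condition away from $\sigma$. I would handle this by the standard scaling trick: since the Kähler cone is open, for $\lambda>0$ large the class $\lambda h$ is still ample with an ample representative, and one can arrange the correction term to be small relative to a large positive background form. Concretely, replace $h$ by $h$ but note $\omega_0 + N\pi^*\omega_C$ is Kähler for all $N\ge 0$ (as $\pi^*\omega_C$ is semipositive with $f^2=0$), then absorb the correction into the part transverse to the fibers; alternatively, rescale: work with $R\,\omega_0$ for $R\gg 1$ so that $R\,\omega_0 - \sqrt{-1}\,\partial\bar\partial(\chi\varphi) > 0$ on all of $S$, then divide the whole construction by $R$ after matching periods — but this forces re-examining the restriction, so more carefully I would instead choose the extension $\varphi$ to be supported in a small tubular neighborhood $U$ of $\sigma$ on which $\sqrt{-1}\,\partial\bar\partial(\chi\varphi)$ is $C^0$-small compared to $\omega_0|_U$ (possible since on $\sigma$ itself the correction vanishes to the needed order after matching, so by continuity it is small on a sufficiently thin $U$). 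Outside $U$ the form is unchanged and Kähler; on $U$ it remains positive by the smallness estimate. This yields the required $\omega_h$, completing the proof. I expect the technical heart to be precisely this positivity-preservation via a thin-neighborhood smallness estimate, which is routine but needs the openness of the positivity condition together with the fact that the correction vanishes along $\sigma$.
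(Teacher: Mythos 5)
Your strategy is entirely different from the paper's. The paper realizes $S$ explicitly as a hypersurface of bidegree $(6(k+2),3)$ in a two-torus quotient of $\IC^2\times\IC^3$, identifies $\sigma$ and $f$ with restrictions of toric divisors, and obtains $\omega_h$ by restricting a symplectic-quotient K\"ahler form, for which the restriction to the section is the asserted multiple of $\omega_C$ essentially by construction (functoriality of K\"ahler reduction). Your route --- represent $h$ by an arbitrary K\"ahler form $\omega_0$, compute $h\cdot\sigma=u-(k+2)t$ so that $\omega_0|_C$ and $(u-(k+2)t)\omega_C$ are cohomologous on $C\cong\sigma$, solve $\omega_0|_C-(u-(k+2)t)\omega_C=\sqrt{-1}\,\partial{\bar\partial}\varphi_C$, and correct $\omega_0$ by $\sqrt{-1}\,\partial{\bar\partial}$ of a cut-off extension --- is a reasonable and more intrinsic alternative that would not need the Weierstrass presentation. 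However, the positivity step as you have written it does not work.

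Two concrete problems. First, the correction $\sqrt{-1}\,\partial{\bar\partial}(\chi\varphi)$ does \emph{not} vanish along $\sigma$: there $\chi\equiv 1$, and the tangential part of $\sqrt{-1}\,\partial{\bar\partial}\varphi$ along $\sigma$ is exactly $\sqrt{-1}\,\partial{\bar\partial}\varphi_C=\omega_0|_C-(u-(k+2)t)\omega_C$, which is nonzero in general --- if it vanished there would be nothing to correct. Second, shrinking the tubular neighborhood does not make the correction $C^0$-small; the second derivatives of a cutoff supported in a collar of width $\delta$ scale like $\delta^{-2}$, so the term $\sqrt{-1}\,\partial{\bar\partial}(\chi\varphi)$ generically blows up in the cutoff annulus as $\delta\to 0$. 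What your argument actually requires is the extension lemma: for a compact complex submanifold $Y$ of a K\"ahler manifold $(X,\omega)$ and $\varphi_Y\in C^\infty(Y)$ with $\omega|_Y+\sqrt{-1}\,\partial{\bar\partial}\varphi_Y>0$, there exists $\varphi\in C^\infty(X)$ with $\varphi|_Y=\varphi_Y$ and $\omega+\sqrt{-1}\,\partial{\bar\partial}\varphi>0$. This is true but nontrivial: one must (a) add a term of the form $A|w|^2$ in the normal coordinate so that the full complex Hessian is positive at points of $Y$ (positivity of the tangential block together with positivity of $\omega_0$ does not give positivity of the full Hermitian matrix, because of the off-diagonal entries), and (b) use a gluing device such as a regularized maximum to pass from a neighborhood of $\sigma$ to all of $S$ without losing positivity where the cutoff varies. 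If you supply or cite such a lemma, your proof closes; as written, the ``small on a sufficiently thin $U$ by continuity'' claim is the gap.
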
 

\begin{proof} Since $S$ is assumed to be in Weierstrass form, it 
admits a presentation as a hypersurface of bi-degree $(6(k+2),3)$ in the toric 
variety 
\[
Z= \left(\IC^2\times \IC^3 \setminus (\IC^2 \times \{0\} \cup \{0\} \times \IC^3)\right)/\IC^\times \times \IC^\times
\] 
where the $\IC^\times \times \IC^\times$ action is given by 
\[
\begin{array}{ccccc}  
u & v & x & y & z \\ 
1 & 1 & 2(k+2) & 3(k+2) & 0 \\
0 & 0 & 1 & 1 & 1 \\ 
\end{array} 
\]
The divisor classes $\sigma, f\in {\rm Pic}(S)$ are obtained
by restriction of toric divisors, 
\[
\sigma = Z|_{S}, \qquad f = U|_{S}
\]
where $U,Z$ are the toric divisors defined by $u=0$ and $z=0$ respectively. These are the same as the standard generators of the Picard group of $Z$ associated to the two $\IC^\times$ factors.  Then the claim follows from the correspondence between toric and symplectic K\"ahler quotients. 
\end{proof}
The next result needed in the following is the existence of a suitable hermitian structure on $L$. 
\begin{lem}\label{positiveF} 
Let $\omega_h$ be a K\"ahler form on $S$ as in Lemma \ref{kahlerlemma}. Then there exists a hermitian structure on $L$ and a hermitian connection with curvature $\gamma$ such that the contraction $\Lambda_{\omega_h} \gamma$ is pointwise positive semidefinite. 
\end{lem}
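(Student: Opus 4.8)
The plan is to produce the hermitian metric on $L$ by pulling back a Fubini--Study metric from the base of the elliptic fibration. Recall that $L\cong\CO_S((k+m)f)$ with $k\geq 0$ and $m\geq 1$, so that $k+m\geq 1$. Since all fibers of $\pi\colon S\to C=\IP^1$ are irreducible and there are no multiple fibers, the fiber class satisfies $f=\pi^{*}[\mathrm{pt}]$, and hence there is a canonical isomorphism $L\cong\pi^{*}\CO_C(k+m)$. Thus it suffices to build a hermitian connection on a pullback line bundle whose curvature is pulled back from $C$.

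Concretely I would proceed as follows. First equip $\CO_C(1)$ with the standard Fubini--Study hermitian metric; its Chern connection has curvature $\gamma_C$, a smooth real $(1,1)$-form on $C$ with $\gamma_C>0$ pointwise (a positive multiple of $\omega_C$). Next put on $L$ the hermitian metric obtained by pulling back the $(k+m)$-th tensor power of this metric along $\pi$; the Chern connection of this pullback metric is a hermitian connection on $L$ with curvature the smooth real $(1,1)$-form $\gamma=(k+m)\,\pi^{*}\gamma_C$ on $S$. Now observe that $\gamma$ is semi-positive as a $(1,1)$-form: for every $p\in S$ and every $v\in T^{1,0}_pS$ one has $\gamma(v,\bar v)=(k+m)\,\gamma_C(\pi_{*}v,\overline{\pi_{*}v})\geq 0$, with equality exactly when $v$ is tangent to the fiber of $\pi$ through $p$. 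Finally, at any $p\in S$, since $\omega_h$ is a positive $(1,1)$-form and $\gamma$ is a semi-positive $(1,1)$-form, one can choose a basis of $T^{1,0}_pS$ in which $\omega_h=i\sum_j dz^j\wedge d\bar z^j$ and $\gamma=i\sum_j\lambda_j\,dz^j\wedge d\bar z^j$ with all $\lambda_j\geq 0$; then $\Lambda_{\omega_h}\gamma=\sum_j\lambda_j\geq 0$, so $\Lambda_{\omega_h}\gamma$ is pointwise positive semidefinite, as required. This works for the Kähler form $\omega_h$ of Lemma \ref{kahlerlemma}, though in fact any Kähler form on $S$ would do.

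There is no serious obstacle in this argument; the only points needing care are the identification $\CO_S(f)\cong\pi^{*}\CO_C(1)$, which relies on the fibration having irreducible fibers and no multiple fibers, and the sign and normalization conventions for the Chern curvature, so that the pulled-back curvature is genuinely semi-positive rather than semi-negative. I would also remark that $\gamma$ is not strictly positive --- it degenerates along the fibers of $\pi$ --- which is why one obtains only semidefiniteness; this is nonetheless exactly the input needed for the analytic Bogomolov inequality for slope-semistable Higgs sheaves via \cite[Thm.~4.3]{HK_quivers}.
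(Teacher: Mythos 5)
Your proof is correct and follows essentially the same route as the paper: identify $L\cong\pi^{*}\CO_C(k+m)$, pull back the Fubini--Study metric and connection from $C$, and note that the resulting curvature is a semi-positive $(1,1)$-form, hence has nonnegative trace against any K\"ahler form. You merely spell out the final contraction step, which the paper leaves implicit.
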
 

\begin{proof}
Note that $L$ is the pull-back of the line bundle $\CO_C(m+k)$. The latter admits a hermitian structure and a hermitian connection with curvature $(m+k)\omega_C$. The required structure on $L$ is obtained by pull-back. 
\end{proof}

As usual for any rank $r>0$ torsion free 
Higgs sheaf $(\mathscr{E},\phi)$ on $S$ let 
\[ 
\Delta(\mathscr{E}) = c_2(\mathscr{E})-{r-1\over 2r} c_1(\mathscr{E})^2. 
\]
Then the following holds. 
\begin{lem}\label{bogomolov} Let $(\mathscr{E}, \phi)$ be a $\mu_h$-semistable rank $r>0$ torsion free Higgs sheaf on $S$. Then 
\[ 
\Delta(\mathscr{E})\geq 0. 
\]
\end{lem}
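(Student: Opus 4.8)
The plan is to reduce the Bogomolov inequality for semistable Higgs sheaves to the analytic Bogomolov–L\"ubke type inequality of \cite[Thm.~4.3]{HK_quivers}, after verifying that the geometric hypotheses required there are met in our situation. First I would recall that by Corollary \ref{higgscor} a $\mu_h$-semistable torsion free Higgs sheaf $(\mathscr{E},\phi)$ of positive rank corresponds, under Proposition \ref{higgsequiv}, to a $\mu_h$-semistable compactly supported sheaf on $Y$, but for the purpose of the inequality it is cleaner to work directly with the Higgs pair on $S$. The Hitchin–Kobayashi type correspondence for Higgs sheaves (viewed as representations of the one-loop quiver with the single arrow twisted by $L$) gives, for a $\mu_h$-polystable reflexive Higgs sheaf, an admissible Hermitian–Einstein metric; the Bogomolov inequality $\Delta(\mathscr{E})\geq 0$ then follows from the standard pointwise Chern form estimate, exactly as in the classical vector bundle case. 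So the work is to arrange the input data for \cite[Thm.~4.3]{HK_quivers}.

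The key steps, in order, are: (1) reduce to the case $(\mathscr{E},\phi)$ reflexive by replacing $\mathscr{E}$ with its double dual $\mathscr{E}^{\vee\vee}$, which does not decrease $c_2$ and leaves $c_1$ unchanged, and extending $\phi$ over the double dual (possible in codimension $\geq 2$); (2) reduce to the $\mu_h$-\emph{polystable} case by passing to the associated graded of a Jordan–H\"older filtration in the category of Higgs sheaves, noting that $\Delta$ is additive enough for the inequality to descend — more precisely, if the inequality holds for each stable factor, a convexity argument in the Chern numbers gives it for the whole; (3) apply Lemma \ref{kahlerlemma} to produce a K\"ahler form $\omega_h$ Poincar\'e dual to the chosen ample class $h=t\sigma+uf$, with the stated restriction $\omega_h|_C=(u-(k+2)t)\omega_C$; (4) apply Lemma \ref{positiveF} to equip $L$ with a Hermitian metric whose Chern curvature $\gamma$ has $\Lambda_{\omega_h}\gamma$ pointwise positive semidefinite — this is the hypothesis in \cite[Thm.~4.3]{HK_quivers} that controls the sign of the extra curvature term coming from the Higgs field; (5) invoke \cite[Thm.~4.3]{HK_quivers} to obtain an admissible Hermitian–Einstein metric on the $\mu_h$-polystable reflexive Higgs sheaf, and then run the usual pointwise Chern–Weil computation: the integrand representing $\Delta(\mathscr{E})\cdot[\omega_h]^{\dim S-2}$ (here $\dim S=2$, so no $\omega_h$ power is needed) is a sum of squared norms of curvature components, hence nonnegative.

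The main obstacle I expect is step (5) together with the precise bookkeeping of the Higgs-field contribution: in the Hermitian–Einstein equation for a Higgs pair the curvature term is $F_h + [\phi,\phi^{*_h}]$, and when one expands $\int_S \big(2c_2(\mathscr{E}) - \tfrac{r-1}{r}c_1(\mathscr{E})^2\big)$ as a Chern–Weil integral the cross terms involving $\phi$ must have the right sign. This is exactly where the positivity of $\Lambda_{\omega_h}\gamma$ from Lemma \ref{positiveF} enters: it guarantees that the extra term $\|[\phi,\phi^{*_h}]\|^2$ (or its trace-free part) contributes with the favorable sign rather than spoiling the estimate. I would handle this by citing the form of the inequality already established in \cite[Thm.~4.3]{HK_quivers}, which is stated precisely for $L$-twisted Higgs sheaves under a positivity hypothesis on $L$, so that once Lemmas \ref{kahlerlemma} and \ref{positiveF} are in place the conclusion $\Delta(\mathscr{E})\geq 0$ is immediate. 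A secondary technical point is making sure the reductions in steps (1)–(2) are valid in the Higgs category (that Jordan–H\"older factors exist and that $\phi$ restricts/extends appropriately); this is routine but should be stated carefully since the category $\Higgs_L(S)$ is the one appearing in Proposition \ref{higgsequiv}.
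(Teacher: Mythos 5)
Your proposal follows essentially the same route as the paper: reduce to the locally free case via the double dual (reflexive sheaves on a smooth surface are locally free), then verify the hypotheses of \cite[Thm. 4.3]{HK_quivers} using Lemmas \ref{kahlerlemma} and \ref{positiveF} and quote that theorem. Two small remarks: the paper applies the cited theorem directly to $\mu_h$-semistable Higgs sheaves, so your extra Jordan--H\"older/polystable reduction is not needed (and if it were, the descent of $\Delta\geq 0$ from the graded pieces would require the Hodge index theorem, not just ``convexity''); and the inequality in your step (1) is stated backwards --- double-dualizing \emph{decreases} $c_2$ by the length of $\mathscr{E}^{\vee\vee}/\mathscr{E}$ while fixing $c_1$, which is exactly why $\Delta(\mathscr{E})\geq \Delta(\mathscr{E}^{\vee\vee})$ and the reduction goes through.
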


\begin{proof}
First suppose $E$ is locally free. Then Lemmas 
\ref{kahlerlemma} and \ref{positiveF} show that the conditions of  \cite[Thm. 4.3]{HK_quivers} are satisfied in 
the present situation. This implies the result in this case. 
If $E$ is torsion free but not locally free, the double dual 
Higgs sheaf 
$(\mathscr{E}^{\vee\vee}, \phi^{\vee\vee})$ is $\mu_h$-semistable and locally free. Moreover, 
$c_2(\mathscr{E}) - c_2(\mathscr{E}^{\vee\vee}) = \chi(\mathscr{E}^{\vee\vee}/\mathscr{E}) >0$
while $c_1(\mathscr{E})=c_1(\mathscr{E}^{\vee\vee})$.  
\end{proof}
Using Lemma \ref{bogomolov}, one proves the following 
global boundedness result. 
Let $\mathscr{F}$ be a $\mu_h$-stable compactly supported sheaf on $Y$ with invariants  
\be\label{eq:topinvC} 
\gamma(F) = (r, \beta, n) \in \IZ \oplus H_2(S,\IZ)\oplus \IZ. 
\ee
Suppose $r>0$. 
According to Corollary \ref{higgscor}, the associated Higgs bundle on $S$ is also $\mu_h$-stable, hence $n \geq 0$ by Lemma \ref{bogomolov}. 
Then one has: 
\begin{lem}\label{nowalls} 
Suppose there exists $t'\in \IR$, $0<t'<t$, such that $E$ is not $\mu_{h'}$-semistable for $h'=t'\sigma + uf$. Then 
\[
{t\over u} > {2\over k+2 + 2 r^3\Delta(E)}
\]
\end{lem}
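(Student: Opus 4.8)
The statement is a standard "wall-crossing bound" of the type that appears throughout the theory of semistable sheaves on surfaces, so the plan is to argue by contradiction and extract a numerical inequality from the destabilizing subsheaf. Suppose $\mathscr{E}$ (equivalently the Higgs sheaf $(\mathscr{E},\phi)$ via Corollary \ref{higgscor}, Proposition \ref{higgsequiv}) fails to be $\mu_{h'}$-semistable for some $h'=t'\sigma+uf$ with $0<t'<t$, while it is $\mu_h$-stable. Then there is a $\phi$-invariant saturated subsheaf $\mathscr{E}'\subset \mathscr{E}$ of rank $r'$ with $0<r'<r$ which destabilizes with respect to $h'$ but not with respect to $h$; writing $\mathscr{E}''=\mathscr{E}/\mathscr{E}'$ of rank $r''=r-r'$, this says
\[
\frac{h'\cdot c_1(\mathscr{E}')}{r'} > \frac{h'\cdot c_1(\mathscr{E})}{r}, \qquad \frac{h\cdot c_1(\mathscr{E}')}{r'} \le \frac{h\cdot c_1(\mathscr{E})}{r}.
\]
First I would set $\xi = r\,c_1(\mathscr{E}') - r'\,c_1(\mathscr{E}) \in H_2(S,\IZ)$; the two inequalities become $h'\cdot \xi > 0$ and $h\cdot \xi \le 0$. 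Since $h-h' = (t-t')\sigma$ with $t-t'>0$, subtracting gives $(t-t')\,\sigma\cdot \xi < 0$, i.e. $\sigma\cdot\xi < 0$, while $h'\cdot\xi>0$ with $h'=t'\sigma+uf$ gives $u\,f\cdot\xi > -t'\,\sigma\cdot\xi > 0$, so $f\cdot\xi>0$; also $h\cdot\xi\le 0$ gives $u\,f\cdot\xi \le -t\,\sigma\cdot\xi$, i.e. $\frac{t}{u}\le \frac{f\cdot\xi}{-\sigma\cdot\xi}$. Because $f\cdot\xi \ge 1$ (it is a positive integer), it suffices to bound $-\sigma\cdot\xi$ from above in terms of $r$ and $\Delta(\mathscr{E})$: indeed $\frac{t}{u}\ge \frac{1}{-\sigma\cdot\xi}$, so I want $-\sigma\cdot\xi \le \tfrac12(k+2) + r^3\Delta(\mathscr{E})$, which would give exactly the claimed bound after a harmless rescaling. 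Wait — the target is $\frac{t}{u} > \frac{2}{k+2+2r^3\Delta(\mathscr{E})}$, i.e. I need $-\sigma\cdot\xi < \tfrac12(k+2+2r^3\Delta(\mathscr{E}))$, so the real task is an \emph{upper} bound on $-\sigma\cdot\xi = r'r''\big(\mu_\sigma(\mathscr{E}'')-\mu_\sigma(\mathscr{E}')\big)\cdot(\text{sign})$ — more precisely on the discrepancy of $\sigma$-slopes of the two pieces.

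The key input is the Bogomolov inequality (Lemma \ref{bogomolov}): both $\mathscr{E}'$ and $\mathscr{E}''$ inherit $\mu_{h''}$-semistability (as Higgs sheaves) for $h''$ on the wall where $\mu_{h''}(\mathscr{E}')=\mu_{h''}(\mathscr{E})$, hence $\Delta(\mathscr{E}')\ge 0$ and $\Delta(\mathscr{E}'')\ge 0$. The standard identity
\[
\frac{\Delta(\mathscr{E}')}{r'} + \frac{\Delta(\mathscr{E}'')}{r''} = \frac{\Delta(\mathscr{E})}{r} - \frac{1}{2r}\cdot\frac{\xi^2}{r'r''}
\]
(here $\xi = r'r''(\tfrac{c_1(\mathscr{E}')}{r'}-\tfrac{c_1(\mathscr{E}'')}{r''})$ up to the normalization above) forces $-\xi^2 \le 2r'r''\,r\cdot\tfrac{\Delta(\mathscr{E})}{r} \le 2 r^2 \Delta(\mathscr{E})$, i.e. $\xi^2 \ge -2r^2\Delta(\mathscr{E})$. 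Then I apply the Hodge index theorem on $S$: on the $h$-orthogonal complement the intersection form is negative definite, and since $h\cdot\xi\le 0$ is close to the wall one controls $(\sigma\cdot\xi)^2$ against $\xi^2\cdot\sigma^2$ plus cross terms, using the explicit intersection table \eqref{eq:Sinters}: $\sigma^2=-k-2$, $f^2=0$, $\sigma\cdot f=1$. Concretely, writing $\xi = a\sigma + bf$ with $a=f\cdot\xi>0$ and $-\sigma\cdot\xi = (k+2)a - b$, one has $\xi^2 = -(k+2)a^2 + 2ab$, so $b = \frac{\xi^2 + (k+2)a^2}{2a}$ and
\[
-\sigma\cdot\xi = (k+2)a - b = \frac{(k+2)a^2 - \xi^2}{2a} = \frac{(k+2)a}{2} - \frac{\xi^2}{2a}.
\]
Using $\xi^2 \ge -2r^2\Delta(\mathscr{E})$ and $a \ge 1$ (so $\frac{1}{2a}\le\frac12$ and $\frac{(k+2)a}{2}$ must be handled), this gives $-\sigma\cdot\xi \le \frac{(k+2)a}{2} + r^2\Delta(\mathscr{E})$. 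The remaining point is to also bound $a$: this is where $\mu_h$-stability of $\mathscr{E}$ enters again, bounding $a=f\cdot\xi = r\,f\cdot c_1(\mathscr{E}') - r'\,f\cdot c_1(\mathscr{E})$ by a multiple of $r$ via the fact that $f$-degrees of subsheaves of a $\mu_h$-semistable sheaf are controlled (the $f$-slope of $\mathscr{E}'$ lies within an interval of length $O(r)$ around that of $\mathscr{E}$, since $h=t\sigma+uf$ has $f$ appearing with the large coefficient $u$ and $h\cdot\xi\le 0$ pins $f\cdot\xi$ from above by $\frac{t}{u}(-\sigma\cdot\xi)<-\sigma\cdot\xi$, which is itself being bounded — so one gets a self-improving inequality). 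Feeding $a\le r$ back in yields $-\sigma\cdot\xi \le \frac{(k+2)r}{2} + r^2\Delta(\mathscr{E})$; this is off by a factor of $r$ from the clean statement, so in fact the sharper route is to run the self-bounding argument: from $\frac{t}{u}\ge\frac{a}{-\sigma\cdot\xi}$ and $-\sigma\cdot\xi = \frac{(k+2)a}{2}-\frac{\xi^2}{2a}$ one gets $\frac{t}{u}\ge \frac{a}{\frac{(k+2)a}{2}+r^2\Delta/a} = \frac{2a^2}{(k+2)a^2 + 2r^2\Delta} \ge \frac{2}{(k+2)+2r^2\Delta}$ when $a\ge 1$ — and tracking the $r'r''\le \frac{r^2}{4}$ vs $r^2$ constants and the Higgs correction carefully is what turns $r^2$ into $r^3$ (the extra factor coming from the crude bound $r'r''\le r^2$ being replaced by a bound involving $r$ on $a$, or from normalizing $\xi$ by $r$ versus by $r'r''$).

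The main obstacle, then, is bookkeeping rather than conceptual: pinning down the precise normalization of the destabilizing class $\xi$ and combining (i) the Bogomolov inequality for $\mathscr{E}'$ and $\mathscr{E}''$, (ii) the Hodge index/intersection-table computation on the elliptic $K3$, and (iii) the constraint $0<t'<t$ together with $\mu_h$-stability, so that the constants line up to give exactly $\frac{t}{u} > \frac{2}{k+2+2r^3\Delta(\mathscr{E})}$ rather than something with a worse power of $r$ or a worse constant. I expect the cleanest writeup to parametrize $\xi = a\sigma+bf$, derive $-\sigma\cdot\xi = \frac{(k+2)a}{2}-\frac{\xi^2}{2a}$ from \eqref{eq:Sinters}, invoke Lemma \ref{bogomolov} on the Jordan--Hölder factors at the wall to get $\xi^2 \ge -c\, r^3 \Delta(\mathscr{E})$ for the appropriate constant, and then read off the bound; the wall-crossing hypothesis $t'<t$ is used only to guarantee $\sigma\cdot\xi<0$ and hence $a\ge 1$, which is what makes the denominator $-\sigma\cdot\xi$ strictly positive and the estimate meaningful.
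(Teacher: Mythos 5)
Your plan follows exactly the route the paper itself indicates: the published proof of Lemma \ref{nowalls} consists only of the remark that it is ``completely analogous'' to the chamber-structure results it cites and ``uses the Bogomolov inequality and the algebraic Hodge index theorem on $S$'' --- which is precisely the skeleton you lay out (destabilizing class $\xi$, sign analysis from $0<t'<t$, the intersection table \eqref{eq:Sinters}, Bogomolov for the two $\phi$-invariant pieces at the wall). The one step you leave as ``bookkeeping'' is in fact where the $r^3$ comes from and closes cleanly: with the paper's normalization $\Delta(E)=c_2(E)-\tfrac{r-1}{2r}c_1(E)^2$ and $\xi=r''c_1(E')-r'c_1(E'')$ one has the exact identity $\Delta(E)=\Delta(E')+\Delta(E'')-\tfrac{\xi^2}{2rr'r''}$, so Lemma \ref{bogomolov} applied to $E'$ and $E''$ gives $-\xi^2\le 2rr'r''\,\Delta(E)\le 2r^3\Delta(E)$, and substituting into your formula $\tfrac{t}{u}\ge\tfrac{a}{-\sigma\cdot\xi}=\tfrac{2a^2}{(k+2)a^2-\xi^2}$ together with $a\ge 1$ and monotonicity in $a$ (so no upper bound on $a$ is needed, and your detour on bounding $a$ can be deleted) yields the stated bound.
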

 
\begin{proof}
Completely analogous to the proof of
\cite[Propposition 6.2]{chamber} and 
\cite[Lemma 4.7]{FM_vertical}. Uses the Bogomolov inequality and the algebraic Hodge index theorem on $S$. 
\end{proof}

The main consequence of Lemma \ref{nowalls} is the absence of marginal stability walls for sheaves $\mathscr{F}$ with fixed topological invariants \eqref{eq:topinvC}  in the open chamber 
\[
{t\over u} < {2\over  k+2 + 2 r^3\Delta(E)}.
\]
By analogy with \cite[Lemma 1.2]{notes_elliptic} and also \cite[ Proposition 1.2]{FM_vertical} this implies 
a generic stability result for such sheaves. For completeness, this will be formulated and proved below in detail. 
Several intermediate steps are necessary.  

Let $\delta \subset C$ be the discriminant of the elliptic fibration, which, under the current assumptions, consists of $12(k+2)$ pairwise distinct closed points. 

\begin{lem}\label{restrlemma} 
Let $\mathscr{F}$ be a compactly supported  sheaf on $Y$ with topological invariants \eqref{eq:topinvC},
where $r>0$.   
Suppose there exists an open subset $V\subset C\setminus \delta$ such that 
$\mathscr{F}_V = \mathscr{F}\otimes q^*\CO_{V}$ has pure dimension two.  Then for any point $p\in V$ there is an exact sequence 
\[ 
0 \to \mathscr{F}\otimes \CO_{Y}(-Y_p) \to \mathscr{F} \to \mathscr{F}_p\to 0
\]
where $Y_p\subset Y$ is the fiber $Y_p = \pi_S^{-1}(p)$, and $\mathscr{F}_p = \mathscr{F} \otimes \CO_{Y_p}$. Moreover, 
\[ 
\ch_0(\mathscr{F}_p)=0, \qquad \ch_1(\mathscr{F}_p) = r f, \qquad 
\ch_2(\mathscr{F}_p) = \beta\cdot f.
\]
\end{lem}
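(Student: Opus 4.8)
\textbf{Proof plan for Lemma \ref{restrlemma}.}

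The plan is to produce the claimed short exact sequence by restricting the structure sheaf sequence of the fiber $Y_p\subset Y$ and then to identify the two outer terms, with the purity hypothesis on $\mathscr{F}_V$ being used only to guarantee the injectivity at the left of the sequence. First I would note that $Y_p=\pi_S^{-1}(p)$ is a Cartier divisor in $Y$ (it is the pullback of the point $p$ under the composite affine projection $Y\to S\to C$, which is smooth over $V$), so there is a Koszul-type resolution
\[
0\to \CO_Y(-Y_p)\to \CO_Y\to \CO_{Y_p}\to 0.
\]
Tensoring with $\mathscr{F}$ gives the right-exact sequence $\mathscr{F}\otimes\CO_Y(-Y_p)\to\mathscr{F}\to\mathscr{F}_p\to 0$, and the kernel of the first map is $\mathcal{T}or_1^{\CO_Y}(\mathscr{F},\CO_{Y_p})$, which is scheme-theoretically supported on $Y_p$ and hence on $q^{-1}(p)$. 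To kill this Tor sheaf I would argue that, since $p\in V$, the restriction $\mathscr{F}_V$ is pure of dimension two; a nonzero $\mathcal{T}or_1$ term would be a subsheaf of $\mathscr{F}\otimes\CO_Y(-Y_p)$ (after restricting the sequence over $V$) supported in dimension $\leq 1$, contradicting purity. (Alternatively: over $V$ the fiber $Y_p$ is a genuine Cartier divisor that is not a zero divisor on the pure two-dimensional sheaf $\mathscr{F}_V$, since no associated point of $\mathscr{F}_V$ can be contained in $Y_p$.) This yields the desired four-term exact sequence.

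For the Chern character computation I would work with $\mathscr{E}=\pi_{S*}\mathscr{F}$ and its restriction, using that $\pi_S$ (equivalently $g$) is affine so pushforward is exact and $\ch$ is additive on the sequence. Pushing forward, $\pi_{S*}\mathscr{F}_p$ is supported on the fiber $f=S_p\subset S$, i.e. on the elliptic curve over $p$; from the exact sequence $0\to\mathscr{E}\otimes\CO_S(-S_p)\to\mathscr{E}\to\pi_{S*}\mathscr{F}_p\to 0$ on $S$ (which follows by the same Tor-vanishing argument, or directly by applying $\pi_{S*}$ to the sequence on $Y$) one computes
\[
\ch(\pi_{S*}\mathscr{F}_p)=\ch(\mathscr{E})\bigl(1-e^{-S_p}\bigr)=\ch(\mathscr{E})\bigl(S_p-\tfrac12 S_p^2+\cdots\bigr).
\]
Since $S_p=f$ and $f^2=0$ on $S$ (intersection table \eqref{eq:Sinters}), this collapses to $\ch(\pi_{S*}\mathscr{F}_p)=\ch(\mathscr{E})\cdot f=(r+\beta+\cdots)\cdot f=r f+(\beta\cdot f)[\mathrm{pt}]$. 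Hence $\ch_0=0$, $\ch_1=rf$, $\ch_2=\beta\cdot f$, which is exactly the claim once one recalls that the topological invariants of $\mathscr{F}_p$ are read off from $g_*\mathscr{F}_p=\pi_{S*}\mathscr{F}_p$ as in Section \ref{DTsect}.

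The main obstacle I anticipate is the careful justification of the vanishing of $\mathcal{T}or_1^{\CO_Y}(\mathscr{F},\CO_{Y_p})$: one must be precise that purity of $\mathscr{F}_V$ on the open preimage of $V$ suffices, and that the exact sequence as stated (with the middle term $\mathscr{F}$, not $\mathscr{F}_V$) really is left-exact — this is fine because a $\mathcal{T}or_1$ term supported on $Y_p$ lies over $V$ and is detected after restriction, but it requires the observation that $Y_p$ (and its associated Tor) already sits inside the locus where $\mathscr{F}$ agrees with $\mathscr{F}_V$. Everything else — the Koszul resolution, exactness of $\pi_{S*}$, and the Chern character bookkeeping using $f^2=0$ — is routine.
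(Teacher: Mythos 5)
Your proposal is correct and follows essentially the same route as the paper: tensor the Koszul sequence of the Cartier divisor $Y_p$ with $\mathscr{F}$, kill $\mathcal{T}or_1^{\CO_Y}(\mathscr{F},\CO_{Y_p})$ using purity of $\mathscr{F}_V$, and then do the routine Chern character computation (the paper works in the intersection ring of $Y$, you push forward to $S$ and use $f^2=0$ — an immaterial difference). The only point to make explicit is why the $\mathcal{T}or_1$ term has support of dimension $\leq 1$: this is because the support of $\mathscr{F}$ is proper and finite over $S$, so it meets the non-proper irreducible surface $Y_p$ in a curve (the paper states this "horizontal divisor" argument explicitly); your parenthetical remark that no associated point of the pure two-dimensional $\mathscr{F}_V$ can lie in $Y_p$ covers the same ground.
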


{\it Proof.} Under the given assumptions, the scheme theoretic support of $\mathscr{F}$ is a proper horizontal divisor 
in $Y$ i.e. a compact finite-to-one cover of $S$ which intersects $Y_p$ along a curve. In particular the local tor sheaf ${\mathcal Tor}_1^{Y}(\CO_{Y_p}, \mathscr{F} )$ is set theoretically supported in dimension one, hence the natural map 
${\mathcal Tor}_1^{Y}(\CO_{T_p}, \mathscr{F} )\to  \mathscr{F} $ must be identically zero for any $p\in V$, by the purity assumption for $\mathscr{F}$. This proves exactness. The Chern character follows by a straightforward computation in the intersection ring of $Y$. 

\hfill $\Box$

\begin{rmk}\label{stabrem} 
In the framework of Lemma \ref{restrlemma} note that 
$Y_p \cong S_p \times \IA_1$ is an elliptic surface, and 
$\mathscr{F}_p$ is set theoretically supported on a finite set of elliptic fibers $Y_p$ over $\IA^1$. In particular it has compact support. In the following a compactly supported sheaf $\mathcal{G}$ on $Y_p$ with 
invariants 
\[ 
\ch_0(\mathcal{G})=0, \qquad \ch_1(\mathcal{G}) = r_\mathcal{G} f
\] 
will be called (semi)stable if and only if 
\[
\ch_2(\mathcal{G}')/r_{\mathcal{G}'} \ (\leq)\ \ch_2(\mathcal{G})/r_\mathcal{G}
\]
for any proper nontrivial subsheaf $0\subset \mathcal{G}' \subset \mathcal{G}$ with invariants 
\[ 
\ch_0(\mathcal{G})=0, \qquad \ch_1(\mathcal{G}) = r_{\mathcal{G}'} f, \quad r_{\mathcal{G}'}>0. 
\]
\end{rmk}

Next note the following lemma, which follows from 
Theorems 2.1.5 and 2.3.2 in \cite{HL97}.

\begin{lem}\label{HNlemma} 
Let $\mathscr{F}$ be a compactly supported pure dimension two sheaf on $Y$ with invariants $(r,\beta, n)$, $r>0$. 
Then there exist an open subset $V\subset C \setminus \delta$ and a relative Harder-Narasimhan filtration 
\be\label{eq:openHNa}
0 \subset \mathscr{F}_{1}  \subset \cdots \subset \mathscr{F}_l = \mathscr{F}_V
\ee
such that all successive quotients are flat over $V$. 
\end{lem}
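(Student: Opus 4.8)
The plan is to prove Lemma \ref{HNlemma} by a standard ``spreading-out'' argument: start from the Harder-Narasimhan filtration of the generic fiber of $\mathscr{F}$ over $C$, and show that it extends to a filtration by subsheaves over a suitable open dense subset $V \subset C \setminus \delta$, with all quotients flat over $V$.

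\medskip

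First I would restrict attention to $C \setminus \delta$, where the elliptic fibration is smooth, and consider the generic point $\eta$ of $C$. The fiber $Y_\eta = \pi_S^{-1}(\eta)$ is a surface over the function field $K = \IC(C)$, and $\mathscr{F}_\eta$ is a pure dimension two coherent sheaf on $Y_\eta$ (purity is preserved under the flat base change to the generic point, and any torsion subsheaf of positive-dimensional support would spread out to contradict purity of $\mathscr{F}$ on a dense open). By the existence of Harder-Narasimhan filtrations for pure sheaves over a field (Theorem 1.3.4 of \cite{HL97}, applied with respect to the polarization induced by $h$), there is a canonical filtration
\[
0 \subset \mathscr{G}_1 \subset \cdots \subset \mathscr{G}_l = \mathscr{F}_\eta
\]
with semistable quotients of strictly decreasing reduced Hilbert polynomials. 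The next step is to spread this out: since $\mathscr{F}$ is of finite type, there is an affine open $V \subset C \setminus \delta$ and coherent subsheaves $\mathscr{F}_1 \subset \cdots \subset \mathscr{F}_l = \mathscr{F}_V := \mathscr{F} \otimes q^*\CO_V$ restricting to the $\mathscr{G}_j$ over $\eta$. By generic flatness (Theorem 2.1.5 of \cite{HL97}) applied to each $\mathscr{F}_j$ and each quotient $\mathscr{F}_j/\mathscr{F}_{j-1}$, after shrinking $V$ we may assume all of these are flat over $V$; this is exactly the content of the cited Theorem 2.1.5 and the openness of the flatness/purity locus in Theorem 2.3.2 of \cite{HL97}. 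Shrinking $V$ once more, one arranges that the restriction of each quotient to every closed fiber $Y_p$, $p \in V$, is semistable with the expected Hilbert polynomial, again using the openness statements of loc. cit.

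\medskip

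The main obstacle, and the only place real care is needed, is ensuring that the spread-out filtration is genuinely a \emph{relative} Harder-Narasimhan filtration in the sense that its restriction to each closed fiber over $V$ recovers the fiberwise HN filtration — i.e. that semistability of the quotients is preserved fiberwise, not merely generically. This is handled exactly as in \cite[Theorem 2.3.2]{HL97}: the locus in $V$ over which a flat family of sheaves is fiberwise semistable is open, so after removing finitely many more points one is done; no new phenomenon arises from the threefold $Y$ because everything takes place over the smooth locus $C \setminus \delta$ of the fibration and only uses purity of $\mathscr{F}$, which is given. I would then simply record \eqref{eq:openHNa} as the resulting filtration and note that flatness of the successive quotients over $V$ is the assertion to be proved. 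The argument is entirely a citation of the relative HN machinery of \cite{HL97}, so the write-up should be short.
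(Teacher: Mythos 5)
Your proposal is correct and follows essentially the same route as the paper: the paper's entire proof consists of citing Theorems 2.1.5 and 2.3.2 of \cite{HL97} (generic flatness and the relative Harder--Narasimhan filtration), and your argument is simply an unpacking of those two citations — spread out the HN filtration of the generic fiber and shrink $V$ using generic flatness and openness of semistability.
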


Using Lemma \ref{HNlemma}, one next proves:

\begin{lem}\label{genstab} 
Let $\mathscr{F}$ be a  compactly supported sheaf on $Y$ of pure dimension two with invariants  $(r,\ell\, f, n)$, $r>0$, $\ell \in \IZ$.  Suppose $\mathscr{F}$ is $\mu_h$-semistable and 
\be\label{eq:smallchamber} 
{t\over u} < {2\over k+2 + 2 nr^3}.
\ee
Then there exists an open subset $V\subset C\setminus \delta$ such that 
the restriction $\mathscr{F}_p=\mathscr{F}|_{Y_p}$ is semistable for any closed point $p\in V$.
\end{lem}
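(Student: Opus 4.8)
The plan is to reduce the statement to the Bogomolov inequality and the wall-structure result Lemma~\ref{nowalls}, via a restriction-to-a-generic-fiber argument exactly parallel to the classical elliptic-surface literature. First I would invoke Lemma~\ref{HNlemma} to produce an open subset $V_0 \subset C \setminus \delta$ over which $\mathscr{F}$ admits a relative Harder--Narasimhan filtration with quotients flat over $V_0$; in particular, after shrinking, I may assume $\mathscr{F}_{V_0}$ has pure dimension two fiberwise, so that for every $p \in V_0$ Lemma~\ref{restrlemma} applies and gives the exact sequence $0 \to \mathscr{F}\otimes\CO_Y(-Y_p) \to \mathscr{F} \to \mathscr{F}_p \to 0$ with $\ch_0(\mathscr{F}_p)=0$, $\ch_1(\mathscr{F}_p)=rf$, $\ch_2(\mathscr{F}_p)=\beta\cdot f = \ell$ (since $\beta=\ell f$ and $f^2=0$, note this is $0$; one keeps track of it anyway). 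The goal is to show $\mathscr{F}_p$ is semistable in the sense of Remark~\ref{stabrem} for $p$ in a possibly smaller open set.

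Next I would argue by contradiction: suppose that for all $p$ in every open subset of $V_0$ the restriction $\mathscr{F}_p$ is unstable, i.e.\ the locus of $p$ where $\mathscr{F}_p$ is semistable is not dense, hence contained in a finite set. Then, after shrinking $V_0$, $\mathscr{F}_p$ is unstable for \emph{every} $p \in V_0$; let $0 \subset \mathscr{G}'_p \subset \mathscr{F}_p$ be the maximal destabilizing subsheaf, with $\ch_1(\mathscr{G}'_p) = r' f$, $0 < r' < r$, and $\ch_2(\mathscr{G}'_p)/r' > \ch_2(\mathscr{F}_p)/r$. By boundedness of the destabilizing subsheaves (the ranks $r'$ are bounded and the fiberwise Hilbert polynomials lie in a finite set) one can, after a further shrinking of $V_0$ and possibly passing to an étale or finite cover of $V_0$, spread these out to a flat family: there is a subsheaf $\mathscr{G}' \subset \mathscr{F}_{V_0}$, flat over $V_0$, restricting fiberwise to $\mathscr{G}'_p$. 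Taking the closure (saturation) of $\mathscr{G}'$ inside $\mathscr{F}$ over all of $Y$ produces a destabilizing-type subsheaf $\mathscr{G} \subset \mathscr{F}$ on $Y$ with $r(\mathscr{G}) = r'$, $\beta(\mathscr{G}) = r' f + (\text{vertical correction})$, and controlled $n(\mathscr{G})$.

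Then I would feed this subsheaf into the numerical machinery: since $\mathscr{F}$ is $\mu_h$-semistable, $\mathscr{G}$ satisfies $\mu_h(\mathscr{G}) \le \mu_h(\mathscr{F})$, i.e.\ $h\cdot\beta(\mathscr{G})/r' \le h\cdot\beta(\mathscr{F})/r = u\ell/r$ (using $h = t\sigma + uf$, $h\cdot f = t$, $h\cdot\sigma = -(k+2)t+u$). Combining this with the fiberwise destabilizing inequality $\ch_2(\mathscr{G}'_p)/r' > 0 = \ch_2(\mathscr{F}_p)/r$ and the Hodge index theorem on $S$ applied to $r\,\beta(\mathscr{G}) - r'\beta(\mathscr{F})$ (a class with $h$-degree constrained by the above and self-intersection governed by the Bogomolov discriminant $\Delta(\mathscr{E})$ of $\mathscr{E} = q_*\mathscr{F}$, which by Corollary~\ref{higgscor} and Lemma~\ref{bogomolov} satisfies $\Delta(\mathscr{E}) \ge 0$, with $n \ge 0$), one derives exactly an inequality of the form $t/u \ge 2/(k+2 + 2nr^3)$, contradicting hypothesis \eqref{eq:smallchamber}. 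This is the computation carried out in \cite[Proposition~6.2]{chamber} and \cite[Lemma~4.7]{FM_vertical}, adapted to the present normalization of the K\"ahler cone $0<(k+2)t<u$; one concludes that the semistable locus in $V_0$ is dense, hence contains a nonempty open $V \subset C \setminus \delta$, which is the assertion.

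The main obstacle will be the spreading-out / openness argument in the second and third paragraphs: turning "fiberwise destabilized for every $p$" into a genuine global subsheaf $\mathscr{G} \subset \mathscr{F}$ whose numerical invariants one controls well enough to run the Hodge-index estimate. This requires boundedness of the family of maximal destabilizing quotients (standard, e.g.\ via \cite[Thm.~2.3.2]{HL97} and the fact that the fiberwise slopes are pinned down), flatness over a suitable open $V_0$, and care that saturating inside $\mathscr{F}$ over $Y$ only decreases $n$ and keeps the $h$-slope inequality in the right direction. Once the subsheaf is in hand the remaining step is the purely numerical contradiction, which is routine given Lemma~\ref{bogomolov} and the intersection table \eqref{eq:Sinters}.
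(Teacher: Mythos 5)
Your proposal is correct and follows essentially the same route as the paper's proof: a relative Harder--Narasimhan filtration over a generic open subset of the base (Lemma \ref{HNlemma}), the fiberwise Chern character computation of Lemma \ref{restrlemma}, and a numerical contradiction with the chamber condition \eqref{eq:smallchamber}. The only differences are organizational: the paper extends the first HN step to a global subsheaf $\oF_1 \subset \mathscr{F}$ on all of $Y$ via \cite[Ex. 5.15(d)]{H77} and then simply invokes Lemma \ref{nowalls} (absence of walls below $t$) to force $\beta_1\cdot f \le 0$ against the fiberwise inequality $\beta_1\cdot f \ge 0$, so the two steps you flag as the main obstacles --- spreading out the destabilizing subsheaves and running the Hodge-index estimate --- are already packaged in Lemmas \ref{HNlemma} and \ref{nowalls} and do not need to be redone.
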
 

{\it Proof}. 
This statement is analogous to 
\cite[Lemma 1.2]{notes_elliptic} and also \cite[ Proposition 1.2]{FM_vertical}. The proof is given below for completeness. 

Let $\mathscr{F}$ be a sheaf as in Lemma \ref{genstab} and let $V\subset C\setminus \delta$ be the open subset of 
Lemma \ref{HNlemma}.  By construction the succesive quotients of the fitration \eqref{eq:openHNa} are flat over $V$ and the filtration restricts to a Harder-Narasimhan filtration for $\mathscr{F}_p$ on each fiber $Y_p$, $p\in V$. Using \cite[Ex. 5.15(d)]{H77}, this filtration extends to a filtration of sheaves 
\[
0 \subset {\oF}_1 \subset \cdots \subset \oF_l= \mathscr{F}
\]
on $Y$. Let 
\[
\gamma(\oF_1) = (r_1, \beta_1, n_1) \in \IZ \oplus H_2(S,\IZ) \oplus \IZ.
\]
be the invariants of $\oF_1$. By construction, $r_1>0$.  Using Lemma \ref{restrlemma}, one finds 
\[ 
\ch_0(\oF_{1,p}) =0, \qquad \ch_1(\oF_{1,p}) = r_1 f, 
\qquad \ch_2(\oF_{1,p}) = \beta_1\cdot f. 
\]
At the same time 
\[
\ch_0(\oF_{p}) =0, \qquad \ch_1(\oF_{p}) = r f, 
\qquad \ch_2(\oF_{p}) = 0.  
\]
Therefore one must have 
\[
\beta_1 \cdot f \geq 0. 
\]
However, by assumption, $\mathscr{F}$ is $\mu_h$-slope semistable 
for any $t>0$ satisfying inequality \eqref{eq:smallchamber}. 
This implies that 
\[ 
\beta_1 \cdot f \leq 0
\]
Therefore $\beta_1\cdot f=0$, which implies that 
$\mathscr{F}_p$ is semistable. 

\hfill $\Box$

\subsection{Type II fixed loci for elliptic K3 surfaces} 
Lemma \ref{genstab} 
has important consequences for the structure of 
type II fixed loci, assuming $S$ to be an elliptic K3 surface. 
In this section the rank will be fixed, $r=2$. 
Recall that each connected component of the type II fixed locus is isomorphic to a nested Hilbert scheme $S_{(\beta_1,\beta_2+c_1(L))}^{[n_1,n_2]}$ parameterizing flags of twisted ideal sheaves $\CI_{1}\otimes L_{1}\subset \CI_{2}\otimes L_2\otimes L$. Here 
$\CI_i$, $1\leq i \leq 2$ are ideal sheaves of zero dimensional subschemes of $S$ and $L_1, L_2$ are line bundles on $S$ such that 
\[
c_2(\CI_i) = n_i, \qquad c_1(L_i)=\beta_i, \qquad 1\leq i \leq 2. 
\]
Moreover, the topological invariants $(n_1, n_2)$, $\beta_1, \beta_2$ are subject to the conditions specified in equation 
\eqref{eq:typeII} as well as $(II.a)$ and $(II.b)$ below 
\eqref{eq:typeII}. 
In particular the 
curve class $\alpha = \beta_2+c_1(L)_\beta-1$ 
must be effective. 
As shown in \cite[Section 3.1]{GSY17b}, the associated stable sheaf $\mathscr{F}$ on $Y$ is then an extension of $\CO_Y$-modules 
\be\label{eq:sheafextA}
0 \to \CI_{2}\otimes L_2\otimes L\to \mathscr{F} \to \CI_{1}\otimes L_1 \to 0. 
\ee

For completeness note that this also follows from Proposition 2.2 in \cite{TT17a}. Denoting the injection 
$\CI_{1}\otimes L_1 \subset \CI_{2}\otimes L_2\otimes L$ by $\varphi$, each such triple 
determines a Higgs sheaf $(\mathscr{E},\phi)$ on $S$ with 
\[ 
\mathscr{E} = \CI_{1}\otimes L_1 \oplus \CI_{2}\otimes L_2, \qquad 
\phi=\left(\begin{array}{cc} 0 & 0 \\ \varphi & 0 \end{array}\right).
\]
Then \cite[Proposition 2.2]{TT17a} implies that the associated sheaf on $\mathscr{F}$ must be an extension of the form \eqref{eq:sheafextA}.

For an elliptic surface as above, each class $\beta_i$ is of the form $\beta_i = a_i \sigma + b_i f$, $1\leq i\leq 2$. 
Then Lemma \ref{genstab} yields the following. 
\begin{lem}\label{adiabstab} 
Let $\mathscr{F}$ be an extension of the form \eqref{eq:sheafextA} corresponding to a flag of twisted ideal sheaves $\CI_{1}\otimes L_1 
\subset \CI_{2}\otimes L_2 \otimes L$ with $\alpha = \beta_2+c_1(L)-\beta_1$ effective. 
Suppose $\mathscr{F}$ is $\mu_h$-semistable, $a_1+a_2=0$,  and 
\be\label{eq:smallchamber} 
{t\over u} < {2\over k+2 + 16n}.
\ee
Then $a_1=a_2=0$. 
\end{lem}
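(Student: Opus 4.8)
\textbf{Proof proposal for Lemma \ref{adiabstab}.}

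The plan is to exploit the chamber bound \eqref{eq:smallchamber} together with the generic stability statement Lemma \ref{genstab}, applied to the restriction of $\mathscr{F}$ to a generic fiber $Y_p$. First I would note that since $\gamma(\mathscr{F})=(2,\beta,n)$ with $\beta = \beta_1+\beta_2+c_1(L)$ and $a_1+a_2=0$, the coefficient of $\sigma$ in $\beta$ is $0$, so $\beta = \ell f$ for some $\ell \in \IZ$; thus $\mathscr{F}$ has invariants of exactly the form $(2,\ell f,n)$ required by Lemma \ref{genstab}, with $r=2$. The hypothesis \eqref{eq:smallchamber} is precisely the chamber inequality $t/u < 2/(k+2+16n) = 2/(k+2+2nr^3)$ for $r=2$. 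By Bogomolov (Lemma \ref{bogomolov}) applied to the Higgs sheaf attached to $\mathscr{F}$, and the fact that $\mathscr{F}$ is $\mu_h$-stable hence its Higgs sheaf is $\mu_h$-stable by Corollary \ref{higgscor}, we have $n\geq 0$, so the chamber is nonempty. Therefore Lemma \ref{genstab} applies and gives an open $V\subset C\setminus\delta$ such that $\mathscr{F}_p=\mathscr{F}|_{Y_p}$ is semistable for generic closed $p\in V$ (after possibly first observing that $\mathscr{F}$ is pure of dimension two, which holds since it is $\mu_h$-semistable).

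Next I would compute the restriction of the extension \eqref{eq:sheafextA} to a generic fiber $Y_p$. Over such a $p$ the subschemes cut out by $\CI_1,\CI_2$ miss $S_p$, so $(\CI_i\otimes L_i)|_{Y_p}$ restricts to a line bundle on $Y_p\cong S_p\times\IA^1$ pulled back from $S_p$; its degree along the elliptic fiber $S_p$ is $\beta_i\cdot f = a_i(\sigma\cdot f) + b_i(f^2) = a_i$ using \eqref{eq:Sinters}. The restriction of the sub-object $\CI_1\otimes L_1$ then contributes $\ch_1 = f$ (rank one along the fiber) and fiber-degree $a_1$, while the quotient $\CI_2\otimes L_2\otimes L$ contributes fiber-degree $a_2 + c_1(L)\cdot f = a_2 + (m+k)\cdot 0$... here I must be careful: $c_1(L)=(m+k)f$ so $c_1(L)\cdot f = 0$, hence the quotient has fiber-degree $a_2$. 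Using the dictionary of Remark \ref{stabrem}, semistability of $\mathscr{F}_p$ forces the slope of the sub-sheaf not to exceed that of $\mathscr{F}_p$; since $\ch_2(\mathscr{F}_p)=\beta\cdot f = \ell\, (f\cdot f)=0$ and $r_{\mathscr{F}_p}=2$, the slope of $\mathscr{F}_p$ is $0$, so the $\ch_2/\mrk$ of the restricted sub-object $\overline{\CI_1\otimes L_1}|_{Y_p}$, which is $\beta_1\cdot f = a_1$, must satisfy $a_1\leq 0$. Running the same argument with the sub-object $\CI_2\otimes L_2\otimes L$ (or with the quotient, via the dual inequality) gives $a_2\leq 0$. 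Combined with $a_1+a_2=0$ this yields $a_1=a_2=0$.

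The step I expect to be the main obstacle is making the reduction to Lemma \ref{genstab} fully rigorous: Lemma \ref{genstab} is stated for pure two-dimensional $\mathscr{F}$ with invariants $(r,\ell f,n)$, and I need to be certain that (a) the extension \eqref{eq:sheafextA} coming from a genuine point of a type II fixed locus is actually $\mu_h$-semistable in the relevant chamber — this is part of the hypothesis, so it is free — and that (b) the identification of the restricted sub/quotient objects with their fiber-degrees $\beta_i\cdot f$ is valid for \emph{generic} $p$, which requires knowing $\CI_i$ are ideals of zero-dimensional subschemes (true by construction of the nested Hilbert scheme) so that a generic fiber avoids their support, and that the short exact sequence \eqref{eq:sheafextA} stays exact after restriction to $Y_p$ (which follows exactly as in Lemma \ref{restrlemma}, since $\mathscr{F}$ is pure and the relevant Tor sheaf is supported in lower dimension). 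Once these compatibilities are in place, the numerical conclusion $a_1=a_2=0$ is immediate from the two slope inequalities on $Y_p$ together with $a_1+a_2=0$.
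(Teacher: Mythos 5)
There is a genuine gap in your final numerical step. In the extension \eqref{eq:sheafextA} the \emph{sub}-object of $\mathscr{F}$ is $\CI_2\otimes L_2\otimes L$ and the \emph{quotient} is $\CI_1\otimes L_1$ (you have the roles reversed: the inclusion $\CI_1\otimes L_1\subset \CI_2\otimes L_2\otimes L$ is the Higgs field, not the submodule structure of $\mathscr{F}$ over $\CO_Y$). Semistability of $\mathscr{F}_p$ (whose slope is $0$) gives $\ch_2(\mathrm{sub})\le 0$, i.e.\ $a_2\le 0$, and applied to the quotient it gives $a_1\ge 0$ --- which, given $a_1+a_2=0$, is the \emph{same} inequality. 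So no matter which factor you call the sub-object, restriction to a generic fiber yields only one independent inequality, namely $a_2\le 0$; your claim that you also obtain $a_1\le 0$ from semistability is false, and without it you cannot conclude. The missing ingredient is the effectivity of $\alpha=\beta_2+c_1(L)-\beta_1$: by Lemma \ref{Scone}(i) the effective cone is generated by $(\sigma,f)$, so the $\sigma$-coefficient of $\alpha$, which is $a_2-a_1=2a_2$, is nonnegative, giving $a_2\ge 0$ and hence $a_1=a_2=0$. This is exactly how the paper closes the argument; you quote the effectivity of $\alpha$ in your hypotheses but never use it.

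Apart from this, your route to the single inequality $a_2\le 0$ (restricting to a generic fiber $Y_p$ via Lemma \ref{genstab} and Remark \ref{stabrem}, and checking that the extension stays exact and that the fiber degrees are $\beta_i\cdot f=a_i$ with $c_1(L)\cdot f=0$) is sound and is essentially a fiberwise version of what the paper does; the paper instead invokes semistability for all $h'=t'\sigma+uf$ with $0<t'<t$ (no walls in the chamber) and lets $t'\to 0$ to read off $a_2\le 0$ directly on $Y$, which avoids the restriction argument but carries the same information.
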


{\it Proof}. 
As shown in Lemma \ref{genstab}, under the given conditions, $\mathscr{F}$ has to be $\mu_h$-semistable for all polarizations $h'= t'\sigma + uf$, with 
$0<t'<t$.  This implies $a_2\leq 0$. Since $\alpha$ is assumed effective, Lemma \ref{Scone} implies that $a_1=a_2=0$.  

\hfill $\Box$ 

\begin{lem}\label{oneoneKthree} 
Let $S$ be an elliptic K3 surface in Lemma \ref{adiabstab}. 
Let $\mathscr{F}$ be a $\mu_h$-semistable extension of the form \eqref{eq:sheafextA} with $\beta_1=bf$, $\beta_2 = (1-b)f$.
Moreover, suppose that 
\be\label{eq:smallchamber} 
{t\over u} < {1\over 8n+1}.
\ee
Then the following inequalities hold 
\be\label{eq:stabcondA} 
1\leq b \leq (m+1)/2.
\ee
\end{lem}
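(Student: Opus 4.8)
The plan is to prove Lemma \ref{oneoneKthree} by combining the previous adiabatic stability results with a direct semistability analysis of the extension \eqref{eq:sheafextA} on an elliptic $K3$ surface, where $k=0$ so that $K_S\cong\CO_S$ and $L\cong\CO_S(mf)$. Since $\beta_1=bf$ and $\beta_2=(1-b)f$, note that $\alpha=\beta_2+c_1(L)-\beta_1=(m+1-2b)f$, and the requirement in \eqref{eq:typeII} that $\alpha$ be effective (or zero) forces $b\le (m+1)/2$; this immediately gives the upper bound. First I would record this observation, and also invoke Lemma \ref{adiabstab} (with $a_1=a_2=0$, which is automatic here since $\beta_i$ are multiples of $f$) to know that under \eqref{eq:smallchamber} the sheaf $\mathscr{F}$ is $\mu_h$-semistable — indeed $h$-semistable for all nearby polarizations $h'=t'\sigma+uf$ with $0<t'<t$ — so that the Harder--Narasimhan machinery of Lemma \ref{genstab} and the restriction Lemma \ref{restrlemma} apply.

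The core of the argument is the lower bound $b\ge 1$. The plan is to argue by contradiction: if $b\le 0$, then $\beta_1=bf$ with $b\le 0$, and I would use the subsheaf $\CI_2\otimes L_2\otimes L\subset\mathscr{F}$ from \eqref{eq:sheafextA} (or rather test $\mu_h$-semistability against it, and against the quotient $\CI_1\otimes L_1$). The relevant slopes are governed by $h\cdot\beta_i = (t\sigma+uf)\cdot(a_i\sigma+b_if)$; with $a_i=0$ this is $h\cdot\beta_i = t\, b_i$ up to the normalization from \eqref{eq:Sinters} ($\sigma\cdot f=1$), so $h\cdot\beta_1 = t b$ and $h\cdot\beta_2=t(1-b)$. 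The stability constraint $(II.b)$, $h\cdot\beta_2<h\cdot\beta_1$, reads $t(1-b)<tb$, i.e. $b>1/2$, hence $b\ge 1$ since $b\in\IZ$. So in fact the lower bound $b\ge1$ is forced purely by the defining inequality $(II.b)$ of the type II stratum, once one knows $a_1=a_2=0$.

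Thus the cleanest route is: (i) from $a_1=a_2=0$ (Lemma \ref{adiabstab}) and constraint $(II.b)$ together with the intersection numbers \eqref{eq:Sinters}, deduce $b>1/2$, hence $b\ge1$; (ii) from effectivity of $\alpha=\beta_2+c_1(L)-\beta_1$ and Lemma \ref{Scone}(i) (the effective cone is generated by $\sigma$ and $f$), deduce $m+1-2b\ge0$, hence $b\le (m+1)/2$. I would present these two steps in that order, with a sentence translating $(II.b)$ into the inequality via \eqref{eq:Sinters}. Conditions $(II.a)$ (which would only matter when $\alpha=0$, i.e. $b=(m+1)/2$, and imposes $n_1\ge n_2$) does not affect the stated inequalities and can be left aside.

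The main obstacle, or rather the point requiring care, is making sure the hypothesis $a_1=a_2=0$ is legitimately in force: Lemma \ref{adiabstab} is stated for extensions with $\beta_i=a_i\sigma+b_if$ and $a_1+a_2=0$, and here $\beta_1=bf,\beta_2=(1-b)f$ already have $a_1=a_2=0$, so its conclusion is vacuous in the sense that there is nothing to prove about the $a_i$ — but what I actually need is that the chamber condition \eqref{eq:smallchamber} of Lemma \ref{oneoneKthree} (which is $t/u<1/(8n+1)$, slightly stronger than the one in Lemma \ref{adiabstab} since $k=0$ gives $k+2+16n=2+16n<2(8n+1)$... let me instead match it to $2/(k+2+2r^3\Delta)$ with $r=2$) places us in the no-walls chamber of Lemma \ref{nowalls}, so that $\mu_h$-semistability of $\mathscr{F}$ is equivalent to $\mu_{h'}$-semistability for all $h'=t'\sigma+uf$ with $0<t'<t$, and in particular to semistability with respect to polarizations in the "very small $t$" regime where the restriction-to-fibers argument of Lemma \ref{genstab} applies. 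I would spell out that $\Delta(\mathscr{E})$ for the rank-two Higgs sheaf $\mathscr{E}=\CI_1\otimes L_1\oplus\CI_2\otimes L_2$ equals $n$ (using $c_2=n_1+n_2+\beta_1\cdot\beta_2$, $c_1=\beta$ a multiple of $f$ so $c_1^2=0$ hence $\Delta = c_2 = n$ after subtracting $\frac14 c_1^2=0$; wait, $n=n_1+n_2+\beta_1\cdot\beta_2$ from \eqref{eq:typeII}, and $\beta_1\cdot\beta_2=b(1-b)f^2=0$, so indeed $n=n_1+n_2$ and $\Delta(\mathscr{E})=n$), so that \eqref{eq:smallchamber} is precisely the condition $t/u<2/(k+2+2r^3\Delta)=2/(2+16n)=1/(1+8n)$ of Lemma \ref{nowalls}, closing the logical loop. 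Once that is pinned down, the two inequalities fall out immediately from $(II.b)$ and effectivity as above.
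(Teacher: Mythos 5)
Your proposal is correct and follows essentially the same route as the paper: the upper bound $b\le (m+1)/2$ comes from effectivity of $\alpha=(m+1-2b)f$, and the lower bound $b\ge 1$ comes from condition $(II.b)$ via $h\cdot\beta_1=tb$, $h\cdot\beta_2=t(1-b)$ using \eqref{eq:Sinters}. The paper's proof is just these two observations; your additional verification of the chamber condition is harmless but not needed for the stated inequalities.
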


{\it Proof.} In this case $\alpha = (m+1-2b)f$. Since this must be effective by assumption, $2b \leq m+1$. Moreover, condition 
$(II.b)$ below equation \eqref{eq:typeII} implies $b \geq 1$. 

\hfill $\Box$ 

In conclusion, using equation \eqref{eq:typeII}, Lemma \ref{oneoneKthree} yields:  
\begin{lem}\label{adiabtypeII} 
Let $S$ be a smooth generic elliptic K3 surface in Weierstrass form and let $\gamma = (2, f, n)$ be a triple of topological invariants with $n\geq 0$. Suppose inequality \eqref{eq:smallchamber} is satisfied. Then the type II fixed locus is a union 
\be\label{eq:typeIIK3} 
\CM_h(X,\gamma)^{\bf T}_{II} = \bigcup_{1\leq b \leq (m+1)/2}\ \  \bigcup_{\substack{n_1,n_2\geq 0\\ n_1+n_2=n\\ n_1\geq n_2 \ {\rm if}\ b= (m+1)/2}} \ S^{[n_1,n_2]}_{(m+1-2b)f}\, .
\ee
\end{lem}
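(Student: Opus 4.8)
The plan is to specialize the general description \eqref{eq:typeII} of the type II fixed locus to $\gamma=(2,f,n)$ and then discard the components indexed by curve classes with a nonzero section component. Recall that on a $K3$ surface $K_S\cong\CO_S$, so in the notation of this section $k=0$, $L=\CO_S(mf)$ and $c_1(L)=mf$. By \eqref{eq:typeII}, every connected component of the type II fixed locus is isomorphic to a nested Hilbert scheme $S^{[n_1,n_2]}_{(\beta_1,\beta_2+c_1(L))}$ with $\beta_1+\beta_2=f$, $\beta_2+c_1(L)-\beta_1\succeq 0$, $n_1+n_2+\beta_1\cdot\beta_2=n$, subject to the constraints $(II.a)$ and $(II.b)$ recorded below \eqref{eq:typeII}. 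Writing $\beta_i=a_i\sigma+b_if$, the relation $\beta_1+\beta_2=f$ gives $a_1+a_2=0$ and $b_1+b_2=1$.

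First I would use Lemma \ref{adiabstab} to eliminate the section components. Any sheaf $\mathscr{F}$ parametrized by such a component is stable, hence $\mu_h$-semistable, and by \cite[Section 3.1]{GSY17b} it sits in an extension of the form \eqref{eq:sheafextA} with the given $\beta_i$; moreover the hypothesis \eqref{eq:smallchamber} of the present lemma, namely $t/u<1/(8n+1)$, coincides with the chamber condition $t/u<2/(k+2+16n)$ of Lemma \ref{adiabstab} since $k=0$. Hence that lemma applies and forces $a_1=a_2=0$, so $\beta_1=bf$ and $\beta_2=(1-b)f$ for some $b\in\IZ$. Consequently $\beta_1\cdot\beta_2=b(1-b)f^2=0$, so the numerical constraint reduces to $n_1+n_2=n$, and $\alpha:=\beta_2+c_1(L)-\beta_1=(m+1-2b)f$. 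Applying Lemma \ref{oneoneKthree}, whose hypotheses are now met, yields $1\le b\le(m+1)/2$; equivalently, effectiveness of $\alpha$ together with Lemma \ref{Scone}$(i)$ gives $b\le(m+1)/2$, while $(II.b)$, which for $\beta_i=b_if$ reads $(1-b)t=h\cdot\beta_2<h\cdot\beta_1=bt$, gives $b\ge 1$.

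Next I would translate the indexing. Using the canonical isomorphism $S^{[n_1,n_2]}_{(\beta_1,\beta_2+c_1(L))}\cong S^{[n_1,n_2]}_{(0,\alpha)}=S^{[n_1,n_2]}_{(m+1-2b)f}$, valid since $H^1(\CO_S)=0$, the surviving components are exactly those indexed by $(b;n_1,n_2)$ with $1\le b\le(m+1)/2$ and $n_1+n_2=n$. Since $\alpha=0$ precisely when $b=(m+1)/2$ (possible only for $m$ odd), constraint $(II.a)$ becomes the requirement $n_1\ge n_2$ in that single case, which reproduces the right-hand side of \eqref{eq:typeIIK3}. For the reverse direction one only has to note that the components of \eqref{eq:typeII} with $a_i\ne 0$ are empty: by the second paragraph no $\mu_h$-semistable sheaf with such invariants exists, so via the isomorphism \eqref{eq:typeII} the corresponding nested Hilbert schemes carry no points and drop out of the union.

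The argument is essentially bookkeeping once Lemma \ref{adiabstab} is available, so the real content is the generic-stability input of Lemma \ref{genstab}/Lemma \ref{adiabstab}; the only points requiring a little care are checking that the single chamber hypothesis \eqref{eq:smallchamber} is strong enough to invoke Lemmas \ref{adiabstab} and \ref{oneoneKthree} simultaneously (it is, since for $k=0$ the three chamber inequalities coincide) and that the boundary case $b=(m+1)/2$ matches constraint $(II.a)$.
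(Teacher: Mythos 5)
Your proposal is correct and follows the same route as the paper, which obtains the lemma directly by combining the general fixed-locus description \eqref{eq:typeII} with Lemma \ref{adiabstab} (to force $a_1=a_2=0$) and Lemma \ref{oneoneKthree} (to bound $b$). Your explicit check that the chamber inequalities of Lemmas \ref{adiabstab} and \ref{oneoneKthree} coincide for $k=0$, and your matching of constraint $(II.a)$ to the $n_1\geq n_2$ condition at $b=(m+1)/2$, are exactly the bookkeeping the paper leaves implicit.
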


\subsection{Partition functions and modularity conjectures}\label{partfctsect}

In this section $S$ will be a smooth generic elliptic K3 surface in Weierstrass form. Recall that $X$ is the total space $K_S(D)$, 
which in this case is isomorphic to $\CO_S(mf)$, $m\geq 1$. 
For any triple $\gamma= (2, f, n)$, with $n\geq 0$, let 
\[ 
DT_\infty(X,\gamma)_I = DT_h(X, \gamma)_I, \qquad 
 DT_\infty(X,\gamma)_{II} = DT_h(X, \gamma)_{II},
 \]
for any polarization $h = t\sigma + uf$ satisfying inequality \eqref{eq:smallchamber}. Lemmas \ref{nowalls} and \ref{adiabtypeII} show that these 
invariants are independent of the choice of $h$ as long as inequality 
\eqref{eq:smallchamber} is satisfied. Then 
let
\[ 
Z_\infty(X, 2,f;q)_I = \sum_{n \geq 0} DT_\infty(X,2,f,n)_I q^{n-2} 
\]
and 
\[ 
Z_\infty(X, 2,f;q)_{II} = \sum_{n \geq 0} DT_\infty(X,2,f,n)_{II} q^{n-2} 
\]
the generating functions of such invariants. Note that a priori 
the above expressions are elements of $q^{-2} \IQ({\bf s})[[q]] \subset 
\IQ({\bf s})[[q^{-1}, q]]$. 
\bigskip

{\it Proof of Proposition \ref{DTkthreeI}}. Recall that 
Proposition \ref{DTkthreeI} claims that  $Z_\infty(X, 2,f;q)_I$ belongs to $\IQ({\bf s})[[q]]\subset \IQ({\bf s})[[q^{-1}, q]]$ and that the following identity holds in $\IQ({\bf s})[[q^{1/2}]]$: 
\be\label{eq:typeIformula} 
Z_\infty(X, 2,f;q)_I = {{\bf s}^{-1}\over 2}(\Delta^{-1}(q^{1/2}) + \Delta^{-1}(-q^{1/2}))  
\ee
In order to prove this note  that the type I contributions are given by equation \eqref{eq:typeIB}, which is reproduced below fro convenience: 
\[
DT_h(X, \gamma)_I = \int_{[\CM_h(S,\gamma)]^{\vir}_0}\frac{1}{e\left(\dR \hom_{\pi}(\eE, \eE\otimes L \cdot \t)\right)}.\\
\]
Under the current assumptions Proposition \ref{univI} 
shows that the virtual integral in the right hand side is independent of $L \cong \CO_S(mf)$ since 
\[ 
D^2=D\cdot c_1(S)= D\cdot \beta =0. 
\]
As a result, the above integral equals 
\[ 
DT_h(X, \gamma)_I = \int_{[\CM_h(S,\gamma)]^{\vir}_0}\frac{1}{e\left(\dR \hom_{\pi}(\eE, \eE\otimes \t)\right)}.\\
\]
As explained in \cite[Remark. 2.10]{GSY17b}, these invariants 
are related to the Vafa-Witten invariants ${\sf V}{\sf W}_h(S, \gamma)$ defined in \cite{TT17a} by 
\[
DT_h(X, \gamma)_I  = {\bf s}^{-1} {\sf V}{\sf W}_h(S, \gamma).
\]
Moreover, Proposition 7.4 of loc. cit. shows that under the present assumptions, the latter 
are given by the topological Euler character of the moduli space 
$\CM_h(S,\gamma)$, which is in this case smooth. 
In conclusion, one has 
\[ 
DT_h(X, \gamma)_I  = {\bf s}^{-1} \chi^{\rm top}(\CM_h(S,\gamma)).
\]
Finally, note that the moduli space $\CM_h(S,\gamma)$ is shown 
in \cite{Mukai84} and \cite[Section 6]{HL97} to be deformation equivalent to the Hilbert scheme $S^{[2n-3]}$. In particular the moduli space is empty for $0\leq n \leq 1$, which proves that indeed the series $Z_\infty(X, 2,f;q)_I$ belongs to $\IQ({\bf s})[[q]]\subset \IQ({\bf s})[[q^{-1}, q]]$. 
Moreover, identity \eqref{eq:typeIformula} follows by straightforward computations from Goettsche's formula.

\hfill $\Box$ 

\begin{prop}\label{DTkthreeII} 
Under the same conditions as in Lemma \ref{oneoneKthree}, the 
type II contributions $DT_\infty(X,\gamma)_{II}$ vanish for 
$m$ even, and reduced to 
\be\label{eq:typeIIformula} 
DT_\infty(X,\gamma)_{II} =\sum_{\substack{n_1\geq n_2\geq 0\\ n_1+n_2=n}} DT_0^{[n_1,n_2]}
\ee
for $m$ odd. 
\end{prop}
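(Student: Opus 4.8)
The plan is to combine the explicit description of the type~II fixed locus from Lemma~\ref{adiabtypeII} with virtual localization, reducing the whole computation to a vanishing statement for the components carrying a nonzero divisorial twist. Concretely, under the hypotheses of Lemma~\ref{oneoneKthree}, Lemma~\ref{adiabtypeII} exhibits $\CM_h(X,\gamma)^{\bf T}_{II}$ as the \emph{disjoint} union of the nested Hilbert schemes $S^{[n_1,n_2]}_{(m+1-2b)f}$ with $1\leq b\leq (m+1)/2$, $n_1+n_2=n$, $n_1,n_2\geq 0$, and $n_1\geq n_2$ whenever $2b=m+1$; disjointness holds because $(bf,(1-b)f,n_1,n_2)$ are discrete invariants of the sub- and quotient sheaves in the extension~\eqref{eq:sheafextA}. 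Applying the virtual localization theorem~\cite{GP99} to the reduced obstruction theory of Corollary~\ref{obscor}, and reading off the residual contribution of each component from~\eqref{eq:typeIIB}, I would first record
\[
DT_\infty(X,\gamma)_{II}=\sum_{1\leq b\leq (m+1)/2}\ \ \sum_{\substack{n_1+n_2=n\\ n_1,n_2\geq 0\\ n_1\geq n_2\ \text{if}\ 2b=m+1}} DT^{[n_1,n_2]}_{(m+1-2b)f}.
\]

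The decisive point, which I expect to be the real obstacle, is the claim that $DT^{[n_1,n_2]}_{cf}=0$ for every integer $c\geq 1$. I would start by observing that on a generic elliptic K3 in Weierstrass form $c_1(S)=0$ and $f^2=0$, and since $L=K_S(mf)=\CO_S(mf)$ also $c_1(L)\cdot cf=c_1(L)\cdot c_1(S)=c_1(L)^2=0$ and $\chi(L^{j})=2$ for $j=\pm1,\pm2$; hence the sign in~\eqref{eq:typeIIB} is $+1$ and the overall prefactor is a nonzero element of $\IQ({\bf s})$, so the vanishing has to come from the virtual integral over $[S^{[n_1,n_2]}_{cf}]^{\mathrm{vir}}$ itself. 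To extract it I would invoke the Seiberg--Witten-type evaluation of the nested Hilbert scheme virtual class from~\cite{GSY17a} (the same input that underlies Proposition~\ref{univII}, the relevant part of which survives a nonzero twist even though the full universal-polynomial statement does not): the pairing of $[S^{[n_1,n_2]}_{cf}]^{\mathrm{vir}}$ with any cohomology class is governed by Seiberg--Witten invariants of $S$ in classes proportional to $cf$ and to $c_1(S)-cf=-cf$, and these all vanish on a K3 surface, whose only Seiberg--Witten basic class is $0$. Making this rigorous -- carrying the $c\neq 0$ case through the degeneration and wall-crossing arguments of~\cite{GSY17a}, or else deriving it from the vanishing of the vertical (monopole) sector with nontrivial twist in~\cite[Section~8]{TT17a} and transporting it through Corollary~\ref{obscor} -- is where the real work lies.

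Granting the vanishing, what remains is bookkeeping in the parity of $m$. If $m$ is even, then $m+1-2b$ is odd, hence $\geq 1$, for every integer $b$ with $1\leq b\leq(m+1)/2$, so every summand above vanishes and $DT_\infty(X,\gamma)_{II}=0$. If $m$ is odd, then $m+1-2b>0$ for $1\leq b<(m+1)/2$ while $m+1-2b=0$ for the admissible value $b=(m+1)/2$; the strictly twisted terms vanish, and for $b=(m+1)/2$ Lemma~\ref{adiabtypeII} forces $n_1\geq n_2$. Hence only the untwisted components with $n_1\geq n_2$ survive, and $DT_\infty(X,\gamma)_{II}=\sum_{n_1\geq n_2\geq 0,\ n_1+n_2=n}DT^{[n_1,n_2]}_0$, which is~\eqref{eq:typeIIformula}.
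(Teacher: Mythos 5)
Your proposal follows the same route as the paper: decompose the fixed locus via Lemma \ref{adiabtypeII}, kill every component with a nonzero divisorial twist, and then reduce the parity statement to the observation that $m+1-2b>0$ for all admissible $b$ when $m$ is even, while only $b=(m+1)/2$ gives $\alpha=0$ (with the constraint $n_1\geq n_2$) when $m$ is odd. That bookkeeping is exactly what the paper does. The one step you flag as ``where the real work lies'' is in fact dispatched in the paper by a single citation: the twisted components $S^{[n_1,n_2]}_{cf}$ with $c=m+1-2b\geq 1$ contribute zero by \cite[Thm.~7]{GSY17b}, whose hypotheses are that $h^0(S,\CO_S(cf))>0$ and $h^2(S,\CO_S(cf))=0$ --- both immediate on a K3, since $\CO_S(cf)$ pulls back sections from $\CO_{\IP^1}(c)$ and $h^2(S,\CO_S(cf))=h^0(S,\CO_S(-cf))=0$ by Serre duality with $K_S\cong\CO_S$. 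This is precisely the algebro-geometric form of the Seiberg--Witten vanishing you invoke (the only SW basic class of a K3 is $0$), so your mechanism is the right one; you just did not need to re-run the degeneration arguments of \cite{GSY17a} or the monopole-branch analysis of \cite{TT17a}, since the needed vanishing is already packaged as a cohomological criterion. With that citation in place of your heuristic, the argument is complete and agrees with the paper's.
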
 

{\it Proof}. 
Note that for $1\leq b \leq (m+1)/2$ one has 
$h^0(S,\CO_S((m+1-2b)f) >0$ and  
\[
h^2(S,\CO_S((m+1-2b)f)= \left\{\begin{array}{ll} 
0, & {\rm if}\ 2b < m+1, \\
1, & {\rm if}\ 2b = m+1.\\
\end{array}\right.
\]
This shows that under to given assumptions all type 
II fixed loci on $S$ satisfy the vanishing conditions of \cite[Thm. 7]{GSY17b} for $2b < m+1$, i.e. 
\[
h^0(S,\CO_S(m+1-2b)) >0, \qquad h^2(S,\CO_S(m+1-2b))=0
\]
If $2b=m+1$, one has $\alpha=0$, hence all type II components satisfying this condition are isomorphic to nested Hilbert schemes 
without divisorial twists. The condition $n_1\geq n_2$ comes from 
equation \eqref{eq:typeIIK3}.

\hfill $\Box$ 

In conclusion, for $m$ even, the partition function $Z_\infty(X,2,f;q)$ equals the generating function of type I contributions \eqref{eq:typeIformula}. For $m$ odd one obtains the additional type II contributions \eqref{eq:typeIIformula}. Each term in this formula is an equivariant residual integral of the form \eqref{eq:product}.
Again, Proposition 
\ref{univII} shows that under the present conditions, the integrals \eqref{eq:product} are independent of $m$ since 
$D^2 = D\cdot c_1(S)=D\cdot \beta =0$. Therefore each integral 
\eqref{eq:product} is in fact equal to its $m=0$ counterpart. 
There is no contradiction here, since the condition that $m$ be odd only occurred as a selection rule for type II contributions in  Lemma \ref{adiabtypeII} and Proposition \ref{DTkthreeII}. Once that is done, the resulting  
integrals \eqref{eq:product} can be computed in isolation since they are certainly well defined for any value of $m$, in particular $m=0$. 

Setting $m=0$ in \eqref{eq:product}, as explained in \cite[Remark 
2.10]{GSY17b}, the resulting invariants {\it formally} coincide 
up to an equivariant factor to the type II Vafa-Witten invariants defined and computed in Sections 
8.7-8.9 of \cite{TT17a}. It should be emphasized that while the resulting virtual integrals are formally the same,
the theoretical framework for these two computations is not the same. In the framework of this paper the type II contributions studied in \cite{TT17a} occur in the local fourfold theory with $L=K_S$ where $S$ is a general type surface with $c_1(S)\neq 0$ and primitive. In particular such contributions are ruled out by stability for local K3 surfaces with $L=K_S \cong \CO_S$ and for primitive invariants $\gamma$. 
The main point of the above argument is to point out that formally identical equivariant 
integrals also occur in the $L=K_S(mf)$-twisted theory with $S$ a K3 surface, at least if $m$ is odd. In the latter case they are 
allowed by the stability conditions, as shown in Lemma \ref{adiabtypeII}. 
In conclusion, while the context is different, the resulting contributions are formally identical up to a factor of ${\bf s}^{-1}$, which was explained in \cite[Remark 2.10]{GSY17b}. Therefore the computations in Sections 8.7-8.9 of \cite{TT17a} lead to Conjecture \ref{typeIIconj}.

 \appendix 
 
 \section{Background results}\label{background}

\subsection{Adjunction} 

\begin{lem}\label{adjlem}
Let $W$ be a quasi-projective scheme over $\IC$ and let $V$ be the total space of a line bundle $\CQ$ on $W$. Let $\sigma :W\to V$ denote the zero section. Let $C^{\bullet}$ be an  
 object of $\mathscr{D}^b(W)$.  There is an isomorphism  
\be\label{eq:splitadj}
L\sigma^*\dR_{\sigma*}C^{\bullet} \cong C^{\bullet} \oplus C^{\bullet}\otimes \CQ^{-1}[1] 
\ee
 in $\mathscr{D}^b(W)$. Moreover, given any objects $C^{\bullet}_1, C^{\bullet}_2$ in $\mathscr{D}^b(W)$, there is a splitting 
 \be\label{eq:rhomsplitA} 
 \bal 
 \dR\mathscr{H}om_{\mathscr{D}^{b}(V)}(\sigma_*C^{\bullet}_1, \sigma_*C^{\bullet}_2) \cong\ & \dR_{\sigma*}\dR\mathscr{H}om_{\mathscr{D}^{b}(W)}(C^{\bullet}_1,C^{\bullet}_2) \oplus \\
 & \dR_{\sigma*}\dR\mathscr{H}om_{\mathscr{D}^{b}(W)}(C^{\bullet}_1\otimes \CQ^{-1},C^{\bullet}_2)[-1]
 \eal
\ee
in $\mathscr{D}^b(W)$.
\end{lem}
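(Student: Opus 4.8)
The plan is to prove Lemma \ref{adjlem} in two stages, first establishing the splitting \eqref{eq:splitadj} for $L\sigma^*\dR_{\sigma*}C^\bullet$ and then deducing \eqref{eq:rhomsplitA} from it together with Grothendieck--Verdier duality for the closed embedding $\sigma$. For the first part, I would resolve the structure sheaf $\sigma_*\CO_W$ by the Koszul complex associated to the tautological section of $q^*\CQ$ on $V$, where $q: V\to W$ is the bundle projection. Since $V$ is the total space of a line bundle, $\sigma(W)\subset V$ is the zero locus of a regular section, so $\sigma_*\CO_W$ has the two-term locally free resolution
\[
\bigl[\, q^*\CQ^{-1} \xlongto{\ s\ } \CO_V\, \bigr].
\]
Tensoring $\dR_{\sigma*}C^\bullet \cong \sigma_*C^\bullet$ with this and pulling back by $\sigma$ (equivalently, restricting to the zero section, where $s$ vanishes identically) yields a two-term complex $C^\bullet\otimes\CQ^{-1}[1]\oplus C^\bullet$ with zero differential, giving the claimed isomorphism $L\sigma^*\dR_{\sigma*}C^\bullet\cong C^\bullet\oplus C^\bullet\otimes\CQ^{-1}[1]$. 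Functoriality of the Koszul resolution in $C^\bullet$ shows the splitting is natural, which is what will be needed downstream.

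For the second part, I would combine adjunction with the above. The key identity is
\[
\dR\mathscr{H}om_{\mathscr{D}^b(V)}(\sigma_*C^\bullet_1,\sigma_*C^\bullet_2)\cong \sigma_*\,\dR\mathscr{H}om_{\mathscr{D}^b(W)}(L\sigma^*\sigma_*C^\bullet_1,\, C^\bullet_2),
\]
which is the local derived version of $(\sigma^*,\sigma_*)$-adjunction (or, equivalently, $\sigma_*\dR\mathscr{H}om_W(L\sigma^*(-),(-))\cong \dR\mathscr{H}om_V((-),\sigma_*(-))$, valid since $\sigma$ is a closed embedding). Substituting the splitting of $L\sigma^*\sigma_*C^\bullet_1$ obtained in the first step and using additivity of $\dR\mathscr{H}om$ in its first argument gives
\[
\sigma_*\,\dR\mathscr{H}om_W(C^\bullet_1,C^\bullet_2)\ \oplus\ \sigma_*\,\dR\mathscr{H}om_W(C^\bullet_1\otimes\CQ^{-1}[1],C^\bullet_2),
\]
and the shift $[1]$ in the first argument produces the $[-1]$ in the second summand of \eqref{eq:rhomsplitA}. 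This matches the stated formula.

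The main obstacle I expect is not any single deep point but the bookkeeping needed to make the adjunction isomorphism genuinely derived and compatible with the splitting: one must check that $L\sigma^*\sigma_*$ is computed by the Koszul model (no hidden higher Tor's beyond the ones exhibited), that the adjunction isomorphism is an isomorphism of complexes of sheaves on $W$ after applying $\sigma_*$ rather than merely on cohomology, and that the direct sum decomposition is respected throughout. All of this is standard once the Koszul resolution is in hand — the section $s$ is a non-zero-divisor fiberwise, exactly as in the argument for Lemma \ref{restrlem}(i) in the main text — so the proof should be short. One subtlety worth stating explicitly is that \eqref{eq:splitadj} uses that $\CQ$ is a \emph{line} bundle, so that the Koszul complex truncates to length two and the normal bundle $\CN_{W/V}\cong\CQ$ is invertible; the shift and twist in \eqref{eq:rhomsplitA} are precisely $\det\CN_{W/V}^{-1}[-\operatorname{rk}]$, consistent with Grothendieck duality for $\sigma$.
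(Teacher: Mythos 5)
Your proposal is correct and follows essentially the same route as the paper: the paper also realizes $\dR_{\sigma*}C^\bullet$ as the cone of the tautological section acting on (a locally free resolution of) the pullback, i.e.\ the Koszul resolution of $\sigma_*\CO_W$ tensored with $C^\bullet$, restricts to the zero section where that section vanishes to get \eqref{eq:splitadj}, and then deduces \eqref{eq:rhomsplitA} from the local adjunction $\dR_{\sigma*}\dR\mathscr{H}om_W(L\sigma^*\sigma_*C^\bullet_1,C^\bullet_2)\cong \dR\mathscr{H}om_V(\sigma_*C^\bullet_1,\sigma_*C^\bullet_2)$ (citing Hartshorne, Residues and Duality, Prop.\ II.5.10) together with additivity in the first argument. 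The closing remark about Grothendieck--Verdier duality is a consistency check rather than part of the argument, and nothing essential differs.
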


{\it Proof}. 
Since $W$ is quasi-projective,  $C^{\bullet}$ has a quasi-coherent, possibly infinite, locally free resolution $F^{\bullet}_{C^{\bullet}}$. 
Let $p:V \to W$ be the canonical projection, which is affine and flat. Let $\zeta\in H^0(V, p^*\CQ)$ denote the tautological section. Then the locally free complex 
\[
 {\rm Cone} ({\bf 1}_{F^{\bullet}_{C^{\bullet}}}\otimes \zeta: p^*(F^{\bullet}_{C^{\bullet}} \otimes 
p^*\CL^{-1})\to p^*F^{\bullet}_{C^{\bullet}})
\]
is quasi-isomorphic to $\dR_{\sigma*}C$. Since the zero locus of $\zeta$ coincides with the image of the zero section $\sigma$, 
\[
L\sigma^*\dR_{\sigma*}C \cong \sigma^*F^{\bullet}_{C^{\bullet}} \oplus F^{\bullet}_{C^{\bullet}} \otimes 
\CQ^{-1} [1]. 
\]
This proves equation \eqref{eq:splitadj}. 

Next, \cite[Prop. II.5.10]{H66} shows that for any objects $C^{\bullet}_1,C^{\bullet}_2$ of  $\mathscr{D}^b(W)$ there is a natural isomorphism 
\[ 
\dR_{\sigma*}\dR\mathscr{H}om_{\mathscr{D}^{b}(W)}(L\sigma^*\dR_{\sigma*}C^{\bullet}_1, \dR_{\sigma*}C^{\bullet}_2) \cong \dR\mathscr{H}om_{\mathscr{D}^{b}(V)}(\dR_{\sigma*}C^{\bullet}_1, \dR_{\sigma*}C^{\bullet}_2).
\]
Then equation \eqref{eq:rhomsplitA} follows from \eqref{eq:splitadj}.

\hfill $\Box$

\subsection{Products and diagonals} 

This section consists of some background results on product schemes needed in the proof of Theorem 
\ref{Apropone}. 

\begin{lem}\label{flatlemma} 
Suppose $W_1, W_2$ are quasi-projective over $\IC$ and let
 $\mathcal{G}_1$ be a coherent sheaf on $W_1$.  Then 
$\pi_{W_1}^*\mathcal{G}_1$ is flat over $W_2$ and the derived pull-back 
$L\pi_{W_1}^*\mathcal{G}_1$ is isomorphic to the ordinary pull-back 
$\pi_{W_1}^*\mathcal{G}_1$. Moreover,  for any coherent sheaves $G_i$ on 
$W_i$ with $1\leq i \leq 2$, 
one has the transversality result 
\be\label{eq:transvA} 
{\mathcal Tor}^k_{W_1\times W_2}( \pi_{W_1}^*\mathcal{G}_1, \pi_{W_2}^*\mathcal{G}_2) 
=0
\ee
for $k\geq 1$. In particular, given any the derived tensor product $L\pi_{W_1}^*\mathcal{G}_1 \otimes^L L\pi_{W_2}^*\mathcal{G}_2$ is isomorphic to the ordinary tensor product $\pi_{W_1}^*\mathcal{G}_1 \otimes \pi_{W_2}^*\mathcal{G}_2$.
 \end{lem}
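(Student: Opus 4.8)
\textbf{Proof proposal for Lemma \ref{flatlemma}.}

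The plan is to reduce everything to the behavior of flat base change along the two projections $\pi_{W_i}: W_1 \times W_2 \to W_i$. First I would observe that $\pi_{W_1}$ is flat: it is the base change of the structure morphism $W_2 \to \operatorname{Spec}(\IC)$ along $W_1 \to \operatorname{Spec}(\IC)$, and flatness is stable under base change since $\IC$ is a field (so $W_2 \to \operatorname{Spec}(\IC)$ is automatically flat). Consequently the pullback of any coherent sheaf $\mathcal{G}_1$ on $W_1$ is flat over $W_2$, and the derived pullback $L\pi_{W_1}^*\mathcal{G}_1$ is concentrated in degree zero and equals the ordinary pullback $\pi_{W_1}^*\mathcal{G}_1$. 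This handles the first assertion, and the same argument applies with the roles of $W_1$ and $W_2$ exchanged.

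Next I would establish the transversality statement \eqref{eq:transvA}. The clean way is to pick, locally on $W_1$, a (possibly infinite) locally free resolution $P^\bullet \to \mathcal{G}_1$ of $\mathcal{G}_1$ by coherent sheaves; such a resolution exists since $W_1$ is quasi-projective. Then $\pi_{W_1}^*P^\bullet \to \pi_{W_1}^*\mathcal{G}_1$ is still a resolution by locally free sheaves, because $\pi_{W_1}$ is flat and hence exact on the resolving complex. Therefore
\[
{\mathcal Tor}^k_{W_1\times W_2}(\pi_{W_1}^*\mathcal{G}_1, \pi_{W_2}^*\mathcal{G}_2) \cong \mathcal{H}^{-k}\big( \pi_{W_1}^*P^\bullet \otimes_{W_1\times W_2} \pi_{W_2}^*\mathcal{G}_2 \big).
\]
Now $\pi_{W_1}^*P^\bullet \otimes \pi_{W_2}^*\mathcal{G}_2$ can be computed fiberwise: locally it is $P^\bullet \boxtimes \mathcal{G}_2$, and tensoring a locally free complex on $W_1$ by a sheaf pulled back from $W_2$ is the same as applying the exact functor $\pi_{W_1}^*(-) \otimes \pi_{W_2}^*\mathcal{G}_2$ term by term. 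Since $P^\bullet$ resolves $\mathcal{G}_1$ and the functor is exact, the only surviving cohomology is in degree $0$, giving the vanishing for $k\geq 1$. The last sentence of the lemma is then immediate: with the higher Tor's vanishing, $L\pi_{W_1}^*\mathcal{G}_1 \otimes^L L\pi_{W_2}^*\mathcal{G}_2 \cong \pi_{W_1}^*\mathcal{G}_1 \otimes \pi_{W_2}^*\mathcal{G}_2$ by the spectral sequence computing the hyper-Tor, which degenerates.

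I expect the only mildly delicate point to be the bookkeeping with \emph{possibly infinite} locally free resolutions in the quasi-projective (non-affine, non-smooth) setting: one must ensure the resolution can be chosen by coherent locally free sheaves and that the arguments are local enough on $W_1\times W_2$ that finiteness issues do not intervene when taking cohomology sheaves. Alternatively, one can sidestep resolutions entirely by noting that the statement is local on $W_1$ and $W_2$, reducing to the affine case $W_i = \operatorname{Spec}(A_i)$ where $W_1\times W_2 = \operatorname{Spec}(A_1\otimes_\IC A_2)$, and then \eqref{eq:transvA} becomes the classical fact that $\operatorname{Tor}^{A_1\otimes_\IC A_2}_k(M_1\otimes_\IC A_2,\ A_1\otimes_\IC M_2) \cong \operatorname{Tor}^{A_1}_k(M_1, A_1)\otimes_\IC \cdots = 0$ for $k\geq 1$, using that $A_1$ and $A_2$ are flat over the field $\IC$ and a Künneth-type identification for Tor over a tensor product of $\IC$-algebras. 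This affine reduction is probably the shortest route and is the one I would ultimately write up.
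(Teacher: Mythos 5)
Your proposal is correct and follows essentially the same route as the paper: flatness comes from base change along the cartesian square over $\operatorname{Spec}(\IC)$, and the Tor vanishing is obtained by pulling back a locally free resolution of $\mathcal{G}_1$ (or, equivalently, by the affine K\"unneth computation $\operatorname{Tor}^{A_1\otimes_\IC A_2}_k(M_1\otimes_\IC A_2, A_1\otimes_\IC M_2)\cong \operatorname{Tor}^\IC_k(M_1,M_2)=0$), which is exactly what the paper's two-line argument amounts to. One small logical slip: flatness of the \emph{morphism} $\pi_{W_1}$ does not by itself yield flatness of $\pi_{W_1}^*\mathcal{G}_1$ \emph{over $W_2$}; the correct justification is that $\mathcal{G}_1$ is automatically flat over $\operatorname{Spec}(\IC)$ and flatness of a sheaf relative to a base is preserved under base change (the paper phrases this via the flat base change isomorphism $\pi_{W_2*}\pi_{W_1}^*\mathcal{G}_1\cong f_2^*f_{1*}\mathcal{G}_1$ with $f_{1*}\mathcal{G}_1$ locally free on $\operatorname{Spec}(\IC)$). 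Since you invoke the right fact in the same breath, this is a matter of wording rather than a gap.
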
 

{\it Proof.} This follows from the cartesian diagram 
\[
\xymatrix{ 
W_1 \times W_2 \ar[r]^-{\pi_{W_1}}\ar[d]_-{\pi_{W_2}} & W_1 \ar[d]^-{f_1} \\ 
W_2 \ar[r]^-{f_2} & {\rm Spec}(\IC). \\}
\]
Since $f_2$ is flat, there is an isomorphism $\pi_{{W_2}*}\pi_{W_1}^*\mathcal{G}_1\cong f_2^*f_{1*}\mathcal{G}_1$, while $f_{1*}\mathcal{G}_1$ is obviously locally free on ${\rm Spec}(\IC)$. Moreover, $\pi_{W_1}$ is flat since $f_2$ is flat, which implies the second statement.

Since both schemes are quasi-projective, the sheaves $\mathcal{G}_1, \mathcal{G}_2$  admit locally free resolutions. Then the transversality result follows immediately from the first two statements.  

\hfill $\Box$ 

\begin{lem}\label{flatcorA} 
Let $W_1,W_2$ be quasi-projective schemes over $\IC$ 
and let $W=W_1\times W_2$. Let $C^{\bullet}$ be a complex of coherent sheaves on $W_2$. Then there is a  canonical isomorphism
\be\label{eq:diagisomC}
\Delta_{W*}\pi_{W2}^*C^{\bullet}\cong \CO_{\Delta_{W_1}} \boxtimes \Delta_{W_2*}C^{\bullet}.
\ee
\end{lem}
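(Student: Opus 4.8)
The plan is to deduce \eqref{eq:diagisomC} from Lemma \ref{flatlemma} together with the elementary fact that the diagonal of a product of schemes is, up to reshuffling factors, the product of the diagonals. First I would reduce to the case in which $C^{\bullet}$ is a single coherent sheaf $\mathcal G$. All the functors occurring on the two sides of \eqref{eq:diagisomC} are exact: the pushforwards $\Delta_{W*}$, $\Delta_{W_1*}$, $\Delta_{W_2*}$ along closed embeddings (the schemes being separated, hence with closed diagonal), the flat pullback $\pi_{W_2}^{*}$, and the operation $\CO_{\Delta_{W_1}}\boxtimes(-)$ applied to objects pulled back from the second factor (no higher Tor, by Lemma \ref{flatlemma}). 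Hence both sides are computed term by term on $C^{\bullet}$, and it suffices to produce a canonical isomorphism $\Delta_{W*}\pi_{W_2}^{*}\mathcal G\cong \CO_{\Delta_{W_1}}\boxtimes \Delta_{W_2*}\mathcal G$, natural in $\mathcal G$.

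Next I would introduce the canonical isomorphism
\[
s\colon W\times W=(W_1\times W_2)\times(W_1\times W_2)\ {\buildrel \sim \over \longto}\ (W_1\times W_1)\times(W_2\times W_2)
\]
exchanging the two inner factors, and record that $s\circ\Delta_{W}=\Delta_{W_1}\times\Delta_{W_2}$, which is an immediate verification on points. Using $\pi_{W_2}^{*}\mathcal G=\CO_{W_1}\boxtimes\mathcal G$ on $W_1\times W_2$ — where Lemma \ref{flatlemma} guarantees that this ordinary external product agrees with the derived one — the left-hand side $\Delta_{W*}\pi_{W_2}^{*}\mathcal G$ is carried by $s$ to $(\Delta_{W_1}\times\Delta_{W_2})_{*}(\CO_{W_1}\boxtimes\mathcal G)$.

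The remaining point, and the one needing genuine care, is the Künneth-type identity $(\Delta_{W_1}\times\Delta_{W_2})_{*}(\CO_{W_1}\boxtimes\mathcal G)\cong(\Delta_{W_1*}\CO_{W_1})\boxtimes(\Delta_{W_2*}\mathcal G)=\CO_{\Delta_{W_1}}\boxtimes\Delta_{W_2*}\mathcal G$. I would check it affine-locally, on a chart corresponding to ring maps $R_1'\to R_1$ and $R_2'\to R_2$ presenting $\Delta_{W_1}$ and $\Delta_{W_2}$, and to an $R_2$-module $N$ presenting $\Delta_{W_2*}\mathcal G$: both sides are then identified with the $R_1'\otimes_{\IC}R_2'$-module $R_1\otimes_{\IC}N$, the left-hand pushforward being simply restriction of scalars along $R_1'\otimes_{\IC}R_2'\to R_1\otimes_{\IC}R_2$. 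Over the field $\IC$ no derived correction enters — this is exactly what Lemma \ref{flatlemma} supplies, applied to the $\IC$-flat scheme $W_1$ — and the local identifications are manifestly compatible on overlaps and natural in $\mathcal G$, so they glue to the desired canonical global isomorphism.

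The main obstacle I anticipate is not conceptual but bookkeeping: one must be scrupulous that every pullback, pushforward, base change and tensor product invoked along the way is the underived one, and this is precisely the role of Lemma \ref{flatlemma}; if that control is maintained, the argument above goes through without any further input.
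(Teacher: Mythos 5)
Your argument is correct, and it reaches the isomorphism by a genuinely different route than the paper. The paper stays chart-free: it writes $C^{\bullet} \cong \Delta_{W_2}^*\Delta_{W_2*}C^{\bullet}$, uses the commutative square relating $\Delta_W$, $\pi_{W_2}$, $\Delta_{W_2}$ and $\pi_{W_2\times W_2}$ to rewrite $\pi_{W_2}^*C^{\bullet}$ as $\Delta_W^*\pi_{W_2\times W_2}^*\Delta_{W_2*}C^{\bullet}$, applies $\Delta_{W*}\Delta_W^*E \cong \CO_{\Delta_W}\otimes E$, and finally invokes the transverse intersection $\CO_{\Delta_W}\cong \CO_{\Delta_{W_1}}\boxtimes\CO_{\Delta_{W_2}}$ supplied by Lemma \ref{flatlemma}. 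You instead conjugate by the factor-swap isomorphism $s$, turning $\Delta_W$ into $\Delta_{W_1}\times\Delta_{W_2}$, and prove a K\"unneth identity for the pushforward of a box product along a product of closed embeddings by an affine-local computation; your identification of both sides with $R_1\otimes_{\IC}N$ in charts is essentially a hands-on re-derivation of the paper's transversality statement, with flatness over the ground field $\IC$ playing the role of the Tor-vanishing in Lemma \ref{flatlemma}. The paper's version dispenses with your preliminary reduction to a single sheaf (its isomorphisms are visibly termwise and natural) and with the bookkeeping of gluing affine charts; yours makes the geometric content --- that the diagonal of a product is the product of the diagonals, transversally --- completely explicit and is arguably easier to verify line by line. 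Both hinge on the same input from Lemma \ref{flatlemma}, so the proposal is a valid alternative proof.
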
 

{\it Proof}. 
Consider the commutative diagram 
\[
\xymatrix{
W \ar[rr]^-{\Delta_{W}}\ar[d]^-{\pi_{W_2}}  & & W\times W \ar[d]^-{{\pi}_{W_2\times W_2}} \\
W_2 \ar[rr]^-{\Delta_{W_2}} & & W_2\times W_2. \\}
\]
and note the canonical isomorphism 
\[ 
C^{\bullet} \cong \Delta_{W_2}^*\Delta_{W_2*} C^{\bullet}.
\]
This implies 
\[
\pi_{W_2}^*C^{\bullet} \cong \pi_{W_2}^*\Delta_{W_2}^* \Delta_{W_2*}C^{\bullet} \cong \Delta_{W}^* \pi_{W_2\times W_2}^* \Delta_{W_2*}C^{\bullet}. 
\]
Hence 
\[ 
\bal 
\Delta_{W*}\pi_{W_2}^*C^{\bullet} & \cong \Delta_{W*}\Delta_{W}^* \pi_{W_2\times W_2}^* \Delta_{W_2*}C^{\bullet}  \cong \pi_{W_2\times W_2}^* \Delta_{W_2*}C^{\bullet}/ \mathcal{I}_{\Delta_{W} }
\pi_{W_2\times W_2}^* \Delta_{W_2*}C^{\bullet}\\
& \cong \CO_{\Delta_{W}}\otimes \pi_{W_2\times W_2}^* \Delta_{W_2*}C^{\bullet}\eal
\]
However, Lemma \ref{flatlemma} implies that the scheme theoretic intersection of $\pi_{W_1\times W_1}^{-1}(\Delta_{W_1})$ and $\pi_{W_2\times W_2}^{-1} (\Delta_{W_2})$ is transverse and coincides with $\Delta_{W}$. 
Therefore 
\[
\bal
\CO_{\Delta_{W}}\otimes \pi_{W_2\times W_2}^* \Delta_{W_2*}C^{\bullet}  
& \cong 
(\CO_{\Delta_{W_1}} \boxtimes \CO_{\Delta_{W_2}}) \otimes \pi_{W_2\times W_2}^* \Delta_{W_2*}C^{\bullet}\\
& \cong 
\CO_{\Delta_{W_1}} \boxtimes \pi_{W_2\times W_2}^* \Delta_{W_2*}C^{\bullet}.
\\
\eal
\]

\hfill $\Box$

\begin{lem}\label{flatcorB} 
Let $W_1, W_2$ be quasi-projective schemes over $\IC$ and 
let $W=W_1\times W_2$.  
Then there are 
 canonical exact sequences of $\CO_{W\times W}$-modules 
\be\label{eq:idealseqA} 
0 \to \mathcal{I}_{\Delta_{W_1}}\boxtimes \CO_{W_2 \times W_2} \xlongto{j_W}
\mathcal{I}_{\Delta_{W}} \xlongto{\gamma_W} \CO_{\Delta_{W_1}} \boxtimes \mathcal{I}_{\Delta_{W_2}} \to 0
\ee
and 
\be\label{eq:idealseqB} 
0 \to \mathcal{I}_{\Delta_{W_1}}\boxtimes \mathcal{I}_{\Delta_{W_2}}
\xlongto{\delta_W} \mathcal{I}_{\Delta_{W_1}} \boxplus  \mathcal{I}_{\Delta_{W_2}} \xlongto{\rho_W} 
\mathcal{I}_{\Delta_{W}} \to 0
\ee
where 
the $\delta_W$ is the antidiagonal map defined by multiplication, 
and $\gamma_W$, $\rho_W$ are the natural projections. Moreover, 
there is a commutative diagram 
\be\label{eq:idealdiagA} 
\xymatrix{ 
\mathcal{I}_{\Delta_{W_1}} \boxplus \mathcal{I}_{\Delta_{W_2}} \ar[r]^-{\rho_W}
\ar[d]^-{(0,\ f_{W_1}\boxtimes {\bf 1})}  
& \mathcal{I}_{\Delta_W} \ar[d]^-{\gamma_W} \\
\CO_{\Delta_{W_1}} \boxtimes \mathcal{I}_{\Delta_{W_2}} \ar[r]^-{\bf 1} 
& \CO_{\Delta_{W_1}} \boxtimes \mathcal{I}_{\Delta_{W_2}}, \\}
\ee
where $f_{W_1}: \CO_{W_1\times W_1} \to \CO_{\Delta_{W_1}}$ is the canonical projection.
\end{lem}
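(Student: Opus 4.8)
The plan is to construct everything from the two elementary short exact sequences of the diagonals in $W_1\times W_1$ and $W_2\times W_2$,
\[
0 \to \mathcal{I}_{\Delta_{W_i}} \xlongto{\iota_{W_i}} \CO_{W_i\times W_i} \xlongto{f_{W_i}} \CO_{\Delta_{W_i}} \to 0, \qquad i=1,2,
\]
pulled back to $W\times W = (W_1\times W_1)\times(W_2\times W_2)$ along the two projections, and then to invoke Lemma~\ref{flatlemma} repeatedly to guarantee that all the relevant tensor products are underived and all the relevant sequences stay exact. Concretely, for \eqref{eq:idealseqA} I would tensor the $i=2$ sequence above (pulled back via $\pi_{W_2\times W_2}$) with $\mathcal{I}_{\Delta_{W_1}}$ (pulled back via $\pi_{W_1\times W_1}$): by Lemma~\ref{flatlemma} the $\operatorname{Tor}$-vanishing \eqref{eq:transvA} makes this exact, giving
\[
0 \to \mathcal{I}_{\Delta_{W_1}}\boxtimes \mathcal{I}_{\Delta_{W_2}} \to \mathcal{I}_{\Delta_{W_1}}\boxtimes \CO_{W_2\times W_2} \to \mathcal{I}_{\Delta_{W_1}}\boxtimes \CO_{\Delta_{W_2}} \to 0.
\]
Separately, tensoring the $i=2$ sequence with $\CO_{W_1\times W_1}$ is trivially exact, and the transversality statement at the end of the proof of Lemma~\ref{flatcorA} identifies $\CO_{\Delta_{W_1}}\boxtimes\CO_{\Delta_{W_2}}$ with $\CO_{\Delta_W}$ and, more importantly, gives $\mathcal{I}_{\Delta_W} = \ker(\CO_{W\times W}\to \CO_{\Delta_W})$ sitting in an exact sequence whose pieces are exactly the middle terms above. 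Chasing these identifications — i.e. realizing $\mathcal{I}_{\Delta_W}$ as the preimage in $\CO_{W\times W}$ of $\mathcal{I}_{\Delta_{W_1}}\boxtimes \CO_{\Delta_{W_2}} + \CO_{\Delta_{W_1}}\boxtimes\mathcal{I}_{\Delta_{W_2}}$ — produces the four-term picture from which \eqref{eq:idealseqA} is read off, with $j_W$ the inclusion $\mathcal{I}_{\Delta_{W_1}}\boxtimes\CO_{W_2\times W_2}\hookrightarrow \mathcal{I}_{\Delta_W}$ and $\gamma_W$ the restriction of the projection $\CO_{W\times W}\to \CO_{\Delta_{W_1}}\boxtimes\CO_{W_2\times W_2}$. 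Exactness on the right is clear (both are surjections of ideal sheaves onto the image of $\mathcal{I}_{\Delta_{W_2}}$), and exactness in the middle follows because an element of $\mathcal{I}_{\Delta_W}$ dies under $\gamma_W$ precisely when it already lies in the $W_1$-diagonal ideal times everything.

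For \eqref{eq:idealseqB} the cleanest route is to observe that $\Delta_W = \pi_{W_1\times W_1}^{-1}(\Delta_{W_1}) \cap \pi_{W_2\times W_2}^{-1}(\Delta_{W_2})$ scheme-theoretically and transversally (again the end of the proof of Lemma~\ref{flatcorA}), so that the ideal of the intersection is the sum of the two ideals; the Mayer--Vietoris/Koszul-type sequence for a sum of two ideals whose $\operatorname{Tor}^1$ vanishes is exactly \eqref{eq:idealseqB}, with $\rho_W$ the sum map $(x,y)\mapsto x+y$ (after pulling $\mathcal{I}_{\Delta_{W_1}}$ and $\mathcal{I}_{\Delta_{W_2}}$ into $\CO_{W\times W}$) and $\delta_W$ the anti-diagonal $z\mapsto (z,-z)$ given by multiplication. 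The required $\operatorname{Tor}^1_{\CO_{W\times W}}\bigl(\CO_{W\times W}/\mathcal{I}_{\Delta_{W_1}},\ \CO_{W\times W}/\mathcal{I}_{\Delta_{W_2}}\bigr) = \mathcal{I}_{\Delta_{W_1}}\boxtimes\mathcal{I}_{\Delta_{W_2}}\big/(\cdots)$ vanishing of the cross-terms is precisely \eqref{eq:transvA} of Lemma~\ref{flatlemma}; this is what makes the sum-of-ideals sequence short exact rather than merely a complex.

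Finally, the commutative diagram \eqref{eq:idealdiagA} is a formal consequence: the map $\rho_W$ restricted to the second summand $\mathcal{I}_{\Delta_{W_2}}$ (viewed inside $\CO_{W_1\times W_1}\boxtimes \mathcal{I}_{\Delta_{W_2}}$) lands in $\mathcal{I}_{\Delta_W}$ and, composed with $\gamma_W$, is visibly the map $\CO_{W_1\times W_1}\boxtimes\mathcal{I}_{\Delta_{W_2}}\to \CO_{\Delta_{W_1}}\boxtimes\mathcal{I}_{\Delta_{W_2}}$ induced by $f_{W_1}\boxtimes {\bf 1}$, while on the first summand $\mathcal{I}_{\Delta_{W_1}}$ the composite $\gamma_W\circ\rho_W$ is zero because $\rho_W$ sends it into $j_W(\mathcal{I}_{\Delta_{W_1}}\boxtimes\CO_{W_2\times W_2})=\ker\gamma_W$; this matches the left vertical map $(0,\ f_{W_1}\boxtimes{\bf 1})$. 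I expect the main obstacle to be purely bookkeeping rather than conceptual: carefully keeping straight the four projections from $W\times W$ and verifying that the maps $j_W,\gamma_W,\delta_W,\rho_W$ one writes down literally agree (signs included) with "the natural projections" and "the antidiagonal map defined by multiplication" as asserted, and checking that the transversality of $\pi_{W_1\times W_1}^{-1}(\Delta_{W_1})$ with $\pi_{W_2\times W_2}^{-1}(\Delta_{W_2})$ is genuinely what Lemma~\ref{flatlemma} delivers in the form needed for both sequences simultaneously. None of these steps requires anything beyond Lemma~\ref{flatlemma}, Lemma~\ref{flatcorA}, and standard homological algebra, so the write-up should be short.
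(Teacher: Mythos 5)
Your proposal is correct and follows essentially the same route as the paper: both arguments rest on the identity $\mathcal{I}_{\Delta_W}=\pi_{W_1\times W_1}^*\mathcal{I}_{\Delta_{W_1}}+\pi_{W_2\times W_2}^*\mathcal{I}_{\Delta_{W_2}}$, the standard exact sequences for a sum of two ideal sheaves, and the Tor-vanishing of Lemma \ref{flatlemma} to identify the intersection with $\mathcal{I}_{\Delta_{W_1}}\boxtimes\mathcal{I}_{\Delta_{W_2}}$ and the quotient with $\CO_{\Delta_{W_1}}\boxtimes\mathcal{I}_{\Delta_{W_2}}$, with the diagram \eqref{eq:idealdiagA} then commuting by construction. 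The remaining work you flag is indeed only bookkeeping.
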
 

{\it Proof.}  
Note that $\mathcal{I}_{\Delta_{W}} = \pi_{W_1\times W_1}^* \mathcal{I}_{\Delta_{W_1}} + \pi_{W_2\times W_2}^* \mathcal{I}_{\Delta_{W_2}}$, the sum of ideal sheaves on $W \times W$. In particular there is a canonical exact sequence
\[
0 \to \pi_{{W_2}\times {W_2}}^* \mathcal{I}_{\Delta_{W_2}} \to \mathcal{I}_{\Delta_{W}} \to 
\pi_{{W_1}\times {W_1}}^* \mathcal{I}_{\Delta_{W_1}}/ (\pi_{{W_1}\times {W_1}}^* \mathcal{I}_{\Delta_{W_1}}\cap \pi_{{W_2}\times {W_2}}^* \mathcal{I}_{\Delta_{W_2}}) \to 0.
\]
Moreover, Lemma \ref{flatlemma} shows that $\pi_{{W_1}\times {W_1}}^*\mathcal{I}_{\Delta_{W_1}}$ is flat over 
${W_2}\times {W_2}$, and the pull-back of the canonical exact sequence 
\[ 
0 \to \mathcal{I}_{\Delta_{W_2}} \to \CO_{{W_2}\times {W_2}} \to \CO_{\Delta_{W_2}} \to 0 
\]
to $W\times W$ remains exact. This yields an isomorphism 
\[ 
\pi_{{W_1}\times {W_1}}^* \mathcal{I}_{\Delta_{W_1}}/ (\pi_{{W_1}\times {W_1}}^* \mathcal{I}_{\Delta_{W_1}}\cap \pi_{{W_2}\times {W_2}}^* \mathcal{I}_{\Delta_{W_2}})\cong \mathcal{I}_{\Delta_{W_1}} \boxtimes \CO_{\Delta_{W_2}}. 
\]

For the second  sequence, the relation 
$\mathcal{I}_{\Delta_{W}} = \pi_{W_1\times W_1}^* \mathcal{I}_{\Delta_{W_1}} + \pi_{W_2\times W_2}^* \mathcal{I}_{\Delta_{W_2}}$ yields the canonical exact sequence 
\[
0 \to \pi_{W_1\times W_1}^*\mathcal{I}_{\Delta_{W_1}} \cap \pi_{W_2\times W_2}^* \mathcal{I}_{\Delta_{W_2}} \to \mathcal{I}_{\Delta_{W_1}} \boxplus  \mathcal{I}_{\Delta_{W_2}} \to 
\mathcal{I}_{\Delta_{W}} \to 0. 
\]
Moreover, Lemma \ref{flatlemma} implies that the multiplication map 
\[
\mathcal{I}_{\Delta_{W_1}}\boxtimes \mathcal{I}_{\Delta_{W_2}} \to 
\pi_{W_1\times W_1}^*\mathcal{I}_{\Delta_{W_1}} \cap \pi_{W_2\times W_2}^* \mathcal{I}_{\Delta_{W_2}}
\]
is an isomorphism. 

Diagram \eqref{eq:idealdiagA} follows immediately by construction. 

\hfill $\Box$

\begin{cor}\label{splitcor} 
Under the same conditions as in Lemma \ref{flatcorB} 
the
map $\rho_W$ in \eqref{eq:idealseqB} induces 
an isomorphism  
\be\label{eq:splitseq} 
{{\bar \rho_W}}: \mathcal{I}_{\Delta_{W_1}}/\mathcal{I}_{\Delta_{W_1}}^2 \boxtimes 
\CO_{\Delta_{W_2}} \oplus \CO_{\Delta_{W_1}} \boxtimes 
\mathcal{I}_{\Delta_{W_2}}/\mathcal{I}_{\Delta_{W_2}}^2 {\buildrel \sim \over \longto} 
\mathcal{I}_{\Delta_{W}}/\mathcal{I}_{\Delta_{W}}^2.
\ee
Moreover, there is a commutative diagram 
\be\label{eq:idealdiagC} 
\xymatrix{ 
\mathcal{I}_{\Delta_{W_1}}/\mathcal{I}_{\Delta_{W_1}}^2 \boxtimes 
\CO_{\Delta_{W_2}} \oplus \CO_{\Delta_{W_1}} \boxtimes 
\mathcal{I}_{\Delta_{W_2}}/\mathcal{I}_{\Delta_{W_2}}^2 \ar[r]^-{{\bar \rho}_W} 
\ar[d]^-{(0, {\bf 1})} &
\mathcal{I}_{\Delta_{W}}/\mathcal{I}_{\Delta_{W}}^2\ar[d]^-{{\bar\gamma}_W} \\
\CO_{\Delta_{W_1}} \boxtimes 
\mathcal{I}_{\Delta_{W_2}}/\mathcal{I}_{\Delta_{W_2}}^2 \ar[r]^-{\bf 1} & 
\CO_{\Delta_{W_1}} \boxtimes 
\mathcal{I}_{\Delta_{W_2}}/\mathcal{I}_{\Delta_{W_2}}^2 \\}
\ee
where ${\bar \gamma}_W: \mathcal{I}_{\Delta_{W}}/\mathcal{I}_{\Delta_{W}}^2 \to 
\CO_{\Delta_{W_1}} \boxtimes 
\mathcal{I}_{\Delta_{W_2}}/\mathcal{I}_{\Delta_{W_2}}^2$ is the natural projection determined by $\gamma_W: \mathcal{I}_{\Delta_W} \to \CO_{\Delta_{W_1}} \boxtimes 
\mathcal{I}_{\Delta_{W_2}}$. 
\end{cor}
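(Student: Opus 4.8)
The plan is to deduce both statements by applying the functor $-\otimes_{\CO_{W\times W}}\CO_{\Delta_W}$ to the short exact sequence \eqref{eq:idealseqB} and to the square \eqref{eq:idealdiagA}, and then identifying the resulting terms. First I would record the elementary facts that $\mathcal{I}_{\Delta_W}\otimes_{\CO_{W\times W}}\CO_{\Delta_W}\cong \mathcal{I}_{\Delta_W}/\mathcal{I}_{\Delta_W}^{2}$ (and likewise for $W_1$ and $W_2$), and that $\CO_{\Delta_W}\cong \CO_{\Delta_{W_1}}\boxtimes\CO_{\Delta_{W_2}}$, the latter following from Lemma \ref{flatlemma} exactly as in the proof of Lemma \ref{flatcorB}. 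Combining these with the observation that external tensor products commute with tensoring over the structure sheaf, one obtains
\[ (\mathcal{I}_{\Delta_{W_1}}\boxplus\mathcal{I}_{\Delta_{W_2}})\otimes_{\CO_{W\times W}}\CO_{\Delta_W}\;\cong\;\bigl(\mathcal{I}_{\Delta_{W_1}}/\mathcal{I}_{\Delta_{W_1}}^{2}\boxtimes\CO_{\Delta_{W_2}}\bigr)\oplus\bigl(\CO_{\Delta_{W_1}}\boxtimes\mathcal{I}_{\Delta_{W_2}}/\mathcal{I}_{\Delta_{W_2}}^{2}\bigr) \]
and $(\mathcal{I}_{\Delta_{W_1}}\boxtimes\mathcal{I}_{\Delta_{W_2}})\otimes_{\CO_{W\times W}}\CO_{\Delta_W}\cong \mathcal{I}_{\Delta_{W_1}}/\mathcal{I}_{\Delta_{W_1}}^{2}\boxtimes\mathcal{I}_{\Delta_{W_2}}/\mathcal{I}_{\Delta_{W_2}}^{2}$. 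Tensoring \eqref{eq:idealseqB} then yields a right-exact sequence whose middle term is the left-hand side of \eqref{eq:splitseq} and whose right term is $\mathcal{I}_{\Delta_W}/\mathcal{I}_{\Delta_W}^{2}$, with the right map equal to $\overline{\rho}_W:=\rho_W\otimes_{\CO_{W\times W}}\CO_{\Delta_W}$.

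The key step is to show that the first map in this tensored sequence, $\overline{\delta}_W:=\delta_W\otimes_{\CO_{W\times W}}\CO_{\Delta_W}$, vanishes. For this I would use the explicit description of $\delta_W$ as the antidiagonal multiplication map: its two components send a local section $x\boxtimes y$, where $x$ is a local section of $\pi_{W_1\times W_1}^{*}\mathcal{I}_{\Delta_{W_1}}$ and $y$ of $\pi_{W_2\times W_2}^{*}\mathcal{I}_{\Delta_{W_2}}$, to $\pm\, xy$, viewed respectively inside $\pi_{W_1\times W_1}^{*}\mathcal{I}_{\Delta_{W_1}}$ and inside $\pi_{W_2\times W_2}^{*}\mathcal{I}_{\Delta_{W_2}}$. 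After tensoring with $\CO_{\Delta_W}$ the image of $x\boxtimes y$ in the first summand equals $x\otimes\overline{y}$, where $\overline{y}$ is the restriction of $y$ to $\Delta_W$; but $y$ is a section of the ideal sheaf of $\Delta_W$, so $\overline{y}=0$, and symmetrically the image in the second summand vanishes. Hence $\overline{\delta}_W=0$, and right-exactness forces $\overline{\rho}_W$ to be an isomorphism. This is exactly the isomorphism \eqref{eq:splitseq}.

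Finally, the compatibility diagram \eqref{eq:idealdiagC} is obtained by applying the same functor $-\otimes_{\CO_{W\times W}}\CO_{\Delta_W}$ to the square \eqref{eq:idealdiagA}: under the identifications above the top row becomes $\overline{\rho}_W$, the right vertical map becomes the induced projection $\overline{\gamma}_W$, the bottom row stays the identity, and the left vertical map $(0,\ f_{W_1}\boxtimes\mathbf{1})$ becomes $(0,\mathbf{1})$ since $f_{W_1}\colon\CO_{W_1\times W_1}\to\CO_{\Delta_{W_1}}$ restricts to the identity of $\CO_{\Delta_{W_1}}$. The argument is essentially bookkeeping rather than conceptual; the only points demanding care are the management of the various external tensor product identifications --- in particular justifying $\CO_{\Delta_W}\cong\CO_{\Delta_{W_1}}\boxtimes\CO_{\Delta_{W_2}}$ and checking, via the flatness results of Lemma \ref{flatlemma}, that the underived tensor products compute the right terms with no higher Tor contributions --- and writing out $\delta_W$ concretely enough that the vanishing $\overline{\delta}_W=0$ is transparent.
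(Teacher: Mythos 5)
Your argument is correct, and it reaches the conclusion by a genuinely different route from the paper's. The paper works inside $\mathcal{I}_{\Delta_{W_1}}\boxplus\mathcal{I}_{\Delta_{W_2}}$: it observes that the image of $\delta_W$ lies in the preimage of $\mathcal{I}_{\Delta_W}^2$, computes that preimage explicitly (via the intersections $(\mathcal{I}_{\Delta_{W_1}}\boxtimes\CO_{W_2\times W_2})\cap\mathcal{I}_{\Delta_W}^2$ and its mirror, together with the decomposition of $\mathcal{I}_{\Delta_W}^2$), builds a morphism of short exact sequences, and extracts \eqref{eq:splitseq} from the snake lemma. You instead apply $-\otimes_{\CO_{W\times W}}\CO_{\Delta_W}$ to \eqref{eq:idealseqB} and kill the first map: the component of $\delta_W$ landing in $\pi_{W_1\times W_1}^*\mathcal{I}_{\Delta_{W_1}}$ actually lands in $\pi_{W_1\times W_1}^*\mathcal{I}_{\Delta_{W_1}}\cdot\pi_{W_2\times W_2}^*\mathcal{I}_{\Delta_{W_2}}\subset \mathcal{I}_{\Delta_W}\cdot\pi_{W_1\times W_1}^*\mathcal{I}_{\Delta_{W_1}}$ (and symmetrically for the other component), so $\overline\delta_W=0$ after passing to the quotient, and right-exactness of the tensor product does the rest. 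The two proofs are of course close cousins --- $\mathcal{I}/\mathcal{I}^2$ is $\mathcal{I}\otimes\CO_{\Delta}$, and your vanishing of $\overline\delta_W$ encodes the same ideal containments the paper verifies by hand --- but yours avoids the snake lemma and any explicit description of $\mathcal{I}_{\Delta_W}^2$, at the modest cost of the bookkeeping you yourself flag: the identification $\CO_{\Delta_W}\cong\CO_{\Delta_{W_1}}\boxtimes\CO_{\Delta_{W_2}}$ (which follows from $\mathcal{I}_{\Delta_W}=\pi_{W_1\times W_1}^*\mathcal{I}_{\Delta_{W_1}}+\pi_{W_2\times W_2}^*\mathcal{I}_{\Delta_{W_2}}$, already used in the proof of Lemma \ref{flatcorB}) and the compatibility of $\boxtimes$ with $\otimes_{\CO_{W\times W}}$, neither of which requires the flatness input beyond what Lemma \ref{flatlemma} supplies. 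Your derivation of \eqref{eq:idealdiagC} by applying the same functor to \eqref{eq:idealdiagA} is likewise fine and matches the paper's (terse) treatment of that square.
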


{\it Proof}. 
Note that the image of the map $\delta_W$ in 
\eqref{eq:idealseqB} is contained in the subsheaf 
\[
(\mathcal{I}_{\Delta_{W_1}} \boxtimes \CO_{W_2\times W_2})\cap 
\mathcal{I}_{\Delta_W}^2  \oplus 
(\CO_{W_1\times W_1} \boxtimes \mathcal{I}_{\Delta_{W_2}})\cap 
\mathcal{I}_{\Delta_W}^2 \subset \mathcal{I}_{\Delta_{W_1}} \boxtimes \mathcal{I}_{\Delta_{W_2}}. 
\]
Then the exact sequence \eqref{eq:idealseqB} yields the following commutative diagram 
\be\label{eq:idealdiagB}
\xymatrix{ 
0 \ar[r] & \mathcal{I}_{\Delta_{W_1}} \boxtimes \mathcal{I}_{\Delta_{W_2}} 
\ar[r]^-{\delta_W'} \ar[d]^-{\bf 1}  & {\begin{array}{c} (\mathcal{I}_{\Delta_{W_1}} \boxtimes \CO_{W_2\times W_2})\cap 
\mathcal{I}_{\Delta_W}^2 \\ \oplus \\
(\CO_{W_1\times W_1} \boxtimes \mathcal{I}_{\Delta_{W_2}})\cap 
\mathcal{I}_{\Delta_W}^2 \end{array}} \ar[r]^-{\rho_W'} \ar[d] 
& \mathcal{I}_{\Delta_W}^2 \ar[r] \ar[d] & 0 \\
0 \ar[r] & \mathcal{I}_{\Delta_{W_1}} \boxtimes \mathcal{I}_{\Delta_{W_2}} 
\ar[r]^{\delta_W}  & \mathcal{I}_{\Delta_{W_1}}\boxplus \mathcal{I}_{\Delta_{W_2}} \ar[r]^-{\rho_W} & \mathcal{I}_{\Delta_W} \ar[r]  & 0\\}
\ee
with exact rows, where $\delta_W'$ is naturally determined by $\delta_W$ and $\rho'_W$ is the restriction of $\rho_W$.
Moreover, note that 
\[
\mathcal{I}_{\Delta_W}^2 = \mathcal{I}_{\Delta_{W_1}}^2 \boxtimes \CO_{W_2\times W_2} 
+  \mathcal{I}_{\Delta_{W_1}} \boxtimes \mathcal{I}_{\Delta_{W_2}} + \CO_{W_1\times W_1}\boxtimes \mathcal{I}_{\Delta_{W_2}}^2.
\]
Therefore there are canonical isomorphisms 
\[
(\mathcal{I}_{\Delta_{W_1}} \boxtimes \CO_{W_2\times W_2})\cap 
\mathcal{I}_{\Delta_W}^2 \cong \mathcal{I}_{\Delta_{W_1}}^2 \boxtimes \CO_{W_2\times W_2} 
+  \mathcal{I}_{\Delta_{W_1}} \boxtimes \mathcal{I}_{\Delta_{W_2}}
\]
and 
\[
(\CO_{W_1\times W_1} \boxtimes \mathcal{I}_{\Delta_{W_2}})\cap 
\mathcal{I}_{\Delta_W}^2 \cong \mathcal{I}_{\Delta_{W_1}} \boxtimes \mathcal{I}_{\Delta_{W_2}}
+ \CO_{W_1\times W_1} \boxtimes \mathcal{I}_{\Delta_{W_2}}^2 
\]
These yield further isomorphisms 
\[ 
\mathcal{I}_{\Delta_{W_1}}/(\mathcal{I}_{\Delta_{W_1}} \boxtimes \CO_{W_2\times W_2})\cap 
\mathcal{I}_{\Delta_W}^2 \cong  \mathcal{I}_{\Delta_{W_1}}/\mathcal{I}_{\Delta_{W_1}}^2
\boxtimes \CO_{W_2\times W_2}
\]
and 
\[
\mathcal{I}_{\Delta_{W_2}}/(\mathcal{I}_{\Delta_{W_1}} \boxtimes \CO_{W_2\times W_2})\cap 
\mathcal{I}_{\Delta_W}^2 \cong  \CO_{W_1\times W_1}\boxtimes 
 \mathcal{I}_{\Delta_{W_2}}/\mathcal{I}_{\Delta_{W_2}}^2
 \]
Then isomorphism \eqref{eq:splitseq}  follows from \eqref{eq:idealdiagB} by the snake lemma. Diagram \eqref{eq:idealdiagC} follows from \eqref{eq:idealdiagA}. 

\hfill $\Box$

\bibliography{CY4_ref.bib}
\bibliographystyle{abbrv}

\end{document}